\documentclass[10pt]{article}
\usepackage{amssymb, amsfonts, amsmath}
\usepackage{amsthm, thmtools, mathrsfs, comment}
\usepackage{enumitem, yhmath, accents}
\usepackage{imakeidx}
\usepackage[colorlinks=true,linkcolor=blue]{hyperref}
\usepackage[retainorgcmds]{IEEEtrantools}
\makeindex

\declaretheorem[name=Definition,style=definition,qed=$\dashv$,numberwithin=section]
{dfn}

\declaretheorem[name=Theorem,style=plain,sibling=dfn]{tm}
\declaretheorem[name=Lemma,style=plain,sibling=dfn]{lem}
\declaretheorem[name=Corollary,style=plain,sibling=dfn]{cor}
\declaretheorem[name=Remark,style=definition,sibling=dfn]{rem}
\declaretheorem[name=Case,style=definition,sibling=dfn]{case}
\declaretheorem[name=Proposition,style=plain,sibling=dfn]{prop}
\declaretheorem[name=Claim,style=plain,numberwithin=tm]{clm}
\declaretheorem[name=Subclaim,style=plain,numberwithin=clm]{sclm}
\numberwithin{equation}{section}

\newcommand{\Xx}{\mathcal{X}}
\newcommand{\Yy}{\mathcal{Y}}
\newcommand{\Shift}{\mathrm{Shift}}
\newcommand{\opt}{\mathrm{opt}}
\newcommand{\dfnemph}{\textbf}
\newcommand{\rest}{\mathord{\upharpoonright}}
\newcommand{\tu}{\textup}

\newcommand{\univ}[1]{\lpole #1\rpole}

\newcommand{\dom}{{\rm dom}}

\newcommand{\lh}{{\rm lh}}

\newcommand{\wins}{\ins}
\newcommand{\wpins}{\pins}
\newcommand{\nwins}{\nins}

\newcommand{\Jins}{\ins^\J}
\newcommand{\Jpins}{\pins^\J}

\newcommand{\card}{\mathrm{card}}

\renewcommand{\models}{\vDash}
\newcommand{\powerset}{\mathcal{P}}

\def\M{{\mathcal{M}}}
\def\N{{\mathcal{N}}}

\def\T{{\mathcal{T}}}

\def\Ss{{\mathcal{S}}}

\newcommand{\ins}{\trianglelefteq}
\newcommand{\nins}{\ntrianglelefteq}
\newcommand{\pins}{\triangleleft}
\newcommand{\npins}{\ntriangleleft}
\newcommand{\Hull}{\mathrm{Hull}}
\newcommand{\cHull}{\mathrm{cHull}}
\newcommand{\crit}{\mathrm{crit}}
\newcommand{\RR}{\mathbb{R}}
\newcommand{\rSigma}{{\mathrm{r}\Sigma}}
\newcommand{\OR}{\mathrm{Ord}}
\newcommand{\J}{\mathcal J}
\newcommand{\un}{\cup}
\newcommand{\sub}{\subseteq}
\newcommand{\om}{\omega}
\newcommand{\ZF}{\mathsf{ZF}}
\newcommand{\AD}{\mathsf{AD}}
\newcommand{\ZFC}{\mathsf{ZFC}}
\newcommand{\CC}{\mathbb{C}}
\newcommand{\es}{\mathbb{E}}
\newcommand{\Tt}{\mathcal{T}}

\newcommand{\PP}{\mathbb{P}}
\newcommand{\Vv}{\mathcal{V}}
\newcommand{\Uu}{\mathcal{U}}

\newcommand{\conc}{\ \widehat{\ }\ }

\newcommand{\Lp}{\mathrm{Lp}}
\newcommand{\sats}{\vDash}
\newcommand{\cut}{\backslash}
\newcommand{\Ult}{\mathrm{Ult}}
\newcommand{\core}{\mathfrak{C}}
\newcommand{\pow}{\mathcal{P}}
\newcommand{\inter}{\cap}

\newcommand{\lex}{\mathrm{lex}}

\newcommand{\Coll}{\mathrm{Col}}
\newcommand{\rg}{\mathrm{rg}}
\newcommand{\lpole}{\left\lfloor}
\newcommand{\rpole}{\right\rfloor}

\renewcommand{\l}{\mathit{l}}
\newcommand{\Hh}{\mathcal{H}}

\newcommand{\nth}{\mathrm{th}}
\newcommand{\Ord}{\mathrm{Ord}}
\newcommand{\Th}{\mathrm{Th}}
\newcommand{\com}{\circ}
\newcommand{\HC}{\mathrm{HC}}
\newcommand{\cof}{\mathrm{cof}}

\newcommand{\Ll}{\mathcal{L}}
\newcommand{\all}{\forall}
\newcommand{\ex}{\exists}
\newcommand{\Ttvec}{\vec{\Tt}}

\newcommand{\id}{\mathrm{id}}
\newcommand{\cross}{\times}

\newcommand{\elem}{\preccurlyeq}

\newcommand{\pred}{\text{pred}}

\newcommand{\C}{\mathcal{C}}
\newcommand{\opbk}{\mathscr{B}}
\newcommand{\her}{\mathscr{H}}
\newcommand{\plI}{\mathrm{I}}
\newcommand{\plII}{\mathrm{II}}
\newcommand{\ZFmin}{\mathsf{ZF}^-}
\newcommand{\rank}{\mathrm{rank}}
\newcommand{\psub}{\subsetneq}

\newcommand{\Fop}{\mathcal{F}}
\newcommand{\base}{\mathrm{base}}

\newcommand{\sq}{\mathrm{sq}}

\newcommand{\trancl}{\mathrm{trancl}}

\newcommand{\xvec}{\vec{x}}

\newcommand{\G}{\mathcal{G}}

\newcommand{\maps}{\colon}

\newcommand{\Abar}{\bar{A}}

\newcommand{\bfrSigma}{\undertilde{\rSigma}}

\newcommand{\Ww}{\mathcal{W}}

\mathchardef\mhyphen="2D

\newcommand{\DC}{\mathsf{DC}}

\newcommand{\partialto}{\dashrightarrow}
\newcommand{\op}{\mathrm{op}}

\newcommand{\Cc}{\mathcal{C}}
\newcommand{\pvec}{\vec{p}}
\newcommand{\Mtilde}{\widetilde{\M}}

\newcommand{\eqdef}{=_{\mathrm{def}}}
\newcommand{\onto}{\stackrel{\mathrm{onto}}{\to}}
\newcommand{\cofto}{\stackrel{\mathrm{cof}}{\to}}

\newcommand{\alphavec}{\vec{\alpha}}
\newcommand{\ph}{\mathfrak{P}}
\renewcommand{\L}{\mathcal{L}}

\newcommand{\witri}{\widetriangle}
\newcommand{\hmE}{E}
\newcommand{\hmP}{P}
\newcommand{\hmS}{S}
\newcommand{\hmX}{X}
\newcommand{\hmb}{cb}
\newcommand{\hmp}{cp}
\newcommand{\hmbv}{b}
\newcommand{\hmpv}{p}
\newcommand{\hmmu}{\mu}
\newcommand{\hme}{e}
\newcommand{\hmEdot}{\dot{\hmE}}
\newcommand{\hmPdot}{\dot{\hmP}}
\newcommand{\hmSdot}{\dot{\hmS}}
\newcommand{\hmXdot}{\dot{\hmX}}
\newcommand{\hmbdot}{\dot{\hmb}}
\newcommand{\hmpdot}{\dot{\hmp}}
\newcommand{\hmmudot}{\dot{\hmmu}}
\newcommand{\hmedot}{\dot{\hme}}
\newcommand{\actext}{F}
\newcommand{\hmPvec}{{\vec{\hmP}}}

\newcommand{\Pvec}{\vec{P}}
\newcommand{\cvec}{\vec{c}}
\newcommand{\Dd}{\mathcal{D}}
\newcommand{\dirlim}{\mathrm{dirlim}}
\newcommand{\wl}{\l}
\newcommand{\wupto}{|}
\newcommand{\wdoubleupto}{||}
\newcommand{\Jupto}{|^{\J}}
\newcommand{\Lim}{\mathrm{Lim}}
\newcommand{\R}{\mathcal{R}}
\newcommand{\Q}{\mathcal{Q}}
\newcommand{\K}{\mathcal{K}}
\renewcommand{\P}{\mathcal{P}}

\begin{document}
\title{The fine structure of operator mice
\renewcommand{\thefootnote}{\fnsymbol{footnote}} 
\footnotetext{\emph{Key words}: Inner model, operator, mouse, fine structure}
\footnotetext{\emph{2010 MSC}: 03E45, 03E55}
\renewcommand{\thefootnote}{\arabic{footnote}}}
\author{Farmer Schlutzenberg\footnote{Institute of Discrete Mathematics and Geometry, TU Vienna, Email: farmer.schlutzenberg@tuwien.ac.at}\\Nam 
Trang\footnote{Department of Mathematics, University of North Texas, USA, Email: nam.trang@unt.edu}}
\maketitle

\begin{abstract}
We develop the fine structure theory of  \emph{operator-premice}.
These are a generalization of standard premice, in which an abstract operator $\Fop$ is used to form the successor steps in the internal hierarchy of the premouse, instead of Jensen's $\J$-operator (which computes rudimentary closure). Such notions have seen applications in core model induction arguments, but their theory has not previously been developed in detail.
We define \emph{fine condensation} for operators $\Fop$ and show that fine 
condensation and iterability together ensure that $\Fop$-mice have the fundamental fine 
structural properties including universality and solidity of the standard parameter.
\end{abstract}

\tableofcontents

\section{Introduction}\label{sec:intro}

The core model induction is the most general and successful method for computing lower bounds for the consistency strengths of strong theories, like $\sf{PFA}$, $\sf{MM}$ and many others. It is used to construct  models of $\sf{AD}^+$ of high complexity -- which themselves contain inner models for large cardinals -- from such theories;  the papers
  \cite{PFA_implies_ADLR}, \cite{cmi}, \cite{wilson2012contributions}, \cite{trang2016pfa}, \cite{adolf2024ideals}, \cite{trang2021determinacy}, \cite{sargsyan2016tame}, \cite{sargsyan2014non}, \cite{busche2009strength} develop the general theory of the core model induction and give various applications. 
The construction is inductive in structure.
Roughly, one proves that the universe $V$ (or some part thereof, such as the hereditarily countable sets $\HC$)
is closed under certain  kinds of operators (functions) $\Fop$ which yield inner models for large cardinals. The proof is by induction on the complexity of such $\Fop$, as measured in terms of the Wadge hierarchy in models of $\AD^+$.
 Suppose we have constructed an operator $\Fop$ of the right kind, which captures a pointclass $\undertilde{\Gamma}$ of $\sf{AD}^+$.\footnote{In the same sense that $\M_1^\sharp(x)$ captures $\Sigma^1_2(x)$.} We would like to construct an operator of higher complexity, and a pointclass $\undertilde{\Omega}$ of $\sf{AD}^+$ that strictly contains $\undertilde{\Gamma}$. In one case, we can construct such objects by  constructing the operator $x\mapsto\M_1^{\Fop,\sharp}(x)$; this is the sharp for the canonical iterable $\Fop$-closed model for one Woodin cardinal over $x$, analogous to $\M_1^\#(x)$. This is usually achieved by showing that some fully backgrounded construction or partially backgrounded construction ($K^c$-construction) over $x$,
 which is built relative to $\Fop$, reaches $\M_1^{\Fop,\sharp}(x)$. Here ``built relative to $\Fop$'' means that  the successor stages of the construction are applications of $\Fop$, instead of  Jensen's operator $\J$. (Given a set $X$, $\J(X)$
 is the  closure of $X\un\{X\}$ under the rudimentary set functions. Standard premice are constructed using $\J$ to extend the model at successor levels,
 and  extenders are added at certain limit levels.)  In order for this kind of construction to work, $\Fop$ should satisfy special properties, generalizing many of those that $\J$ satisfies. This paper defines precisely these concepts and generalizes fine-structural and iterability results from ordinary mice to $\mathcal{F}$-mice.

In an \emph{$\Fop$-premouse},
 $\Fop$ is used to extend the model at successor levels, instead of $\J$.
The operator $\Fop$ can be used to feed different kinds of information into a model. For example, ordinary mice,
or an iteration strategy,
or the specification of term relations for a self-justifying system, are some examples of the kind of information that might be fed in. 
We will define \emph{$\Fop$-premice} for a
fairly
wide class of operators
$\Fop$ with nice condensation properties, and develop their basic theory. 
Versions of this theory have been outlined and used by others
(see particularly \cite[\S1.3]{cmi} and \cite[\S2.1]{wilson2012contributions}), but
without supplying a very thorough development of the theory. We give here a
more thorough development.
Aside from providing more details,
some of the basic definitions we use here differ from those in \cite{cmi} and \cite{wilson2012contributions}
in important ways.
But other than in Remark \ref{rem:problems_with_condenses_well} (which can be omitted),
this paper has no formal dependence on those  two papers, though they do provide significant motivation for what we do here.

If $X$ is a transitive set in the domain of an operator $\Fop$ of the kind in which we are interested, then $\N=\Fop(X)$ will be a transitive structure with $X\in\N$, and $\Fop(X)$ will be a (very) simple instance of an \emph{$\Fop$-premouse over $X$}. More generally,
if $\R$ is a sound $\Fop$-premouse over $X$,
then $\Fop(\R)$ will be an $\Fop$-premouse over $X$, with $\R\pins\Fop(\R)$ (that is, $\R$ is a proper initial segment of $\Fop(\R)$, in fact
the largest such).
An essential feature
of the operators suitable for our purposes
is their behaviour under \emph{condensation},
which should be reasonably
 analogous to that of the $\J$-operator.
For example, if $X$ is a transitive set in the domain of $\Fop$, $\R,\Ss$ are $\Fop$-premice
over $X$, $\M$ is a transitive structure
with $\R\in\M$ and $\pi:\M\to\Fop(\Ss)$ is elementary with $\pi(\R)=\Ss$
and $\pi\rest X\cup\{X\}=\id$,
then we might want to know that $\M=\Fop(\R)$.
In fact, we will also want to consider such condensation with respect to partially elementary maps that show up in fine structural contexts.
We will formulate such properties -- \emph{coarse condensation} and \emph{fine condensation} --
in Definitions \ref{dfn:condenses_coarsely}
and \ref{dfn:condenses_finely} respectively.
But before we can discuss those properties,
we need to describe the more basic and general properties of \emph{operator-premice},
which will be the abstract form for $\Fop$-premice, irrespective of the features of any particular $\Fop$.
This is the subject of \S\ref{SigmaMiceOverR}.
After having developed this theory, we will prove in Theorem \ref{thm:k+1-fineness} that if $\Fop$ is an operator with fine condensation, then $\Fop$-iterable $\Fop$-premice satisfy the fundamental fine structural properties that are essential to our understanding of standard premice.

As mentioned above,
some basic notions we use  differ significantly from their analogues in \cite{cmi} and \cite{wilson2012contributions}.
Let us now try to convey some idea about this.
The fine version of condensation for operators
(described roughly in the previous paragraph) which is employed in \cite{cmi} and \cite{wilson2012contributions} is that of \emph{condenses well} (see \cite[Definition 1.3.2]{cmi} and \cite[Definition 2.1.10]{wilson2012contributions}), and this notion is central to the theory in those papers.
As we explain in Remark \ref{rem:problems_with_condenses_well}, this property does not fully function as one would like, and in particular,
 in the terminology of \cite{wilson2012contributions},
 the operators $F_G$ derived from mouse operators $G$ typically do not condense well, contrary to \cite[Lemma 2.1.12]{cmi}.\footnote{Actually, there is a  minor further issue in \cite[Lemma 2.1.12]{cmi},
 or more to the point, in \cite[Definition 2.1.8]{cmi},
 upon which \cite[2.1.12]{cmi} relies; $F_G$ (as specified in \cite[2.1.8]{cmi}) is typically not well-defined in the first place. There is a natural correction to this, but employing the correction, \cite[Lemma 2.1.12]{cmi} fails.} We will show that the variant we introduce, \emph{condenses finely} (see \ref{dfn:condenses_finely}),
 behaves as desired. Because  Remark \ref{rem:problems_with_condenses_well} motivates some key aspects of the definition of \emph{condenses finely}, we have placed it just prior to the formulation of that definition.
But \ref{rem:problems_with_condenses_well} does not rely on the material in the paper prior to where it appears, and the reader who wishes to start with it should have no difficulty in doing so.

A second key definition of \cite{cmi} and \cite{wilson2012contributions}
is that of \emph{model operator}
(see \cite[Definition 1.3.1]{cmi}, and the similar \cite[Definition 2.1.4]{wilson2012contributions}).
The authors were not able to develop the theory at the level of generality of model operators with condensation properties,
 because we could not see how to define appropriate $\Sigma_1$-Skolem functions, nor prove facts such as the preservation of the $1$st standard parameter under iteration maps.
Thus, we make stronger hypotheses on the kinds of structures we work with, ensuring more properties familiar from $\J$-structues;\footnote{A \emph{$\J$-structure} is one of form $(\J^A_\alpha,B)$ for some  $A$, where $\alpha$ is an ordinal or $\alpha=\OR$, and some predicate $B\sub\J_\alpha^A$.
Here $\J_\alpha^A$ refers to the $\alpha$th iterate of the $\J$-operator relativized to $A$.}  see especially  Definitions \ref{dfn:potential opm}, \ref{dfn:adequate} and \ref{dfn:opm}.

Our proof of the solidity of the standard parameter, part
of Theorem \ref{thm:k+1-fineness}, is based on that in the union of  \cite{FSIT}, \cite{steel2010outline} and \cite{deconIMT}. But we provide some  details which are not discussed explicitly in those papers,  which are also relevant in the case of ordinary premice (as opposed to operator-premice), and which the authors believe are non-trivial. In the introduction to \S\ref{sec:solidity},
we isolate the point in the proof at which the details are relevant. 

In the paper,
we will mostly focus on  material that is new, skipping certain
parts which are immediate
transcriptions of the theory of standard premice, although for purposes of readability and self-containment, we do include some fairly standard material.

We have tried to
develop the theory in a manner {\color{black}that its content}   is mostly compatible  with the literature. This is part of our motivation for developing the
theory of $\Fop$-premice abstractly,
dealing with operators $\Fop$ more general than those given by $\J$-structures; cf.~the developments in \cite{cmi} and \cite{wilson2012contributions}, which are abstract.
Of course the abstract development also makes the work more general,
and has the advantage of showing
which properties of $\J$-structures are most essential to the theory.
But it does incur the cost of increasing  complexity somewhat.
A reasonable alternative would have been to give a more concrete development by restricting attention to operators given by
$\J$-structures, and in the end, all applications known to the authors are of this form.
Also, if one deals exclusively  with $\J$-structures, one can more naturally formulate
fine structural condensation properties regarding \emph{all} $\J$-initial segments of the model.
But at least the most straightforward analogues of condensation for abstract $\Fop$-mice apply only to $\Fop$-initial
segments of the model.\footnote{That is, given a reasonably closed $\Fop$-mouse $\M$, it is straightforward to formulate
condensation properties with respect to embeddings $\Hh\to\M$, or $\Hh\to\Fop(\M)$,
or $\Hh\to\Fop(\Fop(\M))$, etc, but it is not so clear how this should be done with respect to embeddings $\Hh\to\N$ when $\M\in\N\in\Fop(\M)$.} This
seems to be a significant complication for abstract $\Fop$-mice.\footnote{For example, strategy mice can
either
be defined as an instance of the general theory here, or as $\J$-structures.
The latter approach
is
taken in \cite{scales_in_hybrid_mice_over_R}, and that approach is more convenient, as it gives us
the right
notation to prove strong condensation properties like
\cite[Lemma 4.1(***)]{scales_in_hybrid_mice_over_R}.
 If one defines strategy mice as an
instance of the
general theory here, one would then need to define new notation to refer to arbitrary $\J$-initial
segments in order to prove the analogue of
\cite[Lemma 4.1(***)]{scales_in_hybrid_mice_over_R}.
But then one might as well have defined strategy mice as in \cite{scales_in_hybrid_mice_over_R} to
begin with.} Also, there are important operators, like $x\mapsto C_\Gamma(x)$ for pointclasses $\Gamma$, which are not known to be given by $\J$-structures. We hope the work here will fuel the developments of a more general theory of operators that can accommodate those like $C_\Gamma$ in the future. ($C_\Gamma$ probably need not yield the kind of operator that is appropriate to define operator-mice as we define them. But maybe some appropriate variant can be worked out.)

The paper proceeds as follows.
In \S\ref{SigmaMiceOverR} we define precursors to $\Fop$-premice, culminating in \emph{operator
premice}. We analyse these structures and cover basic fine structure and iteration theory.
In \S\ref{sec:operators}, we introduce \emph{operators} $\Fop$, and \emph{$\Fop$-premice},
which will be instances of operator premice. We define \emph{fine condensation} for operators;
this notion is integral to the paper. We describe \emph{mouse operators} in Definition \ref{dfn:mouse_op} (a basic example of abstract operators),
and show in Proposition \ref{prop:mouse_op_con_finely} that  mouse operators  condense finely.
 We
then prove, in \ref{thm:k+1-fineness}, the main result of the paper -- that the fundamental fine
structural facts (such as solidity of the standard parameter) hold for $\Fop$-iterable
$\Fop$-premice, given that $\Fop$ condenses finely.

\subsection{Conventions and Notation}\label{sec:notation}
We work {\color{black}in} $\ZF$ throughout the paper, indicating choice assumptions where we use
them. We write $\Ord$
for the class of
ordinals. Given a transitive set $M$, $\Ord^M=\Ord(M)$ denotes $\Ord\inter M$. We
write $\card(X)$ for the
cardinality of $X$, $\pow(X)$ for the power set of $X$, and for $\theta\in\Ord$,
$\powerset({<\theta})$ is the set of bounded subsets of $\theta$ and $\her_\theta$ the set
of sets hereditarily of cardinality ${<\theta}$.
We write $f:X\partialto Y$ to denote a partial function.

We identify $[\Ord]^{<\om}$ with the strictly decreasing sequences of ordinals,
so given $p,q\in[\Ord]^{<\om}$, $p\rest i$ denotes the upper $i$ elements of $p$,
and $p\ins q$ means that $p=q\rest i$ for some $i$,
and $p\pins q$ iff $p\ins q$ but $p\neq q$.
The default ordering of $[\Ord]^{<\om}$ is lexicographic,
with $p<q$ iff $p\neq q$ and $\max(p\Delta q)\in q$.

Given a first-order structure $\M=(X,A_1,\ldots)$ with universe $X$ and predicates,
constants, etc,
$A_1,\ldots$, we write $\univ{\M}=X$.
A \dfnemph{transitive structure} is a first-order structure with
transitive universe. We sometimes blur the distinction between the terms
\emph{transitive} and
\emph{transitive structure}. For example, when we refer to a transitive structure as being
\dfnemph{rud closed}, it means that its universe is closed under rudimentary functions. For $\M$ a
transitive structure, $\OR(\M)=\OR(\univ{\M})$. An arbitrary transitive set $X$ is also considered
as the transitive structure $(X)$.
We write $\trancl(X)$ for the transitive closure of $X$.
We say that $\M$ is \emph{amenable}
if for predicate $A$ of $\M$,
we have $X\cap A\in\M$
for all $X\in\univ{\M}$.

Given a transitive structure $\M$, we write
$\J_\alpha(\M)$ for the
$\alpha^\nth$ step in Jensen's $\J$-hierarchy over $\M$ (so for example,
$\J_1(\M)$ is the rud closure of $\trancl(\{\M\})$). We similarly use $\Ss$ to
denote the function giving Jensen's more refined $\Ss$-hierarchy,
so $\Ss_\om(\M)=\J_1(\M)$.
And $\J(\M)=\J_1(\M)$.

We take (standard) \dfnemph{premice} as in \cite{steel2010outline}, except that
we allow superstrong extenders on their sequence, as discussed in Remark \ref{rem:superstrong}. Our definition and theory
of
\emph{operator premice} is mostly modelled on \cite{steel2010outline} and \cite{FSIT},
and fine structure is mostly in those papers,
but adopting the simplifications in \cite[\S5]{V=HODX}.
For discussion of generalized solidity witnesses, see \cite{zeman}.

Our notation pertaining to iteration trees is fairly standard, but here are some points. Let $\Tt$
be a putative iteration tree. We write $<^\Tt$ for the tree order of $\Tt$ and $\pred^\Tt$ for
the
$<^\Tt$-predecessor function.
Let $\alpha+1<\lh(\Tt)$ and $\beta=\pred^\Tt(\alpha+1)$.
Then $M^{*\Tt}_{\alpha+1}$ denotes the $\N\ins M^\Tt_\beta$ such that
$M^\Tt_{\alpha+1}=\Ult_n(\N,E)$, where $n=\deg^\Tt(\alpha+1)$ and $E=E^\Tt_\alpha$,
and $i^{*\Tt}_{\alpha+1}=i^{\N,n}_{E}$ denotes the corresponding ultrapower embedding. And for
$\alpha+1\leq_\Tt\gamma$, $i^{*\Tt}_{\alpha+1,\gamma}=i^\Tt_{\alpha+1,\gamma}\com
i^{*\Tt}_{\alpha+1}$. Also let $M^{*\Tt}_0=M^\Tt_0$ and $i^{*\Tt}_0=\id$.
If $\lh(\Tt)=\gamma+1$ then $M^{\Tt}_\infty=M^\Tt_\gamma$, etc,
and $b^\Tt$ denotes $[0,\gamma]_\Tt$.
For $\alpha<\lh(\Tt)$, $\base^\Tt(\alpha)$
denotes the least $\beta\leq_\Tt\alpha$ such that $(\beta,\alpha]_\Tt$ does not
drop in model or degree. (Therefore either $\beta=0$ or $\beta$ is a
successor.)

A premouse $\P$ is \dfnemph{$\eta$-sound} iff for every $n<\om$, if
$\eta<\rho_n^\P$ then $\P$ is $n$-sound, and if $\rho^\P_{n+1}\leq\eta<\rho_n^\P$ then letting
$p=p_{n+1}^\P$, $p\cut\eta$ is $(n+1)$-solid for $\P$, and $\P=\Hull_{n+1}^\P(\eta\un \{p,\pvec_n^{\P}\})$, where  $p^\P_i$ is the $i$-th standard parameter of $\P$,  $\pvec_n^{\P} =\{p^\P_1, \dots, p^\P_n\}$, and $\Hull_{n+1}$ is defined via the union of \ref{dfn:hulls} and \ref{dfn:fine_structure}.

{\color{black}\label{c21}Let $\M$ be a first order structure
and $\Gamma$ a set of formulas in the signature of $\M$. Let $X\sub\M$. Then
$\Th_\Gamma^\M(X)$ denotes the set of pairs $(\varphi,\vec{x})$
such that $\varphi\in\Gamma$,
$\vec{x}\in X^{<\om}$ and $\M\sats\varphi(\vec{x})$.}

\section{The fine structural framework}
\label{SigmaMiceOverR}

In this section, we introduce and analyse an increasingly focused sequence of
approximations to \emph{${\Fop}$-premice} (which were outlined in the introduction, but will be defined formally later). We first define \emph{hierarchical model}, which describes
the most basic structure of
${\Fop}$-premice.
We refine this by defining \emph{adequate model}, adding some
semi-fine-structural  requirements (such as \emph{acceptability}).
We then develop some basic facts regarding adequate models and their cardinal
structure. From there we can define \emph{potential
operator premouse \tu{(}potential opm\tu{)}}, which are analogous to potential premice; this definition makes new restrictions
on the information encoded by the predicates (most significantly that the predicate
$\hmEdot$ encodes extenders analogous to those of premice), and adds some pre-fine structural
requirements.
Using the latter, we can define the central fine structural concepts for
potential opms.
We then define \emph{Q-operator premouse \tu{(}Q-opm\tu{)}} by requiring that every proper segment
be fully sound,
and show that the first-order content of Q-opm-hood is \emph{almost} expressed by a
Q-formula.\footnote{As in \cite{FSIT}, we consider two cases: type 3, and non-type 3. For example,
the property of being a non-type 3 Q-opm is expressed by a Q-formula
modulo transitivity and the Pairing Axiom.}
We then define \emph{operator premouse} (analogous to \emph{premouse}).
We prove various fine structural facts regarding operator premice,
and discuss the basic iterability theory.

Later in \S\ref{sec:operators}, we will introduce \emph{operators}
${\Fop}$, and \emph{${\Fop}$-premice}. In order to motivate the language $\Ll_0$ of hierarchical models (see Definition \ref{dfn:language_Ll_0}),
we mention now the basic setup for ${\Fop}$-premice. \label{c8}In an ${\Fop}$-premouse $\M$, the predicate $\hmEdot$ will be used to
encode an extender,
$\hmPdot$ to encode auxiliary information given by ${\Fop}$ (for example, if ${\Fop}$ codes  an iteration strategy $\Sigma$
and $\T\in \M$ is a tree according to $\Sigma$, then $\dot{P}$ could code a branch $b$ of $\T$ according to $\Sigma$), $\hmSdot$ to encode the sequence of proper initial segments of $\M$, $\hmXdot$
to encode the extensions
of all (not just proper) segments of $\M$, $\hmbdot$ to refer to the coarse
\emph{base} of $\M$ (a coarse,
transitive
set at the bottom of the structure), and $\hmpdot$ to refer to a coarse
\emph{parameter}, {\color{black}which will be useful if there is some special element of the coarse base to which we want to be able to refer to directly with the language (continuing the same example of $\mathcal{F}$ coding an iteration strategy, $\hmpdot$ might specify the structure for which $\mathcal{F}$ is an iteration strategy). The choice of symbols has the following linguistic justification: \emph{E} stands for \emph{extender},
\emph{P} for \emph{predicate}, \emph{S} for \emph{segments}, \emph{X} for
\emph{extensions}, \emph{cb} for \emph{coarse base}, \emph{cp} for \emph{coarse parameter}.\label{c5} We  use
\emph{cp} \label{c9}instead of \emph{p} to avoid conflict with notation for standard
parameters. We use \emph{cb} instead of \emph{b} to avoid conflict with notation
associated to strategy mice. For better readability, we will
typically use the variable $A$ to represent $\hmb^\M$.} An
${\Fop}$-premouse $\M$ is
\emph{over} its base $A=\hmbdot^\M$.
Here $A\in\M$ and
$A$ is in all proper segments of $\M$.
When we form fine structural cores, all elements of $A\un\{A\}$ will be {\color{black}in the relevant hulls\label{comment_6}}.
But
in some contexts we will also be interested in hulls
which do not include all elements of $A$.

\subsection{Hierarchical models}

\begin{dfn}
Let $Y$ be transitive. Then \index{$\varrho_Y$}$\varrho_Y:Y\to\rank(Y)$ denotes the rank function.
And \index{$\hat{Y}$}$\hat{Y}$ denotes $\trancl(\{(Y,\om,\varrho_Y)\})$.
For $M$ transitive, we say that $M$ is \index{rank closed}\dfnemph{rank closed} iff for every $Y\in M$,
we have $\hat{Y}\in M$ and $\hat{Y}^{<\om}\in M$.\footnote{We take finite sequences
	over $Y$ as functions $f:n\to Y$ for $n<\omega$, so if $Y$ is infinite then \index{$Y^{<\om}$}$\rank(\hat{Y}^{<\omega})<\rank(Y)+\om$.} Note that if $M$ is rud closed and rank closed
then $\rank(M)=\Ord\inter M$.
\end{dfn}

\begin{dfn}[Hulls]\index{hull}\label{dfn:hulls} Let $\Ll=\{\dot{B},\Pvec,\cvec\}$ be a finite first-order language,\footnote{\color{black}We include an equality symbol in all first-order languages by default,
interpreted as true equality.} where
$\dot{B}$ is a binary predicate,
$\Pvec=\left<\dot{P}_i\right>_{i<m}$ is a tuple of unary predicates and
$\cvec=\left<\dot{c}_i\right>_{i<n}$ a tuple of constants.
Let $\N$ be a first-order $\Ll$-structure
and $B=\dot{B}^\N$, etc.
Let $\Gamma$ be a collection of $\Ll$-formulas with ``$x=\dot{c}_i$'' in $\Gamma$ for each $i<n$.
Let $X\sub\univ{\N}$. Then\index{$\Hull^\N_\Gamma(X)$}
\[ \Hull^\N_\Gamma(X)\eqdef(H,B\inter H^2,P_0\inter H,\ldots,P_{m-1}\inter
H,c_0,\ldots,c_{n-1}), \] where $H$ is the set of all $y\in\univ{\N}$ such that for some
$\varphi\in\Gamma$ and $\xvec\in
X^{<\om}$, $y$ is the unique $y'\in\N$ such that $\N\sats\varphi(\xvec,y')$.
If $\N$ is transitive {\color{black}and $H$ is extensional,\label{c7}} then
\index{$\cHull^\N_\Gamma(X)$}$\C=\cHull^\N_\Gamma(X)$
denotes the $\L$ structure which is the transitive collapse of $\Hull^\N_\Gamma(X)$. (That is,
$\univ{\C}$ is the transitive collapse of $H$, and letting $\pi:\univ{\C}\to H$ be the uncollapse,
$P_i^\C=\pi^{-1}``P_i$, etc.)
\end{dfn}

\begin{dfn}\label{dfn:language_Ll_0}
Let \index{$\Ll_0$}$\Ll_0$ be the language of set theory augmented with unary predicate symbols
$\hmEdot$, $\hmPdot$, $\hmSdot$, $\hmXdot$, and constant symbols $\hmbdot$,
$\hmpdot$. Let \index{$\Ll_0^+$}$\Ll_0^+$ be $\Ll_0$ augmented with constant symbols $\hmmudot$, $\hmedot$.\footnote{$\mu$ is for
	\emph{measurable}, and will represent the critical point of an active extender,
	and \emph{e} is for \emph{extender}, and will represent the largest witness to the Initial Segment Condition for a type 2 active extender.} Let
\index{$\Ll_0^-$}$\Ll_0^-=\Ll_0\cut\{\hmEdot,\hmPdot\}$.
\end{dfn}

\begin{dfn}\label{dfn:hm}
\label{model}
A \index{hierarchical model}\index{hm}\dfnemph{hierarchical model}
is an $\Ll_0$-structure
\[ \M = (\univ{\M};{\color{black}{\in}\rest\univ{\M}^2}, \hmE, \hmP, \hmS, \hmX, \hmbv, \hmpv),\]
where {\color{black}$\dot{\in}^{\M}={\in}\rest\univ{\M}^2$,} $\hmEdot^\M=\hmE$, etc, $\hmbv=\hmbdot^\M$ and $\hmpv=\hmpdot^\M$, and such that for some
ordinal $\lambda>0$, the following conditions hold:
\begin{enumerate}
\item \index{coarse base}\index{coarse parameter} (Base, Parameter) $\hmbv=\hat{Y}$ for some transitive $Y$, and $\hmpv\in\J(\hmbv)$; we say
that
$\M$ is \dfnemph{over} the
\dfnemph{\tu{(}coarse\tu{)} base} $\hmbv$
and has \dfnemph{\tu{(}coarse\tu{)} parameter} $\hmpv$.

 \item (Segments) \index{$\hmS$}$\hmS = \left<\hmS_\xi\right>_{\xi<\lambda}$ where $\hmS_0=\hmbv$ and for each
$\xi\in[1,\lambda)$, {\color{black}$\hmS_\xi$ is
a  $\Ll_0$-structure
with $\hmbdot^{\hmS_\xi}=\hmbv$,
 $\hmpdot^{\hmS_\xi}=\hmpv$, and
$\hmSdot^{\hmS_\xi} = \hmS\rest \xi$.
Write $\hmS_\lambda=\M$.}
  \item For each $\xi\in[1,\lambda]$,
  $\univ{\hmS_\xi}$ is transitive, rud closed and
rank closed,
and $\hmS_\xi$ is amenable
(note that this includes in particular $\M=\hmS_\lambda$).\footnote{\color{black}Note that it follows
that $\hmS_\alpha\in\univ{\hmS_\beta}$  and $\univ{\hmS_\alpha}\psub\univ{\hmS_\beta}$ for all $\alpha<\beta\leq\lambda$.}

\item\label{item:limit_union} (Continuity)  $\univ{\hmS_\xi}=\bigcup_{\alpha<\xi}\univ{\hmS_\alpha}$
for each limit $\xi\leq\lambda$.
\item\label{item:extensions} (Extensions) \index{$\hmX$}$\hmX^{\hmS_\xi}\colon\univ{\hmS_\xi}\to\xi$, and $\hmX^{\hmS_\xi}(x)$ is the
least $\alpha$ such that $x\in\univ{\hmS_{\alpha+1}}$.
\end{enumerate}
Let \index{$\wl(\M)$}\index{length}$\wl(\M)$ denote
$\lambda$, the \dfnemph{length} of $\M$.
For $\alpha\leq\lambda$ let \index{$\M\wupto\alpha$}$\M\wupto\alpha=\hmS_\alpha$.
A hierarchical model $\M$ is a \index{successor}\dfnemph{successor} iff $\wl(\M)$ is a successor ordinal
$\xi+1$; in this case let \index{$\M^-$}$\M^-=\M\wupto\xi$. If $\wl(\M)$ is a limit ordinal,
let \index{$\M^-$}$\M^-=\M$. We say that $\N$ is an
\index{initial segment (of hm)}\dfnemph{\tu{(}initial\tu{)} segment} of $\M$,
and write \index{$\ins$}$\N\wins\M$, iff
$\N=\M\wupto\alpha$ for some $\alpha\in[1,\lambda]$, and say that $\N$ is a \index{proper segment (of hm)}\dfnemph{proper
\tu{(}initial\tu{)} segment} of $\M$, and write \index{$\wpins$}$\N\wpins\M$, iff $\N\wins\M$ and $\N\neq\M$.
(Note that $\M\wupto0=\hmbv\nwins\M$.) We write $E^\M=E$, etc.  For any transitive $Y$, let
\index{$\hmb^{\hat{Y}}$}$\hmb^{\hat{Y}}=\hat{Y}$; so $\hmb^{\M\wupto\alpha}=\M\wupto0$ for all $\alpha$.\footnote{\color{black}That is,
we have $\hmb^{\M\wupto\alpha}=b=S_0=\M\wupto 0$
for all $\alpha\in(0,\lambda]$ by definition. But recall that $b=\hat{Y}$ for some transitive $Y$,
so $\hmb^{\M\wupto 0}=\hmb^{b}=\hmb^{\hat{Y}}=\hat{Y}=b=\M\wupto 0$.}
\end{dfn}

{\color{black}The first observation follows easily from the definition:
\begin{lem}\label{lem:ins_of_hm}
Let $\M$ be a hierarchical model and $\N\ins\M$. Then $\N$ is a hierarchical model.
\end{lem}}
\begin{rem}\label{rem:def_over_core_0(M)}
For the most part, definability over hierarchical models $\M$ will literally be computed over $\core_0(\M)$ (to be defined later), which will be an $\Ll_0^+$-structure. But for successors $\M$, we will have $\core_0(\M)=(\M,\hmmudot^{\core_0(\M)},\hmedot^{\core_0(\M)})$ and $\hmmudot^{\core_0(\M)}=\emptyset=\hmedot^{\core_0(\M)}$. So in this case, definability over
$\M$ (using $\Ll_0$) will be equivalent to that over $\core_0(\M)$ (using $\Ll_0^+$).
\end{rem}
\begin{dfn}\label{dfn:adequate}
Let $\M$ be a hierarchical model over $A$.

If $\M$ is a successor,
then for $p\in[\OR^\M]^{<\om}$, we say that $\M$ is \index{solid}\dfnemph{$(1,p)$-solid}
iff for every $\alpha\in p$,
we have\footnote{Clearly this implies that $\Th_{\Sigma_1}^\M(X)\in\M$ also, 
where $X=A\cup\alpha\cup(p\cut(\alpha+1))$. 
	Recall that for standard premice $\M$, when defining the solidity of $p_{1}^\M$, it does not matter whether we demand that the relevant $\Sigma_{1}$-hulls are in $\M$,
    or their corresponding $\Sigma_{1}$-theories are in $\M$; the two requirements are equivalent.
	But this does not seem clear for the structures we consider.
	It is important that we use the stronger condition.}
\[ \Hull_1^{\M}\big(A\cup\alpha\cup(p\cut(\alpha+1))\big)\preccurlyeq_1\M \]
and
\[\cHull_1^{\M}\big(A\cup\alpha\cup(p\cut(\alpha+1))\big)\in\M.\]
Note that it follows that
$\Th_{\Sigma_1}^\M\big(A\cup\alpha\cup(p\cut(\alpha+1))\big)\in\M$, and
note that the $\Sigma_1$-elementarity ensures that the uncollapsed hull is extensional (in fact $\Sigma_0$ suffices),
and hence the transitive collapse $\cHull$ is well-defined.

We say that $\M$ is \index{soundly projecting}\dfnemph{soundly projecting} iff
for every successor $\N\wins\M$, there is $p\in[\OR(\N)]^{<\om}$ such that $\N$ is $(1,p)$-solid
and
\[ \N=\Hull_{\Sigma_{1}}^{\N}(\N^-\un\{\N^-, p\}). \]

We say that $\M$ is \index{acceptable}\dfnemph{acceptable} iff
for every successor $\N\wins\M$,
for every $\tau\in\OR(\N^-)$,
if there is some $X\in\pow(A^{<\om}\cross\tau^{<\om})$ such that $X\in\N\cut
\N^-$ then in $\N$ there is a map $A^{<\om}\cross\tau^{<\om}\onto\N^-$.

We say that $\M$ is an \index{adequate}\dfnemph{adequate model} iff $\M$ an acceptable hierarchical model
and every \emph{proper} segment of $\M$ is soundly projecting.

An \index{model-plus}\dfnemph{adequate model-plus} is an $\Ll_0^+$-structure $\M$ such that the $\Ll_0$-reduct of $\M$ is an
adequate model.
\end{dfn}

{\color{black}In the end we will be primarily interested in structures for which every initial segment is soundly projecting, not just the proper segments. For certain kinds of operators
$\Fop$, such as the usual operators used to encode an iteration strategy in a hybrid mouse or strategy mouse, the successor structures $\N$ produced (as $\Fop$-premice) 
will in fact have the stronger property
that $\N=\Hull_{\Sigma_1}^\N(\N^-\cup\{\N^-\})$.
Of course, this is the case when $\Fop$ is the usual $\J$-operator.
For mouse operators $\Fop$, $\N=\Fop(\N^-)$
will be equivalent to a sound mouse over $\N^-$ which projects to $\N^-$.  By coding that mouse via its $n$th reduct for the relevant $n$ (with $\rho_{n+1}^\N\leq\OR^{\N^-}<\rho_n^{\N^-}$),
we will get a structure which is soundly projecting, with the $p$ of the definition being $p_{n+1}^\N$.
}

{\color{black}As in \cite{FSIT}, etc, it is useful to consider
what can be expressed with \emph{Q-formulas} and variants thereof,
as they are preserved well downward under $\Sigma_1$-elementary maps, and upward under ultrapower embeddings:}

\begin{dfn}\label{dfn:Ll_0-Q-formula} Given a language $\Ll$ extending the language of set
theory, an \index{Q-formula}\dfnemph{$\Ll$-simple-Q-formula} is a formula of the form
\[ \varphi(v_0,\ldots,v_{n-1})\ \iff\ \all x\ex y[x\sub y\ \&\ \psi(y,v_0,\ldots,v_{n-1})], \]
for some $\Sigma_1$ formula $\psi$ of $\Ll$. (Here all free variables are displayed; hence, $x$ is
not free in $\psi$.)

Let $\varphi_{\mathrm{pair}}$ be the Pairing Axiom.
\end{dfn}

It is easy to see that neither $\varphi_{\mathrm{pair}}$, nor rud closure, can be expressed, modulo
transitivity, by a simple-Q-formula.\footnote{If $\Ll$ is a first-order language extending the
language of
set theory, and $X,Y$ are rud closed transitive $\Ll$-structures such that $c^X=c^Y$ for each
constant symbol $c\in\Ll$,
and $P^X=P^Y$ for each predicate symbol $P\in\Ll$ with
$P\neq\dot{\in}$, then any $\Ll_0$-Q-formula true in both
$X,Y$ is also true in the ``union'' of $X,Y$.} However:

\begin{lem}\label{lem:adequate model_almost_Q}
There is an $\Ll_0$-simple-Q-formula $\varphi_{\mathrm{am}}$ such that for all transitive
$\Ll_0$-structures
$\M$, $\M$ is an adequate model iff $\M\sats[\varphi_{\mathrm{pair}}\ \&\
\varphi_{\mathrm{am}}]$.
\end{lem}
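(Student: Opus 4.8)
The plan is to unwind the definition of \emph{adequate model} (Definition~\ref{dfn:adequate}) clause by clause and check that each clause is, modulo transitivity and Pairing, expressible by an $\Ll_0$-simple-Q-formula; then $\varphi_{\mathrm{am}}$ is obtained by conjoining these (the class of simple-Q-formulas is easily closed under conjunction, since a conjunction of formulas of the form $\all x\ex y[x\sub y\ \&\ \psi]$ is equivalent to one more formula of that form, pushing the existential quantifiers together and forming $y$ as a union). Pairing itself is handled by the explicit conjunct $\varphi_{\mathrm{pair}}$, so the remaining clauses may freely use Pairing (and, once we also have rud closure, all of the basic closure needed to run the recursive definitions below). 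So the real work is: (i) rud closure and rank closure; (ii) the structural clauses defining \emph{hierarchical model} --- that $\hmbv$ is $\hat Y$ for a transitive $Y$ with $\hmpv\in\J(\hmbv)$, that $\hmS$ is a $\lambda$-sequence with $\hmS_0=\hmbv$, each $\hmS_\xi$ again a hierarchical model over $\hmbv$ with parameter $\hmpv$ and with $\hmSdot^{\hmS_\xi}=\hmS\rest\xi$, the continuity clause at limit $\lambda$, and the defining property of $\hmX$; (iii) amenability of $\M$; (iv) acceptability; and (v) that every \emph{proper} segment is soundly projecting.

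\textbf{Key steps.} First I would recall that rud closure is $\Pi_2$ over transitive structures, hence (modulo transitivity) a simple-Q-formula: $\all x\ex y[x\sub y\ \&\ \text{``}y\text{ is closed under the finitely many rud generators applied to tuples from }x\text{''}]$, the bracketed part being $\Sigma_0$, so $\Sigma_1$. Rank closure is similar: $\all x\ex y[x\sub y\ \&\ (\all Y\in x)(\hat Y\in y\ \&\ \hat Y^{<\om}\in y)]$, where ``$z=\hat Y$'' is $\Sigma_1$ (it asserts the existence of the rank function $\varrho_Y$ and that $z=\trancl(\{(Y,\om,\varrho_Y)\})$; transitive closure and $\om$ are $\Sigma_1$ absolute). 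Next, the hierarchical-model clauses are mostly local statements about the predicates $\hmS,\hmX,\hmbv,\hmpv$: ``$\hmbv=\hat Y$ for transitive $Y$'' and ``$\hmpv\in\J(\hmbv)$'' are $\Sigma_1$; ``$\hmS$ is a function with domain some ordinal $\lambda$'', ``$\hmS_0=\hmbv$'', ``for each $\xi$, $\hmSdot^{\hmS_\xi}=\hmS\rest\xi$'', ``$\hmX\colon\univ\M\to\lambda$ and $\hmX(x)$ is least with $x\in\hmS_{\hmX(x)+1}$'' are all $\Sigma_0$ in the given predicates. The one genuinely recursive clause is ``each $\hmS_\xi$ is itself a hierarchical model'': but this is self-referential in exactly the way Q-formulas are designed to handle --- being a hierarchical model over a fixed base with a fixed parameter is, once we have rud+rank closure and the local clauses, determined, and the whole of ``$\M$ is a hierarchical model'' can be written as a single $\Pi_2$ assertion quantifying over all $\N\wins\M$ (there are set-many, all in $\M$ by rank closure applied along the $\hmS$-sequence, or rather they sit inside the $\hmS$-sequence itself) of the local clauses together with ``$\hmSdot^\N$ agrees with the restriction of $\hmS$''. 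The continuity clause ``if $\lambda=\sup$ then $\univ\M=\bigcup_{\alpha<\lambda}\univ{\hmS_\alpha}$'' is $\Pi_2$: $\all x\ex\alpha[x\in\univ{\hmS_\alpha}]$ when $\lambda$ is a limit, vacuous otherwise. Amenability --- that $\hmE\cap x$, $\hmP\cap x$, etc.\ are in $\M$ for every $x\in\M$ --- is literally of the form $\all x\ex y[x\sub y\ \&\ y\ni\hmEdot\cap x,\ \hmPdot\cap x,\dots]$, a simple-Q-formula. Then acceptability: this is the clause $\all\N\wins\M$ ($\N$ a successor) $\all\tau\in\OR(\N^-)$ $[(\ex X\in\N\cut\N^-)(X\sub A^{<\om}\times\tau^{<\om})\ \to\ (\ex f\in\N)(f\colon A^{<\om}\times\tau^{<\om}\onto\univ{\N^-})]$; each of the inner statements is bounded once $\N,\tau$ are fixed, and $\N$ ranges over segments which, again, are recoverable from $\hmS$, so this is $\Pi_2$. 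Finally ``every proper segment is soundly projecting'': for each proper successor $\N\wins\M$ there should be $p\in\OR(\N)^{<\om}$ with $\N$ $(1,p)$-solid and $\N=\Hull_{\Sigma_1}^\N(\N^-\un\{\N^-,p\})$ --- the hull equation is a $\Pi_2$-over-$\N$ statement ($\all x\in\N\ \ex$ a $\Sigma_1$-definition of $x$ from parameters in $\N^-\cup\{\N^-,p\}$), and $(1,p)$-solidity asks that certain $\Sigma_1$-theories lie in $\N$, again $\Pi_2$; the outer ``$\ex p$'' and ``$\all$ proper successor $\N$'' keep us at the $\Pi_2$ level overall, hence (modulo transitivity and Pairing) simple-Q. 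Conjoining all the pieces and collapsing the conjunction to a single simple-Q-formula gives $\varphi_{\mathrm{am}}$, and Pairing is supplied by $\varphi_{\mathrm{pair}}$.

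\textbf{Main obstacle.} The subtle point --- and the step I expect to require the most care --- is verifying that the \emph{recursive} clauses (``each $\hmS_\xi$ is a hierarchical model over $\hmbv$'' and ``every proper segment is soundly projecting'') really do come out $\Pi_2$ rather than genuinely self-referential at a higher level. The key observation that makes this work is that in a hierarchical model the sequence of segments is \emph{given explicitly} by the predicate $\hmS$ (and $\hmSdot^{\hmS_\xi}=\hmS\rest\xi$), so ``$\N$ is a proper segment of $\M$'' is just ``$\N=\hmS_\alpha$ for some $\alpha<\lambda$'' --- a bounded quantifier over elements of (the transitive closure of) $\M$ --- and the properties demanded of each $\N$ (acceptability-at-$\N$, sound-projection-at-$\N$, the local hierarchical-model clauses for $\N$) are themselves $\Pi_2$ in parameters available inside $\M$. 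Thus there is no true recursion to unwind: the "hierarchical model" requirement on the $\hmS_\xi$'s is \emph{already witnessed} by the data coded into $\M$, and we need only a single universally quantified check. Two smaller technical points to be careful about: (a) in the transition ``modulo transitivity'' we must be sure that the $\Sigma_1$ facts we use (``$z=\hat Y$'', ``$z=\trancl(X)$'', ``$z=\J(w)$'', ``$f$ is the rank function of $Y$'') are upward absolute to any transitive structure containing the relevant witnesses, which they are; and (b) in assembling the final conjunction into simple-Q form, when we merge several ``$\all x\ex y[x\sub y\ \&\ \psi_i(y)]$'' conjuncts we take $y$ to simultaneously witness all the $\psi_i$, using Pairing/union (now available) to form it --- here it matters that the $\psi_i$ are $\Sigma_1$ and monotone enough that a common witness exists, which holds because each asserts closure of $y$ under finitely many operations or the presence in $y$ of finitely many further sets.
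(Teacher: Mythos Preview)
Your proposal is correct and follows essentially the same approach as the paper, which omits the calculation as ``routine'' and gives only the hint about expressing rud closure (modulo Pairing) via the finite basis for rud functions. One minor technical slip worth flagging: in your rud-closure clause you write ``$y$ is closed under the rud generators applied to tuples from $x$'', but by Definition~\ref{dfn:Ll_0-Q-formula} the variable $x$ is not permitted to occur in $\psi$; the standard fix is to let $\psi(y)$ assert that $y$ itself is closed under the generators (a $\Delta_0$ condition independent of $x$), and then use Pairing on the outside to feed arbitrary $a,b\in\M$ through the universal quantifier over $x$---this is exactly why the paper isolates $\varphi_{\mathrm{pair}}$ as a separate conjunct, having observed just before the lemma that rud closure by itself is \emph{not} expressible as a simple-Q-formula.
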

\begin{proof}[Proof Sketch]
 This is a routine calculation, which we omit. (First find an $\Ll_0$-Q-formula
$\varphi_{\mathrm{rud}}$ such that $\varphi_{\mathrm{pair}}\wedge \varphi_{\mathrm{rud}}$
expresses rud closure;
this uses the the finite basis for rud functions.)
\end{proof}

If $\M$ is an adequate model over $A$ and $\xi<\wl(\M)$ then $\M$ has a map
\[ A^{<\om}\cross\xi^{<\om}\onto\M\wupto\xi.\]
In fact, by the following lemma, this is true uniformly.

\begin{lem}\label{lem:canonical_surjection}
There is a $\Sigma_1$ formula $\psi$ of $\Ll_0^-$, of two free variables,
such that for all $A$
and adequate models $\M$ over $A$,
$\psi$ defines a map $F:\wl(\M)\to\M$, and for $\xi<\wl(\M)$, letting $h_\xi=F(\xi)$,
we have
\[ h_{\xi}:A^{<\om}\cross\xi^{<\om}\onto\M\wupto\xi\]
and for all
$\alpha\leq\xi$, we have $h_\alpha\sub
h_\xi$.
\end{lem}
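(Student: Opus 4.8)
The plan is to build the maps $h_\xi = F(\xi)$ by recursion on $\xi<\wl(\M)$, exactly as one builds the canonical surjection onto $L_\xi$, and then to observe that a single $\Sigma_1$ formula $\varphi$ of $\Ll_0^-$ computes the recursion uniformly. The feature that makes this possible -- playing the role that rud closure plays at successor stages in the $L$-case -- is that $\M$ carries the predicate $\hmSdot$: each segment $\M\wupto\zeta$ with $\zeta<\wl(\M)$ is an element of $\M$ and is recovered from $\zeta$ by a $\Delta_0$ formula of $\Ll_0^-$, so that $\Sigma_1$-satisfaction inside $\M\wupto\zeta$ -- hence the canonical $\Sigma_1$-Skolem function $h^{\M\wupto\zeta}$, and predicates like ``$\M\wupto\zeta$ is $(1,p')$-solid'' or ``$\M\wupto\zeta$ is the $\Sigma_1$-hull of $Y$'' -- becomes $\Delta_0$ over $\M$ with $\M\wupto\zeta$ as a parameter, since there every quantifier can be bounded by $\M\wupto\zeta$. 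For $\xi=0$ we take the $\Delta_0$-definable surjection $A^{<\om}\times\{\emptyset\}=A^{<\om}\times 0^{<\om}\onto A=\M\wupto0$ that sends $(s,\emptyset)$ to $s(0)$ when $\lh(s)\geq1$ and $s(0)\in A$, and to $\emptyset$ otherwise; it lies in $\M$ by rank and rud closure. For $\xi<\wl(\M)$ a limit we put $h_\xi=\bigcup_{\alpha<\xi}h_\alpha$; this has domain $A^{<\om}\times\xi^{<\om}$ because $\xi^{<\om}=\bigcup_{\alpha<\xi}\alpha^{<\om}$, and it is onto $\M\wupto\xi$ by the continuity requirement of Definition~\ref{model} applied to the hierarchical model $\M\wupto\xi$; the coherence $h_\alpha\subseteq h_\xi$ is immediate.

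At a successor $\xi=\eta+1<\wl(\M)$ the segment $\N\eqdef\M\wupto\xi$ is a proper segment of $\M$, hence soundly projecting, and it is a successor with $\N^-=\M\wupto\eta$; so there is $p\in[\OR(\N)]^{<\om}$ with $\N$ being $(1,p)$-solid and $\N=\Hull_{\Sigma_1}^{\N}(\N^-\cup\{\N^-,p\})$, and we let $p=p(\N)$ be the $\lex$-least such $p$. Since $\N$ is that hull, $\N=\{h^{\N}(n,\vec x):n<\om,\ \vec x\in(\N^-\cup\{\N^-,p\})^{<\om}\}$, where $h^{\N}$ is the canonical $\Sigma_1$-Skolem function. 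Now define $h_\xi\supseteq h_\eta$ on $A^{<\om}\times(\eta+1)^{<\om}$: if $t$ does not contain the value $\eta$ -- so $(s,t)\in\dom(h_\eta)$ -- put $h_\xi(s,t)=h_\eta(s,t)$; if $t=t'\conc\seq{\eta}$ with $t'$ not containing $\eta$, then (using that $A\supseteq\om$ is infinite, via a fixed $\Delta_0$ coding of finite sequences) decode from $(s,t)$ a formula index $n<\om$, a ``pattern'' recording which argument slots are to be $\N^-$ and which are to be $p$, and a tuple $\seq{(s_j,t_j)}_j$ from $A^{<\om}\times\eta^{<\om}$, build the tuple $\vec x$ over $\N^-\cup\{\N^-,p\}$ whose remaining slots carry the values $h_\eta(s_j,t_j)\in\N^-$, and put $h_\xi(s,t)=h^{\N}(n,\vec x)$ if this is defined and $h_\xi(s,t)=A$ otherwise; for all remaining $(s,t)$ put $h_\xi(s,t)=A$. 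Then $h_\eta\subseteq h_\xi$ by construction, and since $h_\eta$ is onto $\N^-$ and $\N$ is the displayed hull, $h_\xi$ is onto $\N=\M\wupto\xi$ (the ordinals below $\OR(\N)$ included, as they too lie in that hull); and there are enough pairs $(s,t)$ of the middle kind, since sound projection gives $\lvert\M\wupto\xi\rvert\leq\lvert A\rvert+\lvert\xi\rvert+\aleph_0=\lvert A^{<\om}\times\xi^{<\om}\rvert$.

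It remains to see that a single $\Sigma_1$ formula $\varphi(\xi,h)$ of $\Ll_0^-$ does this uniformly. Take $\varphi(\xi,h)$ to say: there is a function $g$ with $\dom(g)=\xi+1$ such that $g(0)$ is the base map, $g(\beta)=\bigcup_{\alpha<\beta}g(\alpha)$ for every limit $\beta\leq\xi$, each $g(\alpha+1)$ is obtained from $g(\alpha)$ and $\M\wupto(\alpha+1)$ by the successor clause above (using the $\lex$-least solid projecting parameter of $\M\wupto(\alpha+1)$), and $h=g(\xi)$. By the observation of the first paragraph the matrix of this is $\Delta_0$ -- the segments $\M\wupto\alpha$ are $\Delta_0$ terms in $\xi$; the Skolem functions, the solidity of a candidate parameter, and the hull condition are $\Delta_0$ with the relevant segment as a parameter; and every quantifier involved ranges over $g$, over a segment, over $\om$, or over a set such as $(\M\wupto\alpha\cup\{\M\wupto\alpha,p'\})^{<\om}\in\M$, while the outer quantifier on $\alpha$ is bounded by $\xi$ -- so $\varphi$ is $\Sigma_1$. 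The recursion never gets stuck (sound projection at the successor steps, continuity at the limits), so $g$ of the right length exists; a routine induction, using amenability and rud and rank closure of $\M$ and that proper segments of $\M$ are elements of $\M$, shows $h_\xi=F(\xi)\in\M$; and $\varphi$ then defines a total, single-valued $F:\wl(\M)\to\M$ with the asserted surjection and coherence. The only step that requires genuine care, and which I expect to be the main obstacle, is the successor clause: arranging the bookkeeping for the parameter $p$ (and for $h^{\N}$) so that it all stays at the $\Delta_0$ level over $\M$, keeping $\varphi$ within $\Sigma_1$. This is precisely where the hypothesis that every proper segment of $\M$ is soundly projecting is used.
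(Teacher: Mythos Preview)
Your argument is correct in outline and matches what the paper intends: the paper's own proof is simply the remark that it is ``routine, using the sound-projection of proper segments of $\M$, much like in the proof of the corresponding fact for $L$'', and you have carried out exactly that recursion.

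One point needs a little more care than you indicate. You assert that ``$\Sigma_1$-satisfaction inside $\M\wupto\zeta$ \ldots\ becomes $\Delta_0$ over $\M$ with $\M\wupto\zeta$ as a parameter, since there every quantifier can be bounded by $\M\wupto\zeta$''. For a \emph{fixed} $\Sigma_1$ formula this is true (relativize its quantifiers to $\univ{\N}$), but your successor clause uses $h^{\N}(n,\vec x)$ with $n$ a variable, and the uniform relation ``$\N\sats\psi_n[\vec a]$'' is only $\Delta_1$, not $\Delta_0$, over rud closed sets. As written, the matrix of your $\varphi$ is then a bounded universal over a $\Sigma_1$ condition, which need not be $\Sigma_1$ absent collection. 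The standard repair is to bundle the satisfaction witnesses into the existentially quantified data: for each successor $\alpha\le\xi$, also record the finite Tarski recursion sequence $\langle T_0,\ldots,T_k\rangle$ (one level of formula complexity at a time) computing the relevant satisfaction over $\N=\M\wupto\alpha$. Each $T_j$ is rud in $\N$ and $\N^{<\om}$ (the latter in $\M$ by rank closure), hence lies in $\M\wupto(\alpha+1)\sub\M$; so the enlarged witness is still an element of $\M$, and with it in hand the matrix really is $\Delta_0$. This is precisely the bookkeeping you flagged as ``the main obstacle'', and once it is handled your argument goes through.
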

{\color{black}\begin{proof}\label{c11}
The proof is
quite routine, using the sound-projection of proper segments of $\M$, much like in the proof
of the
corresponding fact for $L$. At the referee's request, we provide details. We will define  $h_\xi$ for $\xi<\wl(\M)$, by recursion on $\xi$. We leave it to the reader to see that the definitions are sufficiently uniform and local that one can write down a $\Sigma_1$ formula $\psi$ witnessing the lemma.

Recall that $\M|0=A$. Note $0^{<\om}=\{\emptyset\}$. We define $h_0: A^{<\om}\cross 0^{<\om}\onto A$ by setting $h_0(\vec{x},\emptyset)=\vec{x}(0)$, in case $\lh(\vec{x})>0$,
and $h_0(\emptyset,\emptyset)=\emptyset$.

Given a soundly projecting successor $\N$, let $g^{\N}$
be the least $g\in[\OR(\N)]^{<\om}$
such that $\N=\Hull_1^{\N}(\N^-\cup\{\N^-,g\})$.

Now suppose $1<\wl(\M)$.
We define $h_1:A^{<\om}\cross 1^{<\om}\onto\M|1$.
Set $h_1(\vec{x},\emptyset)=h_0(\vec{x},\emptyset)$,
so $h_0\sub h_1$.
For $k<\om$,
let $\vec{0}_k$ denote the sequence $(0,\ldots,0)$ of $0$'s of length $k$. So $1^{<\om}=\{\vec{0}_k\bigm|k<\om\}$.
Let us now define $h_1(\vec{x},\vec{0}_{n+1})$ for $n<\om$.
Let $\varphi$ be a  $\Sigma_1$ formula
with free variables
exactly $v_0,\ldots,v_{k+2}$, where $k<\om$. Let $n$ be the G\"odel number of $\varphi$,
and $\vec{x}\in A^k$.
If there is a unique $y\in\M|1$
such that $\M\sats\varphi(\vec{x},A,g^{\M|1},y)$, then
 we define $h_1(\vec{x},\vec{0}_{n+1})=$ that unique $y$;
 otherwise define $h_1(\vec{x},\vec{0}_{n+1})=\emptyset$.
 By the definition of $g^{\M|1}$
 and since $A=(\M|1)^-$,
 $h_1:A^{<\om}\cross 1^{<\om}\onto\M|1$,
 and since $h_1$ is definable (in fact without parameters) over $\M|1$, we have $h_1\in\M$.

 Now suppose  $0<\gamma<\gamma+1<\wl(\M)$
 and we have defined $h_\gamma:A^{<\om}\cross \gamma^{<\om}\onto\M|\gamma$.
 We define $h_{\gamma+1}:A^{<\om}\cross (\gamma+1)^{<\om}\onto\M|(\gamma+1)$.
We start by setting $h_\gamma\sub h_{\gamma+1}$. It remains to define $h(\vec{x},\vec{\alpha})$ in case $\vec{\alpha}\in(\gamma+1)^{<\om}\cut \gamma^{<\om}$. Here we will use $(\vec{x},\vec{\alpha})$ to determine some formula $\varphi$ (with G\"odel code $n$)
and some elements $y_0,\ldots,y_{m-1}\in\M|\gamma$,
and use these data, along with the parameters $\M|\gamma$ and $g^{\M|(\gamma+1)}$, to attempt to define
some $y\in\M|(\gamma+1)$;
if this attempt is successful,
we will set $h_{\gamma+1}(\vec{x},\vec{\alpha})=y$. Here we use $y_i=h_\gamma(\vec{x}_i,\vec{\ell}_i)$, where $\vec{x}_i$ is a certain substring of $\vec{x}$, and $\vec{\ell}_i$ a certain substring of $\vec{\alpha}$, determined as follows.
Suppose $\vec{\alpha}$
has form
\begin{equation}\label{eqn:code_seq} (\gamma,\vec{0}_{n},1,\vec{0}_{j_0},1,\vec{0}_{k_0},1,\vec{\ell}_0,\vec{0}_{j_1},1,\vec{0}_{k_1},1,\vec{\ell}_1,\ldots,\vec{0}_{j_{m-1}},1,\vec{0}_{k_{m-1}},1,\vec{\ell}_{m-1}) \end{equation}
where $\vec{\ell}_{i}$ has length
$\lh(\vec{\ell}_i)=k_i$ for each $i<m$.\footnote{The sequence in line (\ref{eqn:code_seq}) has length $1+n+1+(j_0+1+k_0+1+\lh(\vec{\ell}_0))+\ldots+(j_{m-1}+1+k_{m-1}+1+\lh(\vec{\ell}_{m-1}))$.
The first entry is $\gamma$, followed by $n$-many $0$s, one $1$,  $j_0$-many $0$s,  one $1$,  $k_0$-many $0$s,  one $1$, then $\lh(\vec{\ell}_0)$-many ordinals $\leq\gamma$, etc.} Note that  any such sequence is uniquely readable, in that the form above is uniquely determined by the sequence. Suppose that $\vec{\ell}_i\in \gamma^{<\om}$ for each $i<m$. Let $\varphi$ be the $\Sigma_1$ formula with G\"odel code $n$. Suppose that the free variables of $\varphi$ are exactly $v_0,\ldots,v_{m+2}$. Let $\vec{x}\in A^{<\om}$
have length $j_0+j_1+\ldots+j_{m-1}$, and write
\[ \vec{x}=\vec{x}_0\conc\ldots\conc\vec{x}_{m-1} \]
where $\lh(\vec{x}_i)=j_i$.
Let $y_i=h_\gamma(\vec{x}_i,\vec{\ell}_i)$, so $y_i\in\M|\gamma$. If there is a unique $y\in\M|(\gamma+1)$
such that
\[ \M|(\gamma+1)\sats\varphi(y_0,\ldots,y_{m-1},\M|\gamma,g^{\M|(\gamma+1)},y),\]
then define $h_{\gamma+1}(\vec{x},\vec{\alpha})=y$,
and otherwise define $h_{\gamma+1}(\vec{x},\vec{\alpha})=\emptyset$.
For all other $(\vec{x},\vec{\alpha})$, define $h_{\gamma+1}(\vec{x},\vec{\alpha})=\emptyset$.
Then $h_{\gamma+1}$ is surjective and definable over $\M|(\gamma+1)$, so $h_{\gamma+1}\in\M$.

Given $h_\alpha$
for all $\alpha<\xi$ where $\xi<\wl(\M)$ is a limit, (we must)
set $h_\xi=\bigcup_{\alpha<\xi}h_\alpha$. By the uniformity of the definitions, $h_\xi$ is $\Sigma_1^{\M|\xi}$,
so $h_\xi\in\M$.
\end{proof}}
\begin{dfn}\label{dfn:h^M}
 Given an adequate model $\M$ over $A$ and $\xi<\wl(\M)$, let
\index{$h^\M$} $h^\M_\xi$ be the function $h_\xi$ of the preceding lemma.
 Let $h^\M=\bigcup_{\xi<\wl(\M)}h^\M_\xi$.
\end{dfn}

\begin{rem}\label{rem:h^M} So $h^\M$ is $\Sigma_1^{\M}$ via a formula in $\Ll_0^-$, uniformly in adequate $\M$,
and
\[ h^\M:A^{<\om}\cross\wl(\M^-)^{<\om}\onto\M^- \]
(recall that if $\M$ is a limit then $\M^-=\M$), and if $\M$ is a successor then
$h^\M\in\M$.
\end{rem}

{\color{black}We now want to analyse somewhat the cardinal structure
of adequate models. This will be useful when we come to defining \emph{potential operator-premice}, in particular regarding the properties of extenders on their sequence.}
\begin{dfn}\label{dfn:cardinals}
 Let $\M$ be an adequate model over $A$ and $\lambda=\wl(\M)$.
Let $\rho<\OR(\M)$. Then $\rho$ is an \index{$A$-cardinal}\dfnemph{$A$-cardinal} of $\M$ iff $\M$
has no map $A^{<\om}\cross\gamma^{<\om}\onto\rho$ where $\gamma<\rho$.
We let \index{$\Theta^\M$}$\Theta^\M$ denote the least $A$-cardinal of $\M$, if such exists.
We say that $\rho$ is \index{$A$-regular}\dfnemph{$A$-regular} in $\M$ iff $\M$ has no map
$A^{<\om}\cross\gamma^{<\om}\cofto\rho$ where $\gamma<\rho$.
We say that $\rho$ is an \index{ordinal-cardinal}\dfnemph{ordinal-cardinal} of $\M$ iff $\M$ has
no map $\gamma^{<\om}\onto\rho$ where $\gamma<\rho$.
We say that $\rho$ is \index{relevant}\dfnemph{relevant} iff $\rho\leq\OR(\M^-)$.
\end{dfn}

\begin{lem}
 Let $\M$ be an adequate model over $A$ and $\lambda=\wl(\M)>\xi>0$.
 Let $\kappa$ be an $A$-cardinal of $\M$ such that $\kappa\leq\OR(\M\wupto\xi)$.
 Then $\rank(A)<\kappa\leq\xi$ and $\kappa=\OR(\M\wupto\kappa)$.
\end{lem}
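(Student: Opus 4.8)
The plan is to unwind the definitions and use the canonical surjections $h^\M_\xi$ from Lemma~\ref{lem:canonical_surjection}. Fix an adequate model $\M$ over $A$, set $\lambda = \wl(\M)$, fix $\xi$ with $0 < \xi < \lambda$, and let $\kappa$ be an $A$-cardinal of $\M$ with $\kappa \le \OR(\M\wupto\xi)$. Recall $\M\wupto0 = \hmbv = \hat{Y}$ for some transitive $Y$, so $\rank(A) = \rank(\hat Y) = \OR(\M\wupto0)$ sits strictly below $\OR(\M\wupto\eta)$ for every $\eta \ge 1$. The three things to prove are: $\rank(A) < \kappa$; $\kappa \le \xi$; and $\kappa = \OR(\M\wupto\kappa)$.

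**The inequality $\rank(A) < \kappa$.**
Since $\kappa$ is an $A$-cardinal, in particular $\M$ has no surjection $A^{<\om}\cross\gamma^{<\om}\onto\kappa$ for $\gamma<\kappa$. If $\kappa \le \rank(A)$, then $\kappa \sub \rank(A)$, and since $\M$ is rud closed and rank closed we have $\varrho_A \in \M$ (it is coded into $\hat Y = \hmbv$), so $\M$ can build a surjection of $A^{<\om}$ (indeed of $A$ itself, via $\varrho_A$) onto $\kappa$, using $\gamma = 1 < \kappa$ (assuming $\kappa > 1$; the cases $\kappa \le 1$ are trivial or vacuous since then $\kappa$ is not a nonzero ordinal above which anything interesting happens — more carefully, $\kappa$ being an $A$-cardinal forces $\kappa > \rank(A) \ge \om$ outright once we observe $A$ is infinite). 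This contradicts $\kappa$ being an $A$-cardinal. Hence $\rank(A) < \kappa$.

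**The inequality $\kappa \le \xi$ and the identity $\kappa = \OR(\M\wupto\kappa)$.**
For $\kappa \le \xi$: by Remark~\ref{rem:h^M} / Lemma~\ref{lem:canonical_surjection}, for each $\eta$ with $\rank(A) < \eta \le \xi$ (say $\eta$ a successor $\le \xi$, so that $h^\M_\eta \in \M$) we have $h^\M_\eta : A^{<\om}\cross\eta^{<\om}\onto\M\wupto\eta$, and this map lies in $\M$. If $\xi < \kappa$, pick $\eta = \xi+1 \le \lambda$ — wait, we need $\eta < \lambda$; instead work inside $\M\wupto\xi$ or note $\M\wupto(\xi{+}1) \wins \M$ is a proper segment hence soundly projecting, so apply the lemma to that segment. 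In any case we obtain a surjection $A^{<\om}\cross\xi^{<\om}\onto\M\wupto\xi$ in $\M$, and since $\kappa \le \OR(\M\wupto\xi) \sub \M\wupto\xi$, composing with the inclusion gives a surjection $A^{<\om}\cross\xi^{<\om}\onto\kappa$ in $\M$ with $\xi < \kappa$ — contradicting that $\kappa$ is an $A$-cardinal. Hence $\kappa \le \xi$. Finally, for $\kappa = \OR(\M\wupto\kappa)$: one direction, $\OR(\M\wupto\kappa) \le \kappa$, is immediate because the $\OR$-function along the $\wl$-hierarchy is strictly increasing and continuous, so $\OR(\M\wupto\eta) \le \eta$ whenever... actually the standard bound is $\OR(\M\wupto\eta) \ge \eta$ with equality at cardinals; one shows $\OR(\M\wupto\kappa) \ge \kappa$ always (each successor step adds at least one ordinal, and limits are unions), and $\OR(\M\wupto\kappa) \le \kappa$ because if $\OR(\M\wupto\kappa) > \kappa$ then some proper successor segment $\N \wpins \M\wupto\kappa$ has $\OR(\N^-) < \kappa \le \OR(\N)$ and $\N = \Hull_{\Sigma_1}^\N(\N^-\un\{\N^-,p\})$, which gives a surjection $A^{<\om}\cross\gamma^{<\om}\onto\kappa$ in $\M$ for some $\gamma < \kappa$ (using $h^\M$ on $\N^-$ to reduce to $\gamma$-many parameters), contradicting that $\kappa$ is an $A$-cardinal.

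**Main obstacle.**
The routine bookkeeping is harmless; the one genuine point requiring care is the last step — showing $\OR(\M\wupto\kappa) \le \kappa$, i.e.\ that an $A$-cardinal cannot sit strictly below the top ordinal of the segment it first reaches. This is where acceptability and sound projection of proper segments are essential: one must argue that if $\kappa < \OR(\M\wupto\kappa)$ then the sound projection of the relevant successor segment, together with the canonical surjection $h^\M$ collapsing $\N^-$ onto $A^{<\om}\cross\OR(\N^-)^{<\om}$, manufactures a surjection witnessing that $\kappa$ is \emph{not} an $A$-cardinal. This is precisely the $A$-relative analogue of the proof that $\om_1^L$, or more generally any $L$-cardinal $\kappa$, satisfies $\kappa = \OR(L_\kappa)$, and it goes through verbatim once the hierarchy-analogues (Lemma~\ref{lem:canonical_surjection} and Remark~\ref{rem:h^M}) are in hand — which is exactly why the text remarks that the proof is "routine, ... much like in the proof of the corresponding fact for $L$."
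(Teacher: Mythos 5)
Your proof is correct and matches the approach the paper indicates (using the canonical $\Sigma_1$-surjections $h_\xi$ of Lemma~\ref{lem:canonical_surjection} together with sound projection of proper segments, exactly as one would for $L$). The one small slip is the claim ``$\N\wpins\M\wupto\kappa$'' in the final step: when $\kappa$ is a successor the relevant segment is $\N=\M\wupto\kappa$ itself, but since $\kappa\le\xi<\lambda$ we still have $\N\wpins\M$, which is all that sound projection of proper segments requires, so the argument goes through.
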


\begin{lem}\label{lem:H_Theta^M_etc}
 There is a $\Sigma_1$ formula $\varphi$ in $\Ll_0^-$ such that,
 for any $A$ and adequate model $\M$ over $A$, if $\Theta=\Theta^\M$ exists and is relevant then:
 \begin{enumerate}
  \item\label{item:power(a)_unbounded} $\Theta$ is the least
$\alpha$ such that $\pow(A^{<\om})^\M\sub\M\wupto\alpha$.
  \item\label{item:M_Theta_is_H_Theta} $\univ{\M\wupto\Theta}$ is the set of all $x\in\M$ such that
$\trancl(x)$ is the surjective image of $A^{<\om}$ in $\M$.
  \item\label{item:uniform_surjections} Over $\M\wupto\Theta$, $\varphi(0,\cdot,\cdot)$ defines
a function $G:\Theta\to\M\wupto\Theta$ such that for all $\alpha<\Theta$,
we have $G(\alpha):A^{<\om}\onto\M\wupto\alpha$.
 \item\label{item:Theta_A-regular} $\Theta$ is $A$-regular in $\M$.
\end{enumerate}

 Let $\kappa_0<\kappa_1$ be consecutive relevant $A$-cardinals of $\M$. Then:
 \begin{enumerate}[resume*]
\item $\kappa_1$ is the least $\alpha$ such that
$\pow(A^{<\om}\cross\kappa_0^{<\om})^\M\sub\M\wupto\alpha$.
\item $\univ{\M\wupto\kappa_1}$ is the set of all $x\in\M$ such that $\trancl(x)$ is the surjective
image of $A^{<\om}\cross\kappa_0^{<\om}$ in $\M$.
\item Over $\M\wupto\kappa_1$,
$\varphi(\kappa_0,\cdot,\cdot)$ defines a map $G:\kappa_1\to\M\wupto\kappa_1$ such that for all
$\alpha<\kappa_1$,
we have $G(\alpha):A^{<\om}\cross\kappa_0^{<\om}\onto\M\wupto\alpha$.
\item $\kappa_1$ is $A$-regular in $\M$.
\end{enumerate}
\end{lem}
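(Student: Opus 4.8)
The plan is to prove (1)--(4), then obtain (5)--(8) by the same argument with $A^{<\om}$ replaced throughout by $A^{<\om}\cross\kappa_0^{<\om}$ and ``$\Theta$, the least $A$-cardinal of $\M$'' replaced by ``$\kappa_1$, the least $A$-cardinal of $\M$ above $\kappa_0$'' (here the hypothesis that $\kappa_0,\kappa_1$ are \emph{consecutive} relevant $A$-cardinals plays the role minimality of $\Theta$ plays below). Since none of the formulas below mentions $A$ --- the base is recovered from $\M$ as $\hmbdot^\M$ --- uniformity in $A$ is automatic given uniformity in $\M$. The tools are: the canonical surjections $h^\M_\xi\colon A^{<\om}\cross\xi^{<\om}\onto\M\wupto\xi$ of \ref{lem:canonical_surjection}, which cohere in $\xi$ and are uniformly $\Ll_0^-$-$\Sigma_1$; acceptability of $\M$; the preceding lemma, which (using that $\Theta$ is relevant) gives $\Theta=\OR(\M\wupto\Theta)$, and hence, as $\M\wupto\Theta$ is rud and rank closed, that $\Theta$ is a limit ordinal and $\M\wupto\Theta=\bigcup_{\alpha<\Theta}\M\wupto\alpha$; and rud- and rank-closure of $\M$ and its segments, used for $\Delta_0$-separation, for the canonical coding $A^{<\om}\cross A^{<\om}\cong A^{<\om}$ (note $\om\sub A$), and for transitive collapses of coded well-founded extensional relations.

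First I would prove the auxiliary claim $P(\alpha)$, for $\alpha<\Theta$: \emph{there is a surjection $A^{<\om}\onto\M\wupto\alpha$ lying in $\M$}, by induction on $\alpha$. The case $\alpha=0$ is trivial since $\M\wupto0=A$. For $\alpha+1<\Theta$, put $\N=\M\wupto(\alpha+1)$, a proper segment of $\M$, hence soundly projecting, so $\N=\Hull^\N_{\Sigma_1}(\N^-\un\{\N^-,p\})$ for some $p$, where $\N^-=\M\wupto\alpha$; since a $\Sigma_1$-hull of a surjective image of $A^{<\om}$ (together with finitely many further parameters) is again such an image, composing with the $P(\alpha)$-surjection gives $P(\alpha+1)$. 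For a limit $\lambda'<\Theta$: as $\lambda'<\Theta$ it is not an $A$-cardinal, so fix $\gamma<\lambda'$ with a surjection $A^{<\om}\cross\gamma^{<\om}\onto\lambda'$ in $\M$; by $P(\gamma)$ there is a surjection $A^{<\om}\onto\M\wupto\gamma$, hence (via the rank function, using $\OR(\M\wupto\gamma)\geq\gamma$) one $A^{<\om}\onto\gamma$, hence one $A^{<\om}\onto\gamma^{<\om}$; this collapses the $\gamma^{<\om}$-factor, giving surjections $A^{<\om}\onto\lambda'$ and $A^{<\om}\onto(\lambda')^{<\om}$, which composed with $h^\M_{\lambda'}$ yields $P(\lambda')$. (Consistently, $P(\Theta)$ fails, since $\Theta$ is an $A$-cardinal.)

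Granting $P$, parts (1), (2), (4) become routine. For (1): if $X\in\pow(A^{<\om})^\M$ and $X\notin\M\wupto\Theta$, let $\alpha$ be least with $X\in\M\wupto(\alpha+1)$, so $X\in\N\cut\N^-$ where $\N=\M\wupto(\alpha+1)\wins\M$ is a successor, $\N^-=\M\wupto\alpha$, and $\alpha\geq\Theta$; then acceptability (taking $\tau=0$, identifying $A^{<\om}\cong A^{<\om}\cross0^{<\om}$) gives a map $A^{<\om}\onto\M\wupto\alpha$ in $\N\sub\M$, hence a map $A^{<\om}\onto\OR(\M\wupto\alpha)$, hence (as $\OR(\M\wupto\alpha)\geq\alpha\geq\Theta$) a map $A^{<\om}\onto\Theta$ in $\M$ --- contradicting that $\Theta$ is an $A$-cardinal. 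Thus $\pow(A^{<\om})^\M\sub\M\wupto\Theta$; and for $\alpha<\Theta$, diagonalising against the $P(\alpha)$-surjection $f$ gives $X=\{a\in A^{<\om}:a\notin f(a)\}$, which lies in $\M$ by $\Delta_0$-separation and in $\M\cut\M\wupto\alpha$ since $X$ cannot equal any $f(a)$; so $\Theta$ is the least such $\alpha$. For (2): if $x\in\M\wupto\Theta=\bigcup_{\alpha<\Theta}\M\wupto\alpha$ then $\trancl(x)$ sits in some $\M\wupto\alpha$, a surjective image of $A^{<\om}$ by $P(\alpha)$, so $\trancl(x)$ is too; conversely, from a surjection of $A^{<\om}$ onto $\trancl(x)\un\{x\}$ in $\M$ one codes $(\trancl(x)\un\{x\},{\in})$ as a subset of $A^{<\om}\cross A^{<\om}\cong A^{<\om}$, which is in $\M\wupto\Theta$ by (1), whence $x\in\M\wupto\Theta$ by forming the transitive collapse inside the rud- and rank-closed $\M\wupto\Theta$. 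For (4): a cofinal $f\colon A^{<\om}\cross\gamma^{<\om}\to\Theta$ in $\M$ with $\gamma<\Theta$, composed with the uniform surjections $G(\beta)\colon A^{<\om}\onto\M\wupto\beta$ from part (3) (which, being $\Sigma_1$ over $\M\wupto\Theta\in\M$, assemble into a single $G\in\M$), gives a surjection $A^{<\om}\cross\gamma^{<\om}\cross A^{<\om}\onto\bigcup_{\beta<\Theta}\M\wupto\beta=\M\wupto\Theta$, hence (pairing) one $A^{<\om}\cross\gamma^{<\om}\onto\Theta$, again contradicting the $A$-cardinality of $\Theta$.

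This leaves (3), which I expect to be the main obstacle: upgrading the \emph{existence} statement $P$ to a single $\Ll_0^-$-$\Sigma_1$ formula $\varphi$ defining, over $\M\wupto\Theta$ and uniformly in $\M$, the map $\alpha\mapsto G(\alpha)$ with $G(\alpha)\colon A^{<\om}\onto\M\wupto\alpha$ --- the parameter slot of $\varphi$ carrying $0$ here and $\kappa_0$ in part (7), with the construction uniform in it. I would do this by re-running the induction for $P$ as a $\Sigma_1$-recursion along $\Theta$ over $\M\wupto\Theta$, with $G(\alpha)=h^\M_\alpha\circ e_\alpha$ where $e_\alpha\colon A^{<\om}\onto A^{<\om}\cross\alpha^{<\om}$ is built by recursion: trivially at $0$; at $\alpha+1$ by adjoining $\alpha$ as a possible last entry of sequences; and at a limit $\lambda'$ from $\{G(\gamma):\gamma<\lambda'\}$ together with the fact that $\lambda'$ is not an $A$-cardinal, which is where a surjection $A^{<\om}\onto\lambda'$ has to be extracted in a $\Sigma_1$-definable way. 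The delicate points are that this recursion is genuinely $\Sigma_1$-definable over $\M\wupto\Theta$ --- for which the canonical surjections $h^\M$ are essential, since they witness that the initial segments $G\rest\alpha$ ($\alpha<\Theta$) are elements of $\M\wupto\Theta$, so that the $\Sigma_1$-recursion theorem applies --- and that the stagewise choices can be made canonically, without a well-ordering of $\M\wupto\Theta$; the limit stage is the crux. This is carried out essentially as in \cite[2.6--2.9(?)]{scales_in_hybrid_mice_over_R}.
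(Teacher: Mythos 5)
The paper itself contains no proof of this lemma: it simply states that ``the next four results are proved just like [2.6--2.9(?)]'' of \cite{scales_in_hybrid_mice_over_R}, which is not included here, so there is no internal proof to compare against. Your outline --- extract the pointwise statement $P(\alpha)$ by induction on $\alpha<\Theta$ using sound projection at successors and non-$A$-cardinality at limits, derive (1), (2), (4) from it via acceptability, diagonalization and the canonical $h^\M_\xi$, reduce (5)--(8) to the same argument with $A^{<\om}$ replaced by $A^{<\om}\times\kappa_0^{<\om}$, and isolate (3) as the genuinely $\Sigma_1$-uniform version of $P$ --- is the natural route, and the successor step of $P$, the proof of (1), the forward direction of (2), and (4) (granted (3)) look sound. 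The forward reference from (4) to (3) is harmless since your sketch of (3) does not invoke (4).

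The one place where the argument still has real content that the sketch does not close is exactly the one you flag: the limit stage of (3). To define $G(\lambda')$ (equivalently, $e_{\lambda'}\colon A^{<\om}\onto\lambda'$) by a single $\Sigma_1$ formula over $\M\wupto\Theta$, you must pass from ``$\lambda'$ is not an $A$-cardinal, so \emph{some} surjection $A^{<\om}\times\gamma^{<\om}\onto\lambda'$ exists in $\M$'' to a \emph{canonical, choice-free} such surjection. Since $A$ is an arbitrary transitive $\hat Y$ and need carry no definable wellorder, ``take the least such $f$'' is unavailable, and having the earlier $G(\gamma)$'s in hand does not by itself flatten $\bigcup_{\gamma<\lambda'}\M\wupto\gamma$ into the range of a single map on $A^{<\om}$ without already being able to enumerate $\lambda'$ from $A^{<\om}$ uniformly. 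Your write-up acknowledges this is the crux and defers to the cited lemmas, which is honest, but as written the recursion does not actually close at limits, and this is the step that needs the missing idea. A secondary point: in the converse direction of (2), forming the Mostowski collapse of the coded relation ``inside the rud- and rank-closed $\M\wupto\Theta$'' needs a short argument --- rud and rank closure alone do not give $\in$-recursion, so one should either argue by $\in$-induction on $x$ combined with a boundedness argument (itself leaning on (3)/(4)) or observe that all ranks involved are $<\Theta$ and the collapse function is itself coded by a subset of $A^{<\om}$ which lands in $\M\wupto\Theta$ by (1).
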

\begin{proof}
	We just prove parts \ref{item:power(a)_unbounded}--\ref{item:Theta_A-regular}; the others are
	similar.
	Let $\gamma\in[1,\wl(\M)]$ be least such that $\pow(A^{<\om})\inter\M\sub\M|\gamma$. For part \ref{item:power(a)_unbounded},
	we must see that $\gamma=\Theta$.

	Let us first observe that $\gamma$ is a limit ordinal.
	{\color{black}Suppose $\gamma=\xi+1$ for some $\xi$. By acceptability,
	there is a surjection $\pi:A^{<\om}\to\M|\xi$ with $\pi\in\M|\gamma$. So $\OR(\M|\xi)<\Theta$. So if $\gamma=\wl(\M)$ then $\OR(\M^-)=\OR(\M|\xi)<\Theta$, contradicting the assumption that $\Theta$ is relevant.  So $\gamma<\wl(\M)$. But then because $\M$ is adequate, $\M|\gamma$ is soundly projecting, so there is a surjection $\pi':(\M|\xi)^{<\om}\to\M|\gamma$ with $\pi'\in\M$,
	so $\OR(\M|\gamma)<\Theta$.
	But then the usual diagonalization gives a contradiction.}

	Now by acceptability, for every $\alpha<\gamma$,
	$\M|\gamma$ has a map $A^{<\om}\onto\M|\alpha$.

	We now claim that $\gamma=\Theta$.
	For $\gamma\leq\Theta$ by acceptability.
	So suppose $\gamma<\Theta$, and let $g:A^{<\om}\onto\gamma^{<\om}$ be in $\M$.
	Let $h=h^{\M|\gamma}$. Then because $g,h\in\M$, clearly $\M$ has a map
	$f:A^{<\om}\onto\M|\gamma$, so $\M$ has a map
	$A^{<\om}\onto\pow(A^{<\om})^\M$, again a contradiction.

	So $\gamma=\Theta$, giving part \ref{item:power(a)_unbounded}.
	{\color{black} Part \ref{item:M_Theta_is_H_Theta}:
As mentioned above, for every $\alpha<\Theta$, $\M|\Theta$
	has a surjection $A^{<\om}\onto\M|\alpha$. So letting $Y\in\M$ be transitive and $\pi:A^{<\om}\onto Y$ with $\pi\in\M$, it suffices to see that $Y\in\M|\Theta$.
	Let $X\sub A^{<\om}$
	be the code for $Y$ determined by $\pi$. Then $X\in\M|\gamma=\M|\Theta$. But then $Y\in L_\kappa(X)$ where $\kappa$ is least such that $L_{\kappa}(X)$ is admissible, and note that $\kappa<\Theta$, so $Y\in\M|\Theta$.}

	Part \ref{item:uniform_surjections}: Let $\alpha<\Theta$.
	We will define
	$g:A^{<\om}\cross A^{<\om}\onto\M|\alpha$,
	and the uniformity in the definition will yield the result.
	Let $\beta\in[\alpha,\Theta)$ be least such that
	\[ \pow(A^{<\om})\inter\M|\beta\not\sub\M|\alpha.\]
	Let $h=h^{\M|\beta}$. Let $x\in A^{<\om}$ be such that for some $y$, $f=h(x,y)$ is
	such that $f:A^{<\om}\to\M|\alpha$ is a surjection (such $x$ exists by acceptability). Let $y_x$
	be the least such $y$,
	and $f_x=h(x,y_x)$. For all such $x$ and for $z\in A^{<\om}$,
	define $g(x,z)=f_x(z)$. For all other $(x,z)$, set $g(x,z)=\emptyset$. This completes the definition
	of $g$, and the uniformity is clear.

	Part \ref{item:Theta_A-regular} now follows.
\end{proof}
\begin{cor}
 Let $\M$ be an adequate model over $A$ and let $\gamma$ be a relevant $A$-cardinal of $\M$.
 If $\gamma$ is a limit of $A$-cardinals of $\M$ then
$\M\wupto\gamma$ satisfies Separation and Power Set.
If $\gamma$ is not a limit of $A$-cardinals of $\M$
then $\M\wupto\gamma\sats\ZFmin$. In particular, $\M\wupto\Theta^\M\sats\ZFmin$.
\end{cor}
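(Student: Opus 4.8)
The plan is to verify the relevant axioms in $N:=\M\wupto\gamma$ one at a time, using the characterizations of $\univ N$ from Lemma~\ref{lem:H_Theta^M_etc} and the $A$-regularity of $\gamma$. By the first lemma on $A$-cardinals, $\gamma=\OR(N)>\rank(A)$, and $\gamma$ is a limit ordinal; so $N$ — being a hierarchical model, or $\M$ itself — is transitive, rud closed, rank closed and amenable, and $\omega\in A=\hmb^N\in\univ N$. This at once yields Extensionality, Foundation, Pairing, Union, Infinity and $\Delta_0$-Separation, which is all that is needed from this line.

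Next, Separation. Fix $a,\vec p\in N$ and an $\Ll_0$-formula $\varphi$, and set $b:=\{x\in a : N\models\varphi(x,\vec p)\}$. Since $\trancl(b)\sub\trancl(a)$, the characterization of $\univ N$ — clause~\ref{item:M_Theta_is_H_Theta} of Lemma~\ref{lem:H_Theta^M_etc} if $\gamma=\Theta^\M$, its $\kappa_1$-analogue if $\gamma$ is a successor $A$-cardinal, and the union of these over the $A$-cardinals $<\gamma$ if $\gamma$ is a limit of them — reduces the problem to $b\in\M$. If $N\in\M$, equivalently $\gamma<\wl(\M)$ (which always holds when $\M$ is a successor), this is immediate from $\Delta_0$-Separation in $\M$, because for the fixed $\varphi$ the relation ``$N\models\varphi(x,\vec p)$'' is bounded in the parameter $N$. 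In the remaining case $\M$ is a limit and $N=\M$; here I would use the reflection afforded by the Continuity clause of Definition~\ref{model}: a routine Skolem-closure argument shows that for each $n$, for a club of $\alpha<\gamma$, the structure $N_\alpha$ with universe $\univ{\M\wupto\alpha}$ and predicates induced from $\M$ satisfies $N_\alpha\elem_{\Sigma_n}\M$ and lies in $\M$ by amenability, so reflecting $\varphi$ to a sufficiently closed such $N_\alpha$ again reduces to $\Delta_0$-Separation in $\M$. This handles Separation in all three cases; then, if $\gamma$ is $A$-regular — which by Lemma~\ref{lem:H_Theta^M_etc}(\ref{item:Theta_A-regular}) and its $\kappa_1$-analogue covers the two non-limit cases — the same method gives Collection: after reflecting, the function sending (a code for) each element of $a$ to the least level of the hierarchy containing a witness is a map from some $A^{<\om}\cross\mu^{<\om}$ with $\mu<\gamma$ into $\gamma$ lying in $\M$, hence is bounded by some $\beta^*<\gamma$, and $\M\wupto\beta^*\in N$ collects. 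When $\gamma$ is a limit of $A$-cardinals, Collection may genuinely fail, as then $\gamma$ need not be $A$-regular; this is why only Separation and Power Set are asserted in that case.

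Now Power Set. For any $a\in N$ one checks, from the characterization of $\univ N$, that $\pow(a)^N=\pow(a)^\M$: any subset of $a$ lying in $\M$ already lies in $N$, its transitive closure being bounded by $\trancl(a)$. Suppose first $\gamma$ is a limit of $A$-cardinals, and fix an $A$-cardinal $\kappa<\gamma$ with $a\in\M\wupto\kappa$. Then $a$ is a surjective image in $\M$ of $A^{<\om}\cross\kappa_-^{<\om}$ (with $\kappa_-$ the preceding $A$-cardinal, omitted if $\kappa=\Theta^\M$), so $\pow(a)^\M$ injects, via a map in $\M$, into $\pow(A^{<\om}\cross\kappa_-^{<\om})^\M$, which by Lemma~\ref{lem:H_Theta^M_etc} is $\sub\M\wupto\kappa$; since $\M\wupto\kappa\in N$, its universe is a surjective image in $\M$ of some $A^{<\om}\cross\nu^{<\om}$ with $\nu<\gamma$ an $A$-cardinal, and chasing surjections one concludes that $\trancl(\pow(a)^\M)$ is a surjective image in $\M$ of $A^{<\om}\cross\rho^{<\om}$ for some $A$-cardinal $\rho<\gamma$, whence $\pow(a)^\M\in N$. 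Suppose instead $\gamma$ is not a limit of $A$-cardinals, so $\gamma=\Theta^\M$, or $\gamma=\kappa_1$ for consecutive $A$-cardinals $\kappa_0<\kappa_1$. Taking $a:=A^{<\om}$ (resp. $a:=A^{<\om}\cross\kappa_0^{<\om}$) — which lies in $N$ — one has $\pow(a)^N=\pow(a)^\M\notin N$, since membership of $\pow(a)^\M$ in $N$ would, via the characterization of $\univ N$, produce a surjection in $\M$ from $a$ onto $\pow(a)^\M$, contradicting Cantor's theorem (valid in $\M$ by $\Delta_0$-Separation). Finally, $\Theta^\M$, being the least $A$-cardinal, is not a limit of $A$-cardinals, so $\M\wupto\Theta^\M\sats\ZFmin$ by the non-limit case.

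I expect two steps to carry the real content. The first is the case split in Separation and Collection: the case $N\in\M$ is dispatched by the observation that satisfaction over a set model is a bounded predicate, whereas the case $N=\M$ — forced only when $\M$ is a limit — is dispatched by $\Sigma_n$-reflection up the $\hmS$-hierarchy, where one must check that the induced substructures $N_\alpha$ genuinely are $\Sigma_n$-elementary in $\M$ for club-many $\alpha$ and genuinely are elements of $\M$. The second is the surjection bookkeeping in the positive Power Set case: since we work in $\ZF$ and $A$ may fail to be well-orderable, every size comparison must be routed through explicit surjections and the rudimentary closure of $\M$, and one must keep track of exactly which product $A^{<\om}\cross\mu^{<\om}$, with $\mu<\gamma$ an $A$-cardinal, is large enough to surject onto the set at hand.
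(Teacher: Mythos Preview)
Your argument is essentially correct, and since the paper omits the proof entirely (deferring to \cite[2.6--2.9(?)]{scales_in_hybrid_mice_over_R}), there is no detailed comparison to make with the paper's own reasoning. The structure you give --- using the characterization of $\univ{\M\wupto\gamma}$ from Lemma~\ref{lem:H_Theta^M_etc} together with $A$-regularity for Collection, and the containment $\pow(a)^\M\sub\M\wupto\kappa$ for Power Set --- is the standard route.

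One point is worth cleaning up: your case split on whether $N\in\M$ or $N=\M$ is unnecessary, because the second case never occurs. Since $\gamma$ is an $A$-cardinal of $\M$, in particular $\gamma<\OR(\M)$; combined with relevance and the lemma immediately preceding Lemma~\ref{lem:H_Theta^M_etc} (which gives $\kappa\leq\xi$ whenever $\kappa\leq\OR(\M\wupto\xi)$ for $0<\xi<\wl(\M)$), one gets $\gamma<\wl(\M)$ in both the successor and limit cases for $\M$. Hence $N=\M\wupto\gamma\wpins\M$ and $N\in\M$ always, so your ``case 1'' argument for Separation and Collection --- reducing to $\Sigma_0$-Separation in $\M$ using $N$ as a parameter --- suffices throughout. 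The $\Sigma_n$-reflection argument you sketch for the phantom case $N=\M$ can simply be dropped. (It is also the least precise part of your write-up, so this is a net gain.)

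Two minor remarks. First, your verification that Power Set \emph{fails} in the non-limit case is correct but goes beyond what the corollary asserts; it is useful as a sanity check that the case distinction is sharp, but is not needed for $\ZFmin$. Second, in your Power Set argument for the limit case, the key step $\pow(a)^\M\in\M$ deserves one explicit line: it follows from $\pow(a)^\M=\{X\in\univ{\M\wupto\kappa}:X\sub a\}$ and $\Sigma_0$-Separation in $\M$, since you have already shown $\pow(a)^\M\sub\univ{\M\wupto\kappa}$.
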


\begin{lem}\label{lem:cardinals_equiv}
Let $\M$ be an adequate model over $A$ such that $\Theta^\M$ exists and is relevant.
Let $\kappa\in[\Theta^\M,\OR(\M))$ be relevant.
Then $\kappa$ is an $A$-cardinal of $\M$ iff $\kappa$ is an ordinal-cardinal of $\M$.
\end{lem}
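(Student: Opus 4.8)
The plan is to prove the two implications separately, using $\kappa\geq\Theta^\M$ only for ``ordinal-cardinal $\Rightarrow$ $A$-cardinal''. For ``$A$-cardinal $\Rightarrow$ ordinal-cardinal'' I would argue contrapositively: given a surjection $g\maps\delta^{<\om}\onto\kappa$ in $\M$ with $\delta<\kappa$, the map $(a,s)\mapsto g(s)$ is a surjection $A^{<\om}\cross\delta^{<\om}\onto\kappa$ in $\M$, so $\kappa$ is not an $A$-cardinal; this uses no hypothesis on $\Theta^\M$.

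For the converse, suppose $\kappa$ is an ordinal-cardinal. If $\kappa=\Theta^\M$ we are done, since $\Theta^\M$ is by definition an $A$-cardinal; so assume $\kappa>\Theta\eqdef\Theta^\M$. I would first observe that $\Theta>\om$ (as $\om\sub A$, there is an evident surjection $A^{<\om}\onto\om$ in $\M$, so $\om$ is not an $A$-cardinal), whence $\kappa>\om$, and an ordinal-cardinal above $\om$ is a limit ordinal (if $\kappa=\beta+1>\om$ then $\beta$ is infinite, $\M$ has surjections $\beta^{<\om}\onto\beta$ and $\beta\onto\beta+1$, hence a surjection $\beta^{<\om}\onto\kappa$ with $\beta<\kappa$, a contradiction). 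Then, assuming toward a contradiction that $\kappa$ is not an $A$-cardinal, I would fix $\gamma<\kappa$ and a surjection $f\maps A^{<\om}\cross\gamma^{<\om}\onto\kappa$ in $\M$, and for $s\in\gamma^{<\om}$ set $R_s=\{f(a,s)\mid a\in A^{<\om}\}\sub\kappa$; then each $R_s\in\M$, $a\mapsto f(a,s)$ is a surjection of $A^{<\om}$ onto $R_s$ in $\M$, and $\bigcup_{s\in\gamma^{<\om}}R_s=\kappa$.

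The crux is the uniform bound $\mathrm{ot}(R_s)<\Theta$ for all $s\in\gamma^{<\om}$. To see it, note that $\M$ contains the order-isomorphism $R_s\to\mathrm{ot}(R_s)$ (adequate models enumerate their sets of ordinals), so composing with $a\mapsto f(a,s)$ gives a surjection of $A^{<\om}$ onto $\mathrm{ot}(R_s)$ in $\M$; were $\mathrm{ot}(R_s)\geq\Theta$, then truncating the range to $\Theta$ would provide $\M$ with a surjection $A^{<\om}\onto\Theta$, contradicting that $\Theta$ is an $A$-cardinal. Given the bound, define $F\maps\gamma^{<\om}\cross\Theta\to\kappa$ by letting $F(s,\eta)$ be the $\eta^\nth$ element of $R_s$ when $\eta<\mathrm{ot}(R_s)$ and $F(s,\eta)=0$ otherwise; then $F\in\M$ and $\rg(F)=\bigcup_s R_s=\kappa$. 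Finally, since $\gamma,\Theta<\kappa$ and $\kappa$ is a limit ordinal, put $\delta=\max(\gamma,\Theta)+1<\kappa$; routine rud coding gives a surjection $p\maps\delta^{<\om}\onto\gamma^{<\om}\cross\Theta$ in $\M$ (as $\gamma^{<\om}\cross\Theta\sub\delta^{<\om}\cross\delta$, a rud image of $\delta^{<\om}$), and then $F\com p\maps\delta^{<\om}\onto\kappa$ is a surjection in $\M$ with $\delta<\kappa$, contradicting that $\kappa$ is an ordinal-cardinal.

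The main obstacle is exactly the uniform bound $\mathrm{ot}(R_s)<\Theta$ on the sections: this is the one place where $\kappa\geq\Theta^\M$ and the assumption that $\Theta^\M$ is an $A$-cardinal get used, everything else being bookkeeping with rud functions. The other point needing care --- standard for adequate models --- is that the collapses of the $R_s$ (hence $F$) genuinely belong to $\M$, i.e.\ that $\M$ enumerates each of its sets of ordinals in increasing order. Apart from the existence of $\Theta^\M$, the argument does not use the finer cardinal-structure analysis of \ref{lem:H_Theta^M_etc}; one could alternatively start by applying that lemma to replace $f$ by a surjection from $A^{<\om}\cross\kappa_0^{<\om}$, where $\kappa_0$ is the largest $A$-cardinal below $\kappa$, but this is not needed.
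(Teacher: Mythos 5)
Your argument is correct and, to the extent the paper's (deferred) proof can be reconstructed from the surrounding lemmas, it is essentially the same approach: the easy direction by composing a surjection from $\delta^{<\om}$ with a projection, and the hard direction by slicing the given surjection $f\maps A^{<\om}\cross\gamma^{<\om}\onto\kappa$ into the fibres $R_s$, bounding $\mathrm{ot}(R_s)<\Theta^\M$ via minimality of $\Theta^\M$, and then trading the $A^{<\om}$-coordinate for a $\Theta^\M$-coordinate.

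One point is worth making slightly more explicit than you do, though you gesture at it in your last paragraph. The assertion $F\in\M$ requires not just that each individual monotone enumeration of $R_s$ lies in $\M$, but that the assignment $s\mapsto(\text{enumeration of }R_s)$ is an element of $\M$ rather than merely a $\bfrSigma_1^\M$ class. This is where you really use the collapse lemma for acceptable structures in its uniform, $\Sigma_1$-definable form (cf.\ \cite{zeman}), together with the observation that each $R_s$ is rudimentary in $f,s$ and hence all the enumerations appear within a fixed initial segment of $\M$; otherwise a $\Sigma_1$-definable bounded function need not be a member of the model. This is routine bookkeeping for acceptable structures and carries over to adequate models, but it is the one step in the proof that goes beyond pure rud-closure and is the implicit place where the hypotheses on $\Theta^\M$ and the relevance of $\kappa$ earn their keep. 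With that noted, the proof is complete.
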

\begin{proof}
	Suppose $\kappa>\Theta=\Theta^\M$ and $\kappa$ is an ordinal-cardinal,
	but $\M$ has a map
	\[ f:A^{<\om}\cross\gamma^{<\om}\onto\kappa \] where $\gamma<\kappa$.
	For each $y\in\gamma^{<\om}$, let $f_y:A^{<\om}\to\kappa$ be $f_y(x)=f(x,y)$,
	and let $g_y$ be the norm on $A^{<\om}$ associated to $f_y$
	(that is, $g_y:A^{<\om}\to\OR$,
	$\rg(g_y)$ is an ordinal, and $g_y(x)<g_y(x')$ iff $f_y(x)<f_y(x')$).
	Then $g_y\in\M$ and $\rg(g_y)<\Theta$, because the associated prewellorder
	on
	$A^{<\om}$ is in $\M|\Theta$ and $\M|\Theta\sats\ZFmin$. Similarly,
	the
	function $y\mapsto(f_y,g_y)$ is in $\M$.
	Let
	\[ h:\Theta\cross\gamma^{<\om}\onto\kappa \] be as follows.
	Let $(\alpha,y)\in\Theta\cross\gamma^{<\om}$. If $\alpha\notin\rg(g_y)$ then $h(\alpha,y)=0$;
	otherwise $h(\alpha,y)=f(x,y)$ where $g_y(x)=\alpha$. Then $h\in\M$, a
	contradiction.
\end{proof}

\begin{dfn}\label{rem:cardinal_successors}
 Let $\M$ be an adequate model over $A$ and let $\kappa<\OR(\M)$.
Then \index{$\kappa^{+\M}$}$\kappa^{+\M}$ denotes either the least ordinal-cardinal $\gamma$ of
$\M$ such that $\gamma>\kappa$, if there is such, and denotes $\OR(\M)$ otherwise.
By \ref{lem:cardinals_equiv}, if $\M$ is a limit and $\Theta^\M\leq\kappa$, then
$\kappa^{+\M}$
is the least $A$-cardinal $\gamma$ of $\M$ such that $\gamma>\kappa$,
if there is such, or is $\OR(\M)$ otherwise. This applies when $\hmE^\N\neq\emptyset$
in \ref{dfn:potential opm} below.
\end{dfn}

\begin{dfn}
 Let $\M$ be an adequate model over $A$. Then \index{$\rho^\M$}$\rho^\M$ denotes the least $\rho\in\Ord$ such
that $\rho\geq\om$ and $\pow(A^{<\om}\cross\rho^{<\om})\inter\J(\M)\not\sub\M$.
\end{dfn}

\subsection{Potential operator-premice}
\begin{rem}\label{rem:popm_preview}
 We now proceed to the definition of \emph{potential operator-premouse}. {\color{black}This will lay out the main first order properties we demand of $\Fop$-premice. The properties for segments with an active extender are very close to those for standard premice as in \cite{steel2010outline}, generalized to  allow superstrong extenders. The properties for successor levels are new, and they consist of four clauses. Let us first give some
motivation for these. \emph{Projectum amenability}
generalizes the fact that in an ordinary premouse $\N$, if $\M\pins\N$ then there are no new bounded subsets of $\rho_\om^\M$ which are in $\J(\M)$.} It ensures that we record all essential segments of a
potential operator-premouse
$\N$ in its history $\hmS^\N$. For example, suppose we are forming an $n$-maximal iteration tree
and
we wish to apply an extender $E$ to some piece of $\N$, but $E$ is not $\N$-total.
Projectum amenability will ensure that there is some
$\M\wpins\N$ such that $E$ is $\M$-total and $\M$ projects to $\crit(E)$.
The property of \emph{$\Sigma_1$-ordinal-generation} is used in making sense of fine structure; it
ensures for
example
that
the 1st standard parameter $p_1$ is well-defined. The \emph{stratification} of $\N$ {\color{black}lets us define $\Sigma_1$-Skolem functions in the manner usual for $\J$-structures, thereby ensuring that $\Hull_{\Sigma_1}^\N(\hmb^\N\un Y)\elem_1\N$ for any $Y\sub\N$,} and also allows us to
establish facts regarding the preservation of fine structure (including the preservation of $p_1$,
assuming $1$-solidity) under degree $0$ ultrapower maps.
 And
the existence of
\emph{$\hmb^\N$-ordinal-surjections}, together with stratification, will be used in proving that
$\Sigma_1$-ordinal-generation is propagated under degree $0$ ultrapower maps.
\end{rem}

\begin{dfn}\label{dfn:potential opm}We say that $\N$ is a \index{potential operator-premouse (opm)}\dfnemph{potential
operator-premouse}
\dfnemph{\tu{(}potential opm\tu{)}}
iff $\N$ is an adequate model, over $A$, such that
for every $\M\wins\N$,
\begin{enumerate}
\item ($\hmP$-goodness) If $\hmP^\M\neq\emptyset$ then $\M$ is a successor and
$\hmP^\M\sub\M\cut\M^-$.\footnote{The requirement that $\hmP^\M\sub\M\cut\M^-$ does not restrict
the information that can be encoded in $\hmP^\M$, because given any $X\sub\M$, one can
always replace it with $\{\M^-\}\cross X$.}
\item\label{item:E-goodness} {\color{black}($\hmE$-goodness) If
$\hmE^\M\neq\emptyset$ then
$\M$ is a limit and letting $F=\bigcup\hmE^\M$,
then $F$ is an extender over $\M$ such that, letting $\hmS=\hmS^\M$,
$\hmE=\hmE^\M$ and $\kappa=\crit(F)$:
\begin{enumerate}
\item $F$ is $A^{<\om}\times\gamma^{<\om}$-complete for all $\gamma < \kappa$.
\item The (following) premouse axioms hold for
$(\univ{\M},\hmS,\hmE)$:
letting $U=\Ult_0(\M|\kappa^{+\M},F)$ and $\nu=\nu(F)$, we have:
\begin{enumerate}
\item (Mitchell-Steel indexing)
$\OR^{\M}=\nu^{+U}$.
\item (Coherence) $S=S^{U|\OR(\M)}$.
\item (Initial Segment Condition)
For every $\eta$ such that $\kappa^{+\M}<\eta<\nu$ and $\eta=\nu(F\rest\eta)$ (that is,
either $\eta=\gamma+1$ for some generator of $F$, or $\eta$ is a limit of generators of $F$)
and $F\rest\eta$ is not of type Z, either:
\begin{enumerate}
\item there is $\P\pins\M$ such that $\bigcup E^{\P}$ is the trivial completion of $F\rest\eta$, or
\item $E^{\M|\eta}\neq\emptyset$
and there is $\P\pins\Ult_0(\M|\eta,F')$,
where $F'=\bigcup E^{\M|\eta}$,
such that $\bigcup E^{\P}$ is the trivial completion of $F\rest\eta$.
\end{enumerate}
\item (Amenable encoding) $\hmE$ is the
amenable code for $F$;
that is,
$\hmE$ is the set of all $e\in\M$
such that for some $\xi\in(\kappa,\kappa^{+\M})$, we have  $e=F\cap((\M|\xi)\cross[\nu]^{<\om})$.
\end{enumerate}
\end{enumerate}
(It follows that $\M$ has a largest cardinal
$\delta$, and $\delta\leq i_F(\kappa)$, and $\delta\leq\nu<\OR(\M)=\delta^{+U}$.)}
\item\label{item:successor_potential opm} If $\M$ is a successor then:
\begin{enumerate}
 \item\label{item:no_early_proj} \index{projectum amenability}(Projectum amenability) If $\wl(\M)>1$ and
$\om,\alpha<\rho^{\M^-}$ then
\[ \pow(A^{<\om}\cross\alpha^{<\om})\inter\M\sub\M^-.\]
 \item\label{item:surjections} \index{$A$-ordinal-surjections}($A$-ordinal-surjections) For every $x\in\M$ there is $\alpha<\OR^\M$
{\color{black}and a map\label{c14}}
$A^{<\om}\cross\alpha^{<\om}\onto x$ in $\M$.
\item\label{item:M=Hull_1^M(OR)} \index{$\Sigma_1$-ordinal-generation}($\Sigma_1$-ordinal-generation)
$\M=\Hull_{\Sigma_1}^\M(\M^-\un\{\M^-\}\un\OR^\M)$.
 \item\label{item:stratification} \index{stratification}(Stratification) There is a limit $\gamma\in\Ord$ and sequence
$\Mtilde=\left<\Mtilde_\alpha\right>_{\alpha<\gamma}$ such that:
\begin{enumerate}
\item for each $\alpha<\gamma$, $\Mtilde_\alpha$ is an $\Ll_0$-structure such that
$\univ{\Mtilde_\alpha}$ is transitive and $\Mtilde_\alpha=\M\rest\univ{\Mtilde_\alpha}$;
that is, $\hmb^{\Mtilde_\alpha}=A$ and $\hmp^{\Mtilde_\alpha}=\hmp^\M$ and
$\hmP^{\Mtilde_\alpha}=\hmP^{\M}\inter\univ{\Mtilde_\alpha}$, etc,
\item $\Mtilde$ is a continuous, strictly increasing sequence with $\M^-\in\Mtilde_0$ and
$\M=\bigcup_{\alpha<\gamma}\Mtilde_\alpha$,
 \item\label{item:Ntilde_def} $\Mtilde\rest\alpha\in\M$ for every $\alpha<\gamma$, and the function
$\alpha\mapsto\Mtilde\rest\alpha$,
with domain $\gamma$, is
$\Sigma_1^\M(\{\M^-\})$.\qedhere
\end{enumerate}
\end{enumerate}
\end{enumerate}
\end{dfn}

\begin{rem}
Let $\N$ be a potential opm over $A$. Suppose $\hmE^\N$ codes an extender $F$. Clearly
$\kappa=\crit(F)>\Theta^\M>\rank(A)$. By \cite[Definition
2.2.1]{wilson2012contributions}, we have
$\kappa^{+\M}<\OR(\M)$; cf. \ref{rem:cardinal_successors}. Note that
\emph{we allow $F$ to be of superstrong type} (see
\ref{someNotations}) in accordance with
\cite{wilson2012contributions}, not \cite[Definition 2.4]{steel2010outline}.\footnote{The main
point of permitting superstrong
extenders is that it simplifies
certain things. But it complicates others. If the reader prefers, one could
instead require that $F$ \emph{not} be superstrong, but
various
statements throughout the paper regarding condensation would need to be modified, along the lines
of \cite[Lemma 3.3]{FSIT}.} Mitchell-Steel fine structure
for premice with superstrongs has been developed in \cite{premouse_inheriting} and \cite{fsfni}, and of course, \cite{zeman} developed it for Jensen fine structure.
\end{rem}

\begin{dfn}
\label{someNotations}
Let $\M$ be a potential opm over $A$.
We say that $\M$ is \dfnemph{$\hmE$-active} iff
$\hmE^\M\neq\emptyset$, and \dfnemph{$\hmP$-active} iff
$\hmP^\M\neq\emptyset$. \index{active}\dfnemph{Active} means either
$\hmE$-active or $\hmP$-active. \dfnemph{$\hmE$-passive} means not
$\hmE$-active. \dfnemph{$\hmP$-passive} means not $\hmP$-active. \index{passive}\dfnemph{Passive}
means not active. \index{type}\dfnemph{Type 0} means passive.
\dfnemph{Type 4} means $\hmP$-active. \dfnemph{Type 1, 2} or \dfnemph{3}
mean $\hmE$-active, with  numerology as in \cite{FSIT}.

We write \index{$\actext^\M$}$\actext^\M$ for the extender $F$ coded by $\hmE^\M$ (where $F=\emptyset$ if
$\hmE^\M=\emptyset$).
We write \index{$\es^\M$}$\es^\M$ for the function with domain $\wl(\M)$, sending
$\alpha\mapsto\actext^{\M\wupto\alpha}$.
Likewise for \index{$\es_+^\M$}$\es^\M_+$, but with domain $\wl(\M)+1$.

{\color{black}For $\alpha\leq\lh(\M)$, we define \index{$\M\wupto\wupto\alpha$}$\M||\alpha$ as follows.
If $\M|\alpha$ is $E$-passive
then $\M||\alpha=\M|\alpha$.
If $\M$ is $E$-active
then $\M||\alpha$
denotes the (unique) $E$-passive
potential opm
$\N$ with $S^{\N}=S^{\M|\alpha}$ (so $\lh(\M)=\alpha$ and $\N$ has  the same universe as has $\M|\alpha$).}

Suppose $F=F^\M\neq\emptyset$ and $\kappa=\crit(F)$. {\color{black}As usual, \index{$\nu(F)$}$\nu(F)=\max(\kappa^{+\M},\nu'(F))$,
where $\nu'(F)$ is the strict sup of generators of $F$.} We say $\M$, or $F$, is \index{superstrong}\dfnemph{superstrong} iff
$i_F(\crit(F))=\nu(F)$.

Suppose $\M$ is a successor.
A \index{stratification}\dfnemph{stratification} of $\M$ is a sequence $\Mtilde$ witnessing
\ref{dfn:potential opm}(\ref{item:stratification}) for $\M$. For a $\Sigma_1$ formula
$\varphi\in\Ll_0$,
we say that $\M$ is \index{$\varphi$-stratified}\dfnemph{$\varphi$-stratified} iff $\varphi(\M^-,\cdot)^\M$ defines the set of
all
proper restrictions $\Mtilde\rest\alpha$ of a
stratification $\Mtilde$ of $\M$.\footnote{The $\varphi$-stratification of $\M$ need not imply that
every successor $\N\wpins\M$ is $\varphi$-stratified.}
\end{dfn}

\begin{lem}\label{lem:local_hulls}
Let $\M$ be a successor potential opm, over $A$. Let
$\Mtilde=\left<\Mtilde_\alpha\right>_{\alpha<\gamma}$ be a stratification of $\M$.
For $\alpha<\gamma$ let
\[ H_\alpha=\Hull_1^{\Mtilde_\alpha}({\color{black}A\un\OR(\Mtilde_\alpha)}). \]
Then for every $x\in\M$ there is $\alpha<\gamma$ such that $x\sub H_\alpha$.
\end{lem}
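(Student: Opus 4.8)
The plan is to funnel all of $x$ through a single surjection onto it and then push the relevant $\Sigma_1$-definitions down from $\M$ to the stratification levels $\Mtilde_\alpha$. Throughout, ``for large $\alpha$'' means ``for all $\alpha<\gamma$ beyond some threshold $<\gamma$''; since $\gamma$ is a limit and $\univ{\M}=\bigcup_{\alpha<\gamma}\univ{\Mtilde_\alpha}$ (so also $\sup_{\alpha<\gamma}\OR(\Mtilde_\alpha)=\OR(\M)$), every fixed element of $\M$ lies in $\Mtilde_\alpha$ for large $\alpha$, and every fixed ordinal $<\OR(\M)$ is $<\OR(\Mtilde_\alpha)$ for large $\alpha$. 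The basic tool will be a transfer principle: since each $\Mtilde_\alpha$ is transitive with $\Mtilde_\alpha=\M\rest\univ{\Mtilde_\alpha}$, $\Sigma_0$ formulas are absolute and $\Sigma_1$ formulas upward absolute between $\Mtilde_\alpha$ and $\M$; hence if $w\in\M$ is the \emph{unique} element of $\M$ satisfying some $\Sigma_1$ formula $\theta$ in parameters from $\M$, and $\alpha$ is large enough that $w$, those parameters, and some $\Sigma_1$-witness for $\theta(w)$ all lie in $\Mtilde_\alpha$, then $w$ is also the unique element of $\Mtilde_\alpha$ satisfying $\theta$ there (uniqueness transfers downward by upward absoluteness). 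Writing $H_\alpha=\Hull_1^{\Mtilde_\alpha}(A^{<\om}\un\OR(\Mtilde_\alpha))$, note $A^{<\om}\un\OR(\Mtilde_\alpha)\sub H_\alpha$ trivially; so $H_\alpha$ contains every finite sequence of ordinals $<\OR(\Mtilde_\alpha)$ that lies in $\Mtilde_\alpha$, and $H_\alpha$ is closed under unique $\Sigma_1$-definability over $\Mtilde_\alpha$ from its own elements.

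The first point is that $h^\M$, $\univ{\M^-}$, and $\M^-$ all lie in $H_\alpha$ for large $\alpha$. By \ref{rem:h^M}, $h^\M\in\M$, and $h^\M$ is the unique $v$ with $\M\models\varphi_h(v)$ for a fixed $\Sigma_1$ formula $\varphi_h$ of $\Ll_0^-$ (uniformly in adequate $\M$); so $h^\M\in H_\alpha$ for large $\alpha$ by the transfer principle. Since $h^\M$ surjects onto $\univ{\M^-}$, we get $\univ{\M^-}=\rg(h^\M)\in H_\alpha$; and $\M^-$, as an $\Ll_0$-structure, is obtained from $\univ{\M^-}$ by intersecting each predicate of $\M$ (equivalently of $\Mtilde_\alpha$, since $\univ{\M^-}\sub\univ{\Mtilde_\alpha}$) with $\univ{\M^-}$ and setting $\hmb^{\M^-}=A$, $\hmp^{\M^-}=\hmp^\M$, so $\M^-\in H_\alpha$ as well.

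Now I would fix, by A-ordinal-surjections (clause \ref{dfn:potential opm}(\ref{item:surjections})), some $\beta<\OR(\M)$ and $f\in\M$ with $f\colon A^{<\om}\cross\beta^{<\om}\onto x$. By $\Sigma_1$-ordinal-generation (clause \ref{dfn:potential opm}(\ref{item:M=Hull_1^M(OR)})), $f$ is the unique $f'$ with $\M\models\chi(f',\vec q)$ for some $\Sigma_1$ formula $\chi$ and parameters $\vec q$ from $\univ{\M^-}\un\{\M^-\}\un\OR(\M)$. Each $q_i\in\univ{\M^-}$ equals $h^\M(\vec a_i,\vec\zeta_i)$ for suitable $\vec a_i\in A^{<\om}$ and $\vec\zeta_i\in\wl(\M^-)^{<\om}$; and $(\vec a_i,\vec\zeta_i)\in\dom(h^\M)\in\M$, so these tuples, hence the $q_i$, lie in $H_\alpha$ for large $\alpha$. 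Replacing each such $q_i$ by the term $h^\M(\vec a_i,\vec\zeta_i)$ inside $\chi$ yields a $\Sigma_1$ formula $\chi^*$ with parameters $\vec r$ --- namely $h^\M$, $\M^-$, the $\vec a_i$ and $\vec\zeta_i$, and the ordinal entries of $\vec q$, all in $H_\alpha$ for large $\alpha$ --- with $f$ the unique $f'$ satisfying $\M\models\chi^*(f',\vec r)$; so $f\in H_\alpha$ for large $\alpha$ by the transfer principle. Finally pick $\alpha<\gamma$ large enough for all of the above and with $x,\dom(f)\in\Mtilde_\alpha$, so $x\sub\univ{\Mtilde_\alpha}$ by transitivity. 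Given $y\in x$, choose $(\vec a,\vec\eta)\in\dom(f)$ with $f(\vec a,\vec\eta)=y$: then $\vec a\in A^{<\om}\sub H_\alpha$, and $(\vec a,\vec\eta)\in\dom(f)\in\Mtilde_\alpha$ gives $\vec\eta\in\Mtilde_\alpha$, whence $\vec\eta\in H_\alpha$ (a finite sequence of ordinals $<\beta<\OR(\Mtilde_\alpha)$ lying in $\Mtilde_\alpha$). Since also $f\in H_\alpha$ and $H_\alpha$ is closed under unique $\Sigma_1$-definability from its elements, $y$ --- the unique $y'$ with $(\seq{\vec a,\vec\eta},y')\in f$ --- lies in $H_\alpha$. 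Thus $x\sub H_\alpha$.

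I expect the main obstacle to be the descent of the $\Sigma_1$-definition of $f$ to $\Mtilde_\alpha$: one must eliminate the parameter $\M^-$ that $\Sigma_1$-ordinal-generation forces into the definition, in favour of $A^{<\om}$ together with ordinals, which is exactly what the existence of $h^\M$ in $\M$ (with its uniform, parameter-free $\Sigma_1$ definition) makes possible; and one must check that the $\Sigma_1$-witnesses over $\M$ descend into some $\Mtilde_\alpha$, which is precisely the role of the \emph{continuity} of the stratification. The remaining work --- the absoluteness facts, the closure of $H_\alpha$, and the reconstruction of $\M^-$ from its universe --- is routine.
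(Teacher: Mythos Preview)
Your approach is correct and is precisely the elaboration the paper intends: its proof reads in full ``Use $\Sigma_1$-ordinal-generation and $A$-ordinal-surjections.''

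One detail needs a small repair. You assert that $h^\M$ is the unique $v$ with $\M\models\varphi_h(v)$ for a parameter-free $\Sigma_1$ formula $\varphi_h$, citing \ref{rem:h^M}; but that remark only gives that the \emph{graph} of $h^\M$ is uniformly $\Sigma_1$, and singling out $h^\M$ as a set (the ``last'' $h_\xi$) seems to require a $\Pi_1$ clause. The fix is to allow yourself the ordinal parameter $\xi_0=\wl(\M^-)$, which lies in $\OR(\M)$ since the pair $(\xi_0,\M^-)$ belongs to $\hmS^\M\sub\univ{\M}$. Then $h^\M=F(\xi_0)$ is uniquely $\Sigma_1$-definable from $\xi_0$ by \ref{lem:canonical_surjection}, and $\M^-$ is the unique $v$ with $(\xi_0,v)\in\hmS^\M$, which is $\Sigma_0$ in $\xi_0$. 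Since $\xi_0\in\OR(\Mtilde_\alpha)$ for large $\alpha$, your transfer principle now gives $h^\M,\M^-\in H_\alpha$, and the rest of your argument goes through unchanged. (Alternatively, bypass $h^\M\in H_\alpha$ entirely: for each $m_i\in\univ{\M^-}$, the relation ``$y=h^\M(\vec a_i,\vec\zeta_i)$'' is already $\Sigma_1$ in $\vec a_i$ and the ordinal entries of $\vec\zeta_i$, directly from the $\Sigma_1$ graph, so $m_i\in H_\alpha$ without further work.)
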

\begin{proof} Use $\Sigma_1$-ordinal-generation and
$A$-ordinal-surjections.
\end{proof}

\begin{lem}\label{lem:popm_is_Hull_1(A_cup_OR)}
	Let $\M$ be a potential opm $\M$  over coarse base $A$. Then  $\M=\Hull_1^{\M}(A\cup\OR^{\M})$,
	and
	$\Hull_1^{\M}(A\cup X)\preccurlyeq_1\M$ for all $X\sub\M$.
\end{lem}
\begin{proof}
	Use $\Sigma_1$-ordinal-generation
	and Lemma \ref{lem:canonical_surjection}.
\end{proof}

\subsection{Fine structure}
{\color{black}We now want to define the various fine structural notions for the structures under consideration, such as soundness, projecta, etc. We will want these definitions to apply not just to potential opms, but also the squash of a type 3 potential opm, once we have defined these. The following definition abstracts out what properties we want for this.}

\begin{dfn}\label{dfn:pre-fine}
Let $\N$ be a structure for a finite first-order language $\Ll$ {\color{black}consisting of constants and relation symbols}. We say that $\N$ is
\index{pre-fine}\dfnemph{pre-fine} (for $\Ll$) iff:
\begin{enumerate}[label=(\roman*)]
 \item $\Ll$ is a finite and $\{\dot{\in},\hmbdot\}\sub\Ll$, where
$\dot{\in}$ is a binary relation symbol and $\hmbdot$ is a constant symbol.
 \item $\N$ is an amenable $\Ll$-structure with transitive, rud closed, rank closed universe
$\univ{\N}$ and $\dot{\in}^\N={\in}\inter\univ{\N}^2$ and $\hmbdot^\N$ is transitive.
 \item\label{item:pre-fine_OR_Sigma_1-gen} $\N=\Hull_{\Sigma_1}^\N(\hmbdot^\N\un\OR(\N))$ (note the language here is $\Ll$),
 \item\label{item:pre-fine_stratification} {\color{black} there is a $\Sigma_1$ formula $\varphi$ of $\Ll$,
 a limit ordinal $\eta$ and a sequence $\left<\N_\alpha\right>_{\alpha<\eta}$ such that each $\N_\alpha$ is an $\Ll$-structure, $\univ{\N_\alpha}$ is transitive, $\N_\alpha\in\N$,
 \[ \N_\alpha=\N\rest\univ{\N_\alpha},\]
 $\univ{\N_\alpha}\sub\univ{\N_\beta}$ for $\alpha<\beta$, $\univ{\N}=\bigcup_{\alpha<\eta}\univ{\N_\alpha}$, and
 for all $X\in\N$,
 \[ X\in\{\N_\alpha\}_{\alpha<\eta}\iff\N\sats\varphi(X).\]
 (Since $\hmbdot^{\N_\alpha}=\hmbdot^{\N}$ and $\N_\alpha$ is transitive, it follows that $\hmbdot^{\N}\sub\N_\alpha$.)}\qedhere
\end{enumerate}
\end{dfn}

\begin{dfn}[Fine structure]\label{dfn:fine_structure}
Let $\N$ be pre-fine for the language $\Ll$. We sketch a description of the \index{fine structural notions}\dfnemph{fine
structural
notions}
for $\N$. For details refer to \cite{FSIT},\cite{steel2010outline};
we also adopt the simplifications explained in \cite[\S5]{V=HODX}.\footnote{The simplifications
involve dropping the parameters $u_n$, and replacing the use of generalized theories with pure
theories. These changes are not important, and if the reader prefers, one could redefine things
more analogously to \cite{FSIT},\cite{steel2010outline}.} Let $A=\hmb^\N$.

We say that $\N$ is \dfnemph{$0$-sound} and let \index{$\rho_n^\M$}\index{$p_n^\M$}$\rho_0^\N=\OR(\N)$,
$p_0^\N=\emptyset$, \index{$\core_n(\M)$}$\core_0(\N)=\N$ and
\index{$\rSigma_n^\M$}$\rSigma_{1}^{\N}=\Sigma_1^{\core_0(\N)}$ (here and in what follows, definability is with respect
to $\Ll$). Let \index{$T_n^\M$}$T^\N_0 = \N$.

Now let $n<\om$ and {\color{black}suppose we have already defined $\rho_n^\N$, $\pvec_n^\N=(p_1^\N,\ldots,p_n^\N)$,
$\core_n(\N)$, and $n$-soundness.
Suppose that $\N$ is $n$-sound,
which will imply that $\core_n(\N)=\N$. Suppose we have also defined the class of $\rSigma_{n+1}^\N$ relations, and that every $\Sigma_1^\N$ relation is $\rSigma_{n+1}^\N$.}

{\color{black}Define \index{$\rho_n^\M$}$\rho_{n+1}^\N$} as the least ordinal $\rho\geq\om$ such that for some
$X\sub A^{<\om}\cross\rho^{<\om}$, $X$ is $\bfrSigma_{n+1}^{\N}$ but $X\notin\univ{\N}$.
Define \index{$p_n^\M$}$p_{n+1}^\N$ as the least  $p\in[\Ord]^{<\om}$ such that some such $X$ is
\[ \rSigma_{n+1}^\N(A\un{\color{black}\rho_{n+1}^{\N}}\un\{p,\pvec^\N_n\}).\]
Here $p_{n+1}^\N$ is well-defined
by {\color{black}condition \ref{item:pre-fine_OR_Sigma_1-gen} of pre-fineness}.
For $X\sub\N$, let\index{$\Hull_{n}^\M(X)$}
\[ \Hull_{n+1}^\N(X)=\Hull_{\rSigma_{n+1}}^\N(X),\]
and \index{$\cHull_{n}^\M(X)$}$\cHull_{n+1}^\N(X)$ be
its transitive collapse, if this hull is extensional. If $A\subseteq X$ then the hull is indeed extensional,
as then $\Hull_1^{\N}(X)\preccurlyeq_1\N$, {\color{black}since
$\rSigma_1^\N\sub\rSigma_{n+1}^\M$,
and
using conditions \ref{item:pre-fine_OR_Sigma_1-gen} and \ref{item:pre-fine_stratification} of pre-fineness, we can define $\Sigma_1$ Skolem functions in a standard manner.}\footnote{\label{ftn:Sigma_1_Skolem}\color{black}That is,
fix $\left<\N_\alpha\right>_{\alpha<\eta}$ as in \ref{item:pre-fine_stratification}.
Let $\psi$ be $\Sigma_0$
and $\vec{x}\in X^{<\om}$
with $\N\sats\exists y\ \psi(y,\vec{x})$. We want to see that there is $y\in\Hull_1^{\N}(X)$
such that $\N\sats\psi(y,\vec{x})$.
Using \ref{item:pre-fine_OR_Sigma_1-gen}, let $\vec{a}\in A^{<\om}$
and $\tau$  be a $\Sigma_1$ formula
 such that there is $\vec{\beta}\in[\OR^\N]^{<\om}$
and  $y\in\N$ such that \[ \big[\N\sats\psi(y,\vec{x})\big]\text{
and }
\big[y\text{ is the unique }y'\in\N\text{
such that }\N\sats\tau(\vec{a},\vec{\beta},y')\big].\]
Then there is $\alpha<\eta$ such that $\vec{x}\in(\N_\alpha)^{<\om}$
and there is $\vec{\beta}\in[\OR^{\N_\alpha}]^{<\om}$
and $y\in\N_\alpha$ such that
\[ \big[\N_\alpha\sats\psi(y,\vec{x})\big]\text{
and }
\big[y\text{ is the unique }y'\in\N_\alpha\text{
such that }\N_\alpha\sats\tau(\vec{a},\vec{\beta},y')\big].\]

Let $\alpha_0$ be least such $\alpha$. Let $\vec{\beta}_0\in[\OR(\N_{\alpha_0})]^{<\om}$
be the least witness to the choice of $\alpha_0$.
Let $y_0$ be the unique $y\in\N_{\alpha_0}$
such that
$\N_{\alpha_0}\sats\tau(\vec{a},\vec{\beta}_0,y)$.
Then $\N\sats\psi(y_0,\vec{x})$ and $\alpha_0,\vec{\beta}_0,y_0\in\Hull_1^\N(X)$, which suffices.} Let\index{$\Th_{n}^\M(X)$}
\[ \Th_{n+1}^\N(X)=\Th_{\rSigma_{n+1}}^\N(X). \]
(Recall that $\Th^\N_{\rSigma_{n+1}}(X)$
was specified at the end of \S\ref{sec:notation}.
Note that this
theory is analogous to the \emph{pure} $\rSigma_{n+1}$ theory, as opposed to the
\emph{generalized}
$\rSigma_{n+1}$ theory,
in the sense of \cite{FSIT}.\footnote{As in \cite[\S2]{FSIT}, it does not matter which we
use.}.) Then we let\index{core}\index{$\core_{n}(\M)$}
\[ \core_{n+1}(\N)=\cHull_{n+1}^\N(A\un\rho_{n+1}^\N\un\pvec_{n+1}^\N), \]
and the uncollapse map $\pi:\core_{n+1}(\N)\to\N$ is the associated \index{core embedding}\dfnemph{core embedding}.
{\color{black}We say that $\N$ is \index{universal}$(n+1)$-\dfnemph{universal} iff
\[ \N\cap\pow\big(A^{<\om}\cross(\rho_{n+1}^\N)^{<\om}\big)\sub\core_{n+1}(\N).\]
For $\alpha\in p_{n+1}^{\N}$, define
the
 \index{solidity witness}\dfnemph{$(n+1)$-solidity witness $\mathcal{W}^\N_{n+1}(\alpha)$ at $\alpha$} by setting
\[ \mathcal{W}^\N_{n+1}(\alpha)=\cHull_{n+1}\big(A\cup\alpha\cup(p_{n+1}^{\N}\cut(\alpha+1))\cup\pvec_n^{\N}\big).\]
We say that $\N$ is \index{solid}\dfnemph{$(n+1)$-solid} iff $\mathcal{W}^{\N}_{n+1}(\alpha)\in\N$ for each $\alpha\in p_{n+1}^{\N}$.} (This follows the form of the definition of solidity in \cite{zeman}. In \cite{FSIT} and \cite{steel2010outline},
 $\rSigma_{n+1}$-theories are used as solidity witnesses, instead of transitive structures. Note that also
 following Zeman \cite{zeman} but not Steel \cite{steel2010outline},
	we do not incorporate $(n+1)$-universality into $(n+1)$-solidity.)
We say that $\N$ is \index{sound}$(n+1)$-\dfnemph{sound} iff $\N$ is $(n+1)$-solid and $\core_{n+1}(\N)=\N$ and the core embedding $\pi=\id$.

Now suppose that $\N$ is $(n+1)$-sound and $\rho_{n+1}^\N>\om$ (so $\rho_{n+1}^\N>\rank(A)$).\footnote{{\color{black}If $A\cap\OR<\rank(A)$,
we can still definably refer to each $\alpha<\rank(A)$ via elements $x\in A$ of rank $\alpha$. This is because  $\N$ is rud closed and rank closed,
so the rank function $r_A:A\to\OR$
is in $\N$, and since $A=\hmbdot^\N$, note that $\{r_A\}$ is $\rSigma_1^{\N}$. It easily follows that we can't have $\om<\rho_{n+1}^\N\leq\rank(A)$.}}
Define \index{$T_n^\M$}$T=T_{n+1}^\N\sub\N$ by letting $t\in T$ iff
for some $q\in\N$ and $\alpha<\rho_{n+1}^\N$,
\[ t=\Th_{{n+1}}^\N(A\un\alpha\un\{q\}).
\]
Define \index{$\rSigma_n^\M$}$\rSigma_{n+2}$ from $T_{n+1}$ as
usual: an $\rSigma_{n+2}$ formula $\varphi(\vec{v})$ (in free variables $\vec{v}$) is one of form
\[\exists t\ \big(T_{n+1}(t) \wedge \psi(t,\vec{v})\big) \]
where $\psi$ is $\rSigma_1$. The $\rSigma_{n+2}^\N$ relations are then given by interepreting these formulas over $\N$, with $T_{n+1}$ interpreted as $T_{n+1}^\N$. This completes the definitions.
\end{dfn}

\begin{dfn}
Let $\N$ be a potential opm.

If $\N$ is $\hmE$-active then \index{$\hmmu^\M$}$\hmmu^\N\eqdef\crit(F^\N)$, and otherwise $\hmmu^\N\eqdef\emptyset$.\footnote{Recall the language $\Ll_0^+=\Ll_0\cup\{\dot\mu,\dot e\}$ was specified in Definition \ref{dfn:language_Ll_0}.}

If $\N$ is $\hmE$-active type $2$ then \index{$\hme^\M$}$\hme^\N$ denotes the trivial completion of the largest
non-type $Z$ proper segment of $F$; otherwise $\hme^\N\eqdef\emptyset$.\footnote{In \cite{FSIT},
the (analogue of) $\hme$ is referred to by its
code $\gamma^\M$. We use $\hme$ instead because this does not depend
on having (and selecting) a wellorder of $\M$.}

If $\N$ is non-type 3 then \index{$\core_n(\M)$}$\core_0(\N)=\N^\sq$ denotes the $\Ll_0^+$-structure
$(\N,\hmmu^\N,\hme^\N)$ (with $\hmmudot^\N=\hmmu^\N$ etc).

If $\N$ is type 3 then define the $\Ll_0^+$-structure $\core_0(\N)=\N^\sq$, called the \index{squash}\index{$\M^\sq$}\dfnemph{squash} of $\N$, essentially as in
\cite{FSIT}; so
 \[ \N^\sq=(R,\hmE',\hmP',\hmS',\hmX';\hmb^\N,\hmp^\N,\hmmu^\N,\hme^\N)\]
where $\nu=\nu(F^\N)$, $R=\univ{\N|\nu}$, $\hmE'=F^\N\rest\nu$ (which is the usual coding
of $F^\N$ over the squash), $\hmP'=\emptyset$, $\hmS'=\hmS^\N\inter R$ and $\hmX'=\hmX^\N\inter R$. Note $e^{\N^\sq}=e^\N=\emptyset$ here.

We define the \index{fine structural notions}\dfnemph{fine structural notions} for $\N$ ($n$-\index{sound}soundness,
 \index{$\rho_n^\M$}$\rho_{n+1}^{\N}$, \index{$\Hull_n^\M(X)$}$\Hull^\N_{n+1}$, $\Th_{n+1}^\N$, etc) as those for
$\core_0(\N)$.\footnote{Thus, when we write, say, $\M=\cHull_{n+1}^\N(X)$,
we will have $X\sub\core_0(\N)$ and literally mean that
$\core_0(\M)=\R$ where $\R=\cHull_{n+1}^{\core_0(\N)}(X)$.
So if $\N$ is type 3, then $\M$ is produced by first squashing $\N$, forming the transitive collapse $\R$ of the hull of $X$ in $\N^\sq$, and then unsquashing $\R$ to reach $\M$. However, if $\N$ is type 3 and $n=0$ it is possible that
unsquashing $\R$ produces an illfounded structure $\M$, in which case $\core_0(\M)$ has not
literally been defined. In this case, we define $\M$ to be this illfounded structure,
and define $\core_0(\M)=\R$.}
\end{dfn}

In the proof of the solidity, etc, of iterable opms, one must also deal with
structures which are almost active opms, except that they may fail the ISC.
The details are immediate modifications of the standard notions, so we leave them to the reader.

The following definition is just the direct adaptation of the usual one:
\begin{dfn}\label{weak0}
Let $\M$ be a potential opm. Let $\R$ be an $\Ll_0^+$-structure (possibly illfounded). Let
$\pi:\R\to\core_0(\M)$.

We say that $\pi$ is a \index{weak $n$-embedding}\dfnemph{weak $0$-embedding} iff $\pi$ is
$\Sigma_0$-elementary (therefore $\R$ is extensional and wellfounded, so assume $\R$ is
transitive) and there is $X\sub\R$ such that $X$ is $\in$-cofinal in $\R$ and
$\pi$ is $\Sigma_1$-elementary on elements of $X$, and if $\M$ is type 1 or 2, then letting
$\mu=\hmmu^\R$,
there is $Y\sub\R\wupto\mu^{+\R}\cross\R$ such that $Y$ is ${\in}\cross{\in}$-cofinal in
$(\R\wupto\mu^{+\R})\cross\R$ and $\pi$ is $\Sigma_1$-elementary on
elements of $Y$.\end{dfn}

The following definition of \emph{\tu{(}near\tu{)} $k$-embedding} is analogous
to that in \cite[Definition 2.20, Remark 4.3]{steel2010outline},
and \emph{weak $k$-embedding} analogous
to that introduced in \cite{copy_con}
(the change in the definition in \emph{weak $k$-embedding} between \cite{FSIT} and the one we use here is due to Steve Jackson).\footnote{\label{ftn:steve}Jackson noticed a difficulty in  the proof of the Shift Lemma for weak $k$-embeddings as defined in \cite{FSIT} and \cite{steel2010outline}, when $0<k<\om$. Jackson suggested the definition we use here as a replacement. (The problem is that it does not seem obvious that a weak $k$-embedding $\pi$ in the sense of \cite{FSIT} is always such that $\pi``T_k^M\sub T_k^N$.
But we do not actually know of an example of a weak $k$-embedding
as defined in \cite{FSIT}
for which this or the Shift Lemma fails.}
\begin{dfn}\label{dfn:k-emb}
{\color{black}\index{embedding}Let $k\leq\om$
and let $\M,\N$ be $k$-sound opms.
Let \[\pi:\core_0(\M)\to\core_0(\N).\]

We say that $\pi$ is a \index{near $n$-embedding}\emph{near $k$-embedding}
iff either $k=\om$ and $\pi$
is fully elementary,
or  $k<\om$ and:
\begin{enumerate}\item $\pi$ is $\rSigma_{k+1}$-elementary,
\item $\pi(\pvec_k^\M)=\pvec_k^\N$,
\item for $i<k$, we have:
\begin{enumerate}[label=--]\item if $\rho_i^\M=\rho_0^\M$ then $\rho_i^\N=\rho_0^\N$, and
\item if $\rho_i^\M<\rho_0^\M$
then $\pi(\rho_i^\M)=\rho_i^\N$.
\end{enumerate}
\end{enumerate}

We say $\pi$ is a \index{$n$-embedding}\emph{$k$-embedding}
iff $\pi$ is a near $k$-embedding
and if $k<\om$
then $\pi``\rho_k^\M$ is cofinal in $\rho_k^\N$.

If $0<k\leq\om$,
we say $\pi$ is a \index{weak $n$-embedding}\emph{weak $k$-embedding} iff $k=\om$ and $\pi$ is fully elementary,
or $k<\om$ and has the properties of a near $k$-embedding,
except that instead of $\rSigma_{k+1}$-elementarity,
we only demand that $\pi$ is $\rSigma_k$-elementary and there is a set $X\sub\rho_k^\M$ such that
{\color{black}$X$ is cofinal in $\rho_k^\M$ and}  $\pi$ is $\rSigma_{k+1}$-elementary on parameters in
$\Hull_{k+1}^\M(X\cup\{\pvec_k^\M\})$. Therefore $\pi `` T_k^{\M} \subseteq T^{\N}_k$.
(Note that
this definition of \emph{weak $k$-embedding} diverges slightly from the definitions given in
\cite[p.~52]{FSIT} and \cite[Definition 4.1]{steel2010outline}; see Footnote \ref{ftn:steve}.)}

We say that $\pi$ is \index{$n$-good}\index{good}\index{nearly $n$-good}\index{weakly $n$-good}\dfnemph{(weakly, nearly) $k$-good} iff $\pi$ is a (weak, near)
$k$-embedding and $\hmb^\M=\hmb^\N$ and $\pi\rest \hmb^\M=\id$.
\end{dfn}

{\color{black}The following definition is not intended to be a comprehensive statement of fine structural condensation.
It simply encompasses some basic instances of condensation, which for example arise in the basic proofs of fine structure, such as solidity, etc. The definition is also of low enough complexity that it is preserved by the relevant hulls and ultrapower maps.}

\begin{dfn}\label{dfn:<om-condensing}
Let $\N$ be an $\om$-sound potential opm. We say that $\N$ is \index{condensing}\index{${<\om}$-condensing}\dfnemph{${<\om}$-condensing}
iff for every $k<\om$, for every soundly projecting, $(k+1)$-sound potential opm $\M$,
for every near $k$-embedding
$\pi:\M\to\N$ such that $\rho=\rho_{k+1}^\M\leq\crit(\pi)$ and
$\rho<\rho_{k+1}^\N$, we have the following. {\color{black}\label{c23}If $\N\wupto\rho$ is $\hmE$-passive let $\Q=\N$,
and otherwise let $\Q=\Ult(\N\wupto\rho,F^{\N\wupto\rho})$.} Then either:
\begin{enumerate}[label=--]
\item $\M\wpins\Q$, or
\item $\M^-\wpins\Q$, and $\M\in\R$ where $\R\wpins\Q$ is such
that $\R^-=\M^-$.\qedhere
\end{enumerate}
\end{dfn}

{\color{black}The inclusion of the second option above (where $\M^-\pins\Q$ and $\M\in\R$) might appear to diverge from the usual kind of conclusion for condensation, as we do not have $\M\ins\Q$ here; instead,
$\M$ is strictly ``between'' $\R^-$ and $\R$.}{\color{black}\label{c24} Note that if this clause attains then by projection amenability for $\R$, $\M$
is sound and
 $\rho_\om^\M=\rho_1^\M=\rho_\om^{\M^-}$, the soundness following from the fact that $\rho_\om^\M=\rho_\om^{\M^-}$.}

 \subsection{Q-operator-premice}

\begin{dfn}
 A \index{Q-opm}\dfnemph{Q-operator-premouse \tu{(}Q-opm\tu{)}}\footnote{\emph{Q} is for \emph{Q-formula}. We
will
see that the first-order aspects
of Q-opm-hood are expressible with Q-formulas and P-formulas.} is a potential operator-premouse
$\M$ such that every $\N\wpins\M$ is $\om$-sound and ${<\om}$-condensing.
\end{dfn}

{\color{black}Q-operator-premice
are basically analogous to \emph{premice}
in \cite{FSIT}. However, we will soon refine things one step further, defining \emph{operator-premice}, which will be the primary objects of interest; these are just Q-operator-premice,
all of whose segments are soundly projecting (including the top one), and they are also analogous to premice. Of course for premice,
there is no distinction between these two notions.}
The analogy with the premice of
 \cite{FSIT} does fail, however, in a minor regard: \cite{FSIT} makes no condensation demands of proper segments of
premice. We
make this requirement here so that we can avoid stating it as an explicit axiom at certain points later
(and it holds for the structures we care about).

{\color{black}Much as in \cite{FSIT},
modulo wellfoundedness
(of the relevant objects),
we can capture the property of being a Q-opm with formulas of the following forms:

\begin{dfn}\label{dfn:Q,P-formula}
The class of (non-simple) \index{Q-formula}\dfnemph{$\Ll_0^+$-Q-formulas}  is defined as in \cite[Definition 2.3.9]{FSIT}; that is, these are the formulas $\varphi(\vec{u})$
of form
\[ \all x\ \all \xi<\dot{\mu}^{+}\  \exists y\ \exists\eta<\dot{\mu}^{+}\ \big[x\sub y\wedge \xi\leq\eta\wedge\psi(y,\eta,\vec{u})\big],\]
where $\psi(v_0,v_1,\vec{u})$
is an $\rSigma_1$ formula
of $\Ll_0^+$,
which
has only $v_0,v_1,\vec{u}$ free.
We define
the class of
\index{P-formula}\dfnemph{$\Ll_0^+$-P-formulas}
just like the conventional notion of Q-formulas (instead of following \cite[Definition 3.1.4]{FSIT});
these are the formulas $\varphi(\vec{u})$
of form
\[\all x\ \exists y\ \big[x\sub y\wedge\psi(y,\vec{u})\big] \]
where $\psi(v,\vec{u})$ is an $\rSigma_1$ of $\Ll_0^+$ and has only $v,\vec{u}$ free.
\end{dfn}}

{\color{black}Recall from Definition \ref{dfn:adequate} that an
adequate model-plus is an $\Ll_0^+$-structure $\N$ such that $\N\rest\Ll_0$ is an
adequate model.}

\begin{lem}\label{lem:Q-opm_type_Q-formulas} There are $\Ll_0^+$-Q-formulas
$\varphi_1,\varphi_2$,
an $\Ll_0^+$-P-formula $\varphi_3$, an $\Ll_0^+$-simple-Q-formula $\varphi_{0,\mathrm{limit}}$,
and for each $\Sigma_1$ formula $\psi\in\Ll_0$ there are $\Ll_0^+$-simple-Q-formulas
$\varphi_{0,\psi},\varphi_{4,\psi}$,
{\color{black}obtained recursively from $\psi$,} such that for any adequate model-plus $\N'$:
\begin{enumerate}
\item\label{item:passive_limit} $\N'\sats\varphi_{\mathrm{0,limit}}$ iff $\N'=\core_0(\N)$ for some
limit passive Q-opm $\N$.
\item\label{item:B-active} $\N'\sats\varphi_{4,\psi}$ iff $\N'=\core_0(\N)$ for some
$\psi$-stratified $\hmP$-active Q-opm $\N$.
\item\label{item:passive} $\N'\sats\varphi_{0,\psi}$ iff $\N'=\core_0(\N)$ for some passive Q-opm
$\N$ which is either a limit or is $\psi$-stratified.
 \item\label{item:active_1,2} $\N'\sats\varphi_1$
\tu{(}respectively, $\N'\sats\varphi_2$\tu{)} iff $\N'=\core_0(\N)$ for some type 1
\tu{(}respectively, type 2\tu{)}
Q-opm $\N$.
 \item\label{item:active_3} If $\N'=\core_0(\N)$ for
some type 3 Q-opm $\N$ then $\N'\sats\varphi_3$. If $\N'\sats\varphi_3$ then $\hmE^{\N'}$
codes an extender
$F$ over $\N'$ such that if $\Ult(\N',F)$ is wellfounded then
$\N'=\core_0(\N)$ for some type 3 Q-opm $\N$.
\end{enumerate}
\end{lem}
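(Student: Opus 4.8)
The plan is to treat each clause separately, in each case building the required formula by transcribing the first-order content of Q-opm-hood over the structure $\core_0(\N)$, and invoking Lemma \ref{lem:adequate model_almost_Q} for the ``adequate model'' part. Since we are starting from an adequate model-plus $\N'$, the base content ($\varphi_{\mathrm{pair}}$ together with $\varphi_{\mathrm{am}}$) is already assumed, so in each case we only need to express the extra conditions in Definition \ref{dfn:potential opm} relevant to the type in question, the soundness and ${<}\om$-condensing of proper segments, and (for the active cases) the premouse axioms. The key observation driving the whole argument is the asymmetry noted in the footnote about type 3 versus non-type 3: for $\hmE$-passive and $\hmP$-active $\N$, the premouse/predicate axioms are local and $\Sigma_1$-expressible over $\core_0(\N)$ together with universal closure conditions, hence fall within the simple-Q-formula format of Definition \ref{dfn:Ll_0-Q-formula}; whereas type 3 requires genuine Q-formulas and a P-formula, exactly as in \cite[\S\S2,3]{FSIT}, and even then only modulo wellfoundedness of the ultrapower (clause \ref{item:active_3}).

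First I would handle the passive and $\hmP$-active clauses (\ref{item:passive_limit}, \ref{item:B-active}, \ref{item:passive}). For a passive limit Q-opm, what must be said beyond ``adequate model'' is: $\hmE^\N=\hmP^\N=\emptyset$ on all segments, $\wl(\N)$ is a limit, continuity (clause \ref{item:limit_union}), $\hmmudot^{\N'}=\hmedot^{\N'}=\emptyset$, and that every proper segment is $\om$-sound and ${<}\om$-condensing. The soundness-of-proper-segments requirement is the crucial ingredient that makes Q-opm-hood first-order: as remarked after the definition of Q-opm, because every $\N\wpins\M$ is fully sound, the fine-structural content of each proper segment is recoverable, and ``$\all x\,\ex y\,[x\sub y\ \&\ y$ codes a sound ${<}\om$-condensing segment covering $x]$'' has the simple-Q shape. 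For $\hmP$-active $\N$ one adds $\hmP$-goodness (clause (1) of Definition \ref{dfn:potential opm}: $\N$ a successor, $\hmP^\N\sub\N\cut\N^-$), the successor-opm requirements (projectum amenability, $A$-ordinal-surjections, $\Sigma_1$-ordinal-generation, and $\psi$-stratification via $\psi(\N^-,\cdot)$), all of which are $\Sigma_1$-over-$\core_0$ or universal-$\Sigma_1$ assertions, hence simple-Q; the dependence on the $\Sigma_1$ formula $\psi$ is why $\varphi_{0,\psi}$ and $\varphi_{4,\psi}$ are indexed by $\psi$. The formula $\varphi_{0,\psi}$ simply disjoins the ``limit passive'' case with the ``$\psi$-stratified passive successor'' case.

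Next I would do the active type 1 and 2 clauses (\ref{item:active_1,2}). Here $\N$ is $\hmE$-active, hence a limit (so, as in the passive-limit case, the successor clauses of Definition \ref{dfn:potential opm} are vacuous), and one must assert: $\hmE^\N$ codes an extender $F$ with $\crit(F)=\hmmudot^{\N'}$, the completeness condition ($F$ is $A^{<\om}\times\gamma^{<\om}$-complete for all $\gamma<\crit F$), the premouse axioms of \cite[Definition 2.2.1]{wilson2012contributions} for $(\univ{\N},\hmS^\N,\hmE^\N)$ including the ISC, the type distinction (type 1 vs.\ type 2 via whether $\hmedot^{\N'}=\emptyset$ or codes the largest non-type-$Z$ proper segment of $F$), plus continuity and soundness/condensing of proper segments as before. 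In the non-type-3 case the agreement $i_F(\hmS\rest(\kappa^+)^\N)\rest\OR(\N)=\hmS^\N$ and the ISC are expressible by Q-formulas over $\core_0(\N)$ exactly as the corresponding premouse axioms are in \cite[\S2]{FSIT}; this is where genuine (non-simple) Q-formulas enter, since coherence of the extender sequence is not a simple-Q statement. For type 3 (clause \ref{item:active_3}), $\core_0(\N)$ is the squashed structure $\N^\sq$, and the ISC becomes a P-formula over $\N^\sq$ precisely as in \cite[\S3]{FSIT}; moreover, reading off the premouse axioms from $\N^\sq$ only guarantees that $\hmE^{\N'}$ codes an extender $F$ whose unsquashed ultrapower, \emph{if wellfounded}, is a type 3 Q-opm — we cannot verify wellfoundedness first-order over $\N'$, which is exactly the one-directional form of clause \ref{item:active_3}.

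The hard part is not any single clause but assembling the right Q- and P-formulas for the type 3 case and verifying they behave under the squash/unsquash operation — i.e.\ checking that the premouse axioms and the condensing/soundness requirements for proper segments, when transcribed over $\N^\sq$, say exactly what they should about $\N$, and that the one-way clause \ref{item:active_3} cannot be improved. All of this is an essentially routine but lengthy transcription of \cite[\S\S2,3]{FSIT} into the operator-premouse setting — using $\Sigma_1$ in place of the $\rSigma_1$ there, as the final remark of Definition \ref{dfn:fine_structure} notes — combined with the new simple-Q-formula packaging of projectum amenability, $A$-ordinal-surjections, $\Sigma_1$-ordinal-generation, and $\psi$-stratification from Definition \ref{dfn:potential opm}. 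I would present the passive and $\hmP$-active constructions in some detail to illustrate the simple-Q packaging of the new clauses, and then say that the type 1,2,3 constructions follow \cite{FSIT} verbatim modulo these modifications, with the type-3 ISC handled by a P-formula and clause \ref{item:active_3} necessarily stated modulo wellfoundedness of the ultrapower.
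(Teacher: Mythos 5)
Your proposal is correct and follows essentially the same route as the paper: prove the passive and $\hmP$-active clauses directly by packaging the successor conditions of Definition \ref{dfn:potential opm} as simple-Q formulas (using that proper segments are fully sound and ${<\om}$-condensing to make this first-order), and refer the type 1, 2, 3 clauses to \cite[Lemmas 2.5, 3.3]{FSIT} with the squash/unsquash caveat forcing the one-directional form of clause \ref{item:active_3}. The only details you've elided that the paper spells out are the specific simple-Q packaging of projectum amenability (which goes via satisfaction relations witnessing $\rho_{n+1}^\P\leq\tau$ inside the structure, exploiting soundness of proper segments in exactly the way you flagged as crucial), and the minor observation that since superstrong extenders are permitted here, the corresponding disjunct of \cite[Lemma 3.3]{FSIT} can be dropped from $\varphi_3$.
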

\begin{proof}
Part \ref{item:passive_limit} is routine. Part \ref{item:active_1,2} is a straightforward adaptation of its analogue
\cite[Lemma 2.5]{FSIT}.
{\color{black}Part
\ref{item:active_3} is likewise adaptation of \cite[Lemma 3.3]{FSIT},
with the following two remarks.
Firstly,
the P-formulas of \cite[Definition 3.1.4]{FSIT}
are more liberal than
$\Ll_0^+$-P-formulas.
But note that each of
the  sentences $\theta_1,\ldots,\theta_5$
of the proof of \cite[Lemma 3.3]{FSIT}, adapted to our context, are expressible with an $\Ll_0^+$-P-formula
(for $\theta_1$,
this is as in part \ref{item:passive_limit} of the current lemma).
Now the rest of the proof of \cite[Lemma 3.3]{FSIT} goes through, noting that we have been able to
drop the clause ``or $\N$ is of superstrong type'' from the statement of \cite[Lemma
3.3]{FSIT}, because we allow extenders of superstrong type as the active extenders of Q-opms.}

Part \ref{item:B-active} is an easy adaptation of part \ref{item:passive}, using the fact that if
$\N$ is $\hmP$-active then $\hmP^\N\sub\N\cut\N^-$.
So it just remains to consider part \ref{item:passive};
we just sketch the proof of this.

Consider an adequate model-plus $\N'$ and $\N=\N'\rest\Ll_0$. We leave it to the reader to verify
that here is an $\Ll_0$-simple-Q-formula
asserting (when interpreted over $\N'$)
that every $\M\wpins\N$ is a ${<\om}$-condensing $\om$-sound potential opm,
and an
$\Ll_0^+$-simple-Q-formula asserting
that
$\hmP^{\N}=\hmE^{\N}=\hmmu^{\N}=\hme^{\N}=\emptyset$.
It remains to see that we can assert that \ref{dfn:potential opm}(\ref{item:successor_potential
opm}) holds for $\M=\N$
(the assertion will include the possibility that $\N$ is a limit). For
\ref{dfn:potential opm}(\ref{item:no_early_proj}), use the formula
``$\all x\ex y\big[x\sub y\wedge\varphi(y)\big]$'',
where $\varphi(y)$ asserts ``either there is $s\in \hmS^\M$ such that $y\in s$ or there are
$\hmS,A$
such that $\hmS=y\inter \hmS^\M$ and $A=\hmb^\M$ and $\hmS$ has a largest element $\P$ and
for each $\tau<\OR(\P)$, if there is $X\in y\cut\P$ such that $X\sub
A^{<\om}\cross\tau^{<\om}$, then there is $n<\om$ such that
$\rho_{n+1}^\P\leq\tau$, as witnessed by a satisfaction relation in
$y$'' (use the fact that $\N$ is rud closed).

Clause \ref{dfn:potential opm}(\ref{item:surjections}) is easy, and it is fairly straightforward to
assert that either $\N$ is a limit or $\N$ is $\psi$-stratified, identifying candidates for
$\N^-$ as in the previous paragraph.
We can therefore assert \ref{dfn:potential opm}(\ref{item:M=Hull_1^M(OR)}) as ``$\all x\ex y\big[x\sub
y$
and there is $\alpha<\gamma$ such that $y\sub H_\alpha\big]$'', where
$\gamma,H_\alpha$ are defined as in \ref{lem:local_hulls}, using the stratification given by $\psi$.
\end{proof}

{\color{black}
The natural adaptations of \tu{\cite[Lemmas 2.4, 3.2]{FSIT}}
hold, and the proofs are straightforward:

\begin{lem}\label{lem:Qformula_pres}
Let $\R,\Ss$ be transitive $\Ll_0^+$-structures. Let $\pi:\R\to\Ss$.
Let $\vec{x}\in\R^{<\om}$. Then:
\begin{enumerate}
\item
Let $\varphi$ be an $\Ll_0^+$-P-formula. Then:
\begin{enumerate}[label=\tu{(}\alph*\tu{)}]
\item If $\pi$ is $\rSigma_1$-elementary and $\Ss\sats\varphi(\pi(\vec{x}))$
then $\R\sats\varphi(\vec{x})$.
\item If $\pi$ is $\Sigma_0$-elementary and $\sub$-cofinal
and $\R\sats\varphi(\vec{x})$
then $\Ss\sats\varphi(\pi(\vec{x}))$.
\end{enumerate}
\item Let $\varphi$ be an $\Ll_0^+$-Q-formula. Suppose $\dot{\mu}^{\R}\in\OR^{\R}$. Then:
\begin{enumerate}[label=\tu{(}\alph*\tu{)}]
\item If $\pi$ is $\rSigma_1$-elementary
and $\Ss\sats\varphi(\pi(\vec{x}))$
then $\R\sats\varphi(\vec{x})$.
\item If $\pi$ is $\Sigma_0$-elementary and $\rg(\pi)$
is ${\sub}\cross{\sub}$-cofinal in $(\Ss|\dot{\mu}^{+\Ss})\cross\Ss$
and $\R\sats\varphi(\vec{x})$ then $\Ss\sats\varphi(\pi(\vec{x}))$.
\end{enumerate}
\end{enumerate}
\end{lem}}

There is also a version of this lemma for weak $0$-embeddings,
but here we only consider statements $\varphi$ without parameters:

\begin{lem}\label{lem:Qformula_pres_weak0}
Let $\N$ be a Q-opm, let $\R$ be an $\Ll_0^+$-structure and let $\pi:\R\to\core_0(\N)$ be a weak
$0$-embedding. Then:
\begin{enumerate}
\item\label{item:weak_0-emb_Q-formula} Suppose $\N$ is type $i\neq 3$. Then:
\begin{enumerate}[label=\tu{(}\alph*\tu{)}]
\item For every $\Ll_0^+$-Q-formula $\varphi$, if $\core_0(\N)\sats\varphi$ then $\R\sats\varphi$.
\item\label{item:weak_0-emb_R_is_Q-opm_same_type}  $\R=\core_0(\M)$ for some type $i$ Q-opm $\M$.
\footnote{Possibly $\M,\N$ are passive and $\N$ is a successor
but $\M$ a limit.}
\end{enumerate}
\item\label{item:weak_0-emb_P-formula} Suppose $\N$ is type 3. Then:
\begin{enumerate}[label=\tu{(}\alph*\tu{)}]\item For every $\Ll_0^+$-P-formula $\varphi$,
if $\core_0(\N)\sats\varphi$ then $\R\sats\varphi$.
{\color{black}\item\label{item:weak_0-emb_wfd_goes_down} If $\Ult(\core_0(\N),F^\N)$ is wellfounded then $\Ult(\R,F^\R)$ is wellfounded.}
\item\label{item:weak_0-emb_R_Q-opm_type_3} If $\Ult(\R,F^\R)$ is wellfounded then
$\R=\core_0(\M)$ for some type 3 Q-opm $\M$.
\end{enumerate}
\end{enumerate}
\end{lem}
The proof is routine,
{\color{black}using  Lemma \ref{lem:Q-opm_type_Q-formulas}
for parts \ref{item:weak_0-emb_Q-formula}\ref{item:weak_0-emb_R_is_Q-opm_same_type} and \ref{item:weak_0-emb_P-formula}\ref{item:weak_0-emb_R_Q-opm_type_3}}.

\begin{lem}\label{lem:hull_emb_properties}
 Let $n<\om$ and $\M$ be an $n$-sound Q-opm over $A$ with $\om<\rho_n^\M$. Let $X\sub\core_0(\M)$, let
$\R=\cHull_{n+1}^\M(A\un X\un\pvec_n^\M)$
and let $\pi:\R\to\core_0(\M)$ be the uncollapse. Then:
\begin{enumerate}
 \item\label{item:N_is_Q-opm} If either $n>1$ or $\M$ is non-type 3 or
$\Ult(\core_0(\M),F^\M)$ is wellfounded then $\R=\core_0(\N)$
for some Q-opm $\N$.
\item\label{item:pi_nearly_n-good} If $\R=\core_0(\N)$ for some Q-opm $\N$ then $\N$ is $n$-sound and $\pi$ is nearly $n$-good.
\end{enumerate}
\end{lem}
\begin{proof}
 Suppose $n=0$ and $\M$ is a successor. Then {\color{black}by Lemmas \ref{lem:Q-opm_type_Q-formulas} and \ref{lem:Qformula_pres},} it suffices to see that $\pi$ is
$\rSigma_1$-elementary,
or in other words, that $\rg(\pi)\preccurlyeq_{1}\M$.
But using  stratification
(as in part \ref{item:successor_potential opm}(\ref{item:stratification}) of Definition \ref{dfn:potential opm})
we can define appropriate $\Sigma_1$ Skolem functions over $\M$,
much as was done in Footnote \ref{ftn:Sigma_1_Skolem},
and thereby verify that $\rg(\pi)\preccurlyeq_1\M$.

If $n=0$ and $\M$ is a limit it is similar, but easier.
(If $\M$ is type 3, then by the hypothesis in part \ref{item:N_is_Q-opm},
$\Ult(\core_0(\M),F^{\M})$ is wellfounded,
which implies that $\Ult_0(\R,F^{\R})$ is wellfounded,
and $\R=\core_0(\N)$
for a type 3 Q-opm $\N$.)

If $n>0$, then the proof for standard premice adapts routinely.\footnote{The fine structural setup here is a little different
from that in \cite{FSIT}, as we have dropped the use of $u_i^\M$. See \cite[\S5]{V=HODX}
for calculations which deal with this difference.} (If $\M$ is type 3 and $n>1$, there is
$(a,f)\in\rg(\pi)$ such that $\nu(F^\M)=[a,f]^\M_{F^\M}$, which easily gives that {\color{black}$\R=\core_0(\N)$
for a type 3 Q-opm $\N$,
even if we don't know that $\Ult_0(\M,F^{\M})$ is wellfounded.)}\end{proof}

Using stratifications and standard calculations, we also have:

\begin{lem}\label{lem:core_embedding}
Let $\N,\M$ be $n$-sound Q-opms over $A$. Then:
\begin{enumerate}
\item Let $\pi:\core_0(\N)\to\core_0(\M)$ be nearly $n$-good.
Suppose that $\N\notin\M$ and
\[\N=\cHull_{n+1}^\N(A\un\rho\un\{q\}),\] where $\rho\in\Ord$ and $\rho\leq\crit(\pi)$
{\color{black}and $q\in[\rho_0^\N]^{<\om}$}.
Then $\pi$ is $n$-good.

\item If $\N=\core_{n+1}(\M)$ and $\pi$ is the core embedding, then $\pi$ is $n$-good.
\end{enumerate}
\end{lem}

\subsection{Operator-premice}
We finally reach the ultimate notion prior to introducing an actual operator $\Fop$ from which to build our premice:

\begin{dfn}\label{dfn:opm}
 An \index{operator-premouse}\index{opm}\dfnemph{operator-premouse \tu{(}opm\tu{)}} is a soundly projecting Q-opm. For an opm $\M$,
let \index{$q^\M$}$q^\M=p_1^\M\inter(\OR(\M^-),\OR(\M))$
(so if $\M$ is a limit then $q^\M=\emptyset$).
\end{dfn}
\begin{dfn}\label{dfn:(k+1,q)-solid}
 Let $\M$ be a $k$-sound opm over $A$ and $q\in(\rho_k^\M)^{<\om}$. We say that $\M$ is
\index{solid}\dfnemph{$(k+1,q)$-solid}
 iff for each $\alpha\in q$, letting $q'=q\cut(\alpha+1)$ and $X=A\un\alpha\un q'\un\pvec_k^\M$, we
have
$\cHull_{k+1}^\M(X)\in\M$.
\end{dfn}
\begin{lem}\label{lem:succ_opm_top_param} Let $\M$ be a successor opm and $\wl(\M)=\xi+1$. Let
$\rho=\rho_\om^{\M^-}$ and $p=p_1^\M\cut\rho$. Then $\M$ is $\rho$-sound and
$\rho_1^\M\leq\rho$ and either:
\begin{enumerate}[label=--]
		\item  $p=q^\M$ \tu{(}in other words, $p_1^{\M}\cap[\rho,\OR(\M^-)]=\emptyset$\tu{)}, or
	\item $q^\M=\emptyset$ and $p=\{\alpha\}$
for some $\alpha\in[\rho,\xi]$.
\end{enumerate}
Therefore either $\M$ is $\om$-sound and
$\rho_\om^\M=\rho=\rho_\om^{\M^-}$,
or there is $k<\om$ such that $\M$ is $k$-sound and $\rho_{k+1}^\M<\rho\leq\rho_k^\M$ {\color{black}\tu{(}so if $k>0$ then $\rho=\rho_k^\M$\tu{)}}.
\end{lem}
\begin{proof} We have $\rho_1^{\M}\leq\rho$ as $\M$ is soundly projecting. If
$q^\M\neq\emptyset$ then $p\inter[\rho,\OR(\M^-)]=\emptyset$,
as letting $A=\hmb^\M$, we have
\[ \M^-\un\{\M^-\}\sub\Hull_1^\M(A\un\rho\un q^\M), \]
as $\hmX^\M$ is
$\Sigma_1^\M$, and
this suffices since $\M$ is soundly projecting. So suppose $q^\M=\emptyset$.
Let $r$ be least in $(\xi+1)^{<\om}$ such that
\[ \M^-\in H=\Hull_1^\M(A\un\rho\un r).\]
Note that $H=\M$, since $\M$ is soundly projecting and $q^\M=\emptyset$.
So if $r=\emptyset$ then $p=\emptyset=q^\M$, so we are done.
So suppose $r\neq\emptyset$.

Suppose for a contradiction
that $\card(r)>1$ and let $\alpha_0>\alpha_1$ be the top 2 elements.
Let
\[ C=\cHull_1^{\M}(A\cup\alpha_1\cup\{\alpha_0\}) \]
and $\pi:C\to\M$ the uncollapse. So $\crit(\pi)=\alpha_1<\OR^{C}$.
So $\alpha_1$ is an $A$-cardinal of $C$, and so $\pi(\alpha_1)$
is an $A$-cardinal of $\M$. Since $\rho\leq\alpha_1<\pi(\alpha_1)$
and $\M^-$ projects to $\rho$, therefore $\pi(\alpha_1)>\OR(\M^-)$.
But then $\M^-\in\rg(\pi_1)$,
contradicting the minimality of $r$.

So $r=\{\alpha_0\}$ for some $\alpha_0$.
It remains to verify that $\M$ is $(1,r)$-solid.
Let
\[ C=\cHull_1^{\M}(A\cup\alpha_0) \]
and $\pi:C\to\M$ the uncollapse. Then $\rg(\pi)\cap\OR=\alpha_0$,
because otherwise  again  $\crit(\pi)$ is an $A$-cardinal of $C$, etc.
Note that because $r\neq\emptyset$,
we have $P^\M=\emptyset$ (since otherwise $P^\M\sub\M\cut\M^-$).
Therefore $C=\M||\alpha_0$,\footnote{This denotes the passivization of $\M|\alpha_0$;
that is, the passive opm $\P$
such that $S^P=S^{\M|\alpha_0}$.} so $C\in\M$, which gives that $\M$ is $(1,\{\alpha_0\})$-solid, as desired.\end{proof}

\begin{lem}\label{lem:Sigma_1_hull_reflects_opm}
 Let $\N$ be a successor opm and $\pi:\M\to\N$. Suppose  either
\begin{enumerate}[label=\tu{(}\roman*\tu{)}]
	\item\label{item:q_empty} $\pi$ is $\Sigma_1$-elementary and
$q^\N=\emptyset$, or
\item\label{item:pi_Sigma_2-elem} $\pi$ is $\Sigma_2$-elementary and
$q^\N\in\rg(\pi)$.
\end{enumerate}
Then $\M$ is a successor opm   of the same type as $\N$, and $\pi(q^\M)=q^\N$.
\end{lem}
\begin{proof}
By \ref{lem:Q-opm_type_Q-formulas}, $\M$ is a Q-opm and we may assume
$\N^-\in\rg(\pi)$, so $\M$ is a successor, $\pi(\M^-)=\N^-$, and $\M$ is $\psi$-stratified
where $\N$ is $\psi$-stratified.
Now
\[ \N=\Hull_1^{\N}(\N^-\cup\{\N^-\}\cup q^\N),\]
 which with $\Sigma_1$-elementarity gives
 \[ \M=\Hull_1^\M(\M^-\un\{\M^-\}\cup q) \]
  where $\pi(q)=q^\N$.
This suffices for part \ref{item:q_empty}. For part \ref{item:pi_Sigma_2-elem} use
generalized solidity witnesses to see that $\M$ is $(1,\bar{q})$-solid,
which is enough.
\end{proof}

However, in the context above,
if $\pi$ is just $\Sigma_1$-elementary and $q^\N\neq\emptyset$,
$\M$  might not be soundly projecting, even if $q^\N\in\rg(\pi)$. Such embeddings arise
when we take $\Sigma_1$ hulls like in the proof of $1$-solidity for $(0,\omega_1+1)$-iterable premice.

{\color{black}
\begin{lem}\label{lem:k+1-Hull}Let $n,\M,\R,\pi$ be as in Lemma \ref{lem:hull_emb_properties},
with $n>0$,
and suppose that $\M$ is an opm.
Suppose that either $n>1$
or $\M$ is non-type 3 or $\Ult(\core_0(\M),F^{\M})$
is wellfounded.
Then $\R=\core_0(\N)$ for an $n$-sound opm $\N$, and $\pi$ is nearly $n$-good.
\end{lem}

\begin{proof}
By Lemma \ref{lem:hull_emb_properties},
we know that $\R=\core_0(\N)$
 for some $n$-sound Q-opm $\N$
 and that $\pi$ is nearly $n$-good.
So we just need to see that $\N$ is an opm. So we may assume that $\N$ is a successor, so $\M$ is also. Since $n>0$, $\pi$ is $\rSigma_2$-elementary,
so by Lemma \ref{lem:Sigma_1_hull_reflects_opm},
it suffices to see that $q^\M\in\rg(\pi)$. But $q^\M\sub p_1^\M\in\rg(\pi)$, so we are done.
	\end{proof}}

Let $X$ be transitive. Then $X^\#$
determines naturally an opm $\M$ over $\hat{X}$ of length $1$, so $U=\Ult_0(\M,F^{X^\#})$ is also a
Q-opm over $\hat{X}$ of length $1$, but $U$ is not an opm.\footnote{$U$ is not soundly projecting.} So opm-hood is not expressible
with Q-formulas. However, given a successor opm $\N$,
we will only form ultrapowers of $\N$ with extenders $E$ such that $\crit(E)<\OR(\N^-)$,
and
under these circumstances, opm-hood is preserved. In fact, we will only form ultrapowers and
fine structural hulls under further fine structural assumptions:

\begin{dfn}
Let $k\leq\om$. An opm $\M$ is \index{$n$-relevant}\index{relevant}$k$-\dfnemph{relevant} iff
$\M$ is $k$-sound, and either $\M$ is a limit or $k=\om$ or
$\rho_{k+1}^\M<\rho_\om^{\M^-}$.

A Q-opm $\M$ which is not an opm (so $\M$ is a successor) is \dfnemph{$k$-relevant} iff $k=0$ and
$\rho_{1}^\M<\rho_\om^{\M^-}$.
\end{dfn}

For the development of the basic fine structure theory of opms, one only needs to iterate
$k$-relevant
opms (and phalanxes of such structures, and bicephali and pseudo-premice); see
\ref{dfn:iteration_trees}. For instance, the following lemma follows from
\ref{lem:succ_opm_top_param}:

\begin{lem}\label{lem:non_k-rpm_sound}
Let $k<\om$ and $\M$ be a $k$-sound operator-premouse which is not $k$-relevant. Then $\M$ is
$(k+1)$-sound.
\end{lem}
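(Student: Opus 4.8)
The plan is to derive everything from Lemma~\ref{lem:succ_opm_top_param} and the definition of $k$-relevance, with no new fine-structural computation. First I would note that $\M$ is a successor opm: if $\M$ were a limit, then since $\M$ is $k$-sound it would be $k$-relevant, contrary to hypothesis. So $\wl(\M)=\xi+1$ for some $\xi$, $\M^-=\M\wupto\xi$ is a proper (hence $\om$-sound) segment, and Lemma~\ref{lem:succ_opm_top_param} applies to $\M$.

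Next I would unpack the failure of $k$-relevance. Since $\M$ is $k$-sound, $k<\om$, and $\M$ is a successor, the only way $\M$ can fail to be $k$-relevant is that $\rho_{k+1}^\M\geq\rho_\om^{\M^-}$. (If $\rho_k^\M=\om$, then $\M$ is automatically $\om$-sound and we are done at once via the first alternative below, so I would assume $\rho_k^\M>\om$, so that $\rho_{k+1}^\M$ is the usual projectum.) Then I invoke the dichotomy in the conclusion of Lemma~\ref{lem:succ_opm_top_param}: either $\M$ is $\om$-sound, in which case it is $(k+1)$-sound and we are done; or there is $m<\om$ with $\M$ $m$-sound and $\rho_{m+1}^\M<\rho_\om^{\M^-}\leq\rho_m^\M$. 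In the latter case $\rho_{m+1}^\M<\rho_\om^{\M^-}\leq\rho_{k+1}^\M$, hence $\rho_{m+1}^\M<\rho_{k+1}^\M$; since $\langle\rho_n^\M\rangle_{n<\om}$ is non-increasing, this forces $m+1>k+1$, i.e.\ $k+1\leq m$. As $\M$ is $m$-sound and soundness is downward closed in the degree, $\M$ is $(k+1)$-sound, completing the proof.

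No step here is a real obstacle; the only things to watch are (i) checking that, for a $k$-sound successor opm with $k<\om$, the negation of $k$-relevance is exactly $\rho_{k+1}^\M\geq\rho_\om^{\M^-}$, and (ii) noting that the degenerate possibility $\rho_k^\M=\om$ is absorbed into the ``$\M$ is $\om$-sound'' alternative of Lemma~\ref{lem:succ_opm_top_param}.
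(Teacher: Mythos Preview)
Your proposal is correct and takes essentially the same approach as the paper, which simply says the lemma ``follows from \ref{lem:succ_opm_top_param}''. You have correctly unpacked the dichotomy in that lemma and the definition of $k$-relevance to produce the argument the paper leaves implicit.
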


\subsection{Fine structure and iterations}

Now that we have introduced operator-premice and studied how they  behave under forming elementary hulls, we want to
consider forming iteration trees on them.
In the following lemma we establish the preservation of fine structure under degree $k$
ultrapowers, for $k$-relevant opms.
The proof involves a key use of stratification.

\begin{lem}\label{lem:Ult_k_pres_fs}
 Let $\M$ be a $k$-relevant opm over $A$ and $E$ an extender over $\M$, weakly amenable to
$\M$,
with
$\rank(A)<\crit(E)<\rho_k^\M$, and $\crit(E)<\rho_\om^{\M^-}$ if $\M$ is a successor.\footnote{Note that if $\M$ is a successor
and $k>0$,
then $\rho_k^\M\leq\rho_\om^{\M^-}$, but $\rho_0^{\M}>\rho_\om^{\M^-}$,
so the hypothesis that $\crit(E)<\rho_\om^{\M^-}$
is just needed when $k=0$.} Let $\N=\Ult_k(\M,E)$
and $j=i^{\M,k}_E$ be the ultrapower embedding.
Suppose $\N$ is wellfounded.
 Then:
 \begin{enumerate}
  \item\label{item:pres_relevant_opm_type} $\N$ is a $k$-relevant opm of the same type as
$\M$.
  \item\label{item:ult_pres_psi-strat} $\N$ is a successor iff $\M$ is. If
$\M$ is a successor then $j(\wl(\M))=\wl(\N)$ and if $\M$
is $\psi$-stratified then $\N$ is $\psi$-stratified.
  \item\label{item:i_E_k-embedding} $j$ is $k$-good.
  \item\label{item:pres_solidity} For any $q\in(\rho_k^\M)^{<\om}$, if $\M$ is $(k+1,q)$-solid
then
$\N$ is
$(k+1,j(q))$-solid.
\item\label{item:half_sup_pres_rho} $\rho_{k+1}^\N\leq\sup
j``\rho_{k+1}^\M$.
  \item\label{item:E_close_sup_pres_rho} If $E$ is close to $\M$ and $\M$ is $(k+1)$-solid then
$\rho_{k+1}^\N=\sup j``\rho_{k+1}^\M$ and $p_{k+1}^\N=j(p_{k+1}^\M)$ and $\N$ is
$(k+1)$-solid.
\end{enumerate}
\end{lem}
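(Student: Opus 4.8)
The plan is to prove the six clauses of \ref{lem:Ult_k_pres_fs} roughly in the listed order, mirroring the standard premice argument (as in \cite{FSIT},\cite{steel2010outline}) but taking care at exactly the two spots where operator-premice differ: the base $A=\hmb^\M$, which lies inside every relevant hull and on which $j$ is the identity, and the stratification, which is what keeps $\Sigma_1$-elementarity and $\Sigma_1$-ordinal-generation alive under degree-$0$ ultrapowers. Throughout, ``definability over $\M$'' means over $\core_0(\M)$, and for type $3$ one works with the squashed structures; I would first dispose of the type $3$ case by the usual unsquashing remarks (noting $\N$ is wellfounded by hypothesis, so $\core_0(\N)$ is genuinely defined), and then argue in the non-type-$3$ case.

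For \ref{item:pres_relevant_opm_type} and \ref{item:ult_pres_psi-strat}: first handle $k\geq 1$, where the ultrapower is formed over $\core_0(\M)=\M^-$-extended structures in the standard way and $\Los$'s theorem plus $k$-soundness give that $\N$ is a potential opm of the same type; one checks the clauses of \ref{dfn:potential opm} are $\Los$-preserved, using that $A$-ordinal-surjections, $\Sigma_1$-ordinal-generation, projectum amenability and the stratification clause are each (boundedly) first-order over $\core_0$. For $k=0$ and $\M$ a successor this is the delicate part: $j$ is only a degree-$0$ map, so to see $\N=\Hull_{\Sigma_1}^\N(\N^-\un\{\N^-\}\un\OR(\N))$ and that $\N$ is $\psi$-stratified, I would push the formula $\psi$ through: by $\Los$, $\psi(\N^-,\cdot)^\N$ defines the proper restrictions of a sequence $\Ntilde$, and one verifies the stratification axioms for $\Ntilde$ using that $j(\wl(\M))=\wl(\N)$ and that $\Ntilde_\alpha=j(\Mtilde)_\alpha$ at points $\alpha$ in the range, with continuity filling the gaps. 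That $\M$ is an opm (soundly projecting, $<\om$-condensing on proper segments) transfers because every $\N'\wpins\N$ is an ultrapower of some $\M'\wins\M$ by a piece of $E$ with small critical point, or already lies in $\rg(j)$; here one invokes \ref{lem:Q-opm_type_Q-formulas} and \ref{lem:Qformula_pres}. To see $\N$ is $k$-relevant one uses \ref{item:half_sup_pres_rho} together with $j$ being continuous at $\rho_\om^{\M^-}$ (from $\crit(E)<\rho_\om^{\M^-}$), so $\rho_{k+1}^\N<\rho_\om^{\N^-}$.

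For \ref{item:i_E_k-embedding}: the $\Sigma_0$-elementarity and the cofinality/weak-elementarity conditions are standard; the one new point is $\hmb^\N=\hmb^\M$ and $j\rest\hmb^\M=\id$, i.e. $k$-\emph{goodness}, which holds because $\crit(E)>\Theta^\M>\rank(A)$, so $A\sub\{x:\rank(x)<\crit(E)\}$ and $j$ fixes $A$ pointwise and fixes $A=\hat Y$ as an element. The $\Sigma_1$-elementarity-on-a-cofinal-set clause in the $k=0$, successor case is exactly where I would again lean on the stratification: the sets $\Mtilde_\alpha$ give a $\Sigma_1^\M(\{\M^-\})$-definable $\in$-increasing cofinal family of $\Sigma_0$-elementary substructures, and $\Los$ applied to $\Mtilde\rest\alpha\mapsto$ its theory transports this to $\N$, so $j$ is $\Sigma_1$-elementary on the cofinal set $\bigcup_\alpha j``\Mtilde_\alpha$. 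Then \ref{item:pres_solidity} is the usual argument: $(k+1,q)$-solidity says $\Th_{k+1}^\M(A\un\alpha\un q'\un\pvec_k^\M)\in\M$ for $\alpha\in q$, and applying $j$ together with the fact that $j$ preserves $\rSigma_{k+1}$-theories over the relevant hulls (which is part of $k$-goodness, using $A\cup\{A\}\sub\rg(j)$ and $j``\alpha$ cofinal in $j(\alpha)$), gives $\Th_{k+1}^\N(A\un j(\alpha)\un j(q)'\un\pvec_k^\N)\in\N$, and one fills in the solidity witnesses for ordinals below $j(\alpha)$ not of the form $j(\beta)$ via generalized solidity witnesses (\cite{zeman}).

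Finally \ref{item:half_sup_pres_rho} and \ref{item:E_close_sup_pres_rho}: the inequality $\rho_{k+1}^\N\leq\sup j``\rho_{k+1}^\M$ is immediate because the $\bfrSigma_{k+1}^\M$ set $X\sub A^{<\om}\cross(\rho_{k+1}^\M)^{<\om}$ witnessing $\rho_{k+1}^\M$ is pushed by $j$ to a $\bfrSigma_{k+1}^\N$ set over $A^{<\om}\cross(\sup j``\rho_{k+1}^\M)^{<\om}$ that is not in $\N$ (if it were, pull it back), again using $A=\hmb^\N$ fixed by $j$. For the reverse, under the closeness and $(k+1)$-solidity hypotheses the standard argument shows $j``\rho_{k+1}^\M$ generates $\rho_{k+1}^\N$ and $p_{k+1}^\N=j(p_{k+1}^\M)$: closeness gives that every $\bfrSigma_{k+1}^\N$ subset of $A^{<\om}\cross(\sup j``\rho_{k+1}^\M)^{<\om}$ is of the form $j(X)\cap(\cdots)$ for a $\bfrSigma_{k+1}^\M$ $X$ plus finitely much information below $\crit(E)<\rho_{k+1}^\M$, hence is in $\N$; and $(k+1)$-solidity of $\M$, via \ref{item:pres_solidity}, gives $(k+1)$-solidity of $\N$ and that the standard parameter computed in $\N$ is $j(p_{k+1}^\M)$ by the usual minimality-plus-solidity-witness comparison. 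I expect the main obstacle to be the $k=0$, successor case of \ref{item:i_E_k-embedding} and \ref{item:ult_pres_psi-strat} — making the stratification argument rigorous, i.e. checking that the $\Los$-image of $\Mtilde$ really is a stratification of $\N$ and that it is defined by the same $\psi$, and that this yields $\Sigma_1$-elementarity of $j$ on a cofinal set — since everything else is a routine transcription of the premice proofs once one keeps $A$ and $\rg(j)\ni A$ in mind.
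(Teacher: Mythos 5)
Your overall strategy tracks the paper's proof closely: reduce to $\core_0$/unsquashed structures, use \ref{lem:Q-opm_type_Q-formulas} for Q-opm-hood and type, use stratification as the engine at $k=0$, and handle parts \ref{item:pres_solidity}--\ref{item:E_close_sup_pres_rho} by the standard generalized-solidity-witness and closeness arguments with $A\un\{A\}\sub\rg(j)$. That much is right, and you correctly flag the $k=0$ successor case as the delicate one.

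However, there is a genuine gap in your proof that $\N$ is an opm (not merely a Q-opm): you never establish that $\N$ \emph{itself} is soundly projecting at the top level. Your argument that ``opm-hood transfers because every $\N'\wpins\N$ is an ultrapower of some $\M'\wins\M$ or already lies in $\rg(j)$'' only covers \emph{proper} segments, but \dfnemph{soundly projecting} (and hence opm-hood, via \ref{dfn:adequate}) quantifies over every successor $\N'\wins\N$ including $\N'=\N$: one needs a finite $q$ with $\N$ $(1,q)$-solid and $\N=\Hull_1^\N(\N^-\un\{\N^-,q\})$. Your $\Los$/$\psi$-stratification argument yields only $\Sigma_1$-ordinal-generation, i.e. $\N=\Hull_1^\N(\N^-\un\{\N^-\}\un\OR(\N))$, which uses all of $\OR(\N)$ and is strictly weaker. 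The paper's fix at $k=0$ is the short observation that $\N=\Hull_1^\N(\rg(j)\un\nu_E)$ (seed generation for the degree-$0$ ultrapower) together with $\nu_E\sub\N^-$ (from $\crit(E)<\rho_\om^{\M^-}$, hence $\nu_E<\OR(\N^-)$), so since $\M$ is an opm and $j$ is above $A$, $\rg(j)\sub\Hull_1^\N(\N^-\un\{\N^-,j(q^\M)\})$ and the finite-parameter hull equation follows; the $(1,j(q^\M))$-solidity is then the stratification-adapted parameter-preservation argument you describe. Without that seed-generation step your proposal does not close part \ref{item:pres_relevant_opm_type}, and the rest of the lemma presupposes $\N$ is an opm so the issue propagates. (A smaller point: your parenthetical ``$j``\alpha$ cofinal in $j(\alpha)$'' in part \ref{item:pres_solidity} is not generally true and is not needed once you invoke generalized solidity witnesses, so drop it.)
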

\begin{proof}
The fact that $\N$ is a Q-opm of the same type as $\M$ is by \ref{lem:Q-opm_type_Q-formulas}.
Part \ref{item:i_E_k-embedding} is standard and part
\ref{item:ult_pres_psi-strat} follows easily. We now verify that $\N$ is soundly projecting; we may
assume that
$\M,\N$ are successors. If $k>0$, use elementarity and stratification. Suppose $k=0$. Let
$\rho=\rho_\om^{\M^-}$ and $q=j(q^\M)$. The fact that $\N$ is $(1,q)$-solid follows by
an easy adaptation of the usual proof of preservation of the standard parameter, using
stratification (where in
the usual proof, one uses the natural stratification of the $\J$-hierarchy).
So it suffices to see that
$\N=\Hull_1^\N(\N^-\un\{\N^-,q\})$.
But this holds because $\M$ is an opm and
\[ \N=\Hull_1^\N(\rg(j)\un\nu_E) \]
and $\nu_E\sub\N^-$, the latter because $\crit(E)\leq\OR(\N^-)$ (in fact,
$\crit(E)<\rho_\om^{\N^-}$).

Parts \ref{item:pres_solidity}--\ref{item:E_close_sup_pres_rho}: If $k>0$ the proof for standard
premice works. See, for example, \cite[Lemmas 4.5, 4.6]{FSIT}, and if
$\kappa<\rho_{k+1}^\M$, see
\cite[Corollary 2.24]{extmax} and its proof
(that result is formally below superstrong, but essentially the same proof works) and/or
\cite[Lemma 3.8]{fsfni};
these arguments are related to the calculations in \cite[Claim 5 of Theorem 6.2]{FSIT}.
If $k=0$, again use stratification to adapt the usual proof. (In the case that
$\wl(\M)$ is a limit, $\M$ is of course ``stratified'' by its proper segments.)

By part \ref{item:half_sup_pres_rho}, it follows that $\N$ is $k$-relevant, completing part
\ref{item:pres_relevant_opm_type}.
\end{proof}

We next want to define (fine, such as $k$-maximal, etc) \emph{iteration trees} $\Tt$ on opms, following
the general form of \cite[\S3.1]{steel2010outline}:

\begin{dfn}\label{dfn:iteration_trees}
Let $k<\om$ and let $\M$ be a $k$-sound opm. The \index{iteration}\index{$n$-maximal}\index{maximal}\dfnemph{$k$-maximal iteration
game}
\index{$\G_n(\M,\theta)$}$\G_k(\M,\theta)$ \dfnemph{on $\M$, of length $\theta$} is defined completely analogously to the game $\G_k(\N,\theta)$
for $k$-sound premice $\N$,
as defined in
\cite[\S3.1]{steel2010outline}, except for the
following differences.
Let $\Tt$ be a partial play
(so $\Tt$ will be a putative tree). Then:
\begin{enumerate}[label=--]
\item It is player I's responsibility that for all $\beta+1<\alpha+1<\lh(\Tt)$, we have
$\lh(E^\Tt_\beta)\leq\lh(E^\Tt_\alpha)$
(as opposed to
$\lh(E^\Tt_\beta)<\lh(E^\Tt_\alpha)$, as is the requirement
in \cite{steel2010outline}).\footnote{\color{black}The weakening of this requirement is needed because we allow extenders of superstrong type in $\es_+^\M$. See Remark \ref{rem:superstrong} for details.}
\item It is player II's responsibility that for each $\alpha+1<\lh(\Tt)$, $M^\Tt_\alpha$ is an opm (as opposed to a premouse, as is the usual responsibility).
\end{enumerate}
The rest is as in \cite[\S3.1]{steel2010outline}. In particular, the game stops as soon as either player breaks a rule,  and
it is player I's responsibility that $E^\Tt_\alpha\in\es_+(M^\Tt_\alpha)$ for each $\alpha+1<\lh(\Tt)$.
Recall here that if $\M$ is a successor opm and $E\in\es_+^\M$
then $E\in\es_+^{\M^-}$.)

A \index{$n$-maximal}\index{iteration}\dfnemph{$k$-maximal iteration tree on $\M$} is a partial play of $\mathcal{G}_k(\M,\infty)$
in which neither player has yet lost. A \index{putative}\dfnemph{putative $k$-maximal iteration tree on $\M$} is as for a $k$-maximal iteration tree $\Tt$,
except that if $\Tt$ has successor length, then the main branch $b^\Tt$ of $\Tt$ is allowed to drop infinitely often,
and if it does not, there are no demands on the nature of the  last model $M^\Tt_\infty$
(other than that it be formed via the direct limit along the branch in the usual manner).

A \index{strategy}\index{$(n,\theta)$-iteration strategy}\dfnemph{$(k,\theta)$-iteration strategy} for $\M$ is
a winning strategy for player $\plII$ in $\G_k(\M,\theta)$.

{\color{black}The \index{$n$-maximal stack}\emph{$k$-maximal stack} iteration game
\index{$\G^*_n(\M,\alpha,\theta)$}$\G^*_k(\M,\alpha,\theta)$ is defined by analogy with the game $\G^*_k(\N,\alpha,\theta)$ for $k$-sound premice $\N$,
essentially as defined in \cite[prior to Corollary 1.10]{cmwmwc}, and see also \cite[\S4.1]{steel2010outline}.}\footnote{\color{black}Here are more details. For $\gamma<\alpha$,
after the first $\gamma$ rounds have been played, both players having met their commitments so far,
we have a $\gamma$-sequence $\Ttvec=\left<\Tt_\delta\right>_{\delta<\gamma}$ of iteration trees, with wellfounded final model
$M^{\Ttvec}_\infty$ (formed by direct limit if $\gamma$ is a limit); it follows that this
model is an $n$-sound opm where $n=\deg^{\Ttvec}_\infty$. At the beginning of
round $\gamma$, player $\plI$ chooses some
$(\Q,q)\wins(\M^{\Ttvec}_\infty,n)$, and round $\gamma$ is a (possibly partial) run of
$\G_q(\Q,\theta)$, producing a putative tree $\Tt_\gamma$. Player I is allowed to terminate this run
at any stage, after producing $\Tt_\gamma$
of length some $\xi+1<\theta$.
If player I wins the round of $\G_q(\Q,\theta)$ at any stage before terminating, then player I wins the full run of $\G^*_k(\M,\alpha,\theta)$.
If player I terminates with $\Tt_\gamma$ of length $\xi+1<\theta$
and $\Tt_\gamma$ is not a win for player I in $\G_q(\Q,\theta)$,
then they go on to round $\gamma+1$ of $\G^*_k(\M,\alpha,\theta)$, assuming $\gamma+1<\alpha$.
Suppose player I never terminates at any $\xi+1<\theta$, and
 $\Tt_\gamma$ is not a win for player I in $\G_q(\Q,\theta)$; so $\Tt_\gamma$ is an iteration tree of length $\theta$. Then
player II wins the full run of $\G^*_k(\M,\alpha,\theta)$.
Note that in case $\theta$ is a successor ordinal,
this last rule
is as in the games denoted $\G^*_k(\M,\alpha,\theta)$ in \cite{cmwmwc},
but differs from those denoted $\G_k(\M,\alpha,\theta)$ in \cite{steel2010outline}.} {\color{black}As in \cite[\S1.1.5]{iter_for_stacks}, the \index{$n$-optimal}\emph{$k$-optimal stack}
iteration game \index{$\G^*_{\opt,n}$}$\G=\G^*_{\opt,k}(\M,\alpha,\theta)$ is
defined likewise, except that we do not allow player $\plI$ to drop in model or degree at the
beginnings of rounds.} That is, (i) round $0$ of $\G$ is a run of $\G_k(\M,\theta)$,
and (ii) letting $0<\gamma<\alpha$ and $\Ttvec=\left<\Tt_\beta\right>_{\beta<\gamma}$ be the
sequence of trees played in rounds $\beta<\gamma$ and $\N=M^{\Ttvec}_\infty$ and
$n=\deg^{\Ttvec}_\infty$, round $\gamma$ of $\G$ is a run of
$\G_n(\N,\theta)$.

{\color{black}A \index{$(n,\alpha,\theta)^*$-optimal}\index{iteration strategy}\dfnemph{$(k,\alpha,\theta)^*$-optimal iteration strategy} for $\M$ is a winning strategy
for player $\plII$ in $\G^*_{\opt,k}(\M,\alpha,\theta)$, and a
\index{$(n,\alpha,\theta)^*$-iteration strategy}\dfnemph{$(k,\alpha,\theta)^*$-iteration strategy} is likewise for
$\G^*_k(\M,\alpha,\theta)$.}
{\color{black}See \cite[Lemma 9.8]{iter_for_stacks} for some basics on the connection
between $(k,\alpha,\theta)^*$- and $(k,\alpha,\theta)^*$-optimal iteration strategies.}

Now \index{iterability}\dfnemph{$(k,\theta)$-iterability}, {\color{black}\index{optimal iterability}\dfnemph{$(k,\alpha,\theta)^*$-optimal iterability},} etc, are
defined by the existence of the appropriate winning
strategy.
\end{dfn}

\begin{rem}\label{rem:superstrong}
\index{superstrong}The requirement, in $\G_k(\M,\theta)$, that $\lh(E^\Tt_\beta)\leq\lh(E^\Tt_\alpha)$ for
$\beta<\alpha$, is weaker than requiring that $\lh(E^\Tt_\beta)<\lh(E^\Tt_\alpha)$,
because opms may have superstrong extenders in their sequence. For example, we might have that
$E^\Tt_0$ is type 2 and $E^\Tt_1$ is
superstrong with $\crit(E^\Tt_1)$ the largest cardinal of $\M^\Tt_0\wupto\lh(E^\Tt_0)$, in which
case
$\M^\Tt_2$ is active but $\OR(\M^\Tt_2)=\lh(E^\Tt_1)$, and therefore if $E^\Tt_2$ is defined then $\lh(E^\Tt_2)=\lh(E^\Tt_1)$. Because $E^\Tt_2$ is type 2, however,
if $E^\Tt_3$ is defined then $\lh(E^\Tt_2)<\lh(E^\Tt_3)$.

The preceding example is essentially general. It is easy to show that if $\Tt$ is
$k$-maximal and $\alpha<\beta<\lh(\Tt)$ then either
$\lh(E^\Tt_\alpha)<\OR(M^\Tt_\beta)$ and $\lh(E^\Tt_\alpha)$
is a cardinal of $M^\Tt_\beta$, or $\beta=\alpha+1$ and
$\lh(E^\Tt_\alpha)=\OR(M^\Tt_{\alpha+1})$ and $E^\Tt_\alpha$ is
superstrong and $M^\Tt_{\alpha+1}$ is type 2. Therefore if $\alpha+1<\beta+1<\lh(\Tt)$ then
$\nu(E^\Tt_\alpha)<\nu(E^\Tt_\beta)$, and if $\alpha+1\leq\beta<\lh(\Tt)$ then
$E^\Tt_\alpha\rest\nu(E^\Tt_\alpha)$ is not an initial segment of any extender on
$\es_+(M^\Tt_\beta)$.

The \index{comparison}comparison algorithm needs to be modified slightly. Suppose we are comparing models $\M,\N$,
via padded $k$-maximal trees $\Tt,\Uu$, respectively, and we have produced $\Tt\rest(\alpha+1)$
and $\Uu\rest(\alpha+1)$. Let $\gamma$ be least such that
$\M^\Tt_\alpha\wupto\gamma\neq\M^\Uu_\alpha\wupto\gamma$;
{\color{black}let us assume that $\gamma$ is a limit, so that this distinction is due to differing extenders indexed at $\gamma$.}
If only one of these models is active, then we use that active extender next. Suppose both are
active. If one active extender is type 3 and one is type 2, then we use only the type 3 extender
next. Otherwise we use both extenders next. With this modification, and with the remarks in the
preceding paragraph, the usual proof that comparison succeeds goes through. {\color{black}The reason we make
this modification is as follows.
Suppose we use the usual process, so that if both sides are active at height $\gamma$ (where the least disagreement was), we automatically use both of the disagreeing extenders. Let us use padding in the usual way for comparison. It might be that $E^\Tt_\alpha$ is superstrong and $E^\Uu_\alpha$ is type 2 (with $\lh(E^\Tt_\alpha)=\gamma=\lh(E^\Uu_\alpha)$),
and the situation described in the previous paragraph occurs in $\Tt$ at that stage, so that $\gamma=\OR(M^\Tt_{\alpha+1})$
and $M^\Tt_{\alpha+1}$ is active type 2. But then it might also be that $F(M^\Tt_{\alpha+1})=E^\Uu_\alpha$.
On the other hand, $\gamma<\OR(M^\Uu_{\alpha+1})$
(since $E^\Uu_\alpha$ is type 2),
so $E^\Tt_{\alpha+1}=F(M^\Tt_{\alpha+1})=E^\Uu_\alpha$.
So $\Tt,\Uu$ use identical extenders at stages $\alpha+1,\alpha$ respectively, which breaks the usual comparison arguments.
With the modified algorithm, we would set $E^\Tt_\alpha$ to be the superstrong extender (indexed at $\gamma$) and $E^\Uu_\alpha=\emptyset$, and if indeed $F(M^\Tt_{\alpha+1})=F^{M^\Uu_\alpha|\gamma}$, then the comparison would terminate at stage $\alpha+1$.}

{\color{black}Note that everything mentioned in this remark applies to standard premice with superstrong extenders, not just opms.}
\end{rem}

\begin{lem}\label{lem:k-stack-max_pres_rpm}
Let $\M$ be a $k$-relevant opm and $\Tt$ be {\color{black}a partial play of $\G^*_{\opt,k}(\M,\infty,\infty)$
of  successor length, with
$M^\Tt_\infty$ well-defined and wellfounded}. Then $M^\Tt_\infty$ is a $\deg^\Tt_\infty$-relevant
opm.
\end{lem}
\begin{proof}
Given the result for $k$-maximal trees $\Tt$, the generalization to {\color{black}stacks of the kind dealt with in the lemma} is routine.
For $k$-maximal $\Tt$, the result follows by a straightforward induction on $\lh(\Tt)$,
using Lemma
\ref{lem:Ult_k_pres_fs}, together with the following observation.
Suppose $\beta+1\in\mathscr{D}_{\deg}^\Tt$ and let $\alpha=\pred^\Tt(\alpha+1)$
and $\N=M^{*\Tt}_{\beta+1}$ and $n=\deg^\Tt(\beta+1)$.
We claim that $M^{*\Tt}_{\beta+1}$ is $n$-relevant,
which together with \ref{lem:Ult_k_pres_fs} suffices.
So suppose that $\N$ is a successor.
We have  $\kappa=\crit(E^\Tt_\beta)<\nu(E^\Tt_\alpha)$ and $\rho_{n+1}^{\N}\leq\kappa<\rho_n^{\N}$. But
$\lh(E^\Tt_\alpha)\leq\OR^{\N^-}$,
since $\es_+^\N$ has no elements indexed in the interval $(\OR^{\N^-},\OR^\N]$.
Since $\N^-\pins M^{*\Tt}_{\beta+1}$, we have $\kappa<\rho_\om^{\N^-}$.
Therefore $\rho_{n+1}^\N\leq\kappa<\rho_\om^{\N^-}$, so $\N$ is $n$-relevant.
\end{proof}

In \ref{lem:k-stack-max_pres_rpm}, it is important that we are considering $\G^*_{\opt,k}(\M,\infty,\infty)$; {\color{black}$M^\Tt_\infty$ can fail to be $\deg^\Tt_\infty$-relevant for trees produced by $\G^*_k(\M,\infty,\infty)$. (For example,  after the first round, producing last model/degree $(M^\Tt_\infty,\deg^\Tt_\infty)$,
player 1 could drop in model/degree
to some $(\Q,q)\ins(M^\Tt_\infty,\deg^\Tt_\infty)$
such that $\Q$ is a successor
and $\rho_{q+1}^\Q=\rho_\om^{\Q^-}$, and then in the second round, use just one extender $E$ applied to $Q$
with $\crit(E)<\rho_\om^{\Q^-}$, forming $\Ult_q(\Q,E)$. If the ultrapower map is continuous at $\rho_\om^{Q^-}$, then $\Ult_q(\Q,E)$
is not $q$-relevant.)}

\section{${\Fop}$-mice for operators ${\Fop}$}

\label{sec:operators}
Operator-premice $\M$ are generally considered in the case that  successor steps are taken by some
\emph{operator}
${\Fop}$ (Definition \ref{dfn:operator}); that is, in which $\N={\Fop}(\N^-)$ for each successor $\N\wins\M$.
We call such an $\M$ an \emph{${\Fop}$-premouse}. A key example  is that of \emph{mouse operators}, for which we have some formula $\varphi$
and  $\Fop(\N^-)$ is, roughly, the least mouse $\R$ over $\N^-$  such that either $\R\sats\varphi$ or $\R$ projects $<\rho_\om(\N^-)$ (but $\R$ must be coded appropriately so that $\Fop(\N^-)$ is an opm; see Definition \ref{dfn:mouse_op} for details). One can also use the operator framework to
define (iteration) \emph{strategy mice}, although a different approach is taken in
\cite{scales_in_hybrid_mice_over_R} (to give a more refined hierarchy).

\subsection{Abstract operators $\Fop$ and $\Fop$-premice}

\begin{dfn} We say that $X$ is \index{swo'd}\index{self-wellordered}\dfnemph{swo'd (self-wellordered)} iff
$X=x\un\{x,<\}$ for some transitive set $x$, and wellorder $<$ of $x$. In this
situation, $<_X$ denotes the wellorder of $X$ extending $<$, and with
last two elements $x,<$. Clearly there are uniform methods of passing from a
swo'd $X$ to a wellorder of $A={\hat{X}}$. Fix such a method, and for such $X,A$, let $<_A$ denote
the resulting wellorder of $A$.
\end{dfn}

{\color{black}The domains of our operators will be  \emph{operatic domains}, which will be certain kinds of subsets of \emph{operator backgrounds}.
The set $\her_\kappa$ of sets hereditarily of cardinality ${<\kappa}$) is a basic example of an operator background:}
\begin{dfn}\label{dfn:operator_bkgd}
We say that a set or class $\opbk$ is an \index{operator background}\dfnemph{operator background} iff (i) $\opbk$ is
transitive, rudimentarily closed and $\om\in\opbk$, (ii) for all $x\in\opbk$ and all $y,f$, if
$f\maps x^{<\om}\to\trancl(y)$ is a surjection then $y\in\opbk$, and (iii) $\opbk\sats\DC$. (So
$\OR(\opbk)=\rank(\opbk)$ is a cardinal; if $\om<\kappa\leq\Ord$  then $\her_\kappa$ is an operator
background {\color{black}(note that this only uses ZF, since $\kappa$ is an ordinal)}, and under $\ZFC$ these are the only operator backgrounds.) By
(iii), every element of $\opbk$ has a countable elementary substructure in $\opbk$.

Let $\opbk$ be an operator background. A set $C$ is a \index{cone}\dfnemph{cone of
$\opbk$} iff there is $a\in\opbk$ such that $C$ is the set of all
$x\in\opbk$ such that $a\in\J_1(\hat{x})$.
With $a,C$ as such, we say $C$ is \dfnemph{the cone above $a$}. If $b\in\J_1(a)$
we say $C$ is \dfnemph{above $b$}. A set $D$
is a \dfnemph{swo'd cone of $\opbk$}
iff $D=C\inter S$, for some cone $C$ of $\opbk$, and where $S$ is the
class of swo'd sets. Here $D$ is
\dfnemph{(the swo'd cone) above $a$} iff $C$ is
(the cone) above $a$. A \dfnemph{cone} is a
cone of $\opbk$ for some operator background $\opbk$. Likewise for \dfnemph{swo'd cone}.
\end{dfn}

\begin{dfn}\label{dfn:operatic_domain}
Let $\opbk$ be an operator background.
For $C\sub\opbk$, let\index{$\witri{C}$}
\[ \witri{C}=\{\hat{Y}\bigm| Y\in C\wedge Y\text{ is transitive}\}. \]
An \index{operatic domain}\dfnemph{operatic domain over $\opbk$} is a set $D=\witri{C}\un P\sub\opbk$,
where $C$ is {\color{black}a  cone of $\opbk$
or a swo'd cone of $\opbk$}, and $P$ is some class of ${<\om}$-condensing
$\om$-sound
opms, each over
some $A\in \witri{C}$. (We do not make any closure requirements on $P$.) Write
\index{$C^D$}\index{$P^D$}$C^D=C$ and $P^D=P$. Note that $\witri{C}\inter P=\emptyset$.

An \dfnemph{operatic domain} is an operatic domain over some such $\mathscr{B}$.
\end{dfn}

We can now define \emph{operators}:
\begin{dfn}\label{dfn:operator}
Let $\opbk$ be an operator background. An \index{operator}\dfnemph{operator over $\opbk$ with
domain $D$} is a function ${\Fop}:D\to\opbk$ such that (i) $D$ is an operatic
domain over $\opbk$;
(ii) for all $X\in D$, $\M={\Fop}(X)$ is a successor opm with $\M^-=X$ (so if $X\in\witri{C^D}$ then
$\wl(\M)=1$ and $\hmb^\M=X$,
whereas if $X\in P^D$ then $\wl(\M)=\wl(X)+1$
and $X\pins\M$).
Write \index{$C^{\Fop}$}\index{$P^{\Fop}$}$C^{\Fop}=C^D$ and $P^{\Fop}=P^D$.
\end{dfn}

\begin{rem}
The argument $X$ to an operator should be thought of as having one of two possible types.
It is a \emph{coarse object} if $X\in\witri{C^{{\Fop}}}$; it is an opm if $X\in P^{\Fop}$.
Some natural operators ${\Fop}$ have the
property that, given
$\N\in P^{\Fop}$ (so $\widehat{\N}\in \witri{C^{\Fop}}$), ${\Fop}(\widehat{\N})$ is inter-computable with
${\Fop}(\N)$. But operators producing strategy mice in the ``least branch'' (or ``least tree'') hierarchy do not have this property. {\color{black}(For in that case, $\Fop(\N)$
is defined by first identifying the ``least tree'' $\Tt_\N$ for which a branch
must be added, and then adding the correct branch through that tree;
this depends of course on $S^\N$,
which indicates which trees have already been dealt with. On the other hand, $\witri{\N}$
is treated as a coarse object, so when defining $\Fop(\witri{\N})$,
$S^\N$ is irrelevant, and
the ``least tree'' $\Tt_{\witri{\N}}$ chosen
will likely be different from $\Tt_{\N}$.)}
\end{rem}

The simplest operator is essentially the rudimentary closure operator $\J$:

\begin{dfn}\label{dfn:J^op} {\color{black}Assume $\DC$.}
Let $\hmpv\in V$.
Let $C_\hmpv$ be the class of all $x$ such that $\hmpv\in\J_1(\hat{x})$.
Let $P_\hmpv$ be the class of all ${<\om}$-condensing $\om$-sound opms $\R$ over some
$Y\in\witri{C_\hmpv}$,
with $\hmp^\R=\hmpv$.
Then $\J^\op_{\hmpv}$ denotes the operator over $V$
with domain $D=\witri{C_\hmpv}\un P_\hmpv$, where for $x\in D$, $\J^\op_\hmpv(x)$ is the passive
successor opm $\M$ with universe $\J_1(x)$ and $\M^-=x$ and $\hmp^\M=\hmpv$.\footnote{It is easy to
see that $\M$ is indeed an opm, so $\J^\op_\hmpv$ is an operator.} (So if $x\in\witri{C_\hmpv}$
then $\wl(\M)=1$ and $\hmb^\M=x$.)
Let $\J^\op=\J^\op_\emptyset$.

{\color{black}Without assuming DC,
if $p\in\mathscr{H}_\kappa$,
then we can define, in the same manner, the operator $\J_{p,\mathscr{H}_\kappa}^\op$ over  $\mathscr{H}_\kappa$.
We might also just write $\J^\op_p$ for $\J_{p,\mathscr{H}_\kappa}^\op$.}
\end{dfn}

\begin{dfn}[${\Fop}$-premouse]\label{Fop-premouse}
For ${\Fop}$ an operator, an \index{$\Fop$-premouse, $\Fop$-pm}\dfnemph{${\Fop}$-premouse
\tu{(}${\Fop}$-pm\tu{)}}
is an opm $\M$
such that $\N={\Fop}(\N^-)$ for every successor $\N\wins\M$.
\end{dfn}

Let $\M$ be an ${\Fop}$-premouse, where ${\Fop}$ is an operator over $\opbk$. Note that
$\hmb^\M\in\witri{C^{\Fop}}$, as $\M\wupto1={\Fop}(\M\wupto0)$ and $\M\wupto0=\hmb^\M=\hat{x}$ for some
$x$, and
$\hat{x}\notin P^{\Fop}$. Note also that
$\OR(\M)\leq\OR(\opbk)$.

We now define ${\Fop}$-iterability for ${\Fop}$-premice $\M$, using Definition \ref{dfn:iteration_trees} (and hence continuing to follow \cite{steel2010outline}).
The main point is that the iteration strategy should produce ${\Fop}$-premice.
One needs to be a little careful, however,
because the background $\opbk$ for ${\Fop}$ might only be a set.
To simplify things, we restrict our attention to the case that $\M\in\opbk$.

\begin{dfn}\label{dfn:F-iterability}
Let ${\Fop}$ be an operator over $\opbk$.
Let $\M$ be an $\Fop$-premouse and let $\Tt$ be a putative iteration tree on $\M$.
We say that $\Tt$ is a \dfnemph{putative ${\Fop}$-iteration tree} iff $M^\Tt_\alpha$ is an
${\Fop}$-premouse for all $\alpha+1<\lh(\Tt)$.
We say that $\Tt$ is an \index{$\Fop$-iteration tree}\dfnemph{${\Fop}$-iteration tree}
iff $M^\Tt_\alpha$ is an ${\Fop}$-premouse for all $\alpha+1\leq\lh(\Tt)$.
We may drop the ``${\Fop}$-'' when it is clear from context.

Let $k<\om$ and let $\M\in\opbk$ be a $k$-sound ${\Fop}$-premouse.
Let $\theta\leq\OR(\opbk)+1$.
The iteration game
\index{$\G^{\Fop}_n(\M,\theta)$}$\G^{{\Fop}}_k(\M,\theta)$ has the rules of $\G_k(\M,\theta)$, except for the following
difference. Let $\Tt$ be the putative tree being produced.
For $\alpha+1\leq\theta$, if both players meet their requirements at all stages $<\alpha$, then,
in stage $\alpha$, player $\plII$ must first ensure that {\color{black}$M^\Tt_\alpha$ is wellfounded, and if $\alpha<\OR^{\opbk}$, that $M^\Tt_\alpha$ is an
${\Fop}$-premouse}. (Given this, if $\alpha+1<\theta$, player $\plI$ then selects
$E^\Tt_\alpha$.)
Thus, if we reach stage $\OR(\opbk)$, then after selecting a branch, player $\plII$ wins iff
$M^\Tt_{\OR(\opbk)}$ is wellfounded. (We cannot in general expect $M^\Tt_{\OR(\opbk)}$ to be an
${\Fop}$-premouse in this situation. For example, suppose that $\opbk=\HC$ and $\theta=\om_1+1$
and $\lh(\Tt)=\om_1+1$.
Then $M^\Tt_{\om_1}$ cannot be an ${\Fop}$-premouse, since all ${\Fop}$-premice have height
$\leq\om_1$.
But in applications such as comparison, we only need to know that $M^\Tt_{\om_1}$
is wellfounded. So we still decide the game in favour of player $\plII$ in this situation.)

Let $\lambda,\alpha\leq\OR(\opbk)$, and suppose that either $\OR(\opbk)$ is regular or
$\lambda<\OR(\opbk)$. Let $\theta\leq\lambda+1$.
The iteration game
\index{$\G^{*\Fop}_n(\M,\alpha,\theta)$}${\color{black}\G}^{{\color{black}*}{\Fop}}_k(\M,\alpha,\theta)$ is defined just as ${\color{black}\G}^{\color{black}*}_k(\M,\alpha,\theta)$,
with the differences that (i) the rounds are runs of $\G^{{\Fop}}_q(\Q,\theta)$ for some
$\Q,q$, and
(ii) if $\alpha$ is a limit and neither player breaks
any rule, and $\Ttvec$ is the sequence of trees played, then player $\plII$ wins iff
$M^{\Ttvec}_\infty$ is well-defined, wellfounded,\footnote{It follows
that {\color{black}if $\alpha=\OR(\opbk)$} then $M^{\Ttvec}_\infty\wupto\OR(\opbk)$ is an ${\Fop}$-premouse.} and if $\alpha<\OR(\opbk)$ then $M^{\Ttvec}_\infty$ is an
${\Fop}$-premouse.
(By some straightforward calculations using the restrictions on $\alpha,\theta$,
one can see that for any $\gamma<\alpha$,
if neither player has lost the game after the first $\gamma$
rounds,
and $\Ttvec\rest\gamma$ is the sequence of trees played thus far,
then $M^{\Ttvec\rest\gamma}_\infty\in\opbk$ and $M^{\Ttvec\rest\gamma}_\infty$ is an
${\Fop}$-premouse, so $\G^{{\Fop}}_q(\Q,\theta)$ is defined for the relevant $(\Q,q)$. This uses the
rule that if one of the rounds produces a tree of length $\theta$, then the game terminates.)

\index{$\G^{*\Fop}_{\opt,n}(\M,\alpha,\theta)$}${\color{black}\G}_{\opt,k}^{{\color{black}*}{\Fop}}(\M,\alpha,\theta)$ is likewise defined by analogy with ${\color{black}\G}^{\color{black}*}_{\opt,k}(\M,\alpha,\theta)$.

An \index{$\Fop$-iterability}\dfnemph{${\Fop}$-$(k,\theta)$-iteration strategy} for $\M$ is
a winning strategy for player $\plII$ in $\G^{{\Fop}}(\M,k,\theta)$.
An \dfnemph{${\Fop}$-$(k,\alpha,\theta)^*$-{\color{black}optimal} iteration strategy} for $\M$ is likewise for
$\G^{*{\Fop}}_{\opt}(\M,k,\alpha,\theta)$. And an
\dfnemph{${\Fop}$-$(k,\alpha,\theta)^*$-iteration strategy} is likewise for
$\G^{*{\Fop}}(\M,k,\alpha,\theta)$.

Now \dfnemph{${\Fop}$-$(k,\theta)$-iterability}, etc, are defined in the obvious manner.
\end{dfn}

\subsection{Coarse condensation of operators}
In order to prove that ${\Fop}$-premice built by background constructions are ${\Fop}$-iterable, we
will need to know that ${\Fop}$ has good \emph{condensation} properties, which roughly  demand that  elementary hulls of structures $\Fop(\M)$ should have form $\Fop(\M')$. (But we will also need to consider variants thereof.)

\begin{dfn}
 Let $\pi:\M\to\N$ be an embedding and $b$ be transitive. We say that $\pi$ is \index{above}\dfnemph{above $b$}
iff
$b\un\{b\}\sub\dom(\pi)$ and $\pi\rest b\un\{b\}=\id$.
\end{dfn}

\begin{dfn}\label{dfn:condenses_coarsely}
 Let ${\Fop}$ be an operator over $\opbk$ {\color{black}with domain $D$. Suppose $C^D$ is the cone above some transitive $p\in\mathscr{B}$.}
We say that ${\Fop}$ \index{above}\index{coarse condensation}\index{condenses coarsely}\dfnemph{condenses coarsely} (or \dfnemph{${\Fop}$ has  coarse
condensation}) \dfnemph{above $p$} iff for
every successor ${\Fop}$-pm $\N$ {\color{black}(so  $p\in\J_1(\hmb^\N)$)},
every set-generic extension $V[G]$ of $V$
and all $\M,\pi\in V[G]$, if $\M$ is a successor opm, $\M^-\in V$ and
$\pi:\M\to\N$ is fully elementary and is above $p$,
then $\M$ is an
${\Fop}$-pm (so in particular,
{\color{black}$\hmb^\M\in\witri{C^D}$ and} $\M^-\in\dom({\Fop})$ and $\M={\Fop}(\M^-)\in V$).

We say that ${\Fop}$ \index{almost condenses coarsely}\dfnemph{almost condenses coarsely above $p$} iff the preceding holds for
$G=\emptyset$.
\end{dfn}

\begin{dfn}
 An operator ${\Fop}$ over $\opbk$ is \index{total operator}\dfnemph{total} iff $P^{\Fop}$ includes all ${<\om}$-condensing
$\om$-sound
${\Fop}$-pms in $\opbk$.
\end{dfn}

The following lemma is a standard kind of observation:
\begin{lem}\label{lem:almost_redundancy_coarsely}
 Let ${\Fop}$ be a total operator over $\mathscr{B}$ {\color{black} with domain $D$. Suppose that $C^D$ is the cone above some $p\in\HC$,
 and} that $\Fop$ almost condenses coarsely above
  $p$.
 Then ${\Fop}$ condenses coarsely above $p$.
\end{lem}
\begin{proof}[Proof sketch]
Suppose the lemma fails and let $\PP$ be a poset, and
$G\sub\PP$ be $V$-generic, such that in $V[G]$ there is a counterexample $\pi:\M\to\N$.
So $\N\in V$  is a successor $\Fop$-pm (so $\N\in\mathscr{B}$),
$\M\in V[G]$ is a successor opm,
 $\M^-\in V$, and $\pi\in V[G]$ with $\pi:\M\to\N$
 is fully elementary and is above $p$,
 but  $\M$ is not an $\Fop$-pm.
 {\color{black}By passing to proper segments of $\M,\N$ if needed, we
may assume that $\M^-$ is an ${\Fop}$-pm, and therefore that $\M^-\in\dom({\Fop})$.
So letting $\M'=\Fop(\M^-)$,
we have $\M\neq\M'$.}

Let $\PP'=\Coll(\om,\M'\un\N)$.
By $\Sigma^1_1$-absoluteness, we may assume that $\PP=\PP'$.
{\color{black}That is, if $H$ is $(V,\PP')$-generic then in $\N,\M',\M^-,p\in\HC^{V[H]}$, and since in $V[G]$ there is $(\P,\sigma)$
such that $\P$ is a successor opm such that $\P^-=\M^-$ and $\P\neq\M'$ and $\sigma:\P\to\N$ is elementary and above $p$, there is also such a pair $(\P,\sigma)\in V[H]$.}

{\color{black}Let $\varrho:\om\to p$ be a surjection.
Let $X=\M'\cup\N\cup p\cup\varrho\cup\{\M',\N,p,\varrho\}$, so $X\in\mathscr{B}$.
We can fix $\eta\in\OR$ such that $L_\eta(X)\sats\ZF^{-\varepsilon}$,
and in fact by condensation,
taking the least such $\eta$, we have
 $L_\eta(X)\in\mathscr{B}$.

So $\PP=\Coll(\om,\M'\cup\N)\in L_\eta(X)$ and $L_\eta(X)\sats$ ``It is forced by $\PP$ that there
is $(\P,\sigma)$ such that $\P$ is a successor opm with $\P^-=\M^-$ but $\P\neq\M'$ and $\sigma:\P\to\N$ is elementary and above $p$.'' Because
$\opbk\sats\DC$, we can take a countable elementary hull of $L_\eta(X)$, such that letting
$\tau:L_{\bar{\eta}}(\bar{X})\to L_\eta(X)$ be the uncollapse map, $\tau(\bar{X})=X$ and $\rg(\tau)$ includes all relevant objects and all points
in $p\cup\{p\}$.
Write $\pi(\bar{\PP})=\PP$, etc.
Fix
 $g\in V$ which is $(L_{\bar{\eta}}(\bar{X}),\bar{\PP})$-generic. Then in $L_{\bar{\eta}}(\bar{X})[g]$,
 we have some opm $\P$ such that $\P^-=\bar{\M}^-$ but $\P\neq\bar{\M}'$, and some elementary $\sigma:\P\to\bar{\N}$
 which is above $\bar{p}=p$.
 Since $\tau\rest\bar{\N}:\bar{\N}\to\N$ and $\tau\rest\bar{\M'}:\bar{\M}'\to\M'=\Fop(\M^-)$ are elementary and above $p$,
 and $\Fop$ almost condenses coarsely above $p$, $\bar{\N}$ and $\bar{\M}'$ are $\Fop$-premice.
 But then similarly, because we have $\sigma$, $\P$ is an $\Fop$-premouse, so $\P=\Fop(\bar{\M}^-)=\bar{\M}'$,
 a contradiction.}
 \end{proof}
\subsection{Operators which (don't) condense well}
\begin{rem}\label{rem:problems_with_condenses_well}
So far the only example of an operator we have formally defined is that of $\J$.
In Definition \ref{dfn:mouse_op} below, we will introduce a more general class of examples, \emph{mouse operators}.
This will be a modification of the notion \emph{model operator} from (see \cite[Definition 2.1.8]{wilson2012contributions},
where such objects are denoted $F_K$).  But as we describe below,
the \emph{model operators} defined in \cite[2.1.8]{wilson2012contributions} have a minor problem, which we rectify here. (The notion \emph{mouse operator} as defined in \cite[Definition 2.1.7]{wilson2012contributions} is distinct from both of these.)

We will then proceed toward the central notion of \emph{condenses finely},
a refinement of \emph{condenses coarsely}.
This notion is based on that of \emph{condenses well}, from \cite[Definition 1.3.2]{cmi} and \cite[Definition 2.1.10]{wilson2012contributions}. We will modify \emph{condenses well} in several respects, for multiple reasons. The main changes will be motivated by the following discussion.
We can demonstrate a concrete
problem with \emph{condenses well}, at least when it is used in concert with
other definitions in \cite{wilson2012contributions}.
The following discussion uses the definitions and notation of
\cite[\S2]{wilson2012contributions},
without further explanation here; the terminology differs from this paper. (The remainder of
this remark is for motivation only; nothing in it is needed later.)

Let $K$ be the function $x\mapsto\J_2(x)$. Clearly $K$ is a mouse operator (see
\cite[2.1.7]{wilson2012contributions}). Let $F=F_K$
(see \cite[2.1.8]{wilson2012contributions}). Then we claim:
\begin{enumerate}
\item {\color{black}assuming that ``$n$th master code'' has a standard interpretation in \cite[2.1.8]{wilson2012contributions}, $F$ is not well-defined\footnote{This is a minor point, and is easily rectified, by following the form of \emph{mouse operator} from \cite[bullet (1) after Definition 1.3.1]{cmi}.}
(contrary to \cite[2.1.8, 2.1.12]{wilson2012contributions}),} and
\item {\color{black}modifying the definition of $F$ in the natural way so as to produce a (well-defined) model operator $F'$,} $F'$ does not
condense well
(contrary to {\color{black}the spirit of} \cite[2.1.12]{wilson2012contributions}).
\end{enumerate}
Let us verify this.

{\color{black}The fact that $F$ is not well-defined is just because in \cite[2.1.8]{wilson2012contributions},
in case 2 of the definition of $F(\M)$, the universe of $F(\M)$ is taken to be the $n$th master code of $\J_2(\M)|\xi$, for the relevant $n<\om$. Here, as we are in case 2, we have $\J_2(\M)|\xi=\J_1(\M)$. Now it can be that $n>0$ and $\rho(\M)<\OR^\M$,
and then the $n$th master code
(if this is interpreted in a standard kind of fashion) has ordinal height $\rho(\M)$, and its universe does not even include all of the universe of $\M$. But then it does not make sense to define $\dot{S}^{F(\M)}=S^\M\conc\left<\M\right>$,
as is written in \cite[2.1.8]{wilson2012contributions}.

So let us consider the modification
of the definition of $F=F_K$,
where instead of using a master code in case 2, we define
\[ F'(\M)=(\J_2(\M)|\xi;{\in},\emptyset,\emptyset,\dot{S}^{\M}\conc\left<\M\right>,\ell(\M)+1,a).\]
(This wouldn't  work for
 mouse operators in general,
 but we only consider the mouse operator $K$ for this discussion.)
In case 1, keep $F'(\M)=F(\M)$
as defined in \cite[2.1.8]{wilson2012contributions}.}

Then $F'$ is a model operator,
and seems to carry the  meaning
intended in \cite[2.1.8]{wilson2012contributions}.
{\color{black}(The adjustment in the definition brings it, moreover,
 in line with the definition of \emph{mouse operator} in \cite[bullet (1) after Definition 1.3.1]{cmi}.)}
But $F'$ does not in general  condense well.
For clearly regular premice $\M$ whose ordinals are closed under ``$+\om$'' can be
arranged as models $\tilde{\M}$ with parameter $\emptyset$ (see
\cite[Definition 2.1.1]{wilson2012contributions}), such that for each
$\alpha<\l(\tilde{\M})$,
$\tilde{\M}|(\alpha+1)=F'(\tilde{\M}|\alpha)$.
Now let $\M$ be a premouse such that for some
$\kappa<\OR(\M)$, $\kappa$ is measurable in $\M$, via some measure on
$\es=\es^\M$, and $\M\sats$``$\lambda=\kappa^{+\kappa}$ exists'',
$\rho_\om^\M=\lambda$, and $\M=\J_1(\M_0)$ where $\M_0=\J^\es_\lambda$.
Let $\M^*=\J(\tilde{\M_0})$, arranged as a model with parameter
$\emptyset$ extending $\tilde{\M_0}$. We have
$\rho_\om^\M=\lambda=\rho(\M_0)$ and $\tilde{\M_0}\in\M^*\in
F'(\tilde{\M_0})$ and
$\l(\M^*)=\lambda+1$ and $(\M^*)^-=\tilde{\M_0}$ (see
\cite[Definition 2.1.3]{wilson2012contributions}).
(We can't say
$\M^*=\tilde{\M}$, because $\tilde{\M}$ is not defined.)

Let $E\in\es$ be $\M$-total with $\crit(E)=\kappa$. Let $\N=\Ult_0(\M,E)$
and $\pi=i_E$. Then
$\rho_1^\N=\sup\pi``\lambda<\pi(\lambda)$. Let $\N_0=\pi(\M_0)$ and
$\N^*=\J_1(\tilde{\N_0})$, arranged as a model with parameter $\emptyset$
extending $\tilde{\N_0}$. Then $\rho_1(\N^*)<\pi(\lambda)=\rho(\tilde{\N_0})$,
and therefore $\N^*=F'(\tilde{\N_0})$. But $\pi:\M^*\to\N^*$ is a $0$-embedding
(and $\pi(\tilde{\M_0})=\tilde{\N_0}$). Since $\M^*\neq F(\tilde{\M_0})$, $F'$
does not condense well (see \cite[2.1.10(1)]{wilson2012contributions}).
{\color{black}Note that, in fact,
$\Ult_1(\M,E)=\Ult_0(\M,E)$ and
$\pi$ is both a $0$-embedding
and a $1$-embedding,
since for all $\bfrSigma_1^{\M}$ functions $f:\kappa\to\M$ there
is a  measure one set $X\in\M$ such that $f\rest X\in\M$.
So $\pi$ is also $\rSigma_2$-elementary, even though $\M^*\neq F'(\tilde{\M_0})$.)}

But note  that in the example above, $\M$ is not $0$-relevant,
nor $k$-relevant for any $k<\om$.
This motivates our focus on $k$-relevant opms.
We now give a second example, and one in which the embedding is the kind that can arise in the proof
of solidity of the standard parameter -- certainly in this context we would want to make
use of \emph{condenses well}. We claim there are (consistently) mice $\M$, containing large
cardinals, and $\rho,\alpha\in\Ord^\M$ such that:
\begin{enumerate}[label=--]
 \item $\M=\J(\N)$ where $\N=\M|\rho^{+\M}$,
 \item $\M$ is $1$-sound,
 \item $\rho_1^\M=\rho<\alpha<\rho^{+\M}$,
 \item $p_1^\M=\{\rho^{+\M},\alpha\}$, and
 \item letting $\Hh=\cHull_1^\M(\alpha\un\{\rho^{+\M}\})$, we have $\rho_\om^\Hh=\alpha$.
\end{enumerate}
Given such  $\M,\rho,\alpha,\Hh$, note that
$\alpha=\rho^{+\Hh}$ and $\Hh=\J(\M||\alpha)$.
Then $\Hh$ is a $1$-solidity witness for $\M$, and the $0$-embedding $\pi:\Hh\to \M$ is the one
that
would be used in the proof of the $1$-solidity of $\M$.
Moreover, with $F'$ as before, ``$\M=\J(\N)=F'(\N)$'' (since $\M$ projects below $\Ord^\N$) but
``$\Hh\neq F'(\M||\alpha)=\J(\J(\M||\alpha))$''. So we again have a failure of \emph{condenses
well}, and
one
which is arising in the context of the proof of solidity. (Of course, in the example we are already
assuming $1$-solidity, but the example seems to indicate that we cannot really expect to use
\emph{condenses well} in the proof of solidity for $F'$-mice.)

Now let us verify that such an $\M$ exists. Let $\P$ be any mouse (with large cardinals) and $\rho$
a
cardinal of $\P$ such that $\rho^{++\P}<\Ord^\P$. Let $\gamma=\rho^{+\P}+1$. For
$\alpha<\rho^{+\P}$
let
\[ \Hh_\alpha=\cHull_1^{\P|\gamma}(\alpha\un\{\rho^{+\P}\}). \]
Because $\rho_\om^{\P|\gamma}=\rho^{+\P}$, there is $\alpha$ with
$\rho<\alpha<\rho^{+\P}$ and such that the
uncollapse map $\pi_\alpha:\Hh_\alpha\to \P|\gamma$ is fully
elementary, and so $\rho_\om^{\Hh_\alpha}=\alpha=\rho^{+\Hh_\alpha}$.
{\color{black}(In $\P$, there is a club $C\sub\rho^{+\P}$ of ordinals $\alpha$ such that $\crit(\pi_\alpha)=\alpha$ and $\pi(\alpha)=\rho^{+\P}$. But $\P|\gamma=\Hull_1^{\P|\gamma}(\rho^{+\P}\cup\{\rho^{+\P}\})$,
so considering Tarski-Vaught
a straightforward closure argument
yields  a club $C'\sub C$
such that $\pi_\alpha$ is fully elementary for each $\alpha\in C'$.)\label{c41}}
Fix such an $\alpha$. Let $\Hh=\Hh_\alpha$ and
\[\M=\cHull_1^{\P|\gamma}(\rho\un\{\rho^{+\P},\alpha\}). \]
We claim that $\M,\rho,\alpha$ are as required. For we have $\M\in
\P$, which easily gives that $\rho_1^\M=\rho$. Clearly $\M=\J(\N)$ where $\N=\M|\rho^{+\M}$. The
$1$-solidity witness associated to $\rho^{+\M}$ is
$\cHull_1^\M(\rho^{+\M})$,
which is just $\M|\rho^{+\M}$, as $\M|\rho^{+\M}\elem_1 \M$, as $\M|\rho^{+\M}\sats\ZFmin$.
And the $1$-solidity witness associated to $\alpha$ is
$\cHull_1^\M(\alpha\un\{\rho^{+\M}\})$,
which is just $\Hh=\J(\P||\alpha)\in\M$. All of the
required properties follow.

The preceding examples seem to extend to any (first-order) mouse operator $K$
such that $\J(x)\in K(x)$ for all $x$.

To get around the problem just described, we will need to weaken the conclusion of \emph{condenses
well}, as will be seen.

{\color{black}Our second modification to the definition of \emph{condenses well}\label{c43}} is not based on a definite problem, but on a suspicion. It relates to,
in the notation used in clause (2) of \cite[2.1.10]{wilson2012contributions}, the embedding
$\sigma:F(\P_0)\to\M$.
In at least the basic situations in which one would want to use this clause
(or its analogue in \emph{condenses finely}), $\sigma$ actually arises from something like an
iteration map. But in \emph{condenses well}, no hypothesis along these lines regarding $\sigma$ is
made.
It seems that this could be a deficit, as it might be that $F(\P_0)$ is lower than $\M$ in the
mouse order (if one can make sense of this); we might have $F(\P_0)\pins\M$. Thus, it seems that
in proving an operator condenses well, one might struggle to make use of the existence of
$\sigma$.
So, in \emph{condenses finely}, we make stronger demands on $\sigma$.

A third change is that we do not require that $\pi\com\sigma\in V$ (with $\pi,\sigma$ as in
\cite[2.1.10]{wilson2012contributions}). This is explained toward the end of
\ref{rem:condenses_finely_beyond_almost}.

Motivation for the remaining details will be provided by how they arise later, in our proof of the
fundamental fine structural properties for ${\Fop}$-mice for operators ${\Fop}$ which condense finely,
and in our proof that mouse operators condense finely. We now return to our terminology and
notation. Before we can define \emph{condenses finely}, we need to set up some terminology in order
to describe the demands on $\sigma$.
\end{rem}

\subsection{Mouse operators}
In this section, in Definition \ref{dfn:mouse_op}, we will define mouse operators,
as an instance of a somewhat more general kind of operators (those of form $\Fop_G$; see \ref{dfn:operator_Fop_G}). These are variants of  the \emph{model operators} of \cite[2.1.8]{wilson2012contributions},
but in view of Remark \ref{rem:problems_with_condenses_well}, the details must be modified somewhat.
Our definition of mouse operators will be based on  \emph{op-$\J$-structures}. An op-$\J$-structure
will be used to form one step in the $\Fop_G$-hierarchy. Being a $\J$-structure, it has its own internal hierarchy, which will provide the stratification needed for opms:

\begin{dfn}[op-$\J$-structure]\label{dfn:op-J-structure}\index{op-$\J$-structure}
Let $\alpha\in\Ord\cut\{0\}$, and let $Y$ be such that either $Y=\hat{Z}$ for a transitive $Z$,
or $Y$ is a {\color{black}${<\om}$-condensing} $\om$-sound opm. Let
\[ D=\Lim\inter[\OR^Y+\om,\OR^Y+\om\alpha) \]
and let $\hmPvec=\left<\hmP_\beta\right>_{\beta\in D}$ be given.

We define $\J_\beta^{\hmPvec}(Y)$ for $\beta\in[1,\alpha]$, if possible, by recursion on
$\beta$, as follows. We set
$\J_1^{\hmPvec}(Y)=\J(Y)$ and take unions at limit $\beta$.
For $\beta+1\in[2,\alpha]$, let $R=\J_\beta^{\Pvec}(Y)$ and suppose that
$\hmP\eqdef\hmP_{\OR^R}\sub R$ and is amenable to $R$. In this case we define
\[
\J_{\beta+1}^{\hmPvec}(Y)=\J(R,\hmPvec\rest R,\hmP).
 \]
Note then that by induction,
$\hmPvec\rest R\sub R$ and $\hmPvec\rest R$ is amenable to
$R$.

Let \index{$\Ll_\J$}$\Ll_{\J}$ be the language with binary relation
symbol
$\dot{\in}$,
predicate
symbols $\dot{\hmPvec}$ and
$\hmPdot$, and constant symbol $\hmbdot$.

For $Y$ as above, an \dfnemph{op-$\J$-structure over $Y$} is an amenable
$\Ll_{\J}$-structure
\[ \M=(\J_{\alpha}^\hmPvec(Y),\in^\M,\hmPvec,\hmP,Y), \]
where $\alpha\in\Ord\cut\{0\}$ and $\hmPvec=\left<\hmPvec_{\gamma}\right>_{\gamma\in D}$
with domain $D$ defined as above,
$\univ{\M}=\J_\alpha^{\hmPvec}(Y)$ is defined,  $\dot{\hmPvec}^\M=\hmPvec$,
$\hmPdot^\M=\hmP$, $\hmbdot^\M=Y$.

Let $\M$ be an op-$\J$-structure, and adopt the notation above. Let
$\l(\M)$ denote $\alpha$. For
$\beta\in[1,\alpha]$ and $R=\J_\beta^{\hmPvec}(Y)$ and $\gamma=\OR^R$, let
\[
\M\Jupto\gamma=(R,\in^R,\hmPvec\rest R,\hmP_\gamma,Y). \]
Write $\N\Jins\M$ iff
$\N=\M\Jupto\gamma$ for
some $\gamma$. Clearly if $\N\Jins\M$ then
$\N$ is
an op-$\J$-structure over $Y$.
Write $\N\Jpins\M$ iff
$\N\Jins\M$
but $\N\neq\M$.

Let $\M$ be an op-$\J$-structure. Note that $\M$ is pre-fine {\color{black}(see Definition \ref{dfn:pre-fine})}.
We define the \index{fine structural notions}\dfnemph{fine structural notions} for
$\M$ using \ref{dfn:fine_structure}.
\end{dfn}

 From now on we omit ``$\in$'' from our notation for op-$\J$-structures.
 In what follows,
recall that \emph{operator background}, \emph{operatic domain},  $\witri{C^D}$
and $P^D$ were introduced in
Definitions \ref{dfn:operator_bkgd} and \ref{dfn:operatic_domain}.

\begin{dfn}[Pre-operator]\label{dfn:pre-op}
Let $\opbk$ be an operator background.
A \index{pre-operator}\dfnemph{pre-operator over $\opbk$}
is a function
$G:D\to\mathscr{B}$,
for some operatic domain $D$ over $\opbk$
such that for each $Y\in D$, $G(Y)$ is an op-$\J$-structure $\M$
over $Y$
such that
(i) every $\N\ins\M$ is $\om$-sound, and (ii) for some
$n<\om$,
$\rho_{n+1}^\M=\om$.\footnote{Recall from \ref{dfn:fine_structure} that $\rho_{n+1}^\M=\om$
does not mean that there is a new subset of $\om$ definable from parameters over $\M$, but just a new subset of $\om\cross(Y\cup\{Y\})^{<\om}$.}
Recalling that $D=\witri{C^D}\cup P^D$,
let $C^G=C^D$ and $P^G=P^D$.
\end{dfn}

{\color{black}We now want to derive an operator $\Fop_G$ from a pre-operator $G$. Say $\R$ is a sound $\Fop_G$-premouse,
over some set $A$, and we want to define $\Fop_G(\R)$. The initial hope is that this structure should be essentially equivalent to $G(\R)$, but with predicates reorganized appropriately. But this might not work, for two reasons. Most importantly, the resulting structure might fail projectum amenability;
that is, $G(\R)$ might contain subsets of $\rho_\om^\R\cross A^{<\om}$ which are not in $\R$.
In this case, we need to first replace $G(\R)$ with the largest $\J$-initial segment $G'(\R)$ of $G(\R)$ which does satisfy projectum amenability. And then, although $\rho_{n+1}^{G'(\R)}=\om$ for some $n<\om$, we cannot expect
that $\rho_1^{G'(\R)}=\om$. So we need to replace $G'(\R)$ with its $n$th reduct, for the appropriate $n$, and then code this as a successor opm.}

\begin{dfn}[Operator ${\Fop}_G$]\label{dfn:operator_Fop_G}
Let $G$ be a pre-operator over an operator background $\opbk$, with domain $D=\witri{C^D}\cup P^D$. We define
a corresponding operator \index{$\Fop_G$}\index{operator $\Fop_G$}${\Fop}={\Fop}_G$, also with domain $D$, as follows.

Let $X\in\witri{C^{D}}$ and
$\N=G(X)=(\univ{\N},\hmPvec^\N,\hmP^\N,X)$.
Let $n<\om$ be {\color{black}least} such
that $\rho_{n+1}^\N=\om$, {\color{black}so} $\OR^X<\sigma$ where $\sigma=\rho_{n}^\N$.
If $n=0$ then let $\M=\N$.
If $n>0$ then let $\Q=\N\Jupto\sigma$ and let $\M$ be the
op-$\J$-structure
\[ \M=(\univ{\Q},\hmPvec^\N\rest\sigma,T,X),\]
where $T\sub\univ{\Q}$
codes
$\Th_{n}^\N(\univ{\Q}\un\pvec_{n}^\N)$
in some uniform fashion, amenably to $\univ{\Q}$, such as with mastercodes.\footnote{\label{ftn:sub_constant_symbol}For
concreteness, we take $T$ to be the set of pairs $(\alpha,t')$ such that for some $t$,
$(\pvec_{n}^\M,\alpha,t)\in T_{n}^\M$, and $t'$ results from the theory $t$
by replacing each instance of $\pvec_{n}^\M$ {\color{black}in statements in $t$ with $\alpha$, interpreted as a constant symbol; note that if $(\pvec_n^\M,\alpha,t)\in T_n^\M$ then $\alpha$ does not already occur as a parameter in $t$, and this substitution neither obscures nor creates information.}}
Note that in either case, $\M=(\univ{\M},\hmPvec^\M,\hmP^\M,X)$ is an $\om$-sound
op-$\J$-structure over $X$ and $\rho_{1}^{\M}=\om$.
Now define ${\Fop}(X)$ as the hierarchical model $\K$ over $X$, of length $1$, with $\univ{\K}=\univ{\M}$,
$\hmE^{\K}=\emptyset=\hmp^\K$,\footnote{A natural generalization of this definition
would set $\hmp^\K$ to be some fixed non-empty object. For example, if one uses operators to
define strategy mice, one might set $\hmp^\K$ to be the structure that the iteration
strategy is for.} and
$\hmP^{\K}=\{X\}\cross(\hmPvec^\M\oplus\hmP^\M)$.
(We use $\{X\}\cross\cdots$ to ensure that $\hmP^\K\sub\K\cut\K^-$.
Recall that $\K$ having length $1$ requires that $\hmS^{\K}=\emptyset$.)

Now let $\R\in P^D$; we define ${\Fop}(\R)$.
Let
$A=\hmb^\R$ and $\rho=\rho_\om^\R$.
Let $\P=G(\R)$. Let $\N\ins\P$ be largest such
{\color{black}if $\rho>\om$ (so $\rho>\rank(A)$) then} for all $\alpha<\rho$, we have
$\pow(A^{<\om}\cross\alpha^{<\om})^\N=\pow(A^{<\om}\cross\alpha^{<\om}
)^\R$.
{\color{black}(Such an $\N$ exists, since $\J(\R)=\P|^\J(\OR^\R+\om)$ satisfies the requirements, by choice of $\rho$. Note that if $\rho=\om$ then $\N=\P$.)}
Let $n<\om$ be {\color{black} least} such that
$\rho_{n+1}^\N=\om$, {\color{black}so} $\OR^\R<\rho_{n}^\N$. Define $\M$ from $(\N,n)$ as in the
definition of ${\Fop}(X)$ for $X\in
\witri{C^{D}}$, but with $\hmb^\M=\R$.
Much as there, $\M=(\univ{\M},\hmPvec^\M,\hmP^\M,\R)$ is an $\om$-sound
op-$\J$-structure over $\R$ and $\rho_{1}^{\M}=\om$.

Now set ${\Fop}(\R)$ to be the unique
hierarchical model $\K$ of length $\wl(\R)+1$ with $\univ{\K}=\univ{\M}$,
$\R\wpins\K$ (so $\hmS^\K=\hmS^\R\conc\left<\R\right>$), $E^{\K}=\emptyset$, and
$\hmP^{\K}=\{\R\}\cross(\hmPvec^\M\oplus\hmP^\M)$. {\color{black}Let us also say that $\Fop(\R)$ \index{projects early}\dfnemph{projects early} if $\N\pins\P$ (in this case,
$\Fop(\R)$ does not ``reach'' the full $\P=G(\R)$, but just its initial segment $\N$).}
This completes the definition.
\end{dfn}

With notation as above, let $\R\in D$.
Note that ${\Fop}(\R)$ easily codes $G(\R)$,
\emph{unless} $\R\in P^D$
and $\Fop(\R)$ projects early. Let us verify that
${\Fop}_G$ is indeed an operator:

\begin{lem}\label{lem:operator_Fop_G}
 Let $G$ be a pre-operator over an operator background $\opbk$, with domain $D$. Then
 ${\Fop}_G$ is an operator over $\opbk$. Moreover, for any ${\Fop}_G$-premouse $\M$ of
length $\alpha+\om$, for all sufficiently large $n<\om$,
${\Fop}_G(\M\wupto(\alpha+n))$
does not project early.
\end{lem}
\begin{proof}[Proof sketch]
We first show that ${\Fop}_G$ is an operator.
Let ${\Fop}={\Fop}_G$ and $X\in D=\dom({\Fop})$. We must verify that
$\M={\Fop}(X)$ is an opm. This follows from (i) the choice of $\univ{{\Fop}(X)}$ (i.e.~the choice
of $\N\ins G(X)$ in the definition of ${\Fop}(X)$, which gives, for example, projectum
amenability
for ${\Fop}(X)$), (ii) if $X\in P^{D}$ then $X$ is
an $\om$-sound opm (acceptability follows from this and projectum amenability), (iii) standard
properties of
$\J$-structures (for example, to establish stratification), and (iv) with {\color{black}$\M$} as in the definition
${\Fop}(X)$ (either in case $X\in\witri{C^D}$ or in case $X\in P^D$), the fact that {\color{black}$\M$} is $\om$-sound
and $\rho_{1}^{\color{black}\M}=\om$ (for
sound projection).

Now let $\M$ be an ${\Fop}$-premouse of limit length $\alpha+\om$. Then for all $m$,
\[ \rho_\om^{\M\wupto(\alpha+m+1)}\leq\rho_\om^{\M\wupto(\alpha+m)}, \]
because $\M\wupto(\alpha+m+1)$
is soundly projecting and $\M\wupto(\alpha+m)$ is $\om$-sound. So if $n<\om$ is such that
$\rho_\om^{\M\wupto(\alpha+n)}$ is as small as possible, then $\Fop(\M|(\alpha+n))$ does not project early.
\end{proof}

So any limit length ${\Fop}_G$-premouse $\M$ is ``closed under $G$'' in the sense that for
$\in$-cofinally many $X\in\M$, we have $G(X)\in\M$.

We can now define mouse operators.

\begin{dfn}\label{dfn:mouse_op}
Let $\varphi\in\Ll_0$. Let $\opbk$ be an operator background. Suppose that for every transitive
structure $x\in\opbk$ there is $\M\pins\Lp(x)$ such that $\M\sats\varphi$, and let $\M_x$ be the
least such. Let ${\color{black}G_\varphi}:\opbk\partialto\opbk$ be the pre-operator
where for $x\in\opbk$ a transitive structure, ${\color{black}G_\varphi}(\hat{x})$
is the op-$\J$-structure over $\hat{x}$ naturally coding $\M_x$,
and for $x\in\opbk$ a ${<\om}$-condensing $\om$-sound opm, ${\color{black}G_\varphi}(x)$
is the op-$\J$-structure over $x$ naturally coding $\M_x$.
The \index{mouse operator}\dfnemph{mouse operator} ${\Fop}_\varphi$ determined by $\varphi$ is ${\Fop}_{G_\varphi}$.\end{dfn}

\begin{rem}
For example, suppose that $\M_1^\#(X)$ is defined and fully iterable for all sets $X$. 
Then $X\mapsto\M_1^\#(X)$ is a pre-operator $G_\varphi$,
for the obvious formula $\varphi$,
and $\Fop=\Fop_\varphi$ the induced mouse operator.
Let $\M$ be the least $\Fop$-premouse
which models $\ZFC^-$; so $\M\models$``Every set is countable'', and letting $\eta=\OR^\M$,
$\rho_\om^{\M|\alpha}=\om$ for all $\alpha<\eta$,
but $\rho_\om^{\M}=\eta$. Note that $\Fop(\M)$ projects early, and in fact $\rho_1^{\J(\M)}=\om$, so $\Fop(\M)$ is $\J(\M)$, reorganized as an $\Fop$-premouse.
But for no $\alpha<\eta$ does $\Fop(\M|\alpha)$ project early (since $\rho_\om^{\M|\alpha}=\om$ already),
so $\Fop(\M|\alpha)$ is equivalent to $\M_1^\#(\M|\alpha)$ for all $\alpha<\eta$.

There are only countably many mouse operators over a given $\mathscr{B}$, since each is determined by a formula $\varphi$. But by combining with real parameters (say specifying the base of a cone),
we obtain uncountably many operators. Assuming $\AD$ in $L(\RR)$, such an operator can be used to witness the $\Sigma_1$ truths about reals in a given $\J_\alpha(\RR)$, and that operator is in $\J_{\alpha'}(\RR)$ with an $\alpha'$ very close to $\alpha$.
\end{rem}

\subsection{Fine condensation}

{\color{black}In this section, in Definition \ref{dfn:condenses_finely}, we will define (almost) fine condensation. It will be the key property that ensures
that copying constructions
for iteration trees on $\Fop$-premice proceed in a desirable fashion; that is,
if we have $\Fop$-premice $\R,\Ss$ and an embedding $\tau:\R\to\Ss$,
and $\Uu$ on $\Ss$ is an $\Fop$-tree (that is, its models are $\Fop$-premice), and $\Uu$ is the copy of $\Tt$  under $\tau$, then we would like to know that $\Tt$ is also an $\Fop$-tree. Of course, we will
have the copy maps $\pi:\M\to\N$ from models $\M$ of $\Tt$ into models $\N$ of $\Uu$. (Almost) fine condensation will be applied to these copy maps, and this should allow us to conclude that $\M$ is an $\Fop$-pm.
The property should also guarantee similar behaviour for realization maps replacing copy maps.

We will also want to apply (almost) fine condensation to maps $\pi:\M\to\N$ such as core embeddings, or hull embeddings which arise in the proof of solidity of the standard parameter, for example.

Before giving the definition,
we will introduce some terminology allowing us to describe the kinds of embeddings $\pi:\M\to\N$ we want to consider.}

The definition of $(z_{k+1}^\M,\zeta_{k+1}^\M$) below is a direct adaptation from
\cite[Definition 2.19]{extmax}. The facts proved there about this notion generalize readily to
the
present setting, although that
paper formally works below superstrong. See also \cite[\S3]{fsfni}, where there is no superstrong restriction.

\begin{dfn}
 Let $\M$ be a $k$-sound opm. Let $\Dd$ be the class of pairs
$(z,\zeta)\in[\Ord]^{<\om}\cross\Ord$ such that  $\zeta\cap z=\emptyset$. For $x\in[\Ord]^{<\om}$ let
$f_x$ be the decreasing enumeration of $x$. For $x=(z,\zeta)\in\Dd$ let
$f_x=f_z\conc\left<\zeta\right>$.
Order $\Dd$ by $x<^*y$ iff $f_x<_{\mathrm{lex}}f_y$, with $x<^*y$ if $f_x\psub f_y$. Then \index{$z_{n}^\M,\zeta_{n}^\M$}$(z_{k+1}^\M,\zeta_{k+1}^\M)$ denotes the
$<^*$-least $(z,\zeta)\in\Dd$ such that
\[ \Th_{k+1}^\M(\hmb^\M\un z\un\zeta)\notin\M. \]
The \dfnemph{$(k+1)$-solid-core} of $\M$ is\index{$\mathfrak{S}_n(\M)$}
 \[ \mathfrak{S}_{k+1}(\M)=\cHull_{k+1}^\M(\hmb^\M\un z_{k+1}^\M\un\zeta_{k+1}^\M), \]
and the \dfnemph{$(k+1)$-solid-core map} \index{$\sigma_n^\M$}$\sigma_{k+1}^\M$ is the uncollapse map.
\end{dfn}

If $\M$ is $(k+1)$-solid then $\mathfrak{S}_{k+1}(\M)=\core_{k+1}(\M)$ and
$\sigma_{k+1}^\M$ is the core map. But we will need to consider the $(k+1)$-solid-core more
generally, in the proof of $(k+1)$-solidity.

{\color{black}Iteration maps, along a portion of a branch which does not drop in model, and is at degree $k$,
are \emph{$k$-tight} embeddings (but \emph{$k$-tight} is more general):}
\begin{dfn}\label{dfn:k-tight}
Let $k\leq\om$, let $\L,\M$ be $k$-sound opms and $\sigma:\L\to\M$. We say that $\sigma$ is
\index{tight}\dfnemph{$k$-tight} iff there is $\lambda\in\Ord$ and a sequence
$\left<\L_\alpha\right>_{\alpha\leq\lambda}$ of opms such that $\L=\L_0$ and $\M=\L_\lambda$ and
there is a sequence $\left<E_\alpha\right>_{\alpha<\lambda}$ of extenders such that each $E_\alpha$
is weakly amenable to $\L_\alpha$, with $\hmb^\L<\crit(E_\alpha)<\rho_k^{\L_\alpha}$,
\[ \L_{\alpha+1}=\Ult_k(\L_\alpha,E_\alpha), \]
and for limit $\eta$,
\[ \L_\eta=\dirlim_{\alpha<\beta<\eta}(\L_\alpha,\L_\beta;j_{\alpha\beta}) \]
where $j_{\alpha\beta}:\L_\alpha\to\L_\beta$ is the resulting ultrapower map,
and $\sigma=j_{0\lambda}$.
\end{dfn}

{\color{black}Note that $E_\alpha$ is not required to be close to $\L_\alpha$.

Copy maps and realization maps between $k$-sound structures are often \emph{$k$-factors}:}
\begin{dfn}\label{dfn:k-factor}
Let $k\leq\om$ and $\M,\N$ be $k$-sound opms and $p$ be transitive.
{\color{black}Suppose that if $k<\om$ then $\M$ is $k$-relevant.}

We say that $\pi:\core_0(\M)\to\core_0(\N)$ is a \index{above}\index{factor}\dfnemph{$k$-factor above $p$}
iff $\pi$ is a weak $k$-embedding above $p$, and if $k<\om$ then there is a $k$-tight
$\sigma:\core_0(\L)\to\core_0(\M)$ such that
\[ \pi\com\sigma\com\sigma^\L_{k+1}:\mathfrak{S}_{k+1}(\L)\to\core_0(\N) \]
is a near $k$-embedding, $\sigma$ is above $p$,
and $\L$ is $k$-relevant.

For an operator ${\Fop}$, a $k$-factor is \index{rooted}\dfnemph{${\Fop}$-rooted} iff either $k=\om$ or
we can take $\L$ to be an ${\Fop}$-premouse.

A $k$-factor is \index{good}\dfnemph{good} iff $A\eqdef\hmb^\M=\hmb^\N$ and $\pi$ is above $A$.
\end{dfn}

An $\om$-factor above $p$ is just an $\om$-embedding (i.e. fully elementary between
$\om$-sound opms) above $p$. If $k<\om$, then both $\sigma$ and $\sigma_{k+1}^\L$, and
therefore also $\sigma\com\sigma_{k+1}^\L$, are $k$-good. Any near $k$-embedding
$\pi:\M\to\N$ above $p$, between opms $\M,\N$, is a $k$-factor above $p$ (if $k<\om$, use $\L=\M$ and $\sigma=\id$), and if $\M$ is an ${\Fop}$-pm, then $\pi$ is
${\Fop}$-rooted.

\begin{dfn}
 Let $\C$ be a successor opm and $\M$ a successor Q-opm with $\C^-=\M^-$. We say that $\C$ is a
\index{universal hull}\dfnemph{universal hull} of $\M$
 iff there is an above $\C^-$, $0$-good embedding $\pi:\C\to\M$ and for every $x\in\M$,
$\Th_1^\M(\M^-\un\{x\})$ is $\bfrSigma_1^\C$ (after replacing $x$ with a constant symbol).
\end{dfn}

\begin{rem}
{\color{black}We are now ready to define \emph{\tu{(}almost\tu{)} fine condensation}. It is a variant of \emph{condenses well} from \cite{cmi} and \cite{wilson2012contributions}. As discussed in Remark \ref{rem:problems_with_condenses_well}, we need to modify that notion.

One issue that
 Remark \ref{rem:problems_with_condenses_well} illustrates is the following: Given a $\Sigma_1$-elementary $\pi:\M\to\N=\Fop(\N^-)$,
 we should not always expect that $\M=\Fop(\M^-)$, even in the case of a mouse operator $\Fop$.
 However, for a mouse operator $\Fop$, the iterability of $\N=\Fop(\N^-)$ above $\N^-$ and the existence of $\pi$ should ensure the iterability of $\M$ above $\M^-$. (Here the \emph{iterability} we refer to is that of the ordinary mouse over $\N^-$ produced by $\Fop$; the \emph{$\Fop$-iterability} of $\N$ above $\N^-$ is trivial, as $\es_+^\N$ is empty above $\OR^{\N^-}$.)
 Secondly, the minimality of $\N$ above $\N^-$ should ensure that $\M$ does not strictly surpass $\Fop(\M^-)$. But
 Remark \ref{rem:problems_with_condenses_well}
 indicates that $\M$ might not actually reach $\Fop(\M^-)$ in general, and for example, we might have $\M\in\Fop(\M^-)$.
 We might, for example,
 have that $\M$ is a proper segment of $\Fop(\M^-)$ in the hierarchy as an op-$\J$-structure, but there are also other possibilities.

 On the other hand,
we want (almost) fine condensation to hold under appropriate circumstances,
and in particular,
we want mouse operators to almost condense finely.
 So it is allowed that $\M\in\Fop(\M^-)$
 in (one case of)  the definition of \emph{condenses finely}.

In Proposition \ref{prop:mouse_op_con_finely},
we will show that mouse operators do almost condense finely,
and the proof will help to illuminate key details of the definition.
}
\end{rem}

\begin{dfn}\label{dfn:condenses_finely}
Let ${\Fop}$ be an operator {\color{black}over $\opbk$ with domain $D$.
Suppose that $C^D$ is the cone above some transitive $p\in\mathscr{B}$.}
We say that ${\Fop}$ \index{above}\index{condenses finely}\index{fine condensation}\dfnemph{condenses finely above $p$} (or ${\Fop}$ \dfnemph{has fine
condensation above $p$}) iff (i) ${\Fop}$ condenses coarsely
above $p$; and (ii) Let
$A,\Abar,\N,\L\in V$ and let $\M,\varphi,\sigma\in V[G]$ where $G$ is
set-generic over $V$. {\color{black}Let $k<\om$.} Suppose that:
\begin{enumerate}[label=--]
 \item $p\in\J_1(\Abar)\inter\J_1(A)$,
 \item  $\L$ is a $k$-sound opm over $\Abar$ and $\N$ is a $k$-sound
opm over $A$,
\item $\M$ is a Q-opm over $\Abar$  and {\color{black} if $k>0$ then $\M$ is a $k$-sound opm},
\item $\L,\M,\N$ each have successor length,
\item $\M^-\in V$ and $\L,\M^-,\N$ are ${\Fop}$-premice {\color{black}and $\M^-\in\dom(\Fop)$ (so $\Fop(\M^-)$ is an opm),} and
\item $\varphi:\core_0(\M)\to\core_0(\N)$.
\end{enumerate}
Then:
\begin{enumerate}
 \item\label{item:if_M_is_opm} If $\M$ is a $k$-sound opm  and  either
\begin{enumerate}[label=--]
 \item $\varphi$ is $k$-good, or
 \item $\M$ is $k$-relevant and $V[G]\sats$``$\varphi$ is a $k$-factor above $p$, as witnessed by
$(\L,\sigma)$'',
\end{enumerate}
then either $\M\in{\Fop}(\M^-)$ or $\M={\Fop}(\M^-)$.
\item\label{item:if_rho_1^M<=OR^M^-} If $k=0$ and $\rho_1^\M\leq\OR^{\M^-}$ and $\varphi$ is $0$-good (hence above $p$), then there is a universal hull $\Hh$
of $\M$ such that either $\Hh\in{\Fop}(\M^-)$ or $\Hh={\Fop}(\M^-)$.
\end{enumerate}

We say ${\Fop}$ \index{almost condenses finely}\dfnemph{almost condenses finely above $p$} iff ${\Fop}$ almost condenses
coarsely above $p$ and condition (ii) above holds
for $G=\emptyset$.
\end{dfn}

{\color{black}Recall that if $\M$ is a successor opm then $\rho_1^\M\leq\OR^{\M^-}$.
So in both parts \ref{item:if_M_is_opm} and \ref{item:if_rho_1^M<=OR^M^-}
above, we have $\rho_1^\M\leq\OR^{\M^-}$,
but in part  \ref{item:if_rho_1^M<=OR^M^-},
$\M$ need not be an opm (although it is a Q-opm).}
{\color{black}Also note that
 there are cases
of \emph{condenses
finely} in which we do not assume that $\M$ is $k$-relevant, though in these, $\varphi$ is $k$-good.}

Let us observe that in certain key circumstances,
we can rule out the possibility that $\M\in\Fop(\M^-)$,
and so the conclusion of fine condensation  sharpens to
$\M={\Fop}(\M^-)$:

\begin{lem}\label{lem:strengthen_scf_conclusion}
{\color{black}Let $\Fop$ be an operator.
Let $k<\om$.
Let $\N$ be an $\Fop$-pm
which is a $k$-sound successor opm.
Let $\M$ be a successor Q-opm.
Suppose $\M^-$ is an $\Fop$-pm in $\dom(\Fop)$.
If $k>0$ then suppose also that $\M$ is a $k$-sound opm.
Suppose that $\M=\core_{k+1}(\N)$
or $\M$ is $k$-relevant.
Then $\M\notin\Fop(\M^-)$,
and if $k=0$
then there is no universal hull of $\M$ in $\Fop(\M^-)$.}
\end{lem}
\begin{proof}
{\color{black}Suppose otherwise. Suppose first that $k>0$, so $\M\in\Fop(\M^-)$.
Then by projectum amenability for $\Fop(\M^-)$, $\M$ is not $k$-relevant. So
$\M=\core_{k+1}(\N)\notin\N$. Let ${\color{black}\pi}:\M\to\N$ be the core
map. By Lemmas \ref{lem:k+1-Hull} and \ref{lem:core_embedding}, ${\color{black}\pi}$ is $k$-good, so ${\color{black}\pi}(\M^-)=\N^-$.
So since $\M$ is not $k$-relevant, $\rho_{k+1}^{\M}=\rho_k^\M=\rho_\om^{\M^-}$,
but then $\rho_{k+1}^{\N}=\rho_k^\N$, so $\N$ is $(k+1)$-sound and $\M=\N=\Fop(\N^-)=\Fop(\M^-)$,
contradicting the assumption that $\M\in\Fop(\M^-)$.

So $k=0$. (So we do not assume $\M$ is an opm, but it is a Q-opm.) Again by projectum amenability, $\M$ is not $0$-relevant, so $\M=\core_1(\N)\notin\N$. Let $\pi:\M\to\N$ be the core map. Then $\pi$ is $0$-good, so $\pi(\M^-)=\N^-$. So $\rho_1^\M=\rho_\om^{\M^-}<\rho_0^\M=\OR^\M$,   and $\rho_1^\M=\rho_1^\N=\rho_\om^{\N^-}$. But then since $\N$ is an opm
and by Lemma \ref{lem:succ_opm_top_param},
$\N$ is $1$-sound, so
$\M=\N$, again a contradiction.}
\end{proof}

So under the circumstances of the lemma above, if $\M$ is an opm, fine condensation
gives the stronger
conclusion that $\M={\Fop}(\M^-)$. But we will need to apply fine
condensation more generally, such as in the proof of solidity.
Analogously to Lemma
\ref{lem:almost_redundancy_coarsely}, we have:

\begin{lem}\label{lem:almost_redundancy_finely} Let ${\Fop}$ be a total operator {\color{black}over $\mathscr{B}$,
	with domain $D$. Suppose $C^D$ is the cone above
	some transitive $p\in\HC$,}
	and that $\Fop$  almost condenses finely
	above  $p$. Then ${\Fop}$ condenses finely above $p$.
\end{lem}

\begin{prop}\label{prop:mouse_op_con_finely}
Let
 ${\Fop}_{\varphi}$ be a mouse operator, as in Definition \ref{dfn:mouse_op}. Then $\Fop_\varphi$ almost condenses finely.
\end{prop}
\begin{proof}[Proof sketch]
We just discuss
the proof in one case, which
illustrates the main points and should clarify why \emph{almost condenses finely} is formulated as it is.
Let $\Fop=\Fop_\varphi$
and let $\N$ be a successor $\Fop$-pm. Let $\M$ be a successor Q-opm with $\rho_1^\M\leq\OR^{\M^-}$
and let $\pi:\M\to\N$ be a $0$-embedding. Then we want to verify clause \ref{item:if_rho_1^M<=OR^M^-} of almost fine condensation holds with respect to $\M,\N,\pi$. So we need to see that  there is a universal hull $\Hh$
of $\M$ such that either $\Hh\in{\Fop}(\M^-)$ or $\Hh={\Fop}(\M^-)$. (Clause \ref{item:if_rho_1^M<=OR^M^-} also assumes that $\pi$ is $0$-good,
but that isn't needed here.)
Note that although $\M$ is a Q-opm, we do not assume  it is an opm.

We have $\pi(\M^-)=\N^-$.  Let
$\N^*\pins\Lp(\N^-)$ be the premouse over $\N^-$ coded by $\N$. ({\color{black}That is, either $\rho_1^{\N^*}=\om$ and $\OR^{\N}=\OR^{\N^*}$ and $P^{\N}$ encodes
 $\es_+^{\N^*}$ directly,
 or for some $n$ such that $0<n<\om$,
 $\rho_{n+1}^{\N^*}=\om<\OR^{\N^-}<\rho_n^{\N^*}$ and $\OR^\N=\rho_n^{\N^*}$
 and $P^\N$ encodes $\Th_{n}^{\N^*}((\N^*|\rho_n^{\N^*})\cup\{\pvec_n^{\N^*}\})$.
Moreover,} $\N^*$ has no proper
segment satisfying $\varphi$, and either $\N^*\sats\varphi$ or {\color{black}$\rho_\om^{\N^-}>\om$} and {\color{black}$\N$} projects
$<\rho_\om^{\N^-}$.)  Let $n<\om$ be such that $\rho_{n+1}^{\N^*}=\om<\OR^{\N^-}<\rho_n^{\N^*}$. {\color{black}By downward extension of embeddings,
$\M$ encodes
 an} $n$-sound premouse $\M^*$ over $\M^-$,
 and {\color{black}$\pi$ determines an} $n$-embedding $\pi^*:\M^*\to\N^*$ with $\pi\sub\pi^*$. Because $\rho_1^\M\leq\OR^{\M^-}$,
$\rho_{n+1}^{\M^*}\leq\OR^{\M^-}$.

Now suppose  $\M^*$ is $(n+1)$-sound.
Then $\M^*$ is fully sound, as $\M^*$ is a premouse over $\M^-$, so
$\M^*\pins\Lp(\M^-)$. Moreover, $\M^*\ins\M'$, where $\M'$ is the premouse over $\M^-$
coded by ${\Fop}(\M^-)$.
{\color{black}For by the $\Sigma_1$-elementarity of $\pi^*$, $\M^*$ has no proper segment modelling $\varphi$,
and if $\M$ has length
$>0$
and $\om<\rho_\om^{\M^-}$
then letting $A=\hmbdot^{\M^-}$, for  $\alpha<\rho_\om^{\M^-}$,
we have
$\pow(\alpha^{<\om}\cross A^{<\om})\cap\M^*\sub\M^-$.}

Now suppose instead that $\M^*$ is not $(n+1)$-sound. Let $\Hh^*=\core_{n+1}(\M^*)$. Then $\Hh^*\ins\M'$, where $\M'$ is as before, and the
$n^\nth$ master code $\Hh$ of $\Hh^*$ is a universal hull of $\M$, and either
$\Hh\in{\Fop}(\M^-)$ or $\Hh={\Fop}(\M^-)$, as required.
\end{proof}

Note we
makes significant use here
of the assumption
that $\rho_1^\M\leq\OR(\M^-)$.

\subsection{Copying and realization}\label{sec:copying}

We next want to consider the copying construction and how it relates to operators $\Fop$ with fine condensation. {\color{black}As discussed in \cite{steel2010outline}, \cite{mim} and \cite{copy_con}, even for standard mice, the copying construction is complicated by type 3 premice $\M$, because one must handle segments $\N\pins\M$
such that $\N\npins\core_0(\M)$,
but the fine structural maps $\pi:\core_0(\M)\to\core_0(\R)$
only act directly on $\core_0(\M)$.
We first make some preparations in this regard.
The following notions are from \cite{mim} and \cite{fsfni}}:
\begin{dfn}\label{dfn:uparrow}
 Let $\M$ be an opm.
 If $\M$ is not type 3 then $\M^\uparrow$ denotes $\M$.
 If $\M$ is type 3 and $\kappa=\hmmu^\M$
then\index{$\M^\uparrow$}
 $\M^\uparrow$ dentes $\Ult_0(\M\wupto\kappa^{+\M},\actext^\M)$.

  For $\pi:\M\to\N$, a $\Sigma_0$-elementary embedding between opms of the same type, we define
\index{$\Shift(\pi)$}$\Shift(\pi):\M^\uparrow\to\N^\uparrow$ as follows.
 If $\M$ is not type 3 then $\Shift(\pi)=\pi$.
 If $\M$ is type 3 then $\Shift(\pi)$
 is the embedding induced via the Shift Lemma by $\pi$.

{\color{black}If $\M$ is not type 3, we say that $\pi$ is \emph{$\nu$-preserving},  not \emph{$\nu$-high} and not \emph{$\nu$-low}.
 Suppose $\M$ is type 3.
 Then we say that $\pi$ is
 \index{$\nu$-preserving}\emph{$\nu$-preserving}
 iff $\Shift(\pi)(\nu(F^\M))=\nu(F^\N)$,
 \index{$\nu$-high}\emph{$\nu$-high}
 iff $\Shift(\pi)(\nu(F^\M))>\nu(F^\N)$, and \index{$\nu$-low}\emph{$\nu$-low}
 iff $\Shift(\pi)(\nu(F^\M))<\nu(F^\N)$.}
\end{dfn}

{\color{black}
\begin{rem}\label{rem:elem_nu-pres}
Elementarity considerations show that
if $\pi$ is $\rSigma_1$-elementary, it is not $\nu$-low,
and if $\pi$ is $\rSigma_2$-elementary, then it is $\nu$-preserving; see  \cite{mim}. 
\end{rem}}
\begin{lem}\label{lem:type_3_F-pm_condensation}
Let $\Fop$ be an operator above $b$ which almost condenses {\color{black}coarsely} above $b$.
Let $A\in\witri{C_{\Fop}}$.
Let $\N$ be a type 3 ${\Fop}$-pm over $A$
such that $\N^\uparrow$ is an ${\Fop}$-pm.
Let $\pi:\R\to\core_0(\N)$ be a weak $0$-embedding which is above $b$.
Then $\R=\core_0(\M)$ for some ${\Fop}$-pm $\M$.
\end{lem}
\begin{proof}
Because $\pi$ is a weak $0$-embedding, $E=E^\R$ is an extender over $\R$.
So we can define $\R^\uparrow$ and $\Shift(\pi):\R^\uparrow\to\N^\uparrow$ as in
\ref{dfn:uparrow}.
By almost coarse condensation, $\R^\uparrow$ is an ${\Fop}$-pm, which yields the desired
conclusion.
\end{proof}

{\color{black}Given an iteration tree $\Uu$ on an opm $\M$,
and given $\N\ins\M$,
the next definition
sets up notation for embeddings
on $\N$ induced by the iteration maps of $\Uu$.}

{\color{black}
\begin{dfn}\label{dfn:restricted_it_maps}
 Let $\Uu$ be a $k$-maximal tree on an opm $\M$, and let $\N\ins\M$. If $\N\pins\M$ then let $\left<\N_i\right>_{i< k}$
 be the model dropdown sequence of $\N$ in $\M$. (That is,
 $\N_0=\N$, for each $i+1<k$,
 $\N_{i+1}$ is the least $\N'\pins\M$ such that $\rho_\om^{\N'}<\rho_\om^{\N_i}$, and
 $\rho_\om^{\N_{k-1}}$ is an $\M$-cardinal.) We say that $\N$ is \index{stable}\emph{$\M$-stable} iff:
 \begin{enumerate}[label=--]
 \item If $\M$ is active type 3 then either $\N=\M$ or $\N\pins\M^\sq$ (hence $\N_k\pins\M^\sq$).
 \item If $\N\pins\M$
 then for all $i+1<k$,
 if $\N_{i+1}$ is active type 3 then $\N_i\pins(\N_{i+1})^\sq$.
 \end{enumerate}

 Suppose $\N$ is $\M$-stable.
 Let $\alpha<\lh(\Uu)$.
 Let us say for the moment that $(\Uu,\alpha)$ is \emph{good} iff either
 \begin{enumerate}
 \item $\N=\M$ and $[0,\alpha]^\Uu\cap\mathscr{D}^\Uu=\emptyset$, or
 \item there are ordinals $\gamma_0\leq^\Uu\delta_0=\gamma_1\leq^\Uu\delta_1=\gamma_2\leq^\Uu\delta_2\ldots\delta_{k-1}=\gamma_k\leq^\Uu\delta_k=\alpha$
 such that:
 \begin{enumerate}
 \item $\gamma_0=0$ and $[\gamma_0,\delta_0]^\Uu\cap\mathscr{D}^\Uu=\emptyset$, and
 \item for each $i\in(0,k]$, if $\gamma_i<\delta_i$ then:
 \begin{enumerate}\item $(\gamma_i,\delta_i]^\Uu\cap\mathscr{D}^\Uu=\{\varepsilon_i\}$ where $\pred^\Uu(\varepsilon_i)=\gamma_i$ (and $\varepsilon_i\leq^\Uu\delta_i$),
 \item $\N_{k-i}\in\dom(j)$ where $j=i^{*\Uu}_{\varepsilon_{i-1}\delta_{i-1}}\com i^{*\Uu}_{\varepsilon_{i-2}\delta_{i-2}}\com\ldots\com i^{*\Uu}_{\varepsilon_1\delta_1}\com i^{\Uu}_{0\delta_0}$,
 and
 $M^{*\Uu}_{\varepsilon_i}=j(\N_{k-i})$.
 \end{enumerate}
 \end{enumerate}
 \end{enumerate}

 If $(\Uu,\alpha)$ is good
 then we define \index{$M^{\Uu}_{\N,\alpha}$}$M^{\Uu}_{\N,\alpha}\ins M^\Uu_\alpha$ and \index{$i^{\Uu}_{\N,0\alpha}$}\[i^{\Uu}_{\N,0\alpha}:\core_0(\N)\to\core_0(M^{\Uu}_{\N,\alpha}) \]
 as follows.
 Let $\gamma_k,\delta_k$ be as above. If $\gamma_k<\delta_k$ then set
 $M^{\Uu}_{\N,\alpha}=M^\Uu_\alpha$
 and $i^{\Uu}_{\N,0\alpha}=i^{*\Uu}_{\varepsilon_{k}\delta_k}\com\ldots\com i^{*\Uu}_{\varepsilon_1\delta_1}\com i^\Uu_{0\delta_0}$.
 If $\gamma_k=\delta_k$
 then set $M^{\Uu}_{\N,\alpha}=j(\N)$,
 where $j=i^{*\Uu}_{\varepsilon_{k-1}\delta_{k-1}}\com\ldots\com i^{*\Uu}_{\varepsilon_1\delta_1}\com i^{\Uu}_{0\delta_0}$,
 and set $i^{\Uu}_{\N,0\alpha}=j\rest\core_0(\N)$.

Now we say that $[0,\beta]^\Uu$
is \index{bounded}\emph{$\N$-bounded}
iff there is $\alpha\leq^\Uu\beta$ such that $[0,\alpha]^\Uu$ is good
and if $\alpha<^\Uu\beta$
then letting $\varepsilon\leq^\Uu\beta$ be such that $\pred^\Uu(\varepsilon)=\alpha$, we have $M^{*\Uu}_{\varepsilon}\pins M^{\Uu}_{\N,\alpha}$. If $[0,\beta]^\Uu$ is $\N$-bounded,
as witnessed by $\alpha$,
we say that $[0,\beta]^{\Uu}$ \index{drops below the image}\dfnemph{drops below the image of $\N$} iff $\beta$ is not good;
that is, $\alpha<\beta$.

We say that $\Uu$ is \emph{$\N$-bounded} iff $\beta$ is $\N$-bounded for all $\beta<\lh(\Uu)$.
\end{dfn}}

The following lemma is an instance of some very related material in \cite[\S2]{extmax} (for example, \cite[Lemma 2.27]{extmax}), \cite[\S6]{fsfni}, \cite[\S7]{mim} and the preprint \cite[\S5]{mouse_scales}. It shows that
when we define an embedding $\tau:\Ult_k(\R,F^\M)\to\Ult_k(\Ss,F^\N)$ via the Shift Lemma from a given embedding $\pi:\core_0(\R)\to\core_0(\Ss)$, if $\pi$
is $\nu$-preserving, so is $\tau$.

{\color{black}
\begin{lem}\label{lem:propagate_nu-pres}
Let $\R,\Ss$ be type 3 $k$-sound opms, where $k<\om$,
and $\pi:\core_0(\R)\to\core_0(\Ss)$ a $\nu$-preserving weak $k$-embedding.
Let $\M,\N$ be active $\Fop$-premice and $\psi:\core_0(\M)\to\core_0(\N)$  a weak $0$-embedding. Let $\kappa=\crit(F^\M)$. Suppose $\R||\kappa^{+\R}=\M|\kappa^{+\M}$
and $\Ss||\psi(\kappa)^{+\Ss}=\N|\psi(\kappa)^{+\N}$ and $\pi\rest(\R||\kappa^{+\R})=\psi\rest(\M|\kappa^{+\M})$. Suppose  $\kappa<\rho_k^\R$.
Suppose  $\pi$ is $\nu$-preserving.
Let
\[ \tau:\Ult_k(\R,F^\M)\to\Ult_k(\Ss,F^\N) \]
by the Shift Lemma map induced by $\pi,\psi$. Then $\tau$ is $\nu$-preserving.
\end{lem}
\begin{proof}
This kind of argument is given, for example, in \cite[\S6.1]{fsfni}. If the ultrapower maps $i^{\R,k}_{F^\M}$ and $i^{\Ss,k}_{F^\N}$ are both $\nu$-preserving,
 commutativity ($\tau\com i^{\R,k}_{F^\M}=i^{\Ss,k}_{F^\N}\com\pi$) gives the desired result. If $k>0$
 then these ultrapower maps are indeed $\nu$-preserving, by Remark \ref{rem:elem_nu-pres}).

So suppose $k=0$.

Let $\mu=\crit(F^\R)$.
We have $R^\uparrow=\Ult_0(\R|\mu^{+\R},F^\R)$.
Note that $F^\M$ is also an extender over $\R^\uparrow$.
Let $U'=\Ult_0(\R^\uparrow,F^\M)$
and let $j':\R^\uparrow\to U'$
be the ultrapower map.
Then a standard calculation shows that \[U'=U^\uparrow=\Ult_0(U|j(\mu)^{+U},F^U)\]
and $j=j'\rest\core_0(\R)$, and $j'\com i^{\R|\mu^{+\R},0}_{F^\R}=i^{U|j(\mu)^{+\R},0}_{F^U}\com j$.

Now let $a,f\in\core_0(\R)$
be such that $\nu(F^\R)=[a,f]^{\R,0}_{F^\R}$.
Then we may assume $f\in\R|\mu^{+\R}$ and we have $\nu(F^\R)=[a,f]^{\R|\mu^{+\R},0}_{F^\R}=i^{\R|\mu^{+\R},0}_{F^\R}(f)(a)$. So
\begin{equation}\label{eqn:shift_[a,f]}
\begin{split}j'(\nu(F^\R))&=j'(i^{\R|\mu^{+\R},0}_{F^\R}(f)(a))\\
&=j'(i^{\R|\mu^{+\R},0}_{F^\R}(f))(j'(a))\\
&=i^{U|j(\mu)^{+U},0}_{F^U}(j(f))(j(a))\\
&= [j(a),j(f)]^{U|j(\mu)^{+U},0}_{F^U}=[j(a),j(f)]^{U,0}_{F^U}.\end{split}\end{equation}

Now $\nu(F^U)=\sup j``\nu(F^\R)$.
So by line (\ref{eqn:shift_[a,f]}), if $j'$ is continuous at $\nu(F^\R)$ then $j$ is $\nu$-preserving, and if $j'$ is discontinuous at $\nu(F^\R)$ then $j$ is $\nu$-high.

But $j'$ is continuous at $\nu(F^\R)$ iff $\cof^{\R}(\nu(F^\R))\neq\kappa$.
And $\cof^{\R}(\nu(F^\R))=\kappa$
iff $\cof^{\Ss}(\nu(F^\Ss))=\pi(\kappa)=\psi(\kappa)$, since $\pi$ is $\nu$-preserving. So if $\cof^{\R}(\nu(F^\R))\neq\kappa$,
then $j$ is $\nu$-preserving,
and similarly, so is $i^{\Ss,0}_{F^\N}$, and so as remarked earlier, it follows that $\tau$ is also $\nu$-preserving, as desired.

So it just remains to consider the case that $\cof^\R(\nu(F^\R))=\kappa$
(so $j'$ is discontinuous at $\nu(F^\R)$),
and so $\cof^\Ss(F^\Ss)=\pi(\kappa)$ (and $i^{\Ss,0}_{F^\N}$ is discontinuous at $\nu(F^\Ss)$).
Let $f\in\R$ be such that $f:\kappa\to\nu(F^\R)$
is continuous, strictly increasing and $\sup f``\kappa=\nu(F^\R)$.
Then note that since $\nu(F^U)=\sup j``\nu(F^\R)$
and $\kappa=\crit(j)=\crit(j')$,
we have $j'(f)\rest \kappa=j\com f:\kappa\to\nu(F^U)$ and $j'(f)\rest\kappa$ is continuous, strictly increasing and $\sup j'(f)``\kappa=\nu(F^U)$.
Now let $g:[\mu]^{<\om}\to\R|\mu^{+\R}$
and $a\in[\nu(F^\R)]^{<\om}$
be such that $f=i^{\R|\mu^{+\R},0}_{F^\R}(g)(a)$ and $\kappa\in a$. Say $\kappa=\alpha_i$ where $a=\{\alpha_0,\ldots,\alpha_{k-1}\}$ and $\alpha_0<\ldots<\alpha_{k-1}$. Then define
$g':[\mu]^k\to\R|\mu^{+\R}$
to be $g'(u)=\sup g(u)``u_i$,
where $u=\{u_0,\ldots,u_{k-1}\}$
and $u_0<\ldots<u_{k-1}$. Then note that $\nu(F^\R)=[a,g']^{\R,0}_{F^\R}$.
Let $h'\in U$
be the function $h':[j(\mu)]^{k+1}\to U|j(\mu)^{+U}$
with \[ h'(u)=\sup (j(g')(u\cut\{u_i\}))``u_i,\]
where $u=\{u_0,\ldots,u_{k}\}$
and $u_0<\ldots<u_k$.
Note here that $j(g')$ has domain $[j(\mu)]^k$, whereas $h'$ has domain $[j(\mu)]^{k+1}$, and if $u=\{u_0,\ldots,u_{k}\}$ as above, then \[ h'(u)=\sup  (j(g')(\{u_0,\ldots,u_{i-1},u_{i+1},\ldots,u_{k}\}))``u_i.\]
Then $\nu(F^U)=[j(a)\cup\{\kappa\},h']^{U,0}_{F^U}$.
For note that $|a|=k$
and $\kappa$ is the $i$th element of $a$, but $j(a)\cap[\kappa,j(\kappa))=\emptyset$, and $j(\kappa)$ is the $i$th element of $j(a)$,
so $\kappa$ is the $i$th element of $j(a)\cup\{\kappa\}$.

Since $\pi$ is $\nu$-preserving,
 $\nu(F^\Ss)=[\pi(a),\pi(g')]^{\Ss,0}_{F^\Ss}$ and $[\pi(a),\pi(g)]^{\Ss,0}_{F^\Ss}$ is a function $f^*:\pi(\kappa)\to\nu(F^\Ss)$ which is continuous and strictly increasing, and $\nu(F^\Ss)=\sup f^*``\pi(\kappa)$.
So it now easily follows that $\nu(F^\Ss)=[\tau(j(a))\cup\{\tau(\kappa)\},\tau(h')]^{U^*,0}_{F^{U^*}}$, where $U^*=\Ult_0(\Ss,F^\N)$,
so $\tau$ is $\nu$-preserving,
as desired.
\end{proof}}
We now verify that fine condensation for ${\Fop}$ ensures that the copying construction proceeds
smoothly for relevant ${\Fop}$-premice.
The indexing function $\iota$ in the following lemma need not be the identity, because of the possibility of
$\nu$-high copy embeddings
 $\pi:\core_0(\M)\to\core_0(\N)$
between type 3 premice $\M,\N$.

\begin{lem}\label{lem:strategy_copying}
Let $\Fop$ be an operator above $b$ which almost condenses finely above $b$.
Let $\bar{A},A\in\witri{C_{\Fop}}$. Let $j\leq\om$ and let $\Q$ be a $j$-sound ${\Fop}$-premouse over $A$.
Let $(\N,k)\wins(\Q,j)$ {\color{black}be such that $\N$ is $\Q$-stable} \tu{(}see \ref{dfn:restricted_it_maps}\tu{)}.
Let $\M$ be a $k$-relevant ${\Fop}$-pm over $\Abar$. Let
$\pi:\core_0(\M)\to\core_0(\N)$ be an ${\Fop}$-rooted $k$-factor above $b$.
Let $\Sigma_\Q$ be an ${\Fop}$-$(j,\om_1+1)$-strategy for $\Q$.
Then there is an ${\Fop}$-$(k,\om_1+1)$-strategy $\Sigma_\M$ for $\M$ such that trees $\Tt$ via
$\Sigma_\M$ lift to trees $\Uu$ via $\Sigma_\Q$. In fact, for such pairs $(\Tt,\Uu)$, there is
$\iota:\lh(\Tt)\to\lh(\Uu)$ such that for each $\alpha<\lh(\Tt)$, there is $(N^\Uu_\alpha,\pi_\alpha)$ such that:
\begin{enumerate}
\item  $(N^\Uu_{\alpha},\deg^\Tt_\alpha)\wins (M^\Uu_{\iota(\alpha)},\deg^\Uu_{\iota(\alpha)})$.
\item  $\pi_\alpha:\core_0(M^\Tt_\alpha)\to\core_0(N^\Uu_{\alpha})$  is an
 ${\Fop}$-rooted $\deg^\Tt_\alpha$-factor which is above $b$.
\item  If $\pi$ is good then
$\pi_\alpha$ is good.

\item $\Uu$ is $\N$-bounded.

\item  $[0,\alpha]^\Tt\inter \mathscr{D}^\Tt\neq\emptyset$ iff $[0,\iota(\alpha)]^{\Uu}$ drops
below the image
of $\N$.

\item If $[0,\alpha]^\Tt\inter \mathscr{D}^\Tt=\emptyset$ then
$N^\Uu_{\alpha}=M^{\Uu}_{\N,\iota(\alpha)}$ and
\begin{equation}\label{eqn:no_drop_comm} \pi_\alpha\com
i^\Tt_{0\alpha}=i^{\Uu,I}_{\N,0,\iota(\alpha)}\com\pi. \end{equation}
\item\label{item:when_model_drop_nu-pres_near_emb}
{\color{black}If $[0,\alpha]^\Tt\cap\mathscr{D}^\Tt\neq\emptyset$
then $N^\Uu_{\alpha}=M^{\Uu}_{\iota(\alpha)}$,
$\deg^\Tt_\alpha=\deg^\Uu_{\iota(\alpha)}$ and $\pi_\alpha$ is a $\nu$-preserving near $\deg^\Tt_\alpha$-embedding.
\item\label{item:if_(N,k)=(Q,j)_and_pi_near}
If
$(\N,k)=(\Q,j)$ and $\pi$ is a near
$k$-embedding
then
 $N^\Uu_{\alpha}=M^{\Uu}_{\iota(\alpha)}$,
$\deg^\Tt_\alpha=\deg^\Uu_{\iota(\alpha)}$ and $\pi_\alpha$ is a near $\deg^\Tt_\alpha$-embedding,
and if $\pi$ is also $\nu$-preserving then so is $\pi_\alpha$.}
\end{enumerate}

The previous paragraph also holds with ``$(j,\om_1,\om_1+1)^*$-optimal'' replacing ``$(j,\om_1+1)$''
and ``$(k,\om_1,\om_1+1)^*$-optimal'' replacing ``$(k,\om_1+1)$''.
\end{lem}

\begin{proof}
We just sketch the proof, for the case that $\Tt$ is $k$-maximal. It is mostly the standard copying
construction, augmented with propagation of near embeddings (using the methods of the proof of \cite[Lemma 1.3]{fine_tame}) and the
standard extra details dealing
with type 3 premice (see \cite{steel2010outline}, \cite{mim}, \cite{copy_con}).
{\color{black}Because of how we handle type 3 premice, the tree orders of $\Tt$ and $\Uu$ need not be identical, and the indexing map $\iota$ can fail to be the identity.

The construction is by recursion on $\lh(\Tt)$.
Suppose we have determined $\Tt\rest(\alpha+1)$, $\iota(\alpha)$, $\Uu\rest(\iota(\alpha)+1)$ and
all the other objects mentioned in the lemma, satisfying the properties there, in particular with $N^\Uu_\alpha\ins M^\Uu_{\iota(\alpha)}$
and $\pi_\alpha:\core_0(M^\Tt_\alpha)\to\core_0(N^\Uu_\alpha)$.
 We now want to proceed to $\Tt\rest(\alpha+2)$, etc.

 Suppose first that  $\pi_\alpha$ is non-$\nu$-high or $E^\Tt_\alpha=F(M^\Tt_\alpha)$ or  $\lh(E^\Tt_\alpha)<\rho_0(M^\Tt_\alpha)$. Then we set $\iota(\alpha+1)=\iota(\alpha)+1$ and set $E^\Uu_{\iota(\alpha)}$ to be:
 \begin{enumerate}[label=--]
 \item $F(M^{\Uu}_{\N,\iota(\alpha)})$, if $E^\Tt_\alpha=F(M^\Tt_\alpha)$
 and $\pi_\alpha$ is non-$\nu$-low,
 \item $F(M^{\Uu}_{\N,\iota(\alpha)})\rest\Shift(\pi_\alpha)(\nu(E^\Tt_\alpha))$,
 if $E^\Tt_\alpha=F(M^\Tt_\alpha)$ and $\pi_\alpha$ is $\nu$-low,
 \item $\pi_\alpha(E^\Tt_\alpha)$, if $\lh(E^\Tt_\alpha)<\rho_0(M^\Tt_\alpha)$,
 \item $\Shift(\pi_\alpha)(E^\Tt_\alpha)$, if $\rho_0(M^\Tt_\alpha)<\lh(E^\Tt_\alpha)<\OR(M^\Tt_\alpha)$.
 \end{enumerate}

 Now suppose instead that $\pi_\alpha$ is $\nu$-high and
 $\rho_0(M^\Tt_\alpha)<\lh(E^\Tt_\alpha)<\OR(M^\Tt_\alpha)$.
 (So by induction
 with part \ref{item:when_model_drop_nu-pres_near_emb},  $[0,\alpha]^\Tt\cap\mathscr{D}^\Tt=\emptyset$
 and $\M$ is active type 3.)
 In this case let us say that $\alpha$ is an \emph{insertion stage}.
We set $\iota(\alpha+1)=\iota(\alpha)+2$,
and set $E^\Uu_{\iota(\alpha)}=F(M^{\Uu}_{\N,\iota(\alpha)})$
and $E^\Uu_{\iota(\alpha)+1}=\Shift(\pi_\alpha)(E^\Tt_\alpha)$.

Let $\beta=\pred^\Tt(\alpha+1)$
 and
 $\beta'=\pred^{\Uu}(\iota(\alpha+1))$ (determined $k$- and  $j$-maximality respectively).
Then $\beta'=\iota(\beta)$,
unless $\beta$ was an insertion stage and $\nu(F(M^\Tt_\beta))\leq\crit(E^\Tt_\alpha)$,
in which case $\beta'=\iota(\beta)+1$ and $\alpha+1\in\mathscr{D}^\Tt$
and $\iota(\alpha+1)\in\mathscr{D}^\Uu$.
We (can and do) define $\pi_{\alpha+1}$
via the Shift Lemma from $\pi_{\beta'},\pi_\alpha$  as usual.

For limit ordinals $\alpha$,
$\iota(\alpha)=\sup_{\beta<
\alpha}\iota(\beta)$,
and $[0,\alpha)^\Tt$ is the unique cofinal branch of $\Tt\rest\alpha$
such that for some $\beta<^\Tt\alpha$, we have
$\iota``[\beta,
\alpha)^\Tt\sub[0,\iota(\alpha))^{\Uu}$.
 We omit the remaining details of the definitions,
 which are routine.

Now let us observe that for each $\alpha$, $\pi_\alpha$ is
an ${\Fop}$-rooted $\deg^\Tt_\alpha$-factor above $b$  (see Definition \ref{dfn:k-factor}); that is, that there is a $\deg^\Tt_\alpha$-sound $\Fop$-pm
$\Ll$ and a $\deg^\Tt_\alpha$-tight $\sigma:\Ll\to M^\Tt_\alpha$ such that $\pi_\alpha\com\sigma\com\sigma^{\Ll}_{\deg^\Tt_\alpha+1}:\mathfrak{S}_{k+1}(\Ll)\to \core_0(N^\Uu_{\iota(\alpha)})$ is a near $k$-embedding. For
 given this,
fine condensation, together with Lemmas \ref{lem:k-stack-max_pres_rpm} (to see $M^\Tt_\alpha$ is {\color{black}$\deg^\Tt_\alpha$-}relevant) and \ref{lem:type_3_F-pm_condensation}, give that $M^\Tt_\alpha$ is
an
${\Fop}$-pm.
(If $M^\Tt_\alpha$ might be type 3 (that is, if $N^\Uu_{\alpha}$ is type 3), then
\ref{lem:type_3_F-pm_condensation} applies, because $(N^\Uu_{\alpha})^\uparrow$ is an
${\Fop}$-pm, since we can extend $\Uu\rest(\iota(\alpha)+1)$ to a tree $\Uu'$, setting
$E^{\Uu'}_{\iota(\alpha)}=F(N^\Uu_{\alpha})$.)

Fix $(\Ll_0,\sigma_0)$
witnessing  that $\pi$ is an  ${\Fop}$-rooted $k$-factor; so
$\sigma_0:\Ll_0\to\M$
is $k$-tight and
$\pi\com\sigma_0\com\sigma^{\L_0}_{k+1}:\mathfrak{S}_{k+1}(\L_0)\to\core_0(\N)$
is a near $k$-embedding.}

\begin{case}\label{case:copying_case_no_drop_in_model}
 $[0,\alpha]^\Tt$ does not drop in model in $\Tt$.

In this case, it is routine to verify that $[0,\iota(\alpha)]^{\Uu}$ does not drop below the
image of $\N$, $\pi_\alpha$ is a weak $\deg^\Tt_\alpha$-embedding and line (\ref{eqn:no_drop_comm})
holds.

If $\deg^\Tt_\alpha=k$ then  $(\Ll_0,\sigma)$ witnesses the fact that
$\pi_\alpha$ is an ${\Fop}$-rooted $k$-factor above $b$, where
$\sigma=i^\Tt_{0\alpha}\com\sigma_0$,
because $i^{\Uu}_{\N,0,\iota(\alpha)}$ and $\pi\com\sigma_0{\color{black}\com \sigma^{\L_0}_{k+1}}$ are  near $k$-embeddings,
and $\pi_\alpha\com i^\Tt_{0\alpha}=i^{\Uu}_{\N,0,\iota(\alpha)}\com\pi$.

Now suppose  that $[0,\alpha]^\Tt$ drops in degree and let $n=\deg^\Tt_\alpha$. Then letting
$\Ll=\core_{n+1}(M^\Tt_\alpha)$ and $\sigma:\core_0(\Ll)\to \core_0(M^\Tt_\alpha)$ be the core embedding,
$(\Ll,\sigma)$ witnesses the fact that $\pi_\alpha$ is an ${\Fop}$-rooted $n$-factor above $b$
(we have $\mathfrak{S}_{n+1}(\Ll)=\Ll$ and $\sigma_{n+1}^\Ll=\id$).
The fact that $\pi_\alpha\com\sigma$ is a near $n$-embedding is because
$\pi_\alpha\com\sigma=i^{\Uu}_{\N,\iota(\xi),\iota(\alpha)}\com\pi_\xi$, $\pi_\xi$ is a weak
$(n+1)$-embedding, and $i^{\Uu}_{\N,\iota(\xi),\iota(\alpha)}$ a near $n$-embedding.

{\color{black}Now consider part \ref{item:if_(N,k)=(Q,j)_and_pi_near}. Suppose
that $(\N,k)=(\Q,j)$ and $\pi$ is a near $k$-embedding. Clearly $N^\Uu_\alpha=M^\Uu_{\iota(\alpha)}$. The fact that $\pi_\alpha$ is a near $\deg^\Tt_\alpha$-embedding (in fact for every $\beta\leq\alpha$, $\pi_\beta$ is a near $\deg^\Tt_\beta$-embedding)
is by \cite[Lemma 1.3]{fine_tame}, or more literally, its proof. However, note that that proof is inductive on $\alpha$, and one should also maintain as part of the induction that $\deg^\Tt_\beta=\deg^\Uu_{\iota(\beta)}$ for every $\beta\leq\alpha$. In order to see that $\deg^\Tt_{\beta+1}=\deg^\Uu_{\iota(\beta+1)}$,
letting $\gamma=\pred^\Tt(\beta+1)$, if
$\beta+1\notin\mathscr{D}^\Tt$,
one uses the fact that $\pi_\gamma$ is a near $\deg^\Tt_\gamma$-embedding
and $\deg^\Tt_\gamma=\deg^\Uu_{\iota(\gamma)}$.
Finally, if $\pi$ is $\nu$-preserving, then using Lemma \ref{lem:propagate_nu-pres}, one easily  shows inductively that $\pi_\beta$ is $\nu$-preserving for every $\beta\leq\alpha$.}
\end{case}
\begin{case}\label{case:drop_in_model_in_[0,alpha]^T}
 $[0,\alpha]^\Tt$ drops in model in $\Tt$.

 It is straightforward to see that
$[0,\iota(\alpha)]^{\Uu}$ drops below the image of $\N$ and that $N^\Uu_{\alpha}=M^\Uu_{\iota(\alpha)}$. The
fact that $\pi_\alpha$ is an ${\Fop}$-rooted $\deg^\Tt_\alpha$-factor is
almost the same as in the dropping degree case above.
{\color{black}The fact that $\pi_\alpha$ is in fact a near $\deg^\Tt_\alpha$-embedding  and
$\deg^\Tt_\alpha=\deg^\Uu_{\iota(\alpha)}$ follows much as before, though this time it is not quite as directly by \cite[Lemma 1.3]{fine_tame} itself,
but from
an examination of its proof; one observes that the inductive argument used in \cite{fine_tame} can simply be done above any node in $\Tt$ at which there is a drop in model, instead of having to start at the root node $0$.\footnote{Note that we are not assuming that $\pi$ itself is a near embedding in this case;
it is just that above any drop in model in $\Tt$, we get near embeddings.}
 (Similar arguments were also used in \cite{copy_con}.) And the fact that $\pi_\alpha$ is $\nu$-preserving is proved similarly, by an induction above any node in $\Tt$ at which there is a drop in model.}\qedhere\end{case}
\end{proof}

\begin{dfn}
 Let $\N$ be an ${\Fop}$-pm and $k\leq\om$. Then $\N$ is
\index{$\Fop$-$n$-fine}\dfnemph{${\Fop}$-$k$-fine} iff for each
$j\leq k$:
\begin{enumerate}[label=--]
 \item $\core_j(\N)$ is a $j$-solid ${\Fop}$-pm,
 \item if $j<k$ then $\core_j(\N)$ is $(j+1)$-universal,
 \item if $k=\om$ then $\core_\om(\N)$ is ${<\om}$-condensing.\qedhere
\end{enumerate}
\end{dfn}

We next consider background constructions building ${\Fop}$-mice.

\begin{dfn}\label{dfn:Fop_con}
Let ${\Fop}$ be an operator over $\opbk$. Let $A\in\witri{C_{\Fop}}$ and $\chi\leq\OR(\opbk)+1$. An
\dfnemph{$L^{\Fop}[\es,A]$-construction (of length $\chi$)} is a
sequence $\CC=\left<\N_\alpha\right>_{\alpha<\chi}$ such that for all $\alpha<\chi$:
\begin{enumerate}[label=--]
\item $\N_0={\Fop}(A)$ and $\N_\alpha$ is an ${\Fop}$-pm over $A$.
\item If $\alpha$ is a limit then $\N_\alpha=\liminf_{\beta<\alpha}\N_\beta$.
\item If $\alpha+1<\chi$ then either (i) $\N_{\alpha+1}$ is $\hmE$-active and
$\N_{\alpha+1}\wdoubleupto\OR(\N_{\alpha+1})=\N_\alpha$, or (ii)
 $\N_\alpha$ is ${\Fop}$-$\om$-fine and $\N_{\alpha+1}={\Fop}(\core_\om(\N_\alpha))$.\qedhere
\end{enumerate}
\end{dfn}

We will now explain how fine condensation for ${\Fop}$ leads to the ${\Fop}$-iterability of
substructures $\R$ of ${\Fop}$-pms built by background construction. The basic engine
behind this is the realizability of iterates of $\R$ back into models of the
construction.

\begin{dfn}\label{dfn:pi_C_realizable}
Let $\Fop$ be an operator above $b$ which almost condenses finely. Let  $\bar{A},A\in\witri{C_{\Fop}}$. Let
$\CC=\left<\N_\alpha\right>_{\alpha\leq\lambda}$ be an
$L^{\Fop}[\es,A]$-construction. Let $k\leq\om$ and suppose that $\N_\lambda$ is
${\Fop}$-$k$-fine. Let $\R$ be a $k$-sound ${\Fop}$-pm over $\Abar$
and
$\pi:\core_0(\R)\to\core_k(\N_\lambda)$ be an above $b$ weak $k$-embedding. Let
$\Tt$ be a putative
${\Fop}$-tree
on $\R$, with $\deg^\Tt_0=k$. We say that $\Tt$
is
\index{realizable}\index{above}\dfnemph{$(\pi,\CC)$-realizable above $b$} iff for every
$\alpha<\lh(\Tt)$, $\beta=\base^\Tt(\alpha)$ {\color{black}exists (that is, $[0,\alpha]^\Tt$ drops only finitely often)} and letting $m=\deg^\Tt_\alpha$,
there are $\zeta,\tau$ such that:
\begin{enumerate}[label=--]
 \item $(\zeta,m)\leq_\lex(\lambda,k)$,
 \item if $[0,\alpha]^\Tt$ does not drop in model or degree then
$\zeta=\lambda$ and $\tau=\pi$,
 \item if $[0,\alpha]^\Tt$ drops in model or degree then
$\tau\colon \core_0(M^{*\Tt}_\beta)\to\core_m(\N_\zeta)$ is a near $m$-embedding above $b$,
 \item if $M^{*\Tt}_\beta$ is not type 3 then there is a weak $m$-embedding
$\varphi:\core_0(M^\Tt_\alpha)\to\core_m(\N_\zeta)$ such that $\varphi\com
i^{*\Tt}_{\beta\alpha}=\tau$.
 \item if $M^{*\Tt}_\beta$ is type 3 then there is a weak $m$-embedding
$\varphi:\Ss\to\core_m(N_\zeta)$ such that $\varphi\com i^{*\Tt}_{\beta\alpha}=\tau$, where $\Ss$
is ``$(M^\Tt_\alpha)^\sq$''.\footnote{\label{ftn:squash}$(M^\Tt_\alpha)^\sq$ might not make literal
sense, if say $M^\Tt_\alpha$ is not wellfounded. By ``$(M^\Tt_\alpha)^\sq$'' we mean that either
$\alpha=\xi+1$ and $\Ss=\Ult_m((M^{*\Tt}_\alpha)^\sq,E^\Tt_\xi)$ (formed without unsquashing), or
$\alpha$ is a limit and $\Ss$ is
the direct limit of the structures $(M^\Tt_\xi)^\sq$ for $\xi\in[\beta,\alpha)_\Tt$, under
the iteration maps.}\qedhere
\end{enumerate}
\end{dfn}

\begin{dfn}
A \dfnemph{putative ${\Fop}$-$(k,\theta)$-iteration strategy} for a $k$-sound ${\Fop}$-pm $\N$ is
a function $\Sigma$ such that for every $k$-maximal ${\Fop}$-tree $\Tt$ on $\N$,
with $\Tt$ via $\Sigma$ and $\lh(\Tt)<\theta$ a limit, $\Sigma(\Tt)$ is a $\Tt$-cofinal branch.
\end{dfn}

\begin{lem}\label{lem:L[E]_construction_iterability}
Let $\Fop$ be an operator above $b$ which almost condenses finely above $b$. Let  $\bar{A},A\in\witri{C_{\Fop}}$.
Let
$\CC=\left<\N_\alpha\right>_{\alpha<\chi}$ be an
$L^{\Fop}[\es,A]$-construction.
Suppose that $(\N_\alpha)^\uparrow$ is an
${\Fop}$-pm for each $\alpha<\chi$. Let $\lambda<\chi$ and $k\leq\om$ be such that $\N_\lambda$ is
${\Fop}$-$k$-fine, and let $\Ss=\core_k(\N_\lambda)$.
Let $\R$ be a $k$-relevant ${\Fop}$-pm over $\Abar$.
Let $\pi:\core_0(\R)\to\core_0(\Ss)$ be an ${\Fop}$-rooted $k$-factor above $b$.
Let $\Sigma$ be either:
\begin{enumerate}[label=--]
 \item a putative ${\Fop}$-$(k,\om_1+1)$-iteration strategy for $\R$, or
 \item a putative ${\Fop}$-$(k,\om_1,\om_1+1)^*$-optimal iteration strategy for $\R$.
 \end{enumerate}
Suppose that every putative ${\Fop}$-tree via $\Sigma$
is $(\pi,\CC)$-realizable above $b$. Then $\Sigma$
is an ${\Fop}$-$(k,\om_1+1)$, or ${\Fop}$-$(k,\om_1,\om_1+1)^*$-optimal, iteration strategy.
\end{lem}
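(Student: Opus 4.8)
The plan is to prove, by induction on $\lh(\Tt)$, that every $k$-maximal putative $\Fop$-tree $\Tt$ on $\R$ via $\Sigma$ with $\lh(\Tt)\le\om_1+1$ is an honest $\Fop$-iteration tree: every model is wellfounded, every $M^\Tt_\alpha$ with $\alpha+1<\OR(\opbk)$ is an $\Fop$-premouse, and $\Sigma$ chooses a $\Tt\rest\eta$-cofinal branch at every limit $\eta<\lh(\Tt)$. Granting this, $\Sigma$ is total on the plays it directs, player $\plII$ never breaks a rule of $\G^{\Fop,\R}(k,\om_1+1)$, and if a play reaches length $\om_1+1$ then $M^\Tt_{\om_1}$ is wellfounded (by realizability of that putative tree), so $\plII$ wins; hence $\Sigma$ is an $\Fop$-$(k,\om_1+1)$-iteration strategy. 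For the $(k,\om_1,\om_1+1)$-maximal case one then stacks: a $k$-stack-maximal tree via $\Sigma$ is a concatenation of at most $\om_1$ maximal rounds, and by \ref{lem:k-stack-max_pres_rpm} the final model of each round is $\deg^\Tt(\infty)$-relevant, while by the maximal case it is an $\Fop$-premouse realizing into $\CC$; so each subsequent round begins from a position to which the maximal-case analysis applies. This stacking is routine, so below I treat the maximal case.

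For the inductive step, fix $\Tt$ via $\Sigma$ and assume the conclusion for all proper initial segments of $\Tt$; in particular, if $\lh(\Tt)=\eta+1$ with $\eta$ a limit, then $\Tt\rest\eta$ is an honest $\Fop$-tree, so $\Sigma(\Tt\rest\eta)$ is a branch and $(\Tt\rest\eta)\conc\Sigma(\Tt\rest\eta)$ is again a putative $\Fop$-tree via $\Sigma$. By hypothesis $\Tt$ (and, in the limit case, that extension) is $(\pi,\CC)$-realizable above $b$, so for each $\alpha<\lh(\Tt)$, writing $\beta=\base^\Tt(\alpha)$ and $m=\deg^\Tt(\alpha)$, we are handed $\zeta\le\lambda$, a map $\tau$ equal to $\pi$ if $[0,\alpha]_\Tt$ does not drop and otherwise a near $m$-embedding above $b$ from $M^{*\Tt}_\beta$ into $\core_m(\N_\zeta)$, and a weak $m$-embedding $\varphi_\alpha$ of $M^\Tt_\alpha$ (or of its squash, when $M^{*\Tt}_\beta$ is type $3$) into $\core_m(\N_\zeta)$ with $\varphi_\alpha\com i^{*\Tt}_{\beta,\alpha}=\tau$. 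Since each $\core_m(\N_\zeta)$ is a level of the $\Fop$-construction $\CC$, hence wellfounded, and $\varphi_\alpha$ is at least $\Sigma_0$-elementary, every $M^\Tt_\alpha$ is wellfounded; this covers the last model when $\lh(\Tt)$ is a successor, and the limit model at every limit stage $<\lh(\Tt)$. That $M^\Tt_\alpha$ is an opm of degree $\ge m$, and is $m$-relevant when relevant, follows from \ref{lem:Ult_k_pres_fs} and \ref{lem:k-stack-max_pres_rpm} applied to the (inductively honest) tree below $\alpha$.

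It remains to see that $M^\Tt_\alpha$ is an $\Fop$-premouse when $\alpha+1<\OR(\opbk)$. This is proved along the tree, exactly as the $\Fop$-premouse-hood of the lifted models is proved in \ref{lem:strategy_copying}: the proper segments of $M^\Tt_\alpha$ of low ordinal height are segments of earlier models of $\Tt$ (covered by the induction), while the remaining proper segments, and $M^\Tt_\alpha$ itself when it is a successor, are obtained by ultrapower and direct limit from segments of the (inductively $\Fop$-) model $M^\Tt_{\pred^\Tt(\cdot)}$; each such structure realizes — via (restrictions and refinements of) $\varphi_\alpha$ composed through $\tau$ and, where a drop in model or degree occurs, through the relevant core embedding of $M^{*\Tt}_\beta$ or of $M^\Tt_\alpha$ — by an $\Fop$-rooted $m'$-factor above $b$ into an $\Fop$-premouse, the witnessing pair being the corresponding $*$-model or core with its $m'$-relevance checked from the length of the extender applied at the drop (as in \ref{lem:strategy_copying}); so fine condensation for $\Fop$ (\ref{dfn:condenses_finely}), together with \ref{lem:strengthen_scf_conclusion} for the equality and \ref{lem:type_3_F-pm_condensation} in the type $3$ case, yields that each is of the form $\Fop(\,\cdot^-)$. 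Hence $M^\Tt_\alpha$ is an $\Fop$-premouse.

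The main obstacle is exactly the verification just sketched that the realization maps are genuine $\Fop$-rooted factors above $b$ — rather than merely weak $m$-embeddings — together with the attendant bookkeeping for drops (supplying the witnessing pair and its relevance) and for type $3$ structures via squashing, $(\,\cdot\,)^\uparrow$ (\ref{dfn:uparrow}), and \ref{lem:type_3_F-pm_condensation}. All of this runs parallel to the corresponding argument in \ref{lem:strategy_copying}, the essential new input being that the ``target'' of the realizations is supplied by the construction $\CC$ rather than by an iteration strategy. The wellfoundedness and branch-existence parts are immediate from realizability, and the reduction of the maximal case to the maximal-tree case is routine given \ref{lem:k-stack-max_pres_rpm}.
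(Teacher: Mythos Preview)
Your proposal is correct and follows the same approach as the paper: the paper's proof is two sentences, pointing to the argument of \ref{lem:strategy_copying} with the realization maps supplied by $(\pi,\CC)$-realizability standing in for the copy maps, and noting that tenability of $\CC$ is what makes \ref{lem:type_3_F-pm_condensation} applicable in the type~3 case. Your write-up is a faithful expansion of that reference; the one point you leave implicit is that tenability is precisely the hypothesis guaranteeing $(\core_m(\N_\zeta))^\uparrow$ is an $\Fop$-pm, which is what \ref{lem:type_3_F-pm_condensation} needs.
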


\begin{proof}
The argument is almost that used for Lemma \ref{lem:strategy_copying},
using the  $(\pi,\CC)$-realizability maps in place of copy maps.
The hypothesis that each $(\N_\alpha)^\uparrow$ is an $\Fop$-pm is used to see that Lemma \ref{lem:type_3_F-pm_condensation} applies where needed. We leave the details to the reader.
\end{proof}

 The above proof does not work with
\emph{$(k,\om_1,\om_1+1)^*$-optimal} replaced by $(k,\om_1,\om_1+1)^*$,
{\color{black}because of the reliance on $m$-relevance in connection with fine condensation.}

\begin{rem}\label{rem:condenses_finely_beyond_almost}
We digress to mention a key application of
the extra strength that \emph{condenses finely} has compared to \emph{almost condenses finely};
this essentially comes from \cite{CMIP}, such as in \cite[\S2]{CMIP}.
Adopt the assumptions and notation of the first paragraph of
\ref{lem:L[E]_construction_iterability}.
Assume further that $\Fop$  condenses finely  (not just almost), $\opbk=V$ and ${\Fop}$ is
total.
For an ${\Fop}$-premouse $\M$, say that $\M$ is \dfnemph{${\Fop}$-full} iff there is no $\alpha\in\Ord$
such that ${\Fop}^\alpha(\M)$ projects $<\OR(\M)$.\footnote{Here ${\Fop}^\alpha(\M)$
is the unique ${\Fop}$-pm $\N$ such that $\M\ins\N$ and $\l(\N)=\l(\M)+\alpha$ and $\N|\beta$ is
$\hmE$-passive for every $\beta\in(\l(\M),\l(\N)]$.}
Assume also that there is no ${\Fop}$-full $\M$ such that $\OR(\M)$ is Woodin in
${\Fop}^{\Ord}(\M)$.
Let $\kappa$ be a cardinal.
 Suppose that {\color{black} $\R\sats$``there is no Woodin cardinal''} and  every $k$-maximal putative ${\Fop}$-tree $\Tt$ on $\R$ of length $\leq\kappa$
 is such that in some set-generic extension
$V[G]$,
either $\Tt$ is $(\pi,\CC)$-realizable, or there is a limit $\lambda\leq\lh(\Tt)$ and a
 $\Tt\rest\lambda$-cofinal branch $c$  such that {\color{black}$c$ is $\Tt$-maximal and} $(\Tt\rest\lambda)\conc c$ is
$(\pi,\CC)$-realizable.
Then $\R$ is ${\Fop}$-$(k,\kappa+1)$-iterable, via the strategy guided by Q-structures
of the form ${\Fop}^\alpha(M(\Tt))$ for some $\alpha\in\Ord$.\footnote{
It might be that the Q-structure satisfies ``$\delta(\Tt)$ is not Woodin'',
but in this case, $\alpha=\beta+1$ for some $\beta$ and ${\Fop}^\beta(M(\Tt))$
satisfies ``$\delta(\Tt)$ is Woodin''.} This follows by a straightforward adaptation of the
proof for standard premice (cf.~\cite{CMIP}), where $\Fop = \J$. In the argument one needs to apply \emph{condenses
finely} to embeddings $\varphi,\sigma$,
when $\varphi\com\sigma\notin
V$.
{\color{black}Here $\sigma$ is an iteration map arising from $\Tt$ (and possibly $c$), with codomain $M^\Tt_\infty$ (or $M^\Tt_c$), and $\varphi$ embeds the codomain of $\varphi$ into some model of $\CC$}. We can only expect $\varphi\com\sigma\in V$ if the realized branch does not drop in model
or degree (indeed, in the latter case, $\varphi\com\sigma=\pi$), or if all relevant objects are
countable. We use fine condensation to see that the Q-structure
$Q\ins \M^\Tt_b$ (where $\M(\Tt)\in V$ and $b\in V[G]$)
is in fact of the form ${\Fop}^\alpha(\M(\Tt))$; the hypothesis
of \emph{condenses finely} that $\M^-\in V$ holds
where needed since $\M(\Tt)\in V$, and so ${\Fop}^\alpha(\M(\Tt))\in V$
for each $\alpha$; $Q$ has no extenders in its sequence above $\delta(\Tt)$
by the smallness assumption.

From now on we will only deal with \emph{almost condenses finely}.
\end{rem}

{\color{black}
\subsection{Weak Dodd-Jensen}

The weak Dodd-Jensen property is defined as the obvious adaptation of the usual one (see \cite{wDJ}):
\begin{dfn}\label{dfn:weak_DJ}
Let $k\leq\om$ and $\M$ be a countable $k$-relevant opm.

We say that $(\Tt,\Q,\pi)$ is \dfnemph{$(\M,k)$-large} iff
$\Tt$ is a run of $\G^{\Fop}_{\opt}(\M,k,\om_1,\om_1)$
of countable successor length,
in which neither player has lost,
$\Q\wins M^\Tt_\infty$ and $\pi:\core_0(\M)\to\core_0(\Q)$ is a nearly $k$-good embedding.\footnote{
So $\Q$ is $k$-sound; the rules of $\G^{\Fop}_{\opt}$ therefore  imply that if
$\Q=M^\Tt_\infty$ then $\deg^\Tt_\infty\geq k$.
So we do not need to explicitly stipulate that $\deg^\Tt_\infty\geq k$, unlike in \cite{copy_con}.}

Let $\Sigma$ be an iteration strategy
for $\M$. Let $\alphavec=\left<\alpha_n\right>_{n<\om}$ enumerate $\OR^\M$. We say that $\Sigma$
has the
\index{Dodd-Jensen}\index{DJ}\dfnemph{$k$-weak Dodd-Jensen \tu{(}DJ\tu{)} property for $\alphavec$} iff for all
$(\M,k)$-large $(\Tt,\Q,\pi)$ with $\Tt$ via
$\Sigma$, we have $\Q=M^\Tt_\infty$, $b^\Tt$ does not drop in model (hence, nor degree),
 and
\[ i^\Tt\rest\OR^\M\leq^{\alphavec}_\lex\pi\rest\OR^\M \]
(that is, either $i^\Tt\rest\OR^\M=\pi\rest\OR^\M$, or $i^\Tt(\alpha_n)<\pi(\alpha_n)$ where
$n<\om$ is least such that $i^\Tt(\alpha_n)\neq\pi(\alpha_n)$).
\end{dfn}

Note that in the context above, if
$i^\Tt\rest\OR^\M=\pi\rest\OR^\M$ then $i^\Tt=\pi$, because $i^\Tt,\pi$ are both nearly $0$-good,
and $\M=\Hull_1^\M(\hmb^\M\un\OR^\M)$.

Following \cite{wDJ}, one can convert given strategies for stacks of trees  on a countable mouse into strategies with weak DJ:
\begin{lem}\label{lem:k-factor+weak_DJ}
Assume $\DC_\RR$. Let $\Fop$ be an operator above $b\in\HC$ which almost condenses finely above $b$. Let  $A\in\witri{C_{\Fop}}$.
 Let $\M\in\HC$ be an ${\Fop}$-$(k,\om_1,\om_1+1)^*$-optimally iterable $k$-relevant ${\Fop}$-pm.
 Let $\alphavec=\left<\alpha_n\right>_{n<\om}$ enumerate $\OR^\M$.
 Then there is an ${\Fop}$-$(k,\om_1,\om_1+1)^*$-optimal strategy for $\M$ with the
$k$-weak
DJ property for $\alphavec$.
\end{lem}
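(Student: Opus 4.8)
The plan is to transcribe the proof of the weak Dodd--Jensen lemma of \cite{wDJ}, incorporating good $k$-factors in the way that \cite[\S4.2]{copy_con} incorporates weak $k$-embeddings, and using \ref{lem:strategy_copying} to keep all the auxiliary strategies inside the class of $\Fop$-strategies.

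First I would fix an $\Fop$-$(k,\om_1,\om_1+1)$-maximal strategy $\Sigma_0$ for $\M$ (one exists by hypothesis) and assume, toward a contradiction, that no $\Fop$-$(k,\om_1,\om_1+1)$-maximal strategy for $\M$ has the $k$-simple DJ property for $\alphavec$. Since $A\in\HC$ and $\M$, hence each of its iterates, is countable, all objects involved are coded by reals, so $\DC_\RR$ applies; using it I would recursively build $\langle(\Sigma_n,\Tt_n,\Q_n,\pi_n)\rangle_{n<\om}$ so that each $\Sigma_n$ is an $\Fop$-$(k,\om_1,\om_1+1)$-maximal strategy for $\M$, $(\Tt_n,\Q_n,\pi_n)$ is an $(\M,k)$-simple triple with $\Tt_n$ via $\Sigma_n$ witnessing that $\Sigma_n$ lacks the $k$-simple DJ property, and $\Sigma_{n+1}$ is a pullback strategy defined as follows. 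Here $\pi_n\colon\M\to\Q_n$ is a good simple $k$-factor above $b$; since $\M$ is an $\Fop$-pm, taking $(\Ll,\sigma)=(\M,\id)$ shows $\pi_n$ is $\Fop$-rooted, and it is above $b$ as $b\in\J_1(\hmb^\M)$ and $\pi_n\rest\J_1(\hmb^\M)=\id$. Moreover $\Q_n\wins M^{\Tt_n}_\infty$ is an $\Fop$-pm which, being an initial segment of an iterate of $\M$ via $\Sigma_n$, carries the tail of $\Sigma_n$, restricted to $\Q_n$, as an $\Fop$-$(j_n,\om_1,\om_1+1)$-maximal strategy, for the appropriate degree $j_n$. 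Applying \ref{lem:strategy_copying} (in its $(k,\om_1,\om_1+1)$-maximal form) to this strategy for $\Q_n$ and to $\pi_n$ then yields an $\Fop$-$(k,\om_1,\om_1+1)$-maximal strategy $\Sigma_{n+1}$ for $\M$ together with its lifting apparatus: each tree via $\Sigma_{n+1}$ copies, along $\pi_n$, to a tree via the tail of $\Sigma_n$, with an $\Fop$-rooted factor of the appropriate degree above $b$ (good, as $\pi_n$ is good) at each node, and with the usual commutativities.

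Next I would splice the $\Tt_n$ into a single tree $\Tt^*$ on $\M$: block $n$ of $\Tt^*$ is the image of $\Tt_n$ under the accumulated copy map into the last model $\M_n$ of the previous block. If some $\Tt_n$ witnesses the DJ failure already in the first two clauses --- that is, $\Q_n\wpins M^{\Tt_n}_\infty$ or $b^{\Tt_n}$ drops --- then, taking $n$ least such (so all preceding blocks are non-dropping at their round beginnings), the composite $\Tt^*$ through block $n$ is a play of $\G_{\max}^{\Fop,\M}(k,\om_1,\om_1+1)$ of successor length via $\Sigma_0$, and the accumulated composite map realizes $\M$, via a good simple $k$-factor, into a segment $\Q^*\wins M^{\Tt^*}_\infty$ with either $b^{\Tt^*}$ dropping or $\Q^*\wpins M^{\Tt^*}_\infty$, which is impossible by the comparison argument of \cite{wDJ} (using wellfoundedness). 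Otherwise every $(\Tt_n,\Q_n,\pi_n)$ is of the ``lex'' type: $\Q_n=M^{\Tt_n}_\infty$, $b^{\Tt_n}$ non-dropping, $\pi_n$ nearly $k$-good, and $\pi_n\rest\OR(\M)<^{\alphavec}_\lex i^{\Tt_n}\rest\OR(\M)$ strictly. Then $\Tt^*$ has countable limit length, is a play of $\G_{\max}^{\Fop,\M}(k,\om_1,\om_1+1)$ via $\Sigma_0$ with wellfounded last model $\M_\om$ (the direct limit of the $\M_n$ along the block iteration maps), and the copy commutativities give, writing $q_n,r_n\colon\M\to\M_n$ for the accumulated copy-image of $\pi_n$ and the main-branch iteration map respectively, that $q_n<^{\alphavec}_\lex r_n$ and that the first-difference index of $q_n$ against $r_n$ is non-increasing in $n$; as in \cite{wDJ} this produces an infinite $\in$-descending sequence of ordinals of $\M_\om$, contradicting its wellfoundedness.

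The only place the operator enters is the verification that each $\Sigma_{n+1}$ really is an $\Fop$-$(k,\om_1,\om_1+1)$-maximal strategy and that the lifted maps stay $\Fop$-rooted good factors above $b$ throughout the splicing --- precisely the content of \ref{lem:strategy_copying}, once one observes that a good simple $k$-factor out of an $\Fop$-pm is automatically $\Fop$-rooted and above $b$. I expect the main obstacle to be the standard Dodd--Jensen bookkeeping in the $k$-factor setting: the propagation of near and weak $k$-embeddings through the splicing, handled as in \cite{fine_tame},\cite{copy_con}, and the extraction of the descending sequence of ordinals (and the arrangement that the spine of $\Tt^*$ coincides with the branch chosen by $\Sigma_0$). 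None of this interacts with $\Fop$, so for those parts one simply invokes \cite{wDJ}.
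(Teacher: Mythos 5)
Your overall strategy --- recursively construct pullbacks via $\DC_\RR$ assuming no strategy has the $k$-simple DJ property, keep everything inside the class of $\Fop$-strategies using \ref{lem:strategy_copying}, splice the resulting trees, and derive a contradiction from wellfoundedness --- is the same Neeman--Steel paradigm the paper uses, and the paper likewise delegates most of the bookkeeping to \cite{wDJ} and \cite{copy_con}. The observations that a good simple $k$-factor out of an $\Fop$-pm is automatically $\Fop$-rooted and above $b$, and that it is safe to reduce to $\DC_\RR$ since everything is coded by reals, are correct and match what the paper does.

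There is, however, a genuine gap in your handling of the case where some block witnesses a failure of the first two clauses (that $b^{\Tt_n}$ drops in model or degree, or $\Q_n \wpins M^{\Tt_n}_\infty$). You take the least such $n$, splice through block $n$ to obtain a single successor-length play $\Tt^*$ of $\G_{\max}^{\Fop,\M}(k,\om_1,\om_1+1)$ via $\Sigma_0$ with a good simple $k$-factor $\M\to\Q^*\wins M^{\Tt^*}_\infty$ and $b^{\Tt^*}$ dropping or $\Q^*\wpins M^{\Tt^*}_\infty$, and you declare this ``impossible by the comparison argument of \cite{wDJ} (using wellfoundedness).'' But there is no such comparison argument available here: in the $\om_1+1$-iterability setting, a single tree of this form via an \emph{arbitrary} strategy $\Sigma_0$ is exactly what a failure of the first clauses of the DJ property looks like, and ruling it out is the very thing the weak Dodd--Jensen lemma is for --- the original Dodd--Jensen lemma, which does dispose of this by comparison, needs stronger iterability (iterability of the relevant phalanxes) precisely because $\Sigma_0$ need not be unique on limit branches. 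So stopping at the first bad block and asserting a contradiction does not work. What is actually needed is to show that drops/proper-segments can occur only \emph{finitely often} along the spliced stack, via the wellfoundedness of a suitable rank that strictly decreases each time a block has a drop or proper segment, and then the lexicographic descent takes over for the cofinal tail of lex-type blocks. The paper packages exactly this as the existence (via $\DC_\RR$) of a ``stable'' triple $(\Tt_0,\Q_0,\pi_0)$ past which no further drops or proper segments occur along trees via $\Sigma^{\Tt_0}_{\P_0}$, after which a second minimization handles the lex clause and the desired strategy is the double pullback $\Sigma_2=(\Sigma_1)^{\Tt_1,\P_1,\pi_1}_\M$. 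Your case split should therefore be ``infinitely many blocks of drop/proper-segment type'' (ruled out by a descending $\in$-chain) versus ``cofinitely many lex-type blocks,'' not ``at least one bad block (discharged immediately)'' versus ``all lex.''
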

\begin{proof}[Proof Sketch]
The proof is like the usual one (see \cite{wDJ}),
using one minor observation: Suppose $\Tt$ is a run of $\G^{\Fop}_{\opt}(\M,k,\om_1,\om_1)$ of countable successor length,
$\Q\pins M^\Tt_\infty$ and $\pi:\core_0(\M)\to\core_0(\Q)$
is a near $k$-embedding, but $\Q$ is not $M^\Tt_\infty$-stable. In this case we can't use Lemma \ref{lem:strategy_copying} to copy trees on $\M$ to trees on $M^\Tt_\infty$ via $\pi$, which is superficially a problem for the proof of the lemma.
But because $\Q$ is $M^\Tt_\infty$-unstable, there is $E\in\es_+^{M^\Tt_\infty}$
such that $\Q\pins M^\Tt_\infty|\lh(E)$. Let $E$ be least such.
Then  note that $\Q\pins M^{\Tt\conc\left<E\right>}_\infty$
and $\Q$ is $M^{\Tt\conc\left<E\right>}_\infty$-stable.
(Here $\Tt\conc\left<E\right>$ is the run of the game for which $\Tt$ constitutes
the first $\alpha$ rounds (for some $\alpha$), followed by 1 more round, which is the tree on $M^\Tt_\infty$ which only uses $E$.) So we can apply Lemma \ref{lem:strategy_copying}
to $\pi$, $\Q$ and $M^{\Tt\conc\left<E\right>}_\infty$,
and this is enough for the proof.
\end{proof}
}

\subsection{Solidity and condensation}\label{sec:solidity}

In this final section we prove some of the basic fine structural facts (solidity, etc) hold for iterable $\Fop$-mice, assuming $\Fop$ condenses finely. The proof will be heavily based on the corresponding
proofs as presented in the union of \cite{FSIT}, \cite{steel2010outline} and \cite{deconIMT}.
Beyond extra details in connection with operator mice,
which are relatively minor,
 we need to handle some details which arise with superstrong extenders, which did not arise in the papers just mentioned. 

We also take the opportunity to discuss a couple of elements of the proof which are not made explicit in \cite{FSIT}, \cite{steel2010outline}, \cite{deconIMT}.
These elements are also relevant for standard premice,
not just operator premice.
They deal with two issues.

First,  \cite[Lemma 6.1.5]{FSIT} (on closeness of extenders) establishes that if $\Tt$ is a $k$-maximal tree on a $k$-sound premouse $\M$,
then for every $\alpha+1<\lh(\Tt)$, $E^\Tt_\alpha$ is close to $M^{*\Tt}_{\alpha+1}$. The proof of solidity, etc,
involves trees on phalanxes, to which this lemma does not literally apply, though the arguments analysing the comparisons seem to assume that the lemma does apply to them. So we discuss the generalization of the lemma to  trees on phalanxes which arise in the proof. This kind of thing has also been discussed in \cite{copy_con} and \cite{fsfni}, for example.

Second, and somewhat more importantly, we discuss why weak Dodd-Jensen is enough to rule out certain situations in the analysis of comparisons, in which it might not be entirely obvious that it is enough, and which are not addressed explicitly in \cite{FSIT}, \cite{steel2010outline}, \cite{deconIMT}.

The main issue regarding weak Dodd-Jensen arises in the following situation.
Suppose that $\M$ is a $k$-sound, $(k,\om_1,\om_1+1)^*$-iterable premouse, and we want to prove that $\M$ is $(k+1)$-solid.
Let $\alpha\in p_{k+1}^\M$.
Let  $\Ww$ be the $(k+1)$-solidity witness
\[ \Ww=\cHull_{k+1}^\M(\alpha\cup\{\pvec_k^\M,p_{k+1}^\M\cut(\alpha+1)\}) \]
at $\alpha$,
and $\pi:\Ww\to\M$ the uncollapse map.
We need to see that $\Ww\in\M$.
Suppose that $\alpha$ is not an $\M$-cardinal. Let $\kappa=\card^\M(\alpha)$. Let $\R\pins\M$ be least such that $\rho_\om^\R=\kappa$ and $\alpha\leq\OR^\R$.
The proof given in \cite{steel2010outline} that $\Ww\in\M$ proceeds by comparing the phalanx $\mathfrak{P}=((\M,{<\kappa}),(\R,\kappa),\Ww)$
versus $\M$, producing trees $\bar{\Tt}$ on $\mathfrak{P}$ and $\Uu$ on $\M$. An iteration strategy $\Sigma$
for $\M$ with the weak Dodd-Jensen property with respect to some enumeration of $\M$ is used to form $\Uu$, and $\bar{\Tt}$ is formed by simultaneously lifting $\bar{\Tt}$ to a tree $\Tt$ on $\M$ via $\Sigma$. Let $r$ be such that $\rho_{r+1}^\R=\kappa<\rho_r^\R$.
The tree $\bar{\Tt}$ is $(k,r,k)$-maximal,
meaning that in $\mathfrak{P}$, $\M$ is at degree $k$,
$\R$ is at degree $r$, and $\Hh$ at degree $k$.
The main issue does not arise in the \emph{anomalous} case,
i.e.~when $\R$ is active type 3 with $\OR^\R=\alpha$,
so we will ignore this case, and therefore we have $\kappa<\rho_0^\R$, so $r<\om$ is well-defined.

Now the process for forming $\Tt$ in this context at a step for which $\crit(E^{\bar{\Tt}}_\eta)=\kappa$ is not made clear in \cite{FSIT}.\footnote{\label{ftn:p.78}In fact, it is ill-defined, because on p.~47 of \cite{FSIT}, clause (4) of Definition 5.0.6 requires $\pow(\kappa)\cap \M^*_{\alpha+1}=\pow(\kappa)\cap\N$,
whereas on p.~78, in Subcase A of Case 2, it is required that $\bar{\P}^*_{\eta+1}=\P^*_{\eta+1}$. But in case $\kappa=\crit(E^{\bar{\Tt}}_\eta)=\crit(E^\Tt_\eta)$  (where $\kappa^{+\R}=\alpha$), then $E^{\bar{\Tt}}_\eta$ measures only $\pow(\kappa)\cap\R$, so is not $\M$-total, but $E^\Tt_\eta$ \emph{is} $\M$-total,
so according to Definition 5.0.6, in order for $\Tt$ to be an iteration tree, we cannot set $\P^*_{\eta+1}=\bar{\P}^*_{\eta+1}$.} The process is, however, clarified in \cite{deconIMT}:\footnote{See p.~728 of \cite{deconIMT}, within the proof of Theorem 3.3, in the paragraph beginning ``There is one wrinkle in the copying argument''.} we lift $M^{\bar{\Tt}}_{\eta+1}$ to $i^{*\Tt}_{\eta+1}(\R)$, defining $\pi_{\eta+1}:M^{\bar{\Tt}}_{\eta+1}\to i^{*\Tt}_{\eta+1}(\R)$; there is a natural definition for this map $\pi_{\eta+1}$, analogous to Shift Lemma maps. Then $\pi_{\eta+1}$ is a weak $r$-embedding, but not clear that it is a near $r$-embedding;
that is, not clear that it is $\rSigma_{r+1}$-elementary.
In fact, if there is an $\bfrSigma_r^\R$-definable function $f:\kappa\to\rho_r^\R$, and $\kappa$ is the least such ordinal,
then  $\pi_{\eta+1}$ is \emph{not} a near $r$-embedding,
as then $\pi_{\eta+1}(\rho_r^{M^{\bar{\Tt}}_{\eta+1}})<\rho_r^{i^{*\Tt}_{\eta+1}(\R)}$.

We write $M^{\bar{\Tt}}_\infty$ and $M^\Uu_\infty$ for the last models of $\bar{\Tt}$ and $\Uu$ respectively. Suppose $M^{\bar{\Tt}}_\infty=M^\Uu_\infty$ and neither $b^{\bar{\Tt}}$ nor $b^\Uu$ drops in model, $b^\Uu$ does not drop in  degree, and $b^{\bar{\Tt}}$ is above $\R$ (but maybe $b^{\bar{\Tt}}$ drops in degree strictly below $r$), so the first extender used along $b^{\bar{\Tt}}$
is some  $E^{\bar{\Tt}}_\eta$ as above. If $b^{\bar{\Tt}}$ does not drop in degree (strictly below $r$),
then one can use some standard fine structural arguments
to reach a contradiction. But if it drops in degree (and it follows that it drops to degree exactly  $k<r$), then standard fine structural arguments do not seem to suffice, and the most obvious tool appears to be weak Dodd-Jensen. But for this, we need to know that $\pi_\infty\com i^\Uu$ is a near $k$-embedding.

While \cite{FSIT} appears to ignore this situation,
\cite{steel2010outline} and \cite{deconIMT}
do not say much further regarding its analysis outside of the anomalous case, which we are presently ignoring.
In \cite{steel2010outline},
it is mentioned briefly that \cite[1.3]{fine_tame}
shows the copying construction gives rise to near $k$-embeddings (see \cite[Remark 4.3]{steel2010outline}, and also \cite[\S4.3]{FSIT}, immediately following the definition of \emph{$(\M,k)$-large}). But the results in \cite{fine_tame} themselves
do not really suffice themselves to cover all cases,
especially the one described above.
We will show in Claim \ref{clm:pi_infty_near_k} below, at least under some contradictory assumptions which we are free to make,\footnote{We will assume that $\Ww\notin\M$, that $\M$ is $(k+1)$-solid with respect to $p_{k+1}^\M\cut(\alpha+1)$,
and  that the enumeration of $\M$ we use for specifying weak Dodd-Jensen begins with $p_{k+1}^\M\cut(\alpha+1)$, in descending order.} that in the above situation,
$\pi_\infty$ is a near $k$-embedding, and hence so is $\pi_\infty\com i^\Uu$. The proof will involve an extension of the methods of \cite{fine_tame}.
Part of these calculations were used, for example, in
\cite{copy_con} and \cite{fsfni}. But in the most subtle case,
they were not, and
the argument we give to handle it seems to be new.

There are actually multiple options available to either complete the proof as it is, or to modify the proof somewhat and thereby prove solidity slightly differently:
\begin{enumerate}
\item\label{item:wDJ_for_k-lifting} Instead of formulating the weak Dodd-Jensen property
for near $k$-embeddings, formulate it for $k$-lifting embeddings (or maybe cardinal-preserving $k$-lifting embeddings), as defined in \cite{premouse_inheriting}. The copying construction routinely yields such embeddings.
\item\label{item:z,zeta-pres} Use the results of \cite{extmax} on $(z,\zeta)$-preservation
(and their generalizations in \cite{fsfni} to the superstrong level) and some of the arguments from \cite{fsfni};
these make the troublesome appeals to weak Dodd-Jensen unnecessary.
\item\label{item:ult_of_small_structure} We have not thought carefully about the following option, but it seems it should work: instead of formulating the weak Dodd-Jensen property with respect to stacks in which the rounds are each $n$-maximal for the relevant $n$, consider stacks in which the rounds can be ``weakly $n$-maximal'', in which degrees of nodes in the tree need not be taken as large as possible. Then we would be free to enforce agreement over degrees of nodes in $\bar{\Tt}$ (as above) with corresponding nodes in $\Tt$ at certain points of the construction, thus avoiding the difficulties we encounter in the proof of Claim \ref{clm:pi_infty_near_k} later.

A variant of this, which should also work,
would be  to allow the equation ``$\P^*_{\eta+1}=\bar{\P}^*_{\eta+1}$'' on page 78 of \cite{FSIT} (see Footnote \ref{ftn:p.78}). So the lift tree $\Tt$ would apply $E^\Tt_\eta$ to a model with strictly fewer subsets of its critical point than what is measured by $E^\Tt_\eta$. But then  the resulting map $\pi_{\eta+1}$ should in fact be a $\deg^{\bar{\Tt}}_{\eta+1}$-embedding, and it seems this should also remove the problem.
\end{enumerate}

Option \ref{item:wDJ_for_k-lifting} is probably
mathematically the most natural option.
But it involves
a significant change to the general setup for the proof,
and seems it might require using a different iteration strategy than that selected in \cite{steel2010outline}
(since we need one that has the resulting Dodd-Jensen property).
Option \ref{item:ult_of_small_structure} also involves changes to the general development, and selection of iteration strategy. Option \ref{item:z,zeta-pres} does not involve such changes
to the setup, so can be used to show that the original comparison argument (as in \cite{steel2010outline} and \cite{deconIMT})
works. But it involves a significant change in method for analysing aspects of the comparison, and new ideas for this.

We wanted to keep the setup of the original proof the same,
excluding  the alternatives \ref{item:wDJ_for_k-lifting} and \ref{item:ult_of_small_structure},
and as far as possible, to stick to the same basic methods of the original proof. The proof we give achieves the latter better than  option \ref{item:z,zeta-pres}. It also gives some  information which does not come out of the other methods.
Let us begin.

We  now state the central result of the paper -- the fundamental fine structural facts for
${\Fop}$-mice. An \dfnemph{${\Fop}$-pseudo-premouse}
is just the $\Fop$- version of a pseudo-premouse (see \cite[\S10]{FSIT}),
an \dfnemph{${\Fop}$-bicephalus}
is that of a bicephalus (see \cite[\S9]{FSIT}),
and the \dfnemph{${\Fop}$-iterability} of such structures
is defined in the
obvious manner. Likewise the definition of \dfnemph{${\Fop}$-iterability} for phalanxes of ${\Fop}$-pms.

\begin{tm}\label{thm:k+1-fineness}
Let $\Fop$ be an operator above $b\in\HC$, over $\mathscr{B}$, which almost condenses finely.
Then:
\begin{enumerate}
\item\label{item:k+1-fine} For $k<\om$, every $k$-sound, ${\Fop}$-$(k,\om_1,\om_1+1)^*$-optimally
iterable ${\Fop}$-premouse  is
${\Fop}$-$(k+1)$-fine.
 \item\label{item:condensing} Every $\om$-sound, ${\Fop}$-$(\om,\om_1,\om_1+1)^*$-optimally iterable
${\Fop}$-premouse  is ${<\om}$-condensing.
\item\label{item:pseudo-pm} Every ${\Fop}$-$(0,\om_1,\om_1+1)^*$-optimally iterable
${\Fop}$-pseudo-premouse is an
${\Fop}$-premouse.
\item\label{item:bicephalus} There is no non-trivial ${\Fop}$-$(0,\om_1,\om_1+1)^*$-optimally iterable
${\Fop}$-bicephalus.
\end{enumerate}
\end{tm}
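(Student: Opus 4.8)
The plan is to mimic the standard proof of the fine structural facts for ordinary premice (as in \cite{FSIT},\cite{steel2010outline}), carrying it out simultaneously for all four items by induction on the mouse order, with the $\Fop$-condensation hypothesis playing the role that the condensation lemma for the $\J$-hierarchy plays in the classical setting. More precisely, I would prove the four statements together by an induction on $(\lambda, k)$ ordered lexicographically, where $\lambda = \OR(\M)$ (or the relevant length), showing: if every $\Fop$-pm, pseudo-pm, and bicephalus which is ``smaller'' has the claimed properties, then so does the given structure of size $\lambda$ at degree $k$. The copying machinery of \ref{lem:strategy_copying}, the realizability apparatus of \ref{dfn:pi_C_realizable} and \ref{lem:L[E]_construction_iterability}, and the weak Dodd--Jensen property of \ref{lem:k-factor+weak_DJ} are the main tools imported from earlier in the paper.

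First I would handle \emph{universality} and \emph{solidity} of the standard parameter (item \ref{item:k+1-fine}), which is the technical heart. Fix a $k$-sound $\Fop$-$(k,\om_1,\om_1+1)$-maximally iterable $\Fop$-pm $\N$ over $A$ with $\om < \rho_k^\N$; we want $\core_{k+1}(\N)$ to be a $(k+1)$-solid $\Fop$-pm and (when not at the top) $(k+1)$-universal. By \ref{lem:k-factor+weak_DJ} fix a strategy $\Sigma$ for $\N$ with the $k$-simple DJ property for some enumeration of $\OR(\N)$. For universality, one compares $\core_{k+1}(\N)$ against $\N$ (or against the relevant ``$Q$'') using padded $k$-maximal iteration trees, and argues as usual that the $\core_{k+1}(\N)$-side of the comparison cannot move and cannot lose; the DJ property rules out the bad alternatives. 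The key new point -- and the place where \emph{condenses finely} is used -- is that each model appearing along the $\core_{k+1}(\N)$-side of the comparison must be shown to be an $\Fop$-pm. For this one applies condition (ii) of \ref{dfn:condenses_finely} to the embeddings arising in the comparison: $\core_{k+1}(\N) \to \N$ is a $k$-good embedding (by \ref{lem:core_embedding}) or, more generally, the iteration maps and the solid-core maps combine to give $\Fop$-rooted $k$-factors above $b$ (this is exactly the structure packaged in \ref{dfn:k-factor} and propagated by \ref{lem:strategy_copying}), so fine condensation gives that the relevant structures (or their universal hulls, via the second bullet of \ref{dfn:condenses_finely}) land in $\Fop(\cdot^-)$, hence are $\Fop$-pms; \ref{lem:strengthen_scf_conclusion} upgrades ``$\in\Fop(\M^-)$'' to ``$=\Fop(\M^-)$'' in the cases where the structure is $k$-relevant or is a genuine core. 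Solidity of $p_{k+1}^\N$ is then proved as in \cite{FSIT}: one shows each solidity witness $\solwit_{k+1,i}(\N)$ is an initial segment of $\N$ (or of an iterate) by comparing it against $\N$, again using the DJ property to rule out bad branches and fine condensation to keep all intermediate structures $\Fop$-pms; preservation of fine structure under the relevant ultrapowers is by \ref{lem:Ult_k_pres_fs}, and stratification is carried along by \ref{lem:Ult_k_pres_fs}(\ref{item:ult_pres_psi-strat}). Item \ref{item:condensing} ($\core_\om(\N)$ is ${<}\om$-condensing) follows by a similar comparison/realizability argument applied to near $k$-embeddings $\pi:\M\to\N$ with $\rho_{k+1}^\M \le \crit(\pi)$, exactly matching the classical condensation proof, once one knows (inductively) that $\N$ is $\Fop$-$\om$-fine.

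Next, items \ref{item:pseudo-pm} and \ref{item:bicephalus} are the ``no bad structures'' results, proved by comparison arguments as for ordinary premice (\cite[\S\S4,9]{FSIT}, \cite{steel2010outline}). For the bicephalus: given an iterable $\Fop$-bicephalus $\B = (\M_0,\M_1)$ over $A$, compare it against itself (or run the standard bicephalus comparison), using \ref{lem:strategy_copying}/\ref{lem:L[E]_construction_iterability} to keep the iterates $\Fop$-pms via fine condensation, and derive the usual contradiction from the fact that neither side can strictly dominate -- the new ingredient being only that the lifted/iterated structures remain $\Fop$-premice, which is precisely what fine condensation buys. For pseudo-premice one runs the analogous argument comparing the $\Fop$-pseudo-pm $\P$ with the $\Fop$-pm it ``should'' be, concluding $\P$ is in fact an $\Fop$-pm. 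In both cases one must also invoke the earlier items of the theorem (available by the simultaneous induction) to know the comparison models are fine enough to be iterated and compared.

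\textbf{The main obstacle} I anticipate is the verification, at each step of every comparison and of the condensation argument, that the embeddings produced really are $\Fop$-rooted $k$-factors above $b$ in the precise sense of \ref{dfn:k-factor} -- i.e., exhibiting the witnessing $k$-tight $\sigma:\L\to\M$ with $\L$ $k$-relevant and $\L$ an $\Fop$-pm, so that condition (ii) of \ref{dfn:condenses_finely} actually applies. In the solidity proof the relevant embedding is a $\Sigma_1$-hull map (as flagged in the remark after \ref{lem:Sigma_1_hull_reflects_opm}, such a hull need not even be soundly projecting), so one cannot directly feed it to coarse condensation; instead one must route through the $(k{+}1)$-solid-core $\solwit_{k+1}(\M)$ and the apparatus of $z_{k+1}^\M,\zeta_{k+1}^\M$, checking that the composite $\varphi\circ\sigma\circ\sigma_{k+1}^\L$ is a near $k$-embedding and that $\L$ can be taken $\Fop$-rooted -- this is the reason the definition of \emph{condenses finely} was built the way it was (cf.\ \ref{rem:problems_with_condenses_well}), and getting the bookkeeping exactly right, including the type-3 squashing complications handled via \ref{dfn:uparrow} and \ref{lem:type_3_F-pm_condensation}, is where essentially all the work lies. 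Everything else is, as the paper repeatedly says, a routine transcription of the standard theory.
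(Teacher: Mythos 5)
Your proposal takes essentially the same route as the paper: a Dodd--Jensen phalanx-comparison argument in which fine condensation (applied via $\Fop$-rooted $k$-factors and the universal-hull clause of \ref{dfn:condenses_finely}, upgraded by \ref{lem:strengthen_scf_conclusion} to exact equality with $\Fop(\cdot^-)$ when the structure is $k$-relevant or a genuine core) replaces $\J$-hierarchy condensation to keep every comparison model an $\Fop$-premouse, with \ref{lem:strategy_copying} and \ref{lem:type_3_F-pm_condensation} handling the copying and type-3 bookkeeping. The paper additionally spells out two reductions you leave implicit -- passing to $W=L^{\Fop,\Sigma_0}[x]$ to secure $\DC_\RR$ so that \ref{lem:k-factor+weak_DJ} applies, and the explicit $k=0$/$k>0$ split, where only for $k=0$ must one detour through a universal hull $\Hh=\Fop(\cdot^-)$ because the core $\core_{k+1}(\M)$ (or the solidity witness $\Ww$) need not be soundly projecting, while for $k>0$ \ref{lem:Sigma_1_hull_reflects_opm} gives opm-hood directly -- but these are refinements within the plan you outline rather than a different decomposition.
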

\begin{proof}[Proof sketch]
The proof is heavily based on that for standard premice, as given by the combination of
 \cite{steel2010outline}, \cite{deconIMT} and \cite{FSIT}, and with which the reader should be reasonably familiar.
But we will give quite a detailed proof of the solidity aspect of part \ref{item:k+1-fine}, in order that we can describe the new features which arise for operator-premice and in particular superstrong extenders,\footnote{Various other papers have dealt in various ways with fine structure for superstrong extenders, for example \cite{zeman}, \cite{premouse_inheriting}, \cite{fsfni},  \cite{fs_plus-one}, \cite{kappa-plus}. The extra considerations that we need to handle superstrong extenders are very similar to some of those which appear in those papers. But because we are working with Mitchell-Steel indexing and generally following the original proof setup from \cite{steel2010outline}, \cite{deconIMT}, \cite{FSIT}, we can't directly cite those works.}  and also in order to discuss the details in the situation described in the introduction to this section.\footnote{The reader who has not gone through the rest of this paper, and is just interested in the proof of solidity for standard premice and the issues mentioned in the introduction, should be able to read the proof, ignoring the details specifically regarding $\Fop$-premice. In that case, the structure $\Hh$ we  introduce is just $\Ww$, and the uncollapse map $\sigma:\Hh\to\Ww$ is just the identity.}
We will omit some more routine calculations.
  We also sketch enough of the rest of parts  \ref{item:k+1-fine} and \ref{item:condensing}, focusing
on aspects new for operator-premice,
that combined with \cite{steel2010outline}, \cite{deconIMT}, \cite{FSIT} and the proof of solidity we give, one obtains a complete proof
 of parts  \ref{item:k+1-fine} and \ref{item:condensing}.
Part
\ref{item:pseudo-pm} involves similar modifications to the standard proof,
and part \ref{item:bicephalus} is an immediate transcription.

Part \ref{item:k+1-fine}:
Let $\M$ be a $k$-sound, ${\Fop}$-$(k,\om_1,\om_1+1)^*$-optimally iterable ${\Fop}$-premouse
over  $A\in\witri{C_{\Fop}}$.
We may assume that $\rho_{k+1}^\M<\rho_k^\M$,
and by Lemma \ref{lem:non_k-rpm_sound}, that $\M$ is $k$-relevant.
We may assume that $\M$ is countable (otherwise we can replace $\M$ with a countable
elementary substructure, because ${\Fop}$ almost condenses coarsely above $b\in\HC$ and
$\opbk\sats\DC$).

Let $\Sigma_0$ be an ${\Fop}$-$(k,\om_1,\om_1+1)^*$-optimal iteration strategy for $\M$.
We would like to use Lemma \ref{lem:k-factor+weak_DJ}, but that lemma assumes $\DC_\RR$.
But we may assume $\DC_\RR$.
For we can work in $W=L^{{\Fop},\Sigma_0}[x]$,
where $x\in\RR$ codes $\M$. (The hypotheses of the
theorem hold in $W$ regarding $b,A,\M,{\Fop}^W,\Sigma_0^W,\opbk^W$, where $\opbk^W,{\Fop}^W,\Sigma_0^W$ are the  restrictions of
$\opbk,{\Fop},\Sigma_0$ to $W$.)

Now using
\ref{lem:k-factor+weak_DJ}, let $\Sigma$ be an ${\Fop}$-$(k,\om_1+1)$ iteration strategy
for $\M$ with the weak DJ property for some enumeration of $\OR^\M$.

We will first establish $(k+1)$-universality and that $\Cc=\core_{k+1}(\M)$ is an ${\Fop}$-pm. For this, let us assume for better focus that $\M$ is a successor, since the limit case is easier and much closer to the standard proof.
Let $\pi:\Cc\to\M$ be the core map.\label{pg:repeat_k-relevant}

First suppose $k=0$, and consider $1$-universality. 
Because $\pi$ is $0$-good and by \ref{lem:Qformula_pres_weak0}, $\Cc$ is a
Q-opm, $\Cc$ is a successor and $\pi(\Cc^-)=\M^-$.
By fine condensation and \ref{lem:strengthen_scf_conclusion}, $\Hh={\Fop}(\Cc^-)$ is a universal
hull of $\Cc$, as witnessed by $\sigma:\Hh\to\Cc$.

We claim that $\Cc$ is $0$-relevant. For suppose otherwise,
so $\rho_1^\Cc=\rho_\om^{\Cc^-}$.
But $\rho_1^{\Cc}=\rho_1^\M$ and $\M$ is $0$-relevant, so $\rho_1^{\M}<\rho_\om^{\M^-}$.
Since $\pi$ is $0$-good,
$\pi(\Cc^-)=\M^-$
and $\pi(\rho_1^{\Cc})=\pi(\rho_\om^{\C^-})=\rho_\om^{\M^-}$.
So by ${<\om}$-condensation
for $\M^-$
and since $\rho_\om^{\Cc^-}=\rho_1^\M$ is an $\M$-cardinal,
we have $\Cc^-\pins\M^-$, so $\Hh=\Fop(\Cc^-)\pins\M^-$ also.
But $\Hh$
is a universal hull of $\Cc$,
and therefore $\Cc\in\M$
also, contradicting the fact that $\Cc=\core_1(\M)$.

So letting $\rho=\rho_1^\M$,
we have $\rho=\rho_1^\Cc<\rho_\om^{\Cc^-}$.
Since $\Hh^-=\Cc^-$, therefore $\Cc|\rho^{+\Cc}=\Hh|\rho^{+\Hh}$.
So it suffices to see that $\M|\rho^{+\M}=\Hh|\rho^{+\Hh}$.

The phalanx
$\ph=((\M,{<\rho}),\Hh)$
is ${\Fop}$-$((0,0),\om_1+1)$-maximally iterable.\footnote{A $(k_0,k_1,\ldots,k)$-maximal tree on a
phalanx $((M_0,\rho_0),(M_1,\rho_1),\ldots,H)$, is one formed according to the usual rules for
$k$-maximal trees, except that an extender $E$ with $\rho_{i-1}\leq\crit(E)<\rho_i$ (where
$\rho_{-1}=0$) is applied to $M_i$, at degree $k_i$.} Moreover, we get an
${\Fop}$-$((0,0),\om_1+1)$-iteration strategy for $\ph$ by lifting to $0$-maximal
trees on $\M$ via $\Sigma$.
This is proved by using $\pi\com\sigma$ to lift $\Hh$ to $\M$
and the identity to lift $\M$ to $\M$,
 {\color{black}combining the usual methods for lifting trees on phalanxes (as in \cite{FSIT} and \cite{steel2010outline}), handling various details much as in the proof of Lemma
\ref{lem:strategy_copying}, 
in particular} to see  that the strategy is indeed an ${\Fop}$-strategy. We can therefore compare $\ph$ with $\M$.
The analysis of the comparison is mostly routine, using  weak DJ
to rule out various possibilities.
The only, small, difference is when $b^\Tt$ is above $\Hh$ without drop and
$M^\Tt_\infty\wins M^\Uu_\infty$. Because $\Hh$ is a universal hull of
$\Cc=\core_1(\M)$, this implies that $b^\Uu$ does not drop and $M^\Tt_\infty=M^\Uu_\infty$;
now deduce that $\M\wdoubleupto\rho^{+\M}=\Hh\wdoubleupto\rho^{+\Hh}$ as usual, completing the
proof.\footnote{\color{black}There are some minor details involved here which are not explicitly discussed in \cite{steel2010outline}, etc,
and which we have not mentioned. We will cover such details in   the solidity to proof to follow.}

We now show that $\Cc=\Hh$, and therefore that $\Cc$ is an ${\Fop}$-pm.
{\color{black}Recall that $\Hh$ is an $\Fop$-pm, $\Hh^-=\Cc^-$,
 $\sigma:\Hh\to\Cc$
is a $0$-embedding which is above $\Hh^-$,
and every $\bfrSigma_1^{\Hh}$
subset of $\Hh^-=\Cc^-$
is
 $\bfrSigma_1^{\Hh}$.}
Therefore
$\rho_1^\Hh=\rho=\rho_1^{\Cc}<\rho_\om^{\Hh^-}$,
and since $\Hh$
is $\Hh^-$-sound,
also $p_1^\Cc\leq\sigma(p_1^{\Hh})$.
{\color{black}Recall from Definition \ref{dfn:opm} that $q^\Hh=p_1^\Hh\cap(\OR^{\Hh^-},\OR^{\Hh})$, and likewise for $q^\Cc$.}\label{page:q}
Since $\Hh$ is $(1,q^\Hh)$-solid,
$\Cc$ is $(1,\sigma(q^\Hh))$-solid (since by stratification, $\sigma$ preserves solidity), so $\sigma(q^\Hh)\wins p_1^\Cc$.
And since $\sigma$ is above $\Cc^-$ {\color{black}and $\Hh|\rho^{+\Hh}=\Cc|\rho^{+\C}$}, it follows that $\sigma(p_1^\Hh)=p_1^\Cc$. But by
$1$-universality, $\pi(p_1^\Cc)=p_1^\M$, so $\Cc=\Hull_1^\Cc(A\un\rho\un p_1^\Cc)$,
so $\Hh=\Cc$ and $\sigma=\id$, completing the proof.

Now suppose $k>0$. Then $\Cc=\core_{k+1}(\M)$ is an
opm by \ref{lem:Sigma_1_hull_reflects_opm}, and is $k$-relevant as
$\rho_{k+1}^\Cc<\rho_k^\Cc\leq\rho_\om^{\Cc^-}$. So by fine condensation and
\ref{lem:strengthen_scf_conclusion}, $\Cc={\Fop}(\Cc^-)$ is an ${\Fop}$-pm. The rest is a
simplification of the argument for $k=0$.

Now consider $(k+1)$-solidity. {\color{black}Here we will not assume that $\M$ is a successor, since there are some subtleties we want to deal with explicitly, which do not arise in the successor case.} Let
$q=p_{k+1}^\M$, $i<\lh(q)$, $\Ww=\mathcal{W}^\M_{k+1}(q_i)$ be the $(k+1)$-solidity witness
for $\M$ at $q_i$ (see \ref{dfn:fine_structure}),
and  $\sigma:\Ww\to\M$ the uncollapse map. We have
$\rho_{k+1}^\Ww\leq\mu$ where $\mu=\crit(\sigma)=q_i$.
By \ref{lem:core_embedding} we may assume that $\sigma$ is $k$-good, so:
\begin{enumerate}[label=--]\item $\Ww$ is a $k$-sound 
Q-opm,
\item if $\M$ is a successor then $\sigma(\Ww^-)=\M^-$,  and
\item
{\color{black} if $\M$ is a limit or $k>0$ then $\Ww$ is an opm}.
\end{enumerate}
{\color{black}We may also assume that
$\Ww^\M_{k+1}(q_j)\in\M$ for each $j<i$, or in other words,
\begin{equation}\label{eqn:solidity_induction}\M\text{ is }(k+1)\text{-solid with respect to }q\rest i.\end{equation}
}

{\color{black}Suppose for the moment that $\M$ is a successor.} Then by \ref{lem:succ_opm_top_param} we may assume that
$\mu<\rho_\om^{\M^-}$, so $\mu\leq\rho_\om^{\Ww^-}$.\label{page:mu<=rho_om^Ww^-} Suppose $\mu=\rho_\om^{\Ww^-}$.
Then since $\M^-$ is ${<\om}$-condensing, {\color{black}either
$\Ww^-\pins\M^-$
or $\M|\mu$ is active
and $\Ww^-\pins\Ult(\M|\mu,F^{\M|\mu})$,
and in either case,}
 ${\Fop}(\Ww^-)\in\M^-$. But by the fine condensation of
${\Fop}$, $\Ww$ is computable from ${\Fop}(\Ww^-)$, and so $\Ww\in\M^-$, which suffices.
{\color{black}(That is, if
 $\Ww\in\Fop(\Ww^-)$
or $\Ww=\Fop(\Ww^-)$,
we are done. But otherwise,
by fine condensation,
$k=0$ and there is a universal hull $\Hh$ of $\Ww$ such that
$\Hh\in\Fop(\Ww^-)$ or $\Hh=\Fop(\Ww^-)$,
and so $\Hh\in\M^-$.
But $\Ww=\Hull_1^{\Ww}(\Ww^-\cup\{x\})$ for some $x\in\Ww^-$,
so working in $\M^-$,
we can recover $\Ww$ from $\Hh$.)}
 So we may assume that
$\mu<\rho_\om^{\Ww^-}$, so $\Ww$ is $k$-relevant, so $\Ww\notin{\Fop}(\Ww^-)$ and if $k=0$ then
$\Ww$ has no universal hull in ${\Fop}(\Ww^-)$.
If $k=0$, let $\Hh={\Fop}(\Ww^-)$; by fine condensation, $\Hh$ is an ${\Fop}$-pm, and is a
universal hull of $\Ww$; let $\sigma':\Hh\to\Ww$ be a map witnessing this, with $\sigma'=\id$ if $\Hh=\Ww$. If $k>0$ then $\Ww$ is an opm, so by fine condensation, $\Ww={\Fop}(\Ww^-)$
is an ${\Fop}$-pm. If $k>0$, let $\Hh=\Ww$.

\label{page:solidity_proof_changed}{\color{black}Now if $\M$ is in fact a limit,} then so is $\Ww$;
in this case let $\Hh=\Ww$.

We now return to the general case (successor or limit).
We just need to see that $\Hh\in\M$.
(For if $\Ww\neq\Hh$ then $\M$ is a sucessor, $\mu<\rho_\om^{\Ww^-}$, $k=0$ and $\Ww=\Hull_{1}^\Ww(\mu\cup\{x\})$ for some $x\in\Ww$, and since $\Hh$ is a universal hull, $\Th_{\Sigma_1}^\Ww(\mu\cup\{x\})\in\J(\Hh)\sub\M$.) So let us assume from now on, for simplicity, that
\[ \Hh\notin\M;\]
we will derive a contradiction. It easily follows that
\[ \mu=\kappa^{+\Hh}<\OR^{\Hh}.\]
If $\Hh=\Ww$ let $\sigma'=\id:\Hh\to\Hh$.
Let $\pi=\sigma\com\sigma':\Hh\to\M$. Note that $\crit(\pi)=\mu$.

Let us also assume from now on, for focus, that $\mu$ is not an $\M$-cardinal, since the $\M$-cardinal case is easier and more routine.
So we have $\mu=\kappa^{+\Hh}=\kappa^{+\Ww}$ for some $\M$-cardinal $\kappa$.
Let $\R\wpins\M$ be least such that $\mu\leq\OR^\R$ and $\rho_\om^\R=\kappa$.
Let \[\ph=((\M,{<\kappa}),(\R,\kappa),\Hh).\] Then $\ph$ is $(k,r,k)$-maximally {\color{black}$\Fop$-}iterable,
where $r$ is least such that $\rho_{r+1}^\R=\kappa$, by lifting to $k$-maximal trees $\Vv$ on $\M$
(possibly $r=-1$, which holds iff $\R$ is active type 3 with $\mu=\OR^{\R}$). In fact,  fix an enumeration $e$ of $\M$ in ordertype $\om$, with $e\rest i=q\rest i$ (recall $q=p_{k+1}^\M$ and $q_i=\mu$). Fix a $(k,\om_1+1)$-iteration strategy $\Sigma$ for $\M$ with weak DJ with respect to $e$. Then we will form $\Vv$ according to $\Sigma$.

Note here that since $\R\pins\M$ and $\rho_\om^\R=\rho_{r+1}^\R=\kappa$ is an $\M$-cardinal,
the model dropdown sequence of $\R$ (see \ref{dfn:restricted_it_maps}) contains only one element, $\R$ itself. Moreover, $\R$ is $\M$-stable (see \ref{dfn:restricted_it_maps}),
since $\kappa<\rho_0^\M$. So the usual methods for lifting trees on phalanxes (as in \cite{steel2010outline}, \cite{deconIMT} and \cite{FSIT}),
combined with the methods in the proof of Lemma \ref{lem:strategy_copying}, do work here. In particular,
note that we are in the situation described in \cite[p.~728, proof of Theorem 3.3, paragraph ``There is one wrinkle in the copying argument...'']{deconIMT}, and we use that method mentioned there, defining $\pi_{\eta+1}:M^\Tt_{\eta+1}\to \R'=M^{\Vv}_{\R,\iota(\eta+1)}\pins M^\Vv_{\iota(\eta+1)}$ when $\crit(E^\Tt_\eta)=\kappa$. Note that $\kappa'=i^\Vv_{0,\iota(\eta+1)}(\kappa)$ is an $M^\Tt_{\iota(\eta+1)}$-cardinal
and $\rho_{r+1}^{\R'}=\kappa'<\rho_r^{\R'}$.
As was discussed in the analogous situation in
\cite[Proof of Claim 9, \S14.2]{fsfni},
letting $E^\Vv_{\eta'}$ be the copy of $E^\Tt_\eta$,
if $E^\Vv_{\eta'}$ is superstrong,\label{page:superstrong_copying} then there is a further small wrinkle. For in this case, $E^\Tt_\eta$ is also superstrong and $\pi_{\eta+1}(\lambda(E^\Tt_\eta))=\kappa'<(\kappa')^{+\R'}=\pi_{\eta+1}(\lh(E^\Tt_\eta))$, and by commutativity, it is easy enough to see that $\pi_{\eta+1}$ is non-$\nu$-high,
so $E^\Vv_{\eta'+1}$ is the $\pi_{\eta+1}$-copy of $E^\Tt_{\eta+1}$, and $\lh(E^\Tt_\eta)<\lh(E^\Tt_{\eta+1})$, so  $(\kappa')^{+\R'}<\lh(E^\Vv_{\eta'+1})\leq\OR^{\R'}<\lh(E^\Vv_{\eta'})$,
and in particular, $\lh(E^\Vv_{\eta'+1})<\lh(E^\Vv_{\eta'})$.
So $\Vv$ itself is not in fact a $k$-maximal tree;
it is \emph{essentially-$k$-maximal}
in the sense of \cite[\S6.2]{fsfni}.
But this does not actually cause a problem, because one can easily replace $\Vv$ with an essentially equivalent $k$-maximal tree $\Vv'$, in particular with $M^{\Vv'}_\infty=M^\Vv_\infty$ and corresponding iteration maps. In $\Vv'$, $E^\Vv_{\eta'}$ does not get used; instead, the next extender used in $\Vv'$ is $E^{\Vv}_{\eta'+1}$, unless that extender is also superstrong with critical point $\kappa$, in which case $\lh(E^\Vv_{\eta'+2})<\lh(E^\Vv_{\eta'+1})$, etc,
producing a (finitely long) strictly decreasing sequence.
Then the next extender used in $\Vv'$
is simply the next one used in $\Vv$ which is \emph{not} superstrong with critical point $\kappa$,
if there is one, or otherwise the last extender used in $\Vv$.
See \cite[\S6.2]{fsfni} for more details. We will continue to work with $\Vv$ itself, not $\Vv'$, however.

 Let $(\Tt,\Uu)$ be the successful comparison of $(\ph,\M)$ and let $\Vv$ be the lift of $\Tt$ to a tree on $\M$,
 formed with $\Tt$ and $\Vv$ according to $\Sigma$. In order that we can make use of standard fine structural preservation arguments (like \cite[Lemma 4.5]{FSIT}), we want to know:

\begin{clm}
$E^\Tt_\alpha$ is close to $M^{*\Tt}_{\alpha+1}$
for every $\alpha+1<\lh(\Tt)$.
\end{clm}
Recall here we are assuming that $\Hh\notin\M$;
this will be used in the proof of the claim.
Since $\mathfrak{P}$ is a phalanx, the claim does not literally follow from \cite[Lemma 6.1.5]{FSIT}, but needs a slight variant of that argument. Such arguments were also given in \cite{copy_con} and
\cite[Proof of Claim 10, \S14.2]{fsfni}, but we include an argument here for better self-containment. We will follow an inductive proof much as in \cite[6.1.5]{FSIT}, and the reader should have that proof in mind; we will point out the key differences.
\begin{proof}[Proof of Claim]Let $b\in[\nu(E^\Tt_\alpha)]^{<\om}$ and consider the measure $(E^\Tt_\alpha)_b$. We must see that \begin{equation}\label{eqn:E_is_close_to_N}(E^\Tt_\alpha)_b\text{ is }\bfrSigma_1^{M^{*\Tt}_{\alpha+1}}.\end{equation}

Suppose first that $(E^\Tt_\alpha)_b\in M^\Tt_\alpha$. Things are as usual by induction as usual unless $\pred^\Tt(\alpha+1)=\M$ or $\pred^\Tt(\alpha+1)=\R$.

Suppose $\pred^\Tt(\alpha+1)=\M$,
and so $M^{*\Tt}_{\alpha+1}=\M$ (as $E^\Tt_\alpha$ is $\M$-total in this case).
If $\mathrm{root}^\Tt(\alpha)=\M$ then line (\ref{eqn:E_is_close_to_N}) is shown  as usual.
If $\mathrm{root}^\Tt(\alpha)=\Hh$ then we get as usual
that $(E^\Tt_\alpha)_b$ is $\bfrSigma_1^\Hh$, but then since $\pi:\Hh\to\M$ is $\Sigma_1$-elementary and $\crit(\pi)=\mu$, it follows that $(E^\Tt_\alpha)_b$ is $\bfrSigma_1^\M$ also. And if $\mathrm{root}^\Tt(\alpha)=\R$
then it is similar, but this time because $\R\in\M$.

So suppose that $\pred^\Tt(\alpha+1)=\R$, so $M^{*\Tt}_{\alpha+1}=\R$ and $\crit(E^\Tt_\alpha)=\kappa$. Note then that since $\kappa^{+\Hh}=\mu\leq\lh(E^\Tt_0)$ and
$\mu<\OR^{\Hh}$ and $E^\Tt_0\in\es_+^{\Hh}$, in fact  $\kappa^{+\Hh}<\lh(E^\Tt_0)$, and so $(E^\Tt_\alpha)_b\in\Hh$.
Since also $(E^\Tt_\alpha)_b\sub\kappa^{+\Hh}=\crit(\pi)$,
we can fix $\Yy\pins\Hh$ such that $\rho_\om^\Yy=\kappa^{+\Hh}$
and $(E^\Tt_\alpha)_b\in\Yy$.
By condensation applied to  $\pi\rest\Yy:\Yy\to\pi(\Yy)$, either $\M|\kappa^{+\Hh}$ is passive and $\Yy\pins\M$, or $\M|\kappa^{+\Hh}$ is active with extender $F$
and $\Yy\pins\Ult(\M|\lh(F),F)$. But if $\M|\kappa^{+\Hh}$ is passive, then since $\rho_\om^{\R}=\kappa<\kappa^{+\Hh}=\rho_\om^\Yy$, we must have $\Yy\pins\R$, so $(E^\Tt_\alpha)_b\in\R$. And if $\M|\kappa^{+\Hh}$ is active with $F$, then $\R=\M|\lh(F)$,
so $\Yy\pins\Ult(\R,F^\R)$, so $(E^\Tt_\alpha)_b$ is $\bfrSigma_1^{\R}$. This establishes line (\ref{eqn:E_is_close_to_N})
in this case.

Now suppose that $(E^\Tt_\alpha)_b\notin M^\Tt_\alpha$.
So $E^\Tt_\alpha=F(M^\Tt_\alpha)$
and $M^\Tt_\alpha$ is active type 1 or 2,
and $\rho_1^{M^\Tt_\alpha}\leq\theta^{+\Hh}$ where $\theta=\crit(E^\Tt_\alpha)$.
If $\crit(E^\Tt_\alpha)>\kappa$, then one can argue
essentially as in \cite[6.1.5]{FSIT}, so suppose $\crit(E^\Tt_\alpha)\leq\kappa$. Suppose first that $\crit(E^\Tt_\alpha)<\kappa$, so $\pred^\Tt(\alpha+1)=\M$
and $M^{*\Tt}_{\alpha+1}=\M$ and $\rho_1^{M^\Tt_\alpha}\leq\kappa$. Letting $\xi=\mathrm{root}^\Tt(\alpha)$, the argument in  \cite{FSIT} therefore shows that $(\xi,\alpha]^\Tt$ does not drop in model, and that $\crit(E^\Tt_\alpha)<\crit(i^\Tt_{\xi\alpha})$. By induction,
all extenders used along $(\xi,\alpha]^\Tt$ are close to their target model,
and it follows that $(E^\Tt_\alpha)_b$ is $\bfrSigma_1^{\N}$,
where $\N=M^\Tt_\xi$. Like before, since $\R\in\M$ and $\pi:\Hh\to\M$ is $\Sigma_1$-elementary, this suffices to give line (\ref{eqn:E_is_close_to_N}). 

So now suppose that $\crit(E^\Tt_\alpha)=\kappa$, so $\pred^\Tt(\alpha+1)=\R$.
Supposing $\Tt$ drops in model in $(\xi,\alpha]^\Tt$, let $\gamma+1$ be the last such drop.
Then as in \cite{FSIT}, $\kappa<\crit(i^{*\Tt}_{\gamma+1,\alpha})$, so $\pred^\Tt(\gamma+1)\neq\M$ and $\pred^\Tt(\gamma+1)\neq\R$,
  and $(E^\Tt_\alpha)_b$ is $\bfrSigma_1^{M^{*\Tt}_{\gamma+1}}$.
So $(E^\Tt_\alpha)_b\in M^\Tt_\delta$ where $\pred^\Tt(\gamma+1)=\delta$, but then $(E^\Tt_\alpha)_b\in\Hh$,
and we can deduce that $(E^\Tt_\alpha)_b$ is $\bfrSigma_1^\R$ as before. So suppose $\Tt$ does not drop in model in $(\xi,\alpha]^\Tt$.
Then $\kappa<\crit(i^\Tt_{\xi\alpha})$,
and so $\xi=\Hh$.
Since $M^\Tt_\alpha$ is active type 1 or 2
and $\kappa=\crit(E^\Tt_\alpha)=\crit(F(M^\Tt_\alpha))$,
 $\Hh$ is therefore also active type 1 or 2
 and $\kappa=\crit(F^\Hh)$,
 so $\M$ is active type 1 or 2 and $\kappa=\crit(F^\M)$.
But $\Hh$ is determined by $F^\Hh$
and $\Hh|\kappa^{+\Hh}$,
and $F^\Hh$ is finitely generated (that is, generated by finitely many generators),
since $\Hh=\Hull_1^{\Hh}(\mu\cup\{x\})$ for some finite $x$.
 And $\kappa^{+\Hh}=\mu<\kappa^{+\M}$,
so $F^\Hh\in\M$,  so $\Hh\in\M$, a contradiction.
\end{proof}

\begin{clm}\label{clm:near_emb_after_drop}For all $\delta<\lh(\Tt)$, if either $\delta$ is above $\Hh$ or
above $\M$ or $(\mathrm{root}^\Tt(\delta),\delta]^\Tt$ drops in model
then $\deg^\Tt_\delta=\deg^\Vv_{\iota(\delta)}$ and $\pi_\delta:M^\Tt_\delta\to M^\Vv_{\iota(\delta)}$ is a near $\deg^\Tt_\delta$-embedding.\end{clm}
(Note here that if $\delta$ is above $\R$ and $(\R,\delta]^\Tt$ drops in model then $[0,\iota(\xi+1)]^\Vv$ drops below the image of $\R$,
so $M^{\Vv}_{\R,\iota(\xi+1)}=M^\Vv_{\iota(\xi+1)}$.)
\begin{proof}
If $\delta$ is above $\M$ and there is no drop in model or degree in $(\M,\delta]^\Tt$ then $(0,\iota(\delta)]^\Vv$ also does not drop in model or degree,
and $\pi_\delta$ is in fact a $\deg^\Tt_\delta$-embedding, by its commutativity with the iteration maps.
This applies in particular in the case that $\delta$ is a successor and $\pred^\Tt(\delta+1)=\M$, since $\kappa<\mu<\rho_k^\M$. And since $\Hh\notin\M$,
$\pi``\rho_k^\Hh$ is cofinal in $\rho_k^\M$,
so similar remarks apply to the case that $\delta$ is above $\Hh$ and there is no drop in model or degree in $(\Hh,\delta]^\Tt$. In the remaining cases, the claim
now follows from an inspection of the proof of \cite[Lemma 1.3]{fine_tame}.\end{proof}

We now begin to analyse the comparison. We will make use of the claim implicitly, in the usual fashion.

\begin{clm}$M^\Tt_\infty=M^\Uu_\infty$ and $b^\Uu$ does not drop in model or degree.\end{clm}
\begin{proof}
 We can't have $M^\Uu_\infty\pins M^\Tt_\infty$, by weak DJ.
 Suppose $M^\Tt_\infty\pins M^\Uu_\infty$.
 Then by weak DJ, $b^\Tt$ is not above $\M$,
 so it is above $\R$ or $\Hh$.
 And $M^\Tt_\infty$ is sound, which implies $\Tt$ is trivial and $M^\Tt_\infty=\Hh$.
But then it follows that $\Hh\in\M$, a contradiction.
 So $M^\Tt_\infty=M^\Uu_\infty$.
 
 Suppose $b^\Uu$ drops in model or degree; so $b^\Tt$ does not drop in model or degree. If $b^\Tt$ is above $\M$, then $M^\Tt_\infty$ is $k$-sound,
 so $b^\Uu$ must drop in model, and then $i^\Tt_{\M\infty}:\M\to M^\Tt_\infty=M^\Uu_\infty$ is nearly $k$-good,
 contradicting weak DJ (as $b^\Uu$ drops).
If $b^\Tt$ is above $\R$, then because $\R$ is sound,
it is just as if $b^\Tt$ drops in model, and so we reach the usual contradiction via compatible extenders. So $b^\Tt$ is above $\Hh$, but then the usual calculations yield that $\Hh\in\M$, a contradiction.
\end{proof}

\begin{clm}\label{clm:b^Tt_not_above_M}
$b^\Tt$ is not above $\M$.\end{clm}
\begin{proof} Suppose $b^\Tt$ is above $\M$.
If $b^\Tt$ does not drop in model or degree, then by weak DJ, $i^\Tt=i^\Uu$, giving the usual contradiction to comparison. So $b^\Tt$ drops in model or degree, and (since $M^\Tt_\infty=M^\Uu_\infty$ is $k$-sound)
therefore in model, and hence $b^\Vv$ also drops in model. By Claim \ref{clm:near_emb_after_drop}, $\pi_\infty:M^\Tt_\infty\to M^\Vv_\infty$ is a near $\deg^\Tt_\infty$-embedding, and since $\deg^\Tt_\infty\geq k$
(as $M^\Uu_\infty$ is $k$-sound),
therefore $\pi_\infty$ is a near $k$-embedding.
But then
 $\pi_\infty\com i^\Uu:\M\to M^\Vv_{\infty}$ is a near $k$-embedding, contradicting weak DJ.
\end{proof}

\begin{clm}\label{clm:b^Tt_not_above_H}
 $b^\Tt$ is not above $\Hh$.\end{clm}
 \begin{proof}
Suppose it is above $\Hh$. Just as in the proof of Claim \ref{clm:b^Tt_not_above_M}, it does not drop in model or degree.
By Claim \ref{clm:near_emb_after_drop},
$\pi_\infty:M^\Tt_\infty\to M^\Vv_\infty$ is a near $k$-embedding. So note that by our choice of $\Sigma$ and enumeration of $e$ (with $e\rest i=q\rest i$, where $q=p_{k+1}^\M$),
letting $\pi(\bar{q})=q\rest i$,
we have $i^\Tt_{\Hh\infty}(\bar{q})\leq i^\Uu(q\rest i)$.
(For $\pi_\infty(i^\Tt_{\Hh\infty}(\bar{q}))=i^\Vv(\pi(\bar{q}))=i^\Vv(q\rest i)\leq \pi_\infty(i^\Uu(q \rest i))$, by weak DJ.)

On the other hand, $i^\Uu(q\rest i)\leq i^\Tt_{\Hh\infty}(\bar{q})$.
For by our inductive hypothesis (\ref{eqn:solidity_induction}),
$\M$ is $(k+1)$-solid with respect to $q\rest i$,
so $M^\Uu_\infty$ is $(k+1)$-solid with respect to $i^\Uu(q\rest i)$, so if $i^\Tt_{\Hh\infty}(\bar{q})<i^\Uu(q\rest i)$
then 
\[ \Th_{k+1}^{M^\Uu_\infty}(\mu\cup\{\pvec_k^{M^\Uu_\infty},i^\Tt_{\Hh\infty}(\bar{q})\})\in M^\Uu_\infty,\]
but $\Hh$ and $M^\Uu_\infty$ have the same subsets of $\mu$,
so then
\[ \Th_{k+1}^{\Hh}(\mu\cup\{\pvec_k^\Hh,\bar{q}\})\in\Hh,\]
whereas $\Hh=\Hull_{k+1}^\Hh(\mu\cup\{\pvec_k^\Hh,\bar{q}\})$,
which is impossible.

So we have established that $i^\Tt_{\Hh\infty}(\bar{q})=i^\Uu(q\rest i)$.

Now if $\rho_{k+1}^\M\leq\crit(i^\Uu)$ then
$\rho_{k+1}^\M=\rho_{k+1}^{M^\Uu_\infty}=\rho_{k+1}^{M^\Tt_\infty}=\rho_{k+1}^\Hh\leq\crit(i^\Tt)$,
and  $t=\Th_{k+1}^\M(\rho_{k+1}^\M\cup\{\pvec_{k+1}^\M\})$
is $\bfrSigma_{k+1}^{M^\Uu_\infty}$,
hence $\bfrSigma_{k+1}^\Hh$, and $\bfrSigma_{k+1}^{\Ww}$,
which contradicts the minimality of $p_{k+1}^\M$.
So  $\crit(i^\Uu)<\rho_{k+1}^\M$.
A straightforward calculation shows that $\sup i^\Uu``\rho_{k+1}^\M\leq\rho_{k+1}^{M^\Uu_\infty}$.\footnote{In fact $\rho_{k+1}^{M^\Uu_\infty}=\sup i^\Uu``\rho_{k+1}^\M$, by \cite[Lemma 3.8]{fsfni} or the methods of  proof of \cite[Corollary 2.24]{extmax},  but as discussed at the start of \S\ref{sec:solidity}, we are avoiding using those results in this proof.} But $\rho_{k+1}^\Hh\leq\mu\leq\crit(i^\Tt)$,
and it follows that $\rho_{k+1}^\Hh=\mu=\lh(E^\Uu_0)$
and $E^\Uu_0$ is superstrong,
and $\crit(E^\Uu_0)^{+\M}=\rho_{k+1}^\M$,
and $\rho_{k+1}^{M^\Uu_1}=\mu$.\footnote{In fact, letting $t'=\Th_{k+1}^{M^\Uu_1}(\mu\cup\{i^\Tt_{01}(\pvec_{k+1}^\M)\})$,
then $t'\notin M^\Uu_1$,
for otherwise, $t'\in\M$, and  $E^\Uu_0\in\M$,
but from $t'$ and $E^\Uu_0$, one can easily compute $t$,
so $t\in M$, a contradiction. Here $t$ was defined above; 
a statement $\varphi$ is in $t$ iff $i_{E^\Uu_0}(\varphi)\in t'$.}  But since $q_i=\mu\in p_{k+1}^\M$,
we have
\[ u=\Th_{k+1}^\M(\rho_{k+1}^\M\cup\{\pvec_k^\M,q\rest i\})\in \M.\]
So $i^\Uu(u)\in M^\Uu_\infty$, and the usual arguments with solidity witnesses (and preservation of the standard parameter under iteration maps) show that from $i^\Uu(u)$, we can recover
\[ u'=\Th_{k+1}^{M^\Uu_\infty}(\mu\cup\{\pvec_k^{M^\Uu_\infty},i^\Uu(q\rest i)\})\in M^\Uu_\infty.\]
But then since $i^\Tt_{\Hh\infty}(\bar{q})=i^\Uu(q\rest i)$,
we again get that $\Th^\Hh_{k+1}(\mu\cup\{\pvec_k^\Hh,\bar{q}\})\in\Hh$, again a contradiction.
\end{proof}

\begin{clm}$b^\Tt$ is above $\R$, does not drop in model,
$0\leq k=\deg^\Tt_\infty< r$.
\end{clm}
\begin{proof} $b^\Tt$ is above $\R$ by Claims \ref{clm:b^Tt_not_above_M} and \ref{clm:b^Tt_not_above_H}. Now if $b^\Tt$ drops in \emph{model},
then by Claim \ref{clm:near_emb_after_drop}, $\pi_\infty$ is nearly $k$-good, so we can apply weak DJ for a contradiction.
We have $r\geq 0$,  since if $r=-1$ then $F^{M^\Tt_\infty}$ fails the ISC, and hence $M^\Tt_\infty$ is not an opm. Since $M^\Tt_\infty=M^\Uu_\infty$ is $k$-sound,
$k\leq\ell\leq r$ where $\ell=\deg^\Tt_\infty$.
The final copy map
$\pi_\infty:M^\Tt_\infty\to M^{\Vv}_{\R,\infty}$ is a weak $\ell$-embedding. If $k<\ell$ then
$\pi_\infty$ is a near $k$-embedding,
and so $\pi_\infty\com i^{\Uu}:\M\to  M^{\Vv}_{\R,\infty}$
 is also a near $k$-embedding,  and either $M^{\Vv}_{\R,\infty}\pins M^\Vv_\infty$ or $b^\Vv$ drops in model, contradicting  weak DJ.
So $k=\ell\leq r$.

Now if $k=\ell=r$ then some fairly standard fine structural calculations give a contradiction:
We have $\rho_{k+1}^{\R}=\kappa<\rho_k^{\R}$,
and as $\crit(i^\Tt)=\kappa$. Using closeness,  \cite[Lemma 4.5]{FSIT} (adapted to our context) now gives that 
 $\rho_{k+1}^{M^\Tt_\infty}=\kappa$
and $i^\Tt(p_{k+1}^\R)=p_{k+1}^{M^\Tt_\infty}$. As earlier, $\sup i^\Uu``\rho_{k+1}^\M\leq\rho_{k+1}^{M^\Uu_\infty}=\rho_{k+1}^{M^\Tt_\infty}=\kappa$.
But since $\kappa<\lh(E^\Uu_\alpha)$ for all $\alpha+1<\lh(\Uu)$, it follows that $\kappa\leq\crit(i^\Uu)$,
and so in fact $\kappa=\rho_{k+1}^\M$.
But now as usual
(as in the proof of
\cite[Lemma 4.5]{FSIT}), it follows  that $\Th_{\rSigma_{k+1}}^{\M}(\rho_{k+1}^\M\cup\pvec_{k+1}^\M)$
is definable from parameters over $\R$,
and hence an element of $\M$,
a contradiction.
\end{proof}

Now 
$\pi_\infty$ can't be a near $k$-embedding,
since otherwise $\pi_\infty\com i^\Uu:\M\to M^{\Vv}_{\R,\infty}$
is a near $k$-embedding,
and as $\R\pins\M$,
this contradicts weak DJ.  So the following claim reaches a contradiction, completing the proof of solidity:

\begin{clm}\label{clm:pi_infty_near_k}$\pi_\infty$ is a near $k$-embedding.\end{clm}
\begin{proof}
The proof will be a variant of the proof of \cite[Lemma 1.3]{fine_tame}.
Let $\alpha_0$ be least such that $\alpha_0+1\in b^\Tt$,
so $M^{*\Tt}_{\alpha_0+1}=\R$ and $\deg^\Tt_{\alpha_0+1}=r$.
Since $k=\ell<r$ where $\ell=\deg^\Tt_\infty$, there is  $\alpha_1+1\in(\alpha_0+1,\infty]^\Tt$ such that, letting $\beta_1=\pred^\Tt(\alpha_1+1)$,
we have $\deg^\Tt_{\beta_1}=r$ and $k\leq\deg^\Tt_{\alpha_1+1}<r$. Let $\pi_\gamma:M^\Tt_\gamma\to M^{\Vv}_{\R,\iota(\gamma)}$ be the copy map; we have $M^{\Vv}_{\R,\iota(\gamma)}\ins M^\Vv_{\iota(\gamma)}$.
Then $\pi_{\beta_1}$ is a weak $r$-embedding,
and so a near $(r-1)$-embedding. Note moreover that $i^{\Tt}_{\R,\beta_1}(\kappa)<\rho_r^{M^\Tt_{\beta_1}}\leq\crit(E^\Tt_{\alpha_1})$,
so $\pi_{\beta_1}(i^\Tt_{\R,\beta_1}(\kappa))=i^{\Uu}_{\R,0,\iota(\beta_1)}(\kappa)<\pi_{\beta_1}(\crit(E^\Tt_{\alpha_1}))$.
So either $M^{\Vv}_{\iota(\beta_1)}=M^{\Vv}_{\R,\iota(\beta_1)}$
or $\Vv$ drops in model at $\iota(\alpha_1+1)$
and $M^{\Vv}_{\iota(\alpha_1+1)}=M^{\Vv}_{\R,\iota(\alpha_1+1)}$.

If $\rho_r^{M^{\Vv}_{\R,\iota(\beta_1)}}\leq\pi_{\beta_1}(\crit(E^\Tt_{\alpha_1}))$ then $\deg^\Tt_{\alpha_1+1}=\deg^\Vv_{\alpha_1+1}$, and because $\pi_{\beta_1}$ is a near $\deg^\Tt_{\alpha_1+1}$-embedding,
an inspection of the proof of \cite[Lemma 1.3]{fine_tame}
shows that for all $\xi\in[\alpha_1+1,\infty]^\Tt$, we have
$\deg^\Tt_{\xi}=\deg^\Vv_{\iota(\xi)}$ and
$\pi_\xi:M^\Tt_\xi\to M^{\Vv}_{\iota(\xi)}=M^\Vv_{\R,\iota(\xi)}$
is a near $\deg^\Tt_{\xi}$-embedding, so $\pi_\infty$ is a near $k$-embedding, as desired.

So suppose from now on that $\pi_{\beta_1}(\crit(E^\Tt_{\alpha_1}))<\rho_r^{M^{\Vv}_{\R,\iota(\beta_1)}}$,
and so $\deg^\Vv_{\iota(\alpha_1+1)}=r$ (since $\rho_{r+1}^\R=\kappa$,
we have $\deg^\Vv_{\iota(\alpha_1+1)}\leq r$).
Note then that $\crit(E^\Tt_{\alpha_1})<\rho_{r-1}^{M^\Tt_{\beta_1}}$, since either $\rho_{r-1}^{M^\Tt_{\beta_1}}=\rho_0^{M^\Tt_{\beta_1}}$ and $\rho_{r-1}^{M^\Vv_{\R,\iota(\beta_1)}}=\rho_0^{M^\Vv_{\R,\iota(\beta_1)}}$,
or $\pi_{\beta_1}(\rho_{r-1}^{M^\Tt_{\beta_1}})=\rho_{r-1}^{M^\Vv_{\R,\iota(\beta_1)}}$. So $\deg^\Tt_{\alpha_1+1}=r-1$.

For  $r$-sound opm $\N$ and $\rho\leq\rho_0^\N$,
the \emph{weak $\bfrSigma_r^\N$-cofinality of $\rho_{r-1}^\N$},
denoted $\mathrm{wcof}^{\bfrSigma_r^\N}(\rho_{r-1}^\N)$,
is the least $\theta\leq\rho_r^\N$ such that there is $x\in\core_0(\N)$ with $\rho_{r-1}^\N\cap\Hull_{r}^\N(\theta\cup\{x\})$  cofinal in $\rho_{r-1}^\N$.

A degree $r$ iteration map $j:\N\to\N'$ preserves $\mathrm{wcof}^{\bfrSigma_r}(\rho_{r-1}^\N)$, to the extent that if $\theta=\mathrm{wcof}^{\bfrSigma_r^\N}(\rho_{r-1}^\N)<\rho_r^\N$
then $\mathrm{wcof}^{\bfrSigma_r^{\N'}}(\rho_{r-1}^{\N'})=j(\theta)$,
and if $\theta=\rho_r^\N$ then $\mathrm{wcof}^{\bfrSigma_r^{\N'}}(\rho_{r-1}^{\N'})=\rho_r^{\N'}$ (but it might be that $j(\rho_r^\N)>\rho_r^{\N'}$).  Moreover, say $\Xx$
is an iteration tree and $\N=M^{*\Xx}_{\alpha+1}$
and $\N'=M^\Xx_\infty$ and $j=i^{*\Xx}_{\alpha+1,\infty}$,
where $(\alpha+1,\infty]^\Xx\cap\mathscr{D}^\Xx=\emptyset$ and $\deg^\Xx_{\alpha+1}=\deg^\Xx_\infty=r$.
Then the following are equivalent:
\begin{enumerate}[label=--]
\item $j$ is discontinuous at $\rho_{r-1}^\N$,
\item $\theta<\rho_n^\N$ and $j$ is discontinuous at $\theta$,
\item $\theta<\rho_n^\N$ and either $\theta=\crit(j)$
or there is $\gamma$ such that $\alpha+1\leq^\Xx\gamma<^\Xx\infty$
and $i^{*\Xx}_{\alpha+1,\gamma}(\theta)=\crit(i^\Xx_{\gamma,\infty})$.
\end{enumerate}
These facts follow from some straightforward arguments;
there are very similar calculations (and more) in \cite[\S6.1]{fsfni}.

Let $\beta'\leq^\Tt\beta_1$ be largest
such that $M^{\Vv}_{\R,\beta'}\pins M^\Vv_{\iota(\beta')}$.
So $i^{\Vv}_{\R,0,\iota(\beta')}:\R\to M^\Vv_{\R,\beta'}$ is fully elementary and $\pi_{\beta'}\com i^\Tt_{\R,\beta'}=i^\Vv_{\R,0,\iota(\beta')}$. Note that $\beta'$
is the least $\beta''\leq^\Tt\beta_1$
such that $\crit(i^\Tt_{\beta'',\infty})\geq i^\Tt_{0\beta''}(\kappa)$.
Let $\beta_2\in(\beta_1,\infty]^\Tt$ be largest such that
either $\beta_2=\infty$ or $\deg^\Vv_{\iota(\beta_2)}=r$.
So $\beta'\leq^\Tt\beta_1<^\Tt\beta_2$ and $i^\Vv_{\R,\iota(\beta'),\iota(\beta_2)}$ is a degree $r$ iteration map,
to which we can apply the preceding remarks on preservation of 
weak $\bfrSigma_r$-cofinality. We are interested here in both $i^\Vv_{\R,\iota(\beta'),\iota(\beta_1)}$ and $i^\Vv_{\R,\iota(\beta_1),\iota(\beta_2)}$.
\begin{sclm}\label{sclm:rho_r-1_cont}
$j=i^{\Vv}_{\R,\iota(\beta_1),\iota(\beta_2)}$ is continuous at $\rho=\rho_{r-1}^{M^{\Vv}_{\R,\iota(\beta_1)}}$.
\end{sclm}
\begin{proof}
Suppose first that $\mathrm{wcof}^{\bfrSigma_r^\R}(\rho_{r-1}^\R)=\rho_r^\R$. Then by the elementarity of $i^{\Vv}_{\R,0,\iota(\beta')}$,
$\mathrm{wcof}^{\bfrSigma_r^{M^\Vv_{\R,\iota(\beta')}}}(\rho_{r-1}^{M^\Vv_{\R,\iota(\beta')}})=\rho_r^{M^\Vv_{\R,\iota(\beta')}}$. But then by the preceding remarks on preservation of weak $\bfrSigma_r$-cofinality, $\mathrm{wcof}^{\bfrSigma_r^{M^\Vv_{\R,\iota(\beta_1)}}}(\rho_{r-1}^{M^\Vv_{\R,\iota(\beta_1)}})=\rho_r^{M^\Vv_{\R,\iota(\beta_1)}}$, and also by those remarks, it follows that $j$ is continuous at $\rho$.

Now suppose otherwise, so $\theta=\mathrm{wcof}^{\bfrSigma_r^\R}(\rho_{r-1}^\R)<\rho_r^\R$. By full elementarity,
$i^\Vv_{\R,0,\iota(\beta')}(\theta)=\mathrm{wcof}^{\bfrSigma_r^{M^\Vv_{\R,\iota(\beta')}}}(\rho_{r-1}^{M^\Vv_{\R,\iota(\beta')}})$, and so by the remarks,
$i^\Vv_{\R,0,\iota(\beta_1)}(\theta)=\mathrm{wcof}^{M^\Vv_{\R,\iota(\beta_1)}}(\rho_{r-1}^{M^\Vv_{\R,\iota(\beta_1)}})$.
But by commutativity, $i^\Vv_{\R,0,\iota(\beta_1)}(\theta)=\pi_{\beta_1}(i^\Tt_{\R,\beta_1}(\theta))<\sup\pi_{\beta_1}``\rho_r^{M^\Tt_{\beta_1}}$.
But $\crit(j)\geq\sup \pi_{\beta_1}``\rho_r^{M^\Tt_{\beta_1}}$,
so again by the remarks, $j$ is continuous at $\rho$, as desired.
\end{proof}

Now recall that $\pred^\Tt(\alpha_1+1)=\beta_1$ and $\deg^\Tt_{\beta_1}=r$
(so $\pi_{\beta_1}$ is a near $(r-1)$-embedding),  but $\deg^\Tt_{\alpha_1+1}=r-1$, 
whereas $\deg^\Vv_{\alpha_1+1}=r$.

\begin{sclm}$\pi_{\alpha_1+1}$ is a near $(r-1)$-embedding.\end{sclm}
\begin{proof}Since $\pi_{\alpha_1+1}$ is a weak $(r-1)$-embedding, we just have to verify $\rSigma_{(r-1)+1}$-elementarity. The proof is much as in the proof of \cite[Lemma 1.3]{fine_tame}. Let $b\in[\nu(E^\Tt_{\alpha_1})]^{<\om}$
 and let $f:[\crit(E^\Tt_{\alpha_1})]^{<\om}\to \core_0(M^\Tt_{\beta_1})$ be $\bfrSigma_{r-1}^{M^\Tt_{\beta_1}}$. Say $f$ is so defined from the parameter $q\in\core_0(M^\Tt_{\beta_1})$, and write this as $f=f_q$. Let $\varphi$ be an $\rSigma_{(r-1)+1}$ formula. We want to see that $M^\Tt_{\alpha_1}\sats\varphi([b,f_q])$ iff $M^\Vv_{\R,\iota(\alpha_1+1)}\sats\varphi([\psi(b),f_{\pi_{\beta_1}(q)}])$,
 where $\psi:M^\Tt_{\alpha_1}|\lh(E^\Tt_{\alpha_1})\to \N$
 is the relevant extender lifting map. As in \cite{fine_tame},
 we can find some $p\in\core_0(M^\Tt_{\beta_1})$
 such that $(E^\Tt_{\alpha_1})_b$ is $\rSigma_1^{M^\Tt_{\beta_1}}(\{b\})$, via a certain $\Sigma_1$ formula $\varrho$, and such that $(F^N)_{\psi(b)}$
 is $\rSigma_1^{M^\Vv_{\iota(\beta_1)}}(\{\pi_{\beta_1}(p)\})$,
 via the same formula $\varrho$.
 Say $\varphi(u)$ is the formula ``there is $t\in T_{r-1}$
 such that $\tau(t,u)$'', where $\tau$ is some $\Sigma_1$ formula; this assumes $r-1>0$, but if $r-1=0$, then one uses the usual kind of variant.
 Then $M^\Tt_{\alpha_1+1}\sats\varphi([b,f_q])$
 iff $M^\Tt_{\beta_1}\sats\varphi'(p,q,\pvec_{r-1}^{M^\Tt_{\beta_1}})$, where $\varphi'(p,q,\pvec_{r-1}^{M^\Tt_{\beta_1}})$ asserts ``there is $t\in T_{r-1}$
 such that $t$ is a theory in parameters $\alpha\cup\{p,q,\pvec_{r-1}^{M^\Tt_{\beta_1}}\}$ for some $\alpha<\rho_{r-1}^{M^\Tt_{\beta_1}}$, and there is some $X\in (E^\Tt_{\alpha_1})_b$, such that for all $x\in X$,
 $t$ codes a sub-theory $t'$ (of the appropriate form for elements of $T_{r-1}$, and with truth corresponding to truth exhibited directly in $t$) and $t$ exhibits that $\tau(t',f_q(x))$ holds''. By the $\rSigma_r$-elementarity
 of $\pi_{\beta_1}$,
 this holds iff $M^\Vv_{\R,\iota(\beta_1)}\sats\varphi'(\pi_{\beta_1}(p),\pi_{\beta_1}(q),\pvec_{r-1}^{M^\Vv_{\R,\iota(\beta_1)}})$,
 and note that by choice of $p,q$, and because $i^{*\Vv}_{\iota(\alpha_1+1)}$ is continuous at $\rho_{r-1}^{M^\Vv_{\R,\iota(\beta_1)}}$ (by Subclaim \ref{sclm:rho_r-1_cont}),
 this holds iff $M^\Vv_{\R,\iota(\alpha_1+1)}\sats\varphi([\psi(b),f_{\pi_{\beta_1}(q)}])$, so $\pi_{\alpha_1+1}$ is a near $(r-1)$-embedding, as desired.
 \end{proof}

Generalizing the previous argument directly, we have:
\begin{sclm}
For each $\xi\in[\alpha_1+1,\beta_2]^\Tt$,
 $\pi_\xi$ is a near $(r-1)$-embedding.\end{sclm}
 
 But now for nodes $\xi\in b^\Tt$ beyond $\beta_2$, we can argue just as before: an inspection of the proof of \cite[Lemma 1.3]{fine_tame}
 shows that $\deg^\Tt_\xi=\deg^\Vv_{\iota(\xi)}$
 and $\pi_\xi$ is a near $\deg^\Tt_\xi$-embedding
 for each such $\xi$.
 So $\pi_\infty$ is a near $k$-embedding, a completing the proof of the claim.
\end{proof} 
As mentioned just prior to Claim \ref{clm:pi_infty_near_k},
the claim yields a contradiction, completing the proof of solidity.

\label{page:condensation}Now consider part \ref{item:condensing} of the theorem,
regarding condensation. Let $k<\om$ and
let $\Hh$ be a $(k+1)$-sound potential opm which is soundly projecting. Let
$\pi:\Hh\to\M$ be nearly $k$-good, with $\rho=\rho_{k+1}^\Hh<\rho_{k+1}^\M$. Then $\Hh$ is in fact
an opm. Let us assume that $\Hh,\M$ are both successors, so $\pi(\Hh^-)=\M^-$. By fine condensation
of ${\Fop}$, $\Hh^-$ is an ${\Fop}$-pm, and either $\Hh\in{\Fop}(\Hh^-)$ or $\Hh={\Fop}(\Hh^-)$.
If $\Hh$ is not $k$-relevant then the result follows from the fact that $\M^-$ is
${<\om}$-condensing and
$\Hh^-$ is an ${\Fop}$-pm. So assume $\Hh$ is $k$-relevant, so $\Hh={\Fop}(\Hh^-)$.

We now use $\om$-weak DJ  and the usual phalanx comparison argument to reach the desired
conclusion.
Say $\ph=((\M,{<\rho}),\Hh)$ is the phalanx. Then $\ph$ is ${\Fop}$-$((\om,k),\om_1+1)$-iterable,
lifting to ${\Fop}$-$(\om,\om)$-maximal trees $\Vv$ on $\M$. (It could be that $\M$ is not
$k$-relevant. So we want to keep the degrees of nodes of $\Vv$ at $\om$ where possible,
to ensure that each $M^\Vv_\alpha$ is an ${\Fop}$-pm.) Suppose $\Tt$ is non-trivial. Because
$k<\om$,
if $M^\Tt_\infty$ is above $\Hh$ without drop in model or degree, $\pi_\infty$ need only be a weak
$k$-embedding.
But in this case, $M^\Tt_\infty$ is not $\om$-sound, which implies
$M^\Uu_\infty\wpins M^\Tt_\infty$, which contradicts $\omega$-weak DJ. The rest is routine.
\end{proof}

\section*{Acknowledgements}

First author partly funded by the Austrian Science Fund (FWF) [10.55776/Y1498].

\printindex

\bibliographystyle{plain}
\bibliography{operators}

\begin{thebibliography}{10}

\bibitem{adolf2024ideals}
Dominik Adolf, Grigor Sargsyan, Nam Trang, Trevor Wilson, and Martin Zeman.
\newblock Ideals and strong axioms of determinacy.
\newblock {\em Journal of the American Mathematical Society}, 37(4):1203--1273,
  2024.

\bibitem{busche2009strength}
Daniel Busche and Ralf Schindler.
\newblock The strength of choiceless patterns of singular and weakly compact
  cardinals.
\newblock {\em Annals of Pure and Applied Logic}, 159(1-2):198--248, 2009.

\bibitem{FSIT}
William~J. Mitchell and John~R. Steel.
\newblock {\em Fine structure and iteration trees}, volume~3 of {\em Lecture
  Notes in Logic}.
\newblock Springer-Verlag, Berlin, 1994.

\bibitem{wDJ}
Itay Neeman and John Steel.
\newblock A weak {D}odd-{J}ensen lemma.
\newblock {\em Journal of Symbolic Logic}, 64(3):1285--1294, 1999.

\bibitem{sargsyan2014non}
Grigor Sargsyan and Nam Trang.
\newblock Non-tame mice from tame failures of the unique branch hypothesis.
\newblock {\em Canadian Journal of Mathematics}, 66(4):903--923, 2014.

\bibitem{sargsyan2016tame}
Grigor Sargsyan and Nam Trang.
\newblock Tame failures of the unique branch hypothesis and models of
  $\sf{AD}_\mathbb{R}$ + {$\Theta$} is regular.
\newblock {\em Journal of Mathematical Logic}, 16(02):1650007, 2016.

\bibitem{fine_tame}
E.~Schimmerling and J.~R. Steel.
\newblock Fine structure for tame inner models.
\newblock {\em The Journal of Symbolic Logic}, 61(2):621--639, 1996.

\bibitem{deconIMT}
Ralf-Dieter Schindler, John Steel, and Martin Zeman.
\newblock Deconstructing inner model theory.
\newblock {\em The Journal of Symbolic Logic}, 67(2):721--736, 2002.

\bibitem{scales_in_hybrid_mice_over_R}
F.~Schlutzenberg and N.~Trang.
\newblock Scales in hybrid mice over $\mathbb{R}$.
\newblock arXiv:1210.7258v4.

\bibitem{fsfni}
Farmer Schlutzenberg.
\newblock Fine structure from normal iterability.
\newblock To appear in Journal of Mathematical Logic. Preprint
  arXiv:2011.10037v4.

\bibitem{kappa-plus}
Farmer Schlutzenberg.
\newblock The initial segment condition for $\kappa^+$-supercompactness.
\newblock arXiv:2306.13827v2.

\bibitem{mouse_scales}
Farmer Schlutzenberg.
\newblock Mouse scales.
\newblock arXiv:2310.19764v2.

\bibitem{copy_con}
Farmer Schlutzenberg.
\newblock Reconstructing copying and condensation.
\newblock Notes available at
  https://sites.google.com/site/schlutzenberg/home-1/research/papers-and-preprints.

\bibitem{mim}
Farmer Schlutzenberg.
\newblock {\em Measures in mice}.
\newblock PhD thesis, University of California, Berkeley, 2007.
\newblock arXiv:1301.4702.

\bibitem{premouse_inheriting}
Farmer Schlutzenberg.
\newblock A premouse inheriting strong cardinals from ${V}$.
\newblock {\em Annals of Pure and Applied Logic}, 171(9), 2020.

\bibitem{iter_for_stacks}
Farmer Schlutzenberg.
\newblock Iterability for (transfinite) stacks.
\newblock {\em Journal of Mathematical Logic}, 21(2), 2021.

\bibitem{extmax}
Farmer Schlutzenberg.
\newblock The definability of $\es$ in self-iterable mice.
\newblock {\em Annals of Pure and Applied Logic}, 174(2), 2023.

\bibitem{V=HODX}
Farmer Schlutzenberg.
\newblock The definability of the extender sequence $\mathbb{E}$ from
  $\mathbb{E}\upharpoonright\aleph_1$ in {$L[\mathbb{E}]$}.
\newblock {\em The Journal of Symbolic Logic}, 89(2):427--459, 2024.

\bibitem{CMIP}
J.~R. Steel.
\newblock {\em The core model iterability problem}, volume~8 of {\em Lecture
  Notes in Logic}.
\newblock Springer-Verlag, Berlin, 1996.

\bibitem{cmi}
J.~R. Steel and R.~D. Schindler.
\newblock The core model induction.
\newblock Unpublished notes, available at
  \url{https://ivv5hpp.uni-muenster.de/u/rds/}.

\bibitem{cmwmwc}
John Steel.
\newblock Core models with more {W}oodin cardinals.
\newblock {\em The Journal of Symbolic Logic}, 67(3):1197--1226, 2002.

\bibitem{fs_plus-one}
John Steel and Itay Neeman.
\newblock Fine structure for plus one premice.
\newblock Unpublished notes available at
  \url{https://math.berkeley.edu/~steel}.

\bibitem{PFA_implies_ADLR}
John~R. Steel.
\newblock $\sf{PFA}$ implies $\sf{AD}$$^{L(\RR)}$.
\newblock {\em Journal of Symbolic Logic}, 70(4):1255–1296, 2005.

\bibitem{steel2010outline}
John~R. Steel.
\newblock An outline of inner model theory.
\newblock In {\em Handbook of set theory. {V}ols. 1, 2, 3}, pages 1595--1684.
  Springer, Dordrecht, 2010.

\bibitem{trang2016pfa}
Nam Trang.
\newblock {PFA} and guessing models.
\newblock {\em Israel Journal of Mathematics}, 215(2):607--667, 2016.

\bibitem{trang2021determinacy}
Nam Trang and Trevor~M Wilson.
\newblock Determinacy from strong compactness of $\omega_1$.
\newblock {\em Annals of Pure and Applied Logic}, 172(6):102944, 2021.

\bibitem{wilson2012contributions}
Trevor~Miles Wilson.
\newblock {\em Contributions to Descriptive Inner Model Theory}.
\newblock PhD thesis, University of California, Berkeley, 2012.
\newblock Pro{Q}uest {ID}:{W}ilson$\_$berkeley$\_$0028{E}$\_$13013,
  \url{https://escholarship.org/uc/item/8fg2x6cr}.

\bibitem{zeman}
Martin Zeman.
\newblock {\em Inner models and large cardinals}, volume~5 of {\em de Gruyter
  Series in Logic and its Applications}.
\newblock Walter de Gruyter \& Co., Berlin, 2002.

\end{thebibliography}
\end{document}